\title[Basmajian--type inequalities for maximal representations]{Basmajian--type inequalities\\for maximal representations}
\author{Federica Fanoni}
\address[Federica Fanoni]{Mathematical Institute, University of Heidelberg, Heidelberg, Germany}
\email{federica.fanoni@gmail.com}
\author{Maria Beatrice Pozzetti}
\address[Maria Beatrice Pozzetti]{Mathematical Institute, University of Heidelberg, Heidelberg, Germany}
\email{pozzetti@mathi.uni-heidelberg.de}
\date{\today}
\DeclareMathOperator{\Sp}{Sp}
\DeclareMathOperator{\R}{\mathbb{R}}
\DeclareMathOperator{\B}{\mathbb{B}}
\DeclareMathOperator{\N}{\mathbb{N}}
\DeclareMathOperator{\hyp}{\mathbb{H}^2}
\DeclareMathOperator{\ort}{\mathcal{O}}
\DeclareMathOperator{\arccoth}{arccoth}
\DeclareMathOperator{\lm}{\mu_{\mbox{\tiny Leb}}}
\DeclareMathOperator{\im}{Im}
\DeclareMathOperator{\re}{Re}
\newcommand{\G}{\Gamma}
\newcommand{\Yy}{\mathcal Y}
\renewcommand{\L}{\Lambda}
\newcommand{\Hom}{{\rm Hom}}
\renewcommand{\S}{\Sigma}
\newcommand{\PSL}{{\rm PSL}}
\renewcommand{\R}{\mathbf R}
\newcommand{\PSp}{{\rm PSp}}
\renewcommand{\O}{{\rm O}}
\newcommand{\C}{\mathbf{C}}
\newcommand{\<}{\langle}
\renewcommand{\H}{\mathbb H}
\renewcommand{\>}{\rangle}
\newcommand{\wt}{\widetilde}
\newcommand{\bsm}{\left(\begin{smallmatrix}}
\newcommand{\esm}{\end{smallmatrix}\right)}                   
\newcommand{\bpm}{\begin{pmatrix}}
\newcommand{\epm}{\end{pmatrix}}
\newcommand{\GL}{{\rm GL}}
\newcommand{\Id}{{\rm Id}}
\newcommand{\Ll}{\mathcal L}
\newcommand{\Sym}{{\rm Sym}}
\newcommand{\ov}{\overline}
\renewcommand{\l}{\lambda}
\newcommand{\SL}{{\rm SL}}
\renewcommand{\d}{\delta}
\newcommand{\g}{\gamma}
\newcommand{\aw}{\bar{\mathfrak{a}}^+}
\DeclareMathOperator{\diag}{diag}
\newcommand{\dC}{{\rm d}^{\aw}} 
\newcommand{\ellC}{{\ell}^{\aw}}
\newcommand{\dF}{{\rm d}^{F}} 
\newcommand{\ellF}{{\ell}^{F}}
\newcommand{\dR}{{\rm d}^{R}} 
\newcommand{\ellR}{{\ell}^{R}}
\newcommand{\calX}{\mathcal X}
\newcommand{\Stab}{{\rm Stab}}
\newcommand{\quot}[2]{{\left. \raisebox{1.5px}{$#1$}\middle/ \raisebox{-5px}{$#2$}\right.}}
\newcommand{\Xx}{\calX}
\newcommand{\s}{\sigma}
\newcommand{\dGL}{{\rm d}_{\GL}}
\newcommand{\dSL}{{\rm d}_{\SL}}
\newcommand{\ev}{{\rm ev}}
\newtheorem{theor}{Theorem}[section]
\newtheorem{cor}[theor]{Corollary}
\newtheorem{lem}[theor]{Lemma}
\newtheorem{prop}[theor]{Proposition}
\newtheorem{theorintro}{Theorem}
\newtheorem{lem?}[theor]{Lemma?}
\newtheorem{definition}[theor]{Definition}
\newtheorem*{Bas}{Basmajian's identity}
\theoremstyle{definition}
\newtheorem{Remark}[theor]{Remark}
\newtheorem{remark}[theor]{Remark}
\theoremstyle{definition}
\numberwithin{equation}{section}
\begin{document}
\begin{abstract}
For suitable metrics on the locally symmetric space associated to a maximal representation, we prove  inequalities between the length of the boundary and the lengths of orthogeodesics that generalize the classical Basmajian's identity from Teichm\"uller theory. Any equality characterizes diagonal embeddings.  \end{abstract}
\maketitle
\section{Introduction}
In the last few decades, many authors have proved elegant identities over moduli spaces of hyperbolic surfaces. One of the first identities was proven by Basmajian in \cite{basmajian}. Even though his work applies more generally to hyperbolic $n$-manifolds for any $n\geq 2$, his most celebrated result is the following:
\begin{Bas}[\cite{basmajian}]
Let $\Sigma$ be a hyperbolic surface with nonempty geodesic boundary $\partial \Sigma$, and let $\ort^{\hyp}_{\Sigma}$ denote the set of unoriented orthogeodesics of $\Sigma$ (i.e.\ the set of geodesics with endpoints on the boundary and orthogonal to it). Then $$\ell(\partial\Sigma)=4\sum_{\alpha\in\ort^{\hyp}_\Sigma}\log \coth \frac{\ell(\alpha)}{2}.$$
\end{Bas}
Another identity involving orthogeodesics is due to Bridgeman and Kahn \cite{bk}, who show that the volume of a hyperbolic $n$-manifold with geodesic boundary can be computed in terms of the length of orthogeodesics. Vlamis and Yarmola generalized this result to a larger class of manifolds in \cite{VY_BKid}.

Other well-known identities are McShane's identity \cite{mcshane} and its generalization by Mirzakhani \cite{mirzakhani}, Luo--Tan's identity \cite{lt} and Bridgeman's identity \cite{bridgeman}. Note that, besides the intrinsic interest of these results, some also have important applications: for instance, Mirzakhani used in \cite{mirzakhani} the generalization of McShane's identity to give a recursive formula for the Weil-Petersson volume of moduli spaces of surfaces with boundary. 

Our main goal is to prove inequalities which generalize Basmajian's identity to the context of maximal representations and such that equality is attained exactly when the maximal representation is a diagonal embedding of a hyperbolization. A tool to prove these inequalities is a result of independent interest: an identity involving cross-ratios (in the sense of Labourie), which should be thought of as the higher rank analogue of the fact that the limit set of the fundamental group of a surface with boundary has measure zero.
\subsection{Maximal representations as higher Teichm\"uller theory} 
{ Maximal representations form a class of discrete and injective representations of fundamental groups of surfaces\footnote{We will only consider oriented surfaces with negative Euler characteristic.} into the symplectic group. They consist of the representations maximizing the \emph{Toledo invariant}, a notion of volume defined using bounded cohomology. A fundamental result of Burger, Iozzi and Wienhard \cite[Theorem 8]{biw}, characterizes them as the representations into $\Sp(2n,\R)$ admitting an equivariant ``well-behaved'' \emph{boundary map} $\phi:\partial \pi_1(\Sigma)\to \Ll(\R^{2n})$ from the boundary of the fundamental group of the surface to the Lagrangians of $\R^{2n}$
(see \cite{biw} or Section \ref{maxreps} for the precise definitions).}

These representations are a generalization of Teichm\"uller space: it was proven by Goldman in \cite{goldman} that Teichm\"uller space can be interpreted as the parameter space of representations of the fundamental group of a surface into $\PSL(2,\R)=\PSp(2,\R)$ maximizing the Euler class, of which the Toledo invariant is a higher rank generalization. In particular, maximal representations into $\PSp(2,\R)$ are holonomies of hyperbolizations and hence correspond to the Teichm\"uller space. The study of common patterns of maximal representations and other special representations, most notably Hitchin representations and positive representations, is often referred to as \emph{higher Teichm\"uller theory}.

A recent trend in higher Teichm\"uller theory is to see which results of hyperbolic geometry can be generalized in the context of representations of surface groups. For instance, a classical and extremely useful result about hyperbolic surfaces is the \emph{collar lemma}. First proven by Keen \cite{keen}, it states the existence, for a simple closed geodesic $\gamma$, of a neighborhood which is an embedded cylinder of width depending only on $\ell(\gamma)$, diverging when $\ell(\gamma)$ shrinks to zero. In particular, one can deduce a lower bound in terms of $\ell(\gamma)$ to the length of any simple closed geodesic intersecting $\gamma$. This corollary has been generalized to the Hitchin component by Lee and Zhang \cite{lz} and to maximal representation by Burger and the second author \cite{bp}.

Also identities in the higher Teichm\"uller setting have attracted attention in recent years: Labourie and McShane generalized McShane-Mirzakhani's identities to arbitrary cross-ratios and in particular to Hitchin representations in \cite{LMS}, while in \cite{VY} Vlamis and Yarmola obtained a generalization of Basmajian's identity to the Hitchin component.
\subsection{Main results}
A maximal representation $\rho:\pi_1(\Sigma)\rightarrow \Sp(2n,\R)$ induces an action of $\Gamma=\pi_1(\Sigma)$ on the symmetric space $\calX$ associated to the symplectic group. The action is properly discontinuous and therefore the locally symmetric space $\rho(\G)\backslash \calX$ is a smooth manifold.  
As we will describe in Section \ref{metrics}, we consider three $\Sp(2n,\R)$-invariant distances on $\calX$: the determinant Finsler distance $\dF$, the Riemannian distance $\dR$ and a Weyl chamber valued distance $\dC$. The latter has values in a Weyl chamber $\aw$, a specific subset of $\R^n$ that parametrizes the orbits of $\Sp(2n,\R)$ on the tangent bundle ${\rm T}\calX$. 
We denote by $\ellR$, $\ellF$ and $\ellC$ the length of paths in $\calX$ computed using $\dR$, $\dF$ and $\dC$ respectively. If $\gamma$ is an element of the fundamental group, its length (with respect to each metric) will be its translation length: the infimum over all points $X\in\calX$ of the distance between $X$ and its translate $\rho(\gamma)\cdot X$.

The first goal of the paper is to find the suitable  generalization of classical orthogeodesics to the context of maximal representations. In the classical setup, an orthogeodesic between two boundary components of a hyperbolic surface lifts in $\hyp$ to a geodesic segment orthogonal to two lifts of the boundary components in the hyperbolic plane. It is pointed out in \cite{bp} that a good generalization of geodesics in the setting of maximal representations is given by the so-called $\R$-tubes, parallel sets of specific singular geodesics (see Section \ref{Rtubes}). In fact, to each $\gamma\in\Gamma$ we can naturally associate an $\R$-tube $\Yy_\gamma$, which is a $\rho(\gamma)$-invariant subspace of $\calX$. In Section \ref{sec:orthogeodesics}, we observe that given any pair of primitive peripheral elements $\gamma$ and $\delta$ in $\Gamma$, there exists a unique $\R$-tube $\Yy_\alpha$ (sometimes simply denoted by $\alpha$) which is orthogonal to both $\Yy_\gamma$ and $\Yy_\delta$; moreover, this tube meets each subspace in a point. We call $\Yy_\alpha$ an \emph{orthotube}, and its length is defined to be the distance between the intersection points with $\Yy_\gamma$ and $\Yy_\delta$. We denote by $\ort_\Sigma$ the collection of orthotubes and by $\ort_\Sigma(\gamma)$ the subset of orthotubes orthogonal to a fixed $\Yy_\gamma$. If $\alpha\in \ort_\Sigma(\gamma)$, we denote by $\delta_\alpha$ the peripheral element associated to $\alpha$ and different from $\gamma$.

The length of orthotubes has also geometric significance: in Section \ref{sec:doubles} we  construct what we call the holomorphic double of a maximal representation (with suitable hypotheses on the peripheral elements). The double is a specific representation of the fundamental group of the double of the surface that restrict to the given representation. Then we can show that the Finsler length of an orthotube is half the length of the corresponding curve in the double (Proposition \ref{prop:4.6}).

Given $v\in \aw$, we denote by $v_n$  the smallest coordinate of $v$. Our geometric generalizations of Basmajian's identity to the setting of maximal representations are the following:
\begin{theorintro}\label{thm:main} 
For any maximal representation $\rho:\G\to\Sp(2n,\R)$ with the property that the image of peripheral elements are Shilov hyperbolic, we have 
\[
2n\sum_{\alpha\in\ort_\Sigma}\log \coth \frac{\ellC(\alpha)_n}{2}\geq\ellF(\partial\Sigma)\geq 2n \sum_{\alpha\in\ort_\Sigma}\log\coth\frac{\ellF(\alpha)}{n} \label{Finsler} \tag{A1}
\]
and
\[
4\sqrt{n}\sum_{\alpha\in\ort_\Sigma}\log \coth \frac{\ellC(\alpha)_n}{2}\geq \ellR(\partial\Sigma)\geq 4\sqrt{n}\sum_{\alpha\in\ort_\Sigma}\log \coth \frac{\ellR(\alpha)}{2\sqrt{n}} \label{Riemannian} \tag{A2}\]
with equalities if and only if $\rho$ is, up to a character in a compact group, the composition of a holonomy representation of a hyperbolization into $\SL(2,\R)$ with the diagonal representation of $\SL(2,\R)$ into $\Sp(2n,\R)$.
\end{theorintro}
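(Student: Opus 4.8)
The plan is to reduce both chains of inequalities to a single one-dimensional estimate along each orthotube, in close analogy with Basmajian's original argument. First I would fix a peripheral element $\gamma$ with Shilov hyperbolic image and study the $\R$-tube $\Yy_\gamma$. The key observation is that $\rho(\gamma)$ acts on $\Yy_\gamma$ (which is itself a symmetric space of smaller rank, in fact a product of hyperbolic planes up to scaling) as a translation along a geodesic, and the quotient of $\partial\Yy_\gamma$ by this action is a circle whose $\ellF$- (resp. $\ellR$-) length is $\ellF(\gamma)$ (resp. $\ellR(\gamma)$). Each orthotube $\alpha\in\ort_\Sigma(\gamma)$ meets $\Yy_\gamma$ orthogonally in a single point, and — just as in the hyperbolic case — the "shadow" of the orthotube, i.e.\ the set of points on $\partial\Yy_\gamma$ whose projection to $\Yy_\gamma$ falls inside the region cut off by $\Yy_{\delta_\alpha}$, is an interval whose length depends only on $\ellC(\alpha)$, $\ellF(\alpha)$ or $\ellR(\alpha)$. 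The cross-ratio identity announced in the introduction (the higher-rank analogue of the limit set having measure zero) guarantees that these shadows cover $\partial\Yy_\gamma/\langle\rho(\gamma)\rangle$ up to a null set and with disjoint interiors, so summing the shadow-lengths over $\ort_\Sigma(\gamma)$ and then over all peripheral classes $\gamma$ recovers $\ellF(\partial\Sigma)$ or $\ellR(\partial\Sigma)$ exactly.

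The heart of the matter is therefore the computation and estimation of a single shadow length. Here I would use the Weyl chamber valued distance $\dC$ to get the \emph{exact} shadow length: a direct computation in the rank-$n$ symmetric space (diagonalizing the relevant one-parameter pieces, as $\Yy_\gamma$ splits) should give that the shadow of $\alpha$ on each of the $n$ hyperbolic-plane factors is $4\log\coth(\ellC(\alpha)_n/2)$ in the factor realizing the smallest coordinate, and a larger value on the other factors; summing over the $n$ factors and comparing with the Finsler/Riemannian normalizations yields the upper bounds (the factors $2n$ and $4\sqrt n$ come precisely from how $\dF$ and $\dR$ compare to $\dC$, namely $\dF = \sum_i (\cdot)_i$ and $\dR$ is the Euclidean norm on $\aw$, evaluated on the maximally singular direction $(1,\dots,1)$). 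For the lower bounds I would instead bound the shadow length from below by expressing it through $\ellF(\alpha)$ or $\ellR(\alpha)$ directly: since $\ellF(\alpha)\le n\,\ellC(\alpha)_n$ and $\ellR(\alpha)\le\sqrt n\,\ellC(\alpha)_n$ (the orthotube is a regular geodesic for a diagonal piece only in the extremal case), the elementary monotonicity of $x\mapsto\log\coth x$ converts the exact $\dC$-identity into the stated one-sided bounds.

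The equality analysis is where I expect the real obstacle. Equality in, say, the right-hand inequality of \eqref{Finsler} forces $\ellF(\alpha)=n\,\ellC(\alpha)_n$ for every orthotube $\alpha$, i.e.\ every orthotube is a \emph{regular} (maximally singular, diagonal) geodesic, and moreover the shadow cover must be tight for every peripheral class simultaneously. I would argue that this regularity, imposed on a dense set of directions coming from the orthotubes associated to all pairs of peripheral elements, propagates to the whole limit set: the boundary map $\phi$ must send $\partial\pi_1(\Sigma)$ into a single diagonally embedded $\R\mathbb P^1\subset\Ll(\R^{2n})$ — because the pairwise "transversality type" of boundary points, controlled by the same Weyl-chamber data, is forced to be the extremal one. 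Once the boundary map has image in a diagonal circle, the representation is (up to a compact-valued character, which does not affect any of the metrics) the diagonal composition of a Fuchsian representation into $\SL(2,\R)$, and conversely for such representations every orthotube is a diagonal geodesic and classical Basmajian's identity, applied in each of the $n$ factors, gives the equalities. Pinning down the "propagation" step — showing that extremality of countably many cross-ratios forces the limit set onto a diagonal — is the part that will require the most care, and I would lean on the rigidity of cross-ratios under the maximality hypothesis together with the structure theory of Shilov hyperbolic elements.
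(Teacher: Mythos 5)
Your overall skeleton for the Finsler inequality (project onto $\Yy_\gamma$, compute the ``shadow'' of each $\Yy_{\delta_\alpha}$ via the Weyl-chamber distance, and use the cross-ratio identity of Theorem \ref{thm:cr} to see that the shadows exhaust the boundary length) is indeed the paper's strategy, but several of your concrete claims fail. First, $\Yy_\gamma$ is not a product of hyperbolic planes and $\rho(\gamma)$ need not translate along a geodesic of it: $\Yy_\gamma$ is isometric to $\R\times\Xx_{\SL(n,\R)}$, and only the $\R$-factor carries the translation picture (Remark \ref{dFandpiR}). Consequently your claim that the shadows recover $\ellR(\partial\Sigma)$ \emph{exactly} is false: the Riemannian distance is not additive along causal paths and $\ellR(\gamma)$ is not realized on them, so no Riemannian analogue of Theorem \ref{thm:M0} holds; the paper only gets the one-sided Proposition \ref{prop:m0Riemannian} and deduces the other Riemannian inequality from the Finsler one via $\dR\le 2\dF\le\sqrt n\,\dR$ (Lemma \ref{lem:relRF}) and the Euclidean-factor observation. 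Second, your shadow estimates have the monotonicity of $\log\coth$ backwards: by Lemma \ref{2logcothC} the shadow coordinate attached to $\ellC(\alpha)_i$ is $2\log\coth(\ellC(\alpha)_i/2)$, which is \emph{largest} (not smallest) at the smallest orthotube coordinate, and the comparisons you invoke, $\ellF(\alpha)\le n\,\ellC(\alpha)_n$ and $\ellR(\alpha)\le\sqrt n\,\ellC(\alpha)_n$, are reversed (one has $\ellF(\alpha)\ge\tfrac n2\ellC(\alpha)_n$ and $\ellR(\alpha)\ge\sqrt n\,\ellC(\alpha)_n$). With the correct signs, monotonicity alone gives only the left-hand (upper) bounds; the right-hand bounds $\sum_i\log\coth(\ellC(\alpha)_i/2)\ge n\log\coth(\ellF(\alpha)/n)$ genuinely require the \emph{convexity} of $\log\coth$ (Jensen), which is how Lemma \ref{2logcoth} proceeds, and without it your lower bounds lose the factor $n$.

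The more serious gap is the equality characterization, which you explicitly leave open (``the propagation step \dots will require the most care''). Equality does force all eigenvalues of $R(\phi(\g^-),\phi(\delta^+),\phi(\delta^-),\phi(\g^+))$ to coincide for every orthotube, i.e.\ each orthotube direction is maximally singular, but passing from this countable family of scalar cross-ratios to the statement that $\phi(\Lambda(\Gamma))$ lies in the boundary of a diagonal disk is exactly the content of Lemma \ref{lem:iff}, and ``rigidity of cross-ratios'' is not a proof of it. The paper's mechanism is quantitative: scalar cross-ratios force $\pi^{\SL}\circ p_\g\circ\phi$ to take the same value at the fixed points $\delta^\pm$ of each peripheral $\delta$, the causal estimate $\dSL\le\sqrt{n-1}\,\bigl(\pi^\R(\cdot)-\pi^\R(\cdot)\bigr)$ of Lemma \ref{lem:pos} controls the $\SL$-displacement between consecutive such points by the gaps in the Euclidean factor, and the identity of Theorem \ref{thm:M0} says those gaps exhaust the total $\pi^\R$-variation, forcing $\pi^{\SL}\circ p_\g\circ\phi$ to be constant on the whole limit set; only then does the stabilizer computation give $\rho(\Gamma)\subset\SL(2,\R)\times\O(n)$, i.e.\ a diagonal embedding up to a compact character. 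Since the ``only if'' direction is a substantive part of Theorem \ref{thm:main}, omitting this step leaves the proof incomplete.
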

Note that adding the condition on the images of peripheral elements is equivalent to requiring that the representation is Anosov (see Section \ref{maxreps}). Interestingly, for both metrics the difference between the middle term and the right term can be arbitrarily large (Proposition \ref{prop:arbitrarilybad}), namely there are sequences of maximal representations in which the length of the boundary components stay bounded away from zero, but the  $\R$-tubes associated to any two peripheral elements are arbitrarily far apart. This is discussed in  Section \ref{arbitrarilybad}.

\begin{Remark} If we drop the hypothesis on the Shilov hyperbolic image, the right-hand side inequalities in \eqref{Finsler} and \eqref{Riemannian} still hold, as long as there is at least one peripheral element whose image is Shilov hyperbolic. Moreover, the characterization of any equality still holds under these milder assumptions.
\end{Remark}

\begin{Remark}
A natural question is how this work generalizes to maximal representations $\rho$ in other Hermitian Lie groups $G$. It is well known that then the Zariski closure $H=\ov{\rho(\G)}^Z$ of the image of $\rho$ is a Hermitian Lie group of tube type \cite[Theorem 5]{biw}. Under the mild assumption that $H$ has no factor isogenous to $\SL(2,\R)$ or ${\rm E}_{6(-14)}$, there exists a (virtual) tight holomorphic embedding $$\iota:H\to \Sp(2n,\R)$$ (for some explicit $n$ depending on $H$ only \cite[Example 8.7]{BIWtight}) that is isometric, up to suitably rescaling the metric on the irreducible factors of $H$. Furthermore the Shilov boundary of $H$ is naturally a subspace of $\Ll(\R^{2n})$ by \cite[Theorem 7]{BIWtight}.

The composition $\rho'=\iota\circ\rho:\G\to \Sp(2n,\R)$ is then a maximal representation and Theorem \ref{thm:main} holds for $\rho'$. Note that, if $H$ is irreducible, the Riemannian translation distance for $\rho'$ is an explicit multiple of the Riemannian translation distance for the action $\rho$ of $\Gamma$ on the symmetric space $\Xx_H$ associated to $H$.  Since there is a $\iota$-equivariant totally geodesic holomorphic inclusion  $\Xx_H\to\Xx$, it is possible to verify that, for every orthotube $\Yy_\alpha$ that we consider, the endpoints of $\Yy_\alpha$ belong to the Shilov boundary of $H$. This follows from Lemma \ref{existenceorth} and the observation that the totally geodesic holomorphic and tight image of a polydisk in $\Xx_H$ is a partially diagonal subset of a polydisk of $\Xx$ (the rank of $\Xx_H$ is in general smaller than the rank of $\Xx$). In turn this implies that intersection $\Yy_\alpha\cap \Xx_H$ is non-empty, consists of the parallel set of a maximally singular 
geodesic in $\Xx_H$ (an $H$-tube) and is orthogonal to the $H$-tubes in $\Xx_H$ associated to the two peripheral elements corresponding to $\alpha$. 

It is furthermore possible to verify that this does not generalize to the Fuchsian locus within Hitchin maximal representations: this is not a contradiction since the totally geodesic equivariant map is, in that case, not holomorphic. This also partially justifies why, for representations in such Fuchsian locus, equality in Theorem \ref{thm:main} does not hold.
\end{Remark}

As a corollary of our main result  we are able to deduce an interesting geometric property of the locally symmetric space $\rho(\Gamma)\backslash \Xx$: not only the image $\langle\rho(\gamma)\rangle\backslash\Yy_\gamma$ in $\rho(\Gamma)\backslash \Xx$ of $\Yy_\gamma$ is an embedded manifold, but it also admits an embedded tubular neighborhood whose width can be explicitly computed as a function of the translation length of $\Gamma$:   
\begin{cor}\label{corB}
If $\g\in\G$ corresponds to a simple closed curve or boundary component of $\Sigma$ and $\rho:\Gamma\to \Sp(2n,\R)$ is an Anosov maximal representation, then $\langle\rho(\gamma)\rangle\backslash\Yy_\gamma\subset\rho(\Gamma)\backslash \Xx$ has an embedded tubular neighbourhood $C(\g)$ of width
$$w(\g):=\sqrt{n}\arccoth\left(\exp\left(\frac{\ellR(\g)}{2\sqrt{n}}\right)\right)$$
with respect to the Riemannian metric. Moreover, if $\delta$ corresponds to a simple closed curve or boundary component that is disjoint from $\g$, then the neighborhoods $C(\g)$ and $C(\delta)$ are disjoint.
\end{cor}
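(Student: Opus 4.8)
Via the usual covering-space dictionary the statement is a separation estimate in $\Xx$. For $Z\subseteq\Xx$ let $N_r(Z)$ denote its $r$-neighbourhood with respect to $\dR$. Since $\rho$ is Anosov we have $\Stab_\G(\Yy_\gamma)=\langle\gamma\rangle$, so $C(\gamma):=\langle\rho(\gamma)\rangle\backslash N_{w(\gamma)}(\Yy_\gamma)$ is an embedded tubular neighbourhood of $\langle\rho(\gamma)\rangle\backslash\Yy_\gamma$ as soon as
\[
\dR\big(\Yy_\gamma,\rho(g)\Yy_\gamma\big)\ \ge\ 2\,w(\gamma)\qquad\text{whenever }\rho(g)\Yy_\gamma\ne\Yy_\gamma,
\]
and, for $\gamma,\delta$ disjoint, $C(\gamma)\cap C(\delta)=\emptyset$ follows from $\dR(\Yy_\gamma,\rho(g)\Yy_\delta)\ge w(\gamma)+w(\delta)$ for all $g\in\G$. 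So the plan is to prove these lower bounds on distances between $\R$-tubes of (lifts of) simple closed curves and boundary components.

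The mechanism is that each such distance is pinched by a single term of the Basmajian-type sum. First, as $\gamma$ is simple or peripheral, $\gamma$ and $g\gamma g^{-1}$ have non-linking endpoints in $\partial\G$; hence $\Yy_\gamma$ and $\rho(g)\Yy_\gamma$ are disjoint $\R$-tubes with transverse endpoints and (Section~\ref{sec:orthogeodesics}) are joined by an orthotube $\alpha_g$, with $\ellR(\alpha_g)=\dR(\Yy_\gamma,\rho(g)\Yy_\gamma)$. Now cut $\S$ along $\gamma$: the resulting, possibly disconnected, incompressible subsurface $\S'$ carries the orientation of $\S$, so the restriction of $\rho$ to it is again maximal, and it has Shilov-hyperbolic peripheral elements because $\rho$ is Anosov. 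Suppose first that, for a suitable choice of a component of $\S'$ and a lift of it having $\Yy_\gamma$ as a boundary $\R$-tube, $\rho(g)\Yy_\gamma$ is adjacent to $\Yy_\gamma$. The shadow argument from the proof of Theorem~\ref{thm:main}, run for this single boundary component $\gamma$, shows that the shadows on $\Yy_\gamma$ of the $\R$-tubes adjacent to it are pairwise disjoint, have total $\dR$-length $\ellR(\gamma)$ (the complement being the intersection of the limit set with the relevant arc, of measure zero), while the shadow of $\rho(g)\Yy_\gamma$ has length at least $2\sqrt n\,\log\coth\!\big(\ellR(\alpha_g)/(2\sqrt n)\big)$. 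Keeping only that term,
\[
\ellR(\gamma)\ \ge\ 2\sqrt n\,\log\coth\frac{\ellR(\alpha_g)}{2\sqrt n},
\]
and since $t\mapsto\log\coth t$ is a decreasing bijection of $(0,\infty)$ onto itself this rearranges to $\ellR(\alpha_g)\ge 2\sqrt n\,\arccoth\!\big(\exp(\ellR(\gamma)/(2\sqrt n))\big)=2\,w(\gamma)$, i.e.\ $\dR(\Yy_\gamma,\rho(g)\Yy_\gamma)\ge 2\,w(\gamma)$.

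Translates $\rho(g)\Yy_\gamma$ that are not adjacent to $\Yy_\gamma$ after cutting (hidden behind another boundary $\R$-tube) are handled by monotonicity: the shadow of $\rho(g)\Yy_\gamma$ on $\Yy_\gamma$ is contained, up to the measure-zero limit set, in the shadow of an adjacent tube, so it still has length at most $\ellR(\gamma)$ and the same estimate applies. The bound for disjoint $\gamma,\delta$ is obtained in the same way: cut $\S$ along $\gamma\cup\delta$ and apply the estimate to the orthotube joining $\Yy_\gamma$ and $\rho(g)\Yy_\delta$ once from the $\gamma$-side and once from the $\delta$-side, which gives $\dR(\Yy_\gamma,\rho(g)\Yy_\delta)\ge\max\{2w(\gamma),2w(\delta)\}\ge w(\gamma)+w(\delta)$. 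Finally $\dR(\Yy_\gamma,\rho(g)\Yy_\gamma)\ge 2w(\gamma)>0$ already yields that $\langle\rho(\gamma)\rangle\backslash\Yy_\gamma$ is embedded.

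I expect the main obstacle to be the bookkeeping in the middle paragraphs: extracting from the proof of Theorem~\ref{thm:main} the per-boundary-component inequality in a form that bounds a single distance $\dR(\Yy_\gamma,\rho(g)\Yy_\gamma)$, and verifying that every relevant translate $\rho(g)\Yy_\gamma$ — for an arbitrary $g\in\G$, not only for $g$ in the fundamental group of a component of $\S'$ — occurs either as an $\R$-tube adjacent to $\Yy_\gamma$ in some copy of the cut surface or as one hidden behind such a tube. The supporting facts that the restriction of a maximal Anosov representation to an incompressible, orientation-compatible subsurface is again maximal Anosov, and that the orthotube between two $\R$-tubes realises the distance between them, also need to be in place.
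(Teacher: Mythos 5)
Your overall plan is the one the paper follows: reduce to the case where $\gamma$ is peripheral by cutting along it, bound every relevant orthotube below by $2w(\gamma)$ by keeping a single term of the Basmajian-type inequality, and lift an intersection of the neighbourhoods to a too-short distance between $\R$-tubes. The problem is the step you use to produce the single-term bound. You assert that the Riemannian shadows of the adjacent tubes on $\Yy_\gamma$ have total $\dR$-length exactly $\ellR(\gamma)$; this additivity is precisely what fails in higher rank. Only the Finsler length is additive along causal paths (Lemma \ref{orsum}, Theorem \ref{thm:M0}); for the Riemannian metric the paper proves only the one-sided inequality $\ellR(\gamma)\le\sum_{\alpha}\dR\bigl(p_\gamma(\phi(\delta_\alpha^-)),p_\gamma(\phi(\delta_\alpha^+))\bigr)$ (Proposition \ref{prop:m0Riemannian}), and there is no reverse bound of the sum by $\ellR(\gamma)$ (only by $\sqrt n$ times a Riemannian length, cf.\ Lemma \ref{ellRcausal}). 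Your pincer needs the total to be at most $\ellR(\gamma)$ while a single shadow is at least $2\sqrt n\log\coth\bigl(\ellR(\alpha_g)/(2\sqrt n)\bigr)$; the second half is fine (it follows from Lemma \ref{2logcoth} together with Lemma \ref{lem:relRF}), but the first half is unavailable, so the deduction $\ellR(\gamma)\ge 2\sqrt n\log\coth\bigl(\ellR(\alpha_g)/(2\sqrt n)\bigr)$ does not follow as written. The same Finsler/Riemannian confusion reappears in your treatment of hidden translates, where you again bound a single Riemannian shadow by $\ellR(\gamma)$.

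The repair is short and is exactly what the paper does: quote the right-hand inequality of Theorem \ref{Riemannian-component} and drop all terms but one, or equivalently run your shadow argument entirely in the Finsler metric -- the shadow of any tube whose endpoints lie in one fundamental domain of $\langle\gamma\rangle$ acting on $(\!(\gamma^-,\gamma^+)\!)$ has $\dF$-length at most $\ellF(\gamma)$ by Lemmas \ref{lem:ellF} and \ref{orsum}, and at least $n\log\coth(\ellF(\alpha_g)/n)$ by Lemma \ref{2logcoth} -- and only convert to Riemannian quantities at the very end via Lemma \ref{lem:relRF}. This gives $\ellR(\alpha_g)\ge 2w(\gamma)$ for hidden translates just as for adjacent ones, after which your contradiction/contrapositive step and the disjointness estimate $\dR(\Yy_\gamma,\Yy_\delta)\ge 2\max\{w(\gamma),w(\delta)\}\ge w(\gamma)+w(\delta)$ coincide with the paper's proof. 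The two supporting facts you flag at the end (that the common orthogonal tube controls the distance between the two tubes, and that arbitrary conjugates, not only those adjacent after cutting, must be covered) are indeed needed; the paper treats them only implicitly, so making them explicit as you propose is a genuine improvement, provided the estimate feeding into them is the Finsler one.
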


The main difference of our approach with respect to Vlamis and Yarmola's one is that with our inequalities we want to relate intrinsic geometric quantities of the locally symmetric space associated to a representation. Instead, following Labourie--McShane, they mainly work with algebraic versions of the identities and use cross-ratios to translate Basmajian's result in the language of representations. As a partial step towards the proof of Theorem  \ref{thm:main} we prove the analogue of Vlamis and Yarmola's result in our context:
\begin{theorintro}\label{thm:cr}
Let $\rho:\Gamma\to\Sp(2n,\R)$ be a maximal representation with the property that the image of each peripheral element is Shilov hyperbolic. Then for every peripheral element $\gamma\in\Gamma$ we have
$$\ell_{\B}(\gamma)=\sum_{\alpha\in\ort_\Sigma(\gamma)}\log\B(\gamma^-,\delta_\alpha^+,\gamma^+,\delta_\alpha^-),$$
where $\B$ is the $\R$-valued cross-ratio constructed by Labourie in \cite{Lab_energy}.
\end{theorintro}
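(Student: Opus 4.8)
The plan is to follow the cross-ratio strategy of Labourie--McShane and Vlamis--Yarmola \cite{VY}, turning the statement into an assertion about the size of the limit set. Fix a peripheral element $\gamma$; since $\rho(\gamma)$ is Shilov hyperbolic it has transverse attracting and repelling fixed Lagrangians $\gamma^+,\gamma^-$. Because $\rho$ is maximal, after passing to the holomorphic double $\hat\rho\colon\pi_1(D\Sigma)\to\Sp(2n,\R)$ of Section \ref{sec:doubles} (available under our hypotheses on the peripheral elements) the boundary map $\phi\colon\partial\Gamma\to\Ll(\R^{2n})$ extends to a monotone topological embedding $\hat\phi$ of the circle $\partial\pi_1(D\Sigma)$, whose image is rectifiable and on which $\B$ is positive on positively oriented quadruples \cite{biw,Lab_energy}. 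Picking a basepoint $x_0$ in the gap opposite to the limit set $\L=\phi(\partial\Gamma)$ and setting $s(x):=\log\B(\gamma^-,\hat\phi(x_0),\gamma^+,\hat\phi(x))$, the cocycle identity for $\B$ shows that $s$ is a monotone continuous function on the arc $I$ of $\partial\pi_1(D\Sigma)$ bounded by $\gamma^{\pm}$ that contains $\L$, with $s\circ\rho(\gamma)=s+\ell_\B(\gamma)$; hence pulling back Lebesgue measure along $s$ yields a non-atomic, $\langle\gamma\rangle$-invariant measure $\mu$ on $I$ with $\mu(F)=\ell_\B(\gamma)$ for any fundamental domain $F$ of $\langle\gamma\rangle$ whose endpoints lie in $\L$.

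Next I would unwind the combinatorics, using Section \ref{sec:orthogeodesics}: each gap of $\L$ in $I$ is spanned by the fixed Lagrangians $\delta^{\pm}$ of a conjugate $\delta$ of a boundary generator, the orthotube $\alpha$ being the common perpendicular of $\Yy_\gamma$ and $\Yy_{\delta_\alpha}$, and the gaps contained in a fundamental domain $F$ as above are indexed precisely by $\ort_\Sigma(\gamma)$. Since $F=(F\cap\L)\sqcup\bigsqcup_{\alpha\in\ort_\Sigma(\gamma)}\mathrm{gap}_\alpha$ and a direct cocycle computation gives $\mu(\mathrm{gap}_\alpha)=\log\B(\gamma^-,\delta_\alpha^+,\gamma^+,\delta_\alpha^-)$ (positive by positivity of $\B$, with the order of the last two entries dictated by the orientation conventions of Section \ref{sec:orthogeodesics}), we obtain
\[
\ell_\B(\gamma)=\mu(F)=\mu(F\cap\L)+\sum_{\alpha\in\ort_\Sigma(\gamma)}\log\B(\gamma^-,\delta_\alpha^+,\gamma^+,\delta_\alpha^-),
\]
the series converging because its terms are nonnegative with sum at most $\mu(F)<\infty$. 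Thus the theorem is \emph{equivalent} to the vanishing $\mu(F\cap\L)=0$ --- the higher-rank counterpart of the classical fact that the limit set of a surface-with-boundary group is Lebesgue-null.

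The remaining, and genuinely substantial, point is this nullity. I would prove $\mu(\L)=0$ by observing that $\L=\hat\phi(\L')$, where $\L'\subset\partial\pi_1(D\Sigma)$ is the limit set of the infinite-index subgroup $\pi_1(\Sigma)<\pi_1(D\Sigma)$, and that under $\hat\phi$ the measure $\mu$ lies in the measure class of the Patterson--Sullivan (Gibbs) measure of $\hat\rho$ for the linear functional underlying $\B$ --- both are obtained by integrating the same cross-ratio along the same rectifiable curve. For that class $\L'$ is null: $\pi_1(\Sigma)$ is a finitely generated subgroup of infinite index, hence of strictly smaller critical exponent, in the convex-cocompact group $\pi_1(D\Sigma)$, so its limit set is null for the ambient Patterson--Sullivan class by the Hopf--Tsuji--Sullivan dichotomy. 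Feeding $\mu(\L)=0$ into the displayed identity completes the proof; the variant stated in the Remark follows by the same argument after discarding the degenerate gap attached to any non-Shilov-hyperbolic peripheral element.

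I expect the cross-ratio bookkeeping of the first two steps to be routine once the orientation conventions of Section \ref{sec:orthogeodesics} are in place and one invokes positivity and the cocycle identities for $\B$. \textbf{The main obstacle} is the nullity statement: one must compare the intrinsic measure $\mu$, built purely from $\B$, with a measure on the circle to which the classical argument for infinite-index subgroups applies, which requires both the regularity (rectifiability) of the limit curve of a maximal representation and the thermodynamic formalism for Anosov representations. A self-contained alternative, bypassing the double, would be a direct estimate that the $\mu$-mass of the non-gap part of a fundamental domain vanishes, using the exponential contraction of the Anosov boundary map; I expect this route to be more technical and less transparent.
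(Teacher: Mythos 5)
Your first two steps (defining the monotone parametrization $s$ via the cross-ratio, decomposing a fundamental domain of $\langle\gamma\rangle$ into gaps indexed by $\ort_\Sigma(\gamma)$, computing the mass of each gap as $\log\B(\gamma^-,\delta_\alpha^+,\gamma^+,\delta_\alpha^-)$, and reducing the theorem to the vanishing of the measure of the limit-set part) are correct and are essentially the paper's own reduction: this is the map $\theta$ and the intervals $I_\alpha$ of Section \ref{sec:m0max}, with the easy inequality of Remark \ref{rem:parabolic} coming from Lemmas \ref{lem:ellF} and \ref{orsum}, and the translation into cross-ratio language via Lemma \ref{R(a,x,y,b)}. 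The problem is the third step, which you yourself identify as the main obstacle but then dispose of by assertion. The claim that the measure $\mu$ obtained by pulling back Lebesgue measure along $s$ lies in the measure class of a Patterson--Sullivan (Gibbs) measure of the doubled representation ``because both are obtained by integrating the same cross-ratio along the same rectifiable curve'' is not an argument: Patterson--Sullivan measures are built from Poincar\'e series, not from the cross-ratio parametrization of the limit curve, and identifying the two classes would amount to proving that the cross-ratio measure is a conformal density of dimension exactly equal to the critical exponent of $D\rho$ for the relevant Finsler functional, together with a Sullivan-type uniqueness statement for conformal densities in this higher-rank Finsler setting. None of this is supplied, it is not available off the shelf for general (possibly Zariski-dense, non-smooth-limit-curve) maximal representations, and it is precisely the content one would need to prove. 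Likewise, the subsequent appeal to a Hopf--Tsuji--Sullivan dichotomy and to a strict critical-exponent drop for the infinite-index subgroup $\pi_1(\Sigma)<\pi_1(D\Sigma)$, and the implication ``smaller exponent $\Rightarrow$ null for the ambient PS class,'' require higher-rank shadow lemmas and dimension theory of PS measures along the limit curve that are neither established in the paper nor reduced to a citation you can actually invoke. So as written the crucial nullity statement $\mu(F\cap\L)=0$ is not proved.

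For comparison, the paper avoids all thermodynamic formalism and proves the nullity directly, following a Bowditch/Tukia-style scheme: a covering-proportion criterion (Lemma \ref{criterionmeasure0}) reduces the problem to showing that every subinterval of $[0,\ell]$ with endpoints in $\theta(\Lambda(\Gamma))$ contains a gap of definite relative size (Proposition \ref{subinterval}). This is obtained from three geometric inputs: the $D\rho$-equivariant map $F:T^1\H^2\to\calX$ is a quasi-isometric embedding (Proposition \ref{qiembedding}, via \cite{bilw} and cocompactness of the double), every point of the universal cover of the compact surface is boundedly close to a lift of the boundary, and a delicate analysis of causal paths and eigenvalues (Proposition \ref{prop:high} together with Lemmas \ref{lem:topvsdet}, \ref{lem:closetohigh}, \ref{lem:highislarge}) converting ``the orbit point is close to a boundary tube $\Yy_{\phi(s'),\phi(t')}$'' into ``the corresponding gap $(\theta(s'),\theta(t'))$ has length a definite fraction of the ambient interval.'' If you want to complete your proposal, the honest route is either to carry out this kind of direct estimate (your suggested ``self-contained alternative''), or to actually prove the measure-class and exponent statements you invoke; the latter is a research project in itself, not a corollary of the classical HTS theory.
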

Our proof of this result is very different from Vlamis and Yarmola's proof: they build on the fact that the image of the boundary map associated to a Hitchin representation is $C^1$, which is in general far from being true in the case of maximal representations. Instead, we adapt a more geometric proof which could also be used to get estimates on the Hausdorff dimension of the image of the boundary map.  

We conclude this introduction by mentioning that Xu \cite{xu_pressure} used orthogeodesics to study the metric completion of the pressure metric (see \cite{pressure}) on the Teichm\"uller space of surfaces with boundary. He proved that, in that case, the pressure metric is not a constant multiple of the Weil-Petersson metric. We hope that our study will have application in the study of the pressure metric on the space of maximal representations as well.  
\subsection{Plan of the paper}
In Section \ref{sec:symspace} we discuss properties of the geometry of the symmetric space associated to $\Sp(2n,\R)$ and of the synthetic geometry of $\R$-tubes. We recall the results of \cite{bp} and expand them when needed. In particular we relate, in Section \ref{sec:producttube} the  Finsler length and the translation on the Euclidean factor of an $\R$-tube (Lemma \ref{lem:producttube}), we define causal paths in $\R$-tubes (these will arise naturally while dealing with maximal representations), and give an explicit bound on the length of the projection of a causal path on the symmetric space for $\SL(n,\R)$ in terms of its length in the Euclidean factor  (Lemma \ref{lem:pos}).
Section \ref{maxreps} contains the necessary preliminaries about maximal representations and the construction of the holomorphic double of a representation (Proposition \ref{prop:double}).
In Section \ref{sec:ort} we define our generalization of orthogeodesic in the context of maximal representations and prove the relation between the length of an orthotube and the length of the associated element in the double of the representation (Proposition \ref{prop:4.6}).
Section \ref{sec:strategy} describes the idea of the proof of Basmajian's identity and the difficulties that arise when generalizing it to higher rank.
Section \ref{sec:0measure} is devoted to the proof of Theorem \ref{thm:cr}, generalizing the classical strategy described in Section \ref{classicalmeasure0}.
In the final section we prove the geometric inequalities announced in the introduction and the characterization of diagonal representations (Theorem \ref{thm:main}).
\subsection*{Acknowledgements}
The second author would like to thank Ursula Hamenst\"adt for asking if the equality in the Collar lemma of \cite{bp} characterizes the Fuchsian locus -- that question gave the initial motivation for this work. We are grateful to Brian Bowditch for useful conversations and in particular for suggesting a proof of Proposition \ref{classicalmeasure0} that turned out to be amenable to generalization to maximal representations, to the referees for many detailed suggestions, and to Jean-Louis Clerc for asking about possible generalizations to other symmetric domains.\\
We acknowledge support of Swiss National Science Foundation grants number P2FRP2\_161723 (Federica Fanoni) and P2EZP2\_159117 (Maria Beatrice Pozzetti).
\section{The symmetric space associated to $\Sp(2n,\R)$}\label{sec:symspace}
Recall that the symplectic group $\Sp(2n,\R)$ is the subgroup of $\SL(2n,\R)$ preserving the symplectic form $\omega(\cdot,\cdot)$ represented, with respect to the standard basis, by the matrix $$J_n=\bsm 0&\Id_n\\-\Id_n&0\esm.$$

The symmetric space $\calX$ associated to the symplectic group $\Sp(2n,\R)$ is often referred to as \emph{Siegel space}. In this paper we will be concerned with locally symmetric spaces arising as the quotient of $\calX$ by the image $\rho(\G)<\Sp(2n,\R)$ of a maximal representation. We will be interested in two models for $\calX$: the \emph{upper-half space} and the image of the \emph{Borel embedding}.

The upper-half space is the generalization of the upper-half plane model of the hyperbolic plane, given by a specific set of symmetric matrices:
$$\calX=\{X+iY|X\in\Sym(n,\R), Y\in \Sym^+(n,\R)\},$$
where $\Sym(n,\R)$ denotes the set of $n$-dimensional symmetric matrices with coefficients in $\R$ and $\Sym^+(n,\R)$ is the subset of $\Sym(n,\R)$ given by positive definite matrices. The group $\Sp(2n,\R)$ acts by fractional linear transformations in this model:
$$\left(\begin{matrix}
A&B\\C&D
\end{matrix}\right)\cdot Z=(AZ+B)(CZ+D)^{-1}.$$
The image of the Borel embedding is
$$\mathbb X=\{l\in\Ll(\C^{2n})\,|\,\left.i\omega(\cdot,\sigma(\cdot))_\C\right|_{l\times l}\; \mbox{is positive definite}\}.$$
Here $\Ll(\C^{2n})$ is the set of Lagrangians, the maximal isotropic subspaces of $\C^{2n}$ for the complexification of  the symplectic form $\omega(\cdot,\cdot)_\C$, and $\sigma:\C^{2n}\to\C^{2n}$ denotes the complex conjugation.

We will consider the affine chart $\iota:\Sym(n,\C)\to\Ll(\C^{2n})$ that associates to a symmetric matrix $Z$ the linear subspace of $\C^{2n}$ spanned by the columns of the matrix $\bsm Z\\\Id_n\esm$. It is easy to verify that $\iota$ is well defined and induces an $\Sp(2n,\R)$-equivariant identification  $\iota:\calX\to \mathbb X$ (cfr. \cite[Section 2.2]{bp} for more detail). The complex conjugation $\sigma:\C^{2n}\to\C^{2n}$ induces a map on $\Ll(\C^{2n})$ that will still be  denoted by $\sigma$ with a slight abuse of notation. It is easy to verify that $\sigma$ corresponds, via $\iota$, to the complex conjugation on $\Sym(n,\C)$.

A \emph{maximal polydisk} in $\calX$ is the image of a totally geodesic and holomorphic embedding of the Cartesian product of $n$ copies of the Poincar\'e disk into $\calX$. Maximal polydisks exist and they are all conjugate under the action of $\Sp(2n,\R)$ (see \cite[p.\ 280]{wolf}). Polydisks arise as complexifications of maximal flats. Thus each pair of points $(x,y)$ is contained in a maximal polydisk, that is unique if the direction determined by $(x,y)$ is regular.
\subsection{Lagrangians and boundaries}

The set of real Lagrangians $\Ll(\R^{2n})$ naturally arises as the unique closed $\Sp(2n,\R)$ orbit in the boundary of $\calX$ in its Borel embedding and for this reason $\Ll(\R^{2n})$ is the Shilov boundary of the bounded domain realization of $\Xx$ (see \cite{wienhard_thesis} for details). The restriction of the affine chart $\iota$ to the subspace $\Sym(n,\R)$ provides a parametrization of the set of real Lagrangians that are transverse (as linear subspaces) to $\<e_1,\ldots,e_n\>$. We will denote $\<e_1,\ldots,e_n\>$ by $l_\infty$, since it is at infinity in the affine chart we chose. Whenever this won't generate confusion we will omit the map $\iota$ and identify symmetric matrices (with real or complex coefficients) with Lagrangian subspaces (of $\R^{2n}$ and $\C^{2n}$ respectively).

Since $\calX$ has higher rank for $n>1$, the visual boundary $\partial_\infty\calX$ is not homogeneous \cite{Eberlein}, and the relation between the visual boundary and the closure of $\calX$ in the Borel embedding is in general fairly complicated. However there is a closed orbit in $\partial_\infty\calX$ which is naturally identified with the Lagrangians \cite[Theorem 9.11]{Loos}. In particular this allows us to associate to a point $l\in\Ll(\R^{2n})$ a class of asymptotic directions.

Denote by $\Ll(\R^{2n})^{(k)}$ the set of $k$-tuples of pairwise transverse Lagrangians. It is well known and easy to prove that the group $\Sp(2n,\R)$ acts transitively $\Ll(\R^{2n})^{(2)}$. Moreover, it has $(n+1)$ orbits in $\Ll(\R^{2n})^{(3)}$, indexed by the Maslov cocycle \cite[Section 1.5]{LV}. The value of the Maslov cocycle is maximal on the orbit of  
$$(\langle e_1,\ldots,e_n\rangle,\langle e_{n+1},\ldots,e_{2n}\rangle,\langle e_1+e_{n+1},\ldots, e_n+e_{2n}\rangle)=(l_\infty, 0,\Id).$$
\begin{definition}
A triple of pairwise transverse Lagrangians is called \emph{maximal} if it is in the $\Sp(2n,\R)$-orbit of $(l_\infty, 0,\Id)$.
\end{definition}
Maximal triples should be regarded as a generalization of positively oriented triples in the circle. As we will shortly see, maximal triples play a fundamental role in the definition and study of maximal representations. We will need a concrete criterion to check when triples of Lagrangians are maximal. The following is well known, and a proof can be found, for example, in \cite[Lemma 2.10]{bp}.
\begin{lem}\label{lem:max}The following hold:
\begin{enumerate}
\item any cyclic permutation of a maximal triple is maximal;
\item the triple $(l_\infty,X,Y)$ is maximal if and only if $Y-X$ is positive definite;
\item if $Z-X$ is positive definite, the triple $(X,Y,Z)$ is maximal if and only if $Z-Y$ and $Y-X$ is positive definite.
\end{enumerate}
\end{lem}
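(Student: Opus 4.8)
The plan is to prove part (2) by a direct orbit computation in the affine chart $\iota$, and then to deduce parts (1) and (3) from it. For (2): given a pairwise transverse triple $(l_\infty,X,Y)$ with $X,Y\in\Sym(n,\R)$, I would first apply the symplectic matrix $\bsm \Id & -X \\ 0 & \Id \esm$, which fixes $l_\infty$ and acts on the chart by the translation $W\mapsto W-X$, thereby reducing to the triple $(l_\infty,0,Y-X)$. The stabilizer of the ordered pair $(l_\infty,0)$ in $\Sp(2n,\R)$ is $\{\bsm A & 0 \\ 0 & (A^T)^{-1} \esm \mid A\in\GL(n,\R)\}$, acting on the remaining coordinate by $W\mapsto AWA^T$; since $\{AA^T \mid A\in\GL(n,\R)\}$ is exactly the cone of positive definite symmetric matrices, the triple $(l_\infty,0,W)$ lies in the $\Sp(2n,\R)$-orbit of $(l_\infty,0,\Id)$ precisely when $W$ is positive definite. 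Hence $(l_\infty,X,Y)$ is maximal if and only if $Y-X$ is positive definite.

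For (1): cyclically permuting the coordinates of a triple commutes with the diagonal action of $\Sp(2n,\R)$, so it suffices to check that the single cyclic permutation $(0,\Id,l_\infty)$ of the model maximal triple $(l_\infty,0,\Id)$ is again maximal. This is witnessed by $g=\bsm 0 & \Id \\ -\Id & \Id \esm$: one verifies $g^T J_n g=J_n$, so $g\in\Sp(2n,\R)$, and evaluating $g$ on the spanning columns $\bsm \Id \\ 0 \esm$, $\bsm 0 \\ \Id \esm$, $\bsm \Id \\ \Id \esm$ gives $g\cdot(l_\infty,0,\Id)=(0,\Id,l_\infty)$. (Alternatively, one may simply quote the cyclic invariance of the Maslov cocycle.)

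For (3): assume $Z-X$ positive definite; pairwise transversality makes $Y-X$ invertible. Translating by $\bsm \Id & -X \\ 0 & \Id \esm$ yields $(0,Y-X,Z-X)$, and then the symplectic transformation $\bsm 0 & -\Id \\ \Id & 0 \esm$, which swaps $0$ and $l_\infty$ and acts on the chart by $W\mapsto -W^{-1}$, turns this into $(l_\infty,-(Y-X)^{-1},-(Z-X)^{-1})$. By (2) the latter is maximal if and only if $(Y-X)^{-1}-(Z-X)^{-1}$ is positive definite, so it remains to prove the elementary fact that, for $B:=Z-X$ positive definite and $A:=Y-X$ invertible symmetric, $A^{-1}-B^{-1}$ is positive definite if and only if both $A$ and $B-A=Z-Y$ are positive definite. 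This follows from inversion being order-reversing on the positive definite cone (so $0<A<B \Longleftrightarrow 0<B^{-1}<A^{-1}$), which one proves by conjugating $A<B$ by $A^{-1/2}$ and using $C>\Id\Longleftrightarrow C^{-1}<\Id$ for positive definite $C$.

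I do not anticipate a genuine obstacle here: each of the three parts is short once the affine chart, the fractional-linear action, and the stabilizer of $(l_\infty,0)$ are in place. The only points requiring care are verifying that the two auxiliary matrices used in (1) and (3) are symplectic and that their action on the chart is as stated, and, in part (3), noting that the chain of reductions stays within pairwise transverse triples — which is automatic from the positive-definiteness and invertibility hypotheses.
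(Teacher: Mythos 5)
Your proof is correct. Note that the paper itself does not prove this lemma: it only cites \cite[Lemma 2.10]{bp}, so there is no in-paper argument to compare against; your self-contained verification in the affine chart is exactly the standard route one would expect that reference to take. All the computational steps check out: the matrices $\bsm \Id & -X\\ 0&\Id\esm$, $\bsm 0&\Id\\-\Id&\Id\esm$ and $\bsm 0&-\Id\\ \Id&0\esm$ are indeed symplectic and act on the chart as you claim, the stabilizer of the ordered pair $(l_\infty,0)$ is the Levi subgroup $\left\{\bsm A&0\\0&{}^{t}\!A^{-1}\esm\right\}$ acting by $W\mapsto AW\,{}^{t}\!A$, and $\{A\,{}^{t}\!A \mid A\in\GL(n,\R)\}$ is the positive definite cone, which gives (2); (1) and (3) then follow as you describe. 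The only point worth spelling out in (3) is the forward implication of your final equivalence: from $A^{-1}-B^{-1}>0$ and $B^{-1}>0$ one first gets $A^{-1}=(A^{-1}-B^{-1})+B^{-1}>0$, hence $A>0$, and only then does the order-reversal of inversion yield $A<B$; this is implicit in the chain $0<B^{-1}<A^{-1}$ as you wrote it, but deserves a word since a priori $A=Y-X$ is only assumed invertible symmetric.
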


More generally, we can define maximal $m$-tuples:
\begin{definition} An $m$-tuple $(l_1,\dots,l_m)\in\Ll(\R^{2n})^{(m)}$ is \emph{maximal} if for every $i<j<k$ the triple $(l_i,l_j,l_k)$ is maximal.
\end{definition}
We will often consider maximal $4$-tuples and it will be useful to consider special representatives in an $\Sp(2n,\R)$-orbit of a maximal $4$-tuple. Two representatives are described in \cite[Prop.\ 2.11]{bp} and another is given in the following Lemma:
\begin{lem}\label{lem:IdLambda}
If $(l_1,l_2,l_3,l_4)$ is a maximal $4$-tuple, there exists $\Lambda=\diag(\l_1,\dots\l_n)$ with $\l_i\in(0,1)$ and an element $g\in\Sp(2n,\R)$ such that
$$g(l_1,l_2,l_3,l_4)=(-\Id,-\Lambda,\Lambda,\Id).$$
\end{lem}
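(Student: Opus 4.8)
The plan is to reduce $(l_1,l_2,l_3,l_4)$ to the stated normal form in three $\Sp(2n,\R)$-moves, the last of which takes place on the boundary of a maximal polydisk, where the problem splits into $n$ independent rank-one problems. \emph{Move 1.} Since $1<2<4$, the triple $(l_1,l_2,l_4)$ is one of those entering the maximality hypothesis, hence maximal, and by the definition of a maximal triple there is $g_1\in\Sp(2n,\R)$ with $g_1(l_1,l_2,l_4)=(l_\infty,0,\Id)$; as $g_1(l_3)$ is transverse to $l_\infty$ it is a genuine symmetric matrix $Y$. Applying Lemma \ref{lem:max}(2) to the maximal triples $(l_\infty,0,Y)$ and $(l_\infty,Y,\Id)$ (also among those in the hypothesis) shows that $Y$ and $\Id-Y$ are positive definite. \emph{Move 2.} A direct computation in the affine chart shows that the stabilizer of the pair $(l_\infty,0)$ is the copy of $\GL(n,\R)$ embedded as the matrices $\bsm A&0\\0&(A^T)^{-1}\esm$, acting on symmetric matrices by $Z\mapsto AZA^T$; hence $\Stab_{\Sp(2n,\R)}(l_\infty,0,\Id)$ consists of the $\bsm A&0\\0&A\esm$ with $A\in\O(n)$, acting by orthogonal conjugation. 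By the spectral theorem we may apply such an element to bring $Y$ to the form $D:=\diag(y_1,\dots,y_n)$, and since conjugation preserves the spectrum the inequalities of Move 1 force $y_i\in(0,1)$. Thus the $4$-tuple has become $(l_\infty,0,D,\Id)$.

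\emph{Move 3.} The elements of $\Sp(2n,\R)$ all of whose four blocks are diagonal form a subgroup that we identify with $\SL(2,\R)^n$; it preserves the set of diagonal Lagrangians, which we identify with $(\R\cup\{\infty\})^n$, and acts coordinatewise by M\"obius transformations (this is the boundary of a maximal polydisk). In the $i$-th coordinate $l_\infty,0,D,\Id$ are $\infty,0,y_i,1$, so it suffices to find, for each $i$, an $M_i\in\PSL(2,\R)$ with $M_i(\infty)=-1$, $M_i(1)=1$, $M_i(y_i)=\lambda_i$ and $M_i(0)=-\lambda_i$ for some $\lambda_i\in(0,1)$; then $g$ is the product of $(M_1,\dots,M_n)$ (viewed in $\Sp(2n,\R)$ via the above identification), the diagonalizing orthogonal element of Move 2, and $g_1$, and $\Lambda:=\diag(\lambda_1,\dots,\lambda_n)$. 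Writing a M\"obius map as $z\mapsto(az+b)/(cz+d)$, the conditions $M_i(\infty)=-1$ and $M_i(1)=1$ give $a=-c$ and $b=2c+d$; normalizing $d=1$ and imposing $M_i(0)=-M_i(y_i)$ reduces to $c^2y_i+2c+1=0$, whose root $c=(-1+\sqrt{1-y_i})/y_i$ lies in $(-1,-1/2)$ precisely when $y_i\in(0,1)$, which makes $M_i$ orientation-preserving (so it lifts to $\SL(2,\R)$) and $\lambda_i=-2c-1\in(0,1)$.

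The only move carrying real content is the third, and its delicate point is exactly that all the $\lambda_i$ land strictly in $(0,1)$: this is where the strict positive-definiteness of $Y$ and $\Id-Y$ — i.e.\ the full strength of maximality of the $4$-tuple, not merely pairwise transversality — is used. Moves 1 and 2 are nothing more than transitivity of $\Sp(2n,\R)$ on maximal triples together with the spectral theorem, plus the elementary identification of $\Stab_{\Sp(2n,\R)}(l_\infty,0,\Id)$ with $\O(n)$.
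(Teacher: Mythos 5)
Your argument is correct and takes essentially the same route as the paper: the paper normalizes the $4$-tuple to $(-\Id,0,D,l_\infty)$ with $D$ diagonal positive by citing \cite[Prop.~2.11]{bp}, then asserts the existence of $2\times2$ matrices $A_i(d_i)$ sending $(-1,0,d_i,\infty)$ to $(-1,-\l_i,\l_i,1)$ and assembles them block-diagonally exactly as in your Move 3. Your Moves 1--2 merely rederive that normalization from scratch (transitivity on maximal triples, the stabilizer computation, the spectral theorem), and your Move 3 spells out the rank-one M\"obius computation, including the checks that $\l_i\in(0,1)$ and that the maps lift to $\SL(2,\R)$, which the paper leaves implicit.
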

\begin{proof}
By \cite[Prop.\ 2.11]{bp}, we can assume $(l_1,l_2,l_3,l_4)=(-\Id,0,D,l_\infty)$ for a diagonal matrix $D=\diag(d_1,\dots,d_n)$ with $d_1\geq \dots\geq d_n>0$. Note that for every $d_i>0$, there exists $\l_i=\l_i(d_i)$ and a matrix $A_i=A_i(d_i)=\left(\begin{array}{cc}
\alpha_i & \beta_i\\
\gamma_i & \delta_i
\end{array}\right)$ such that $A_i$ sends the $4$-tuple $(-1,0,d_i,\infty)\in (\partial\hyp)^4$ to $(-1,-\l_i,\l_i,1)$.
Consider the element $g\in\Sp(2n,\R)$ given by
$$g=\left(\begin{array}{cc}
\diag(\alpha_1,\dots,\alpha_n)& \diag(\beta_1,\dots,\beta_n)\\
\diag(\gamma_1,\dots,\gamma_n)& \diag(\delta_1,\dots, \delta_n)
\end{array} \right).$$
Then it is straightforward to check that $$g(-\Id,0,D,l_\infty)=(-\Id,-\Lambda,\Lambda,\Id),$$ where $\Lambda=\diag(\lambda_1,\dots,\lambda_n)$.
\end{proof}
\subsection{$\Sp(2n,\R)$-invariant distances}\label{metrics}
We are interested in three $\Sp(2n,\R)$-invariant distances on the symmetric space $\calX$: the \emph{vectorial distance}, the \emph{Riemannian distance} and the \emph{(determinant) Finsler distance}.

Fix a point $p$ in a maximal flat $F$ and a Weyl chamber $\aw\subset T_pF$. This is a fundamental domain for the action of $\Sp(2n,\R)$ on the tangent bundle $T\calX$. In our case we have
$$\aw=\{(x_1,\ldots,x_n)\in\R^n|x_1\geq\ldots\geq x_n\geq 0\}.$$
A vector in the model Weyl chamber is \emph{regular} if all the inequalities are strict, which is equivalent to being contained in a unique flat. We say that  $v$ is \emph{singular} pointing in the direction of a Lagrangian if $x_1=\ldots=x_n>0$. Indeed, if a vector $v$ is singular pointing in the direction of a Lagrangian and $\g$ is the geodesic  determined by exponentiating  $v$, the endpoints of $\g$ in the visual boundary $\partial_\infty\calX$ are two Lagrangians.

In order to define the projection onto the model Weyl chamber, we need to recall from \cite{bp} the definition of an endomorphism-valued cross-ratio.
If two real or complex Lagrangians $l_1$ and $l_2$ are transverse (denoted by $l_1\pitchfork l_2$), we denote by $p_{l_1}^{\parallel l_2}:\R^{2n}\to l_1$ (resp. by $p_{l_1}^{\parallel l_2}:\C^{2n}\to l_1$) the projection to $l_1$ parallel to $l_2$. 
\begin{definition}For Lagrangians $l_1,\dots, l_4\in\Ll(\C^{2n})$ such that $l_1\pitchfork l_2 $ and $l_3\pitchfork l_4$, the \emph{cross-ratio} $R(l_1,l_2,l_3,l_4)$ is the endomorphism of $l_1$ given by $$R(l_1,l_2,l_3,l_4)=\left. p_{l_1}^{\parallel l_2}\circ p_{l_4}^{\parallel l_3}\right|_{l_1}.$$ 
\end{definition}
\noindent We will use multiple times the explicit expression for the cross-ratio $R$ on the affine chart $\iota(\Sym(n,\C))$ of $\Ll(\C^{2n})$ (cfr.\ \cite[Lemma 4.2]{bp}):
\begin{equation} \label{cross-ratio}
R(X_1,X_2,X_3,X_4)=(X_1-X_2)^{-1}(X_4-X_2)(X_4-X_3)^{-1}(X_1-X_3).
\end{equation}
Here $R$ is expressed with respect to the basis of $X_1$ given by the columns of the matrix $\bsm X_1\\\Id_n\esm$.

Since the symmetric space $\Xx$ is a complete, negatively curved Riemannian manifold, any pair of points $(a,b)\in \calX^2$ is joined by a unique geodesic $\gamma$ and therefore corresponds to a unique vector $v\in T_a\Xx$. In \cite{Siegel}, Siegel proved that the cross-ratio we just introduced can be used to describe the projection of a pair of points in $\Xx$ onto the Weyl chamber, the fundamental domain for the $\Sp(2n,\R)$-action on $T\calX$:
\begin{theor}[\cite{Siegel}]
The projection onto the Weyl chamber is given by
\begin{align*}
\calX^2&\to \aw\\
(X,Z)&\mapsto (\log(\l_1),\dots,\log(\l_n))
\end{align*}
where $\l_i =\frac{1+\sqrt r_i}{1-\sqrt r_i}$ and  $1>r_1\geq \ldots\geq r_n\geq0$ are the eigenvalues of $R(X,\ov Z,Z,\ov X)$.\end{theor}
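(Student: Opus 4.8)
The plan is to exploit the $\Sp(2n,\R)$-invariance of both sides of the claimed formula in order to reduce to a single, completely explicit pair of points, and then to carry out two short diagonal computations. First I would observe that the left-hand side, the projection onto $\aw$, is $\Sp(2n,\R)$-invariant by the very definition of the vectorial distance, and that the right-hand side is $\Sp(2n,\R)$-invariant as well: the cross-ratio $R$ is $\Sp(2n,\R)$-equivariant, and since the elements of $\Sp(2n,\R)$ are real they commute with the complex conjugation $\sigma$, so $R(gX,\ov{gZ},gZ,\ov{gX})=g\,R(X,\ov Z,Z,\ov X)\,g^{-1}$ for every $g\in\Sp(2n,\R)$; in particular the eigenvalues $r_1\ge\dots\ge r_n$ depend only on the $\Sp(2n,\R)$-orbit of $(X,Z)$ in $\calX^2$. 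Hence it suffices to verify the identity on one representative of each orbit.

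Next I would produce the normal form. Using transitivity of $\Sp(2n,\R)$ on $\calX$, move $X$ to the basepoint $i\Id$. By the discussion in Section \ref{sec:symspace} the pair $(i\Id, Z)$ lies in a maximal polydisk; since all maximal flats through $i\Id$ are conjugate under $\Stab(i\Id)\cong{\rm U}(n)$ (hence so are their complexifications, the maximal polydisks through $i\Id$), we may assume $Z$ lies in the standard polydisk $\{\diag(z_1,\dots,z_n)\mid z_j\in\hyp\}$, i.e.\ $Z=\diag(z_1,\dots,z_n)$. Acting coordinatewise by the diagonal $\SO(2)^n\subset\Stab(i\Id)$ we rotate each $z_j$ onto the positive imaginary axis of $\hyp$, and permuting the coordinates by $S_n\subset{\rm U}(n)$ we finally bring $(X,Z)$ to $(i\Id,\,i\Lambda)$ with $\Lambda=\diag(\lambda_1,\dots,\lambda_n)$ and $\lambda_1\ge\dots\ge\lambda_n\ge 1$. (Equivalently, on the bounded domain model, where $\Stab(i\Id)$ acts by $W\mapsto gWg^t$, this is exactly the Autonne--Takagi diagonalization of the image of $Z$; the Cayley transform then sends $\diag(r_j)\mapsto i\diag\!\big(\tfrac{1+r_j}{1-r_j}\big)$.)

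It then remains to compute both sides for $(X,Z)=(i\Id,i\Lambda)$. For the right-hand side I would substitute $(X_1,X_2,X_3,X_4)=(i\Id,-i\Lambda,i\Lambda,-i\Id)$ into \eqref{cross-ratio}: the four matrices $i(\Id+\Lambda)$, $i(\Lambda-\Id)$, $-i(\Id+\Lambda)$, $-i(\Lambda-\Id)$ are diagonal, hence commute, the scalar factors $\pm i$ cancel, and one gets $R(i\Id,-i\Lambda,i\Lambda,-i\Id)=\big((\Lambda-\Id)(\Lambda+\Id)^{-1}\big)^2$. Thus the eigenvalues are $r_j=\big(\tfrac{\lambda_j-1}{\lambda_j+1}\big)^2$, which indeed satisfy $1>r_1\ge\dots\ge r_n\ge 0$ (the function $t\mapsto\big(\tfrac{t-1}{t+1}\big)^2$ being nondecreasing on $[1,\infty)$), and a one-line simplification gives $\tfrac{1+\sqrt{r_j}}{1-\sqrt{r_j}}=\lambda_j$. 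For the left-hand side, the curve $t\mapsto i\diag(\lambda_1^t,\dots,\lambda_n^t)$, $t\in[0,1]$, joins $i\Id$ to $i\Lambda$ inside the standard polydisk, which with the normalization of Section \ref{metrics} is the Riemannian product of $n$ copies of $\hyp$; in each factor this is the imaginary-axis geodesic, along which $d_{\hyp}(i,i\lambda_j)=\log\lambda_j$, so the initial vector of this geodesic at $i\Id$ corresponds to $(\log\lambda_1,\dots,\log\lambda_n)$. As $\log\lambda_1\ge\dots\ge\log\lambda_n\ge 0$, this vector lies in $\aw$, hence it is the projection onto the Weyl chamber. Comparing the two computations yields the theorem.

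The step I expect to be the main obstacle is the normal-form reduction: one must control the orbits of $\Sp(2n,\R)$ on $\calX^2$ well enough to place both points simultaneously into the standard flat, and this ultimately rests on the conjugacy of maximal polydisks (equivalently, the Autonne--Takagi factorization of complex symmetric matrices) together with the identification of $\Stab(i\Id)$ with ${\rm U}(n)$ acting by $W\mapsto gWg^t$ on the bounded model. Once the normal form is in place, everything reduces to the diagonal computation of $R$ via \eqref{cross-ratio} and to the rank-one fact $d_{\hyp}(i,i\lambda)=\log\lambda$.
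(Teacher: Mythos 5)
The paper offers no proof of this statement: it is imported verbatim from Siegel's work \cite{Siegel}, so there is no internal argument to compare yours against. Your proposal is a correct, self-contained derivation, and it is in the same spirit as the reductions the paper performs elsewhere (e.g.\ the computation of $R(iA,-iB,iB,-iA)$ in Lemma \ref{Finslerdistance}, which is exactly your diagonal computation with $A=\Id$, $B=\Lambda$ --- note the logical order is the right one: you do not use that lemma, which the paper deduces \emph{from} Siegel's theorem). The three pillars of your argument all hold up: (i) both sides are $\Sp(2n,\R)$-invariant, since real symplectic matrices commute with the conjugation $\sigma$, so $\ov{g\cdot Z}=g\cdot\ov Z$ and the cross-ratio changes only by conjugation, leaving its eigenvalues unchanged; (ii) the normal-form reduction to $(i\Id,i\Lambda)$ is legitimate, resting on transitivity of $\Sp(2n,\R)$ on $\calX$, the standard fact that the isotropy group $\Stab(i\Id)\cong{\rm U}(n)$ acts transitively on maximal flats (hence maximal polydisks) through $i\Id$, and the rotation/permutation step inside the standard polydisk --- equivalently the Autonne--Takagi factorization in the bounded model, as you note; (iii) the two explicit computations are correct: the cross-ratio \eqref{cross-ratio} gives $R=\bigl((\Lambda-\Id)(\Lambda+\Id)^{-1}\bigr)^2$, so $r_j=\bigl(\tfrac{\lambda_j-1}{\lambda_j+1}\bigr)^2$ and $\tfrac{1+\sqrt{r_j}}{1-\sqrt{r_j}}=\lambda_j$, while the Weyl-chamber vector of $(i\Id,i\Lambda)$ is $(\log\lambda_1,\dots,\log\lambda_n)$ because the standard polydisk is isometrically and totally geodesically embedded with each factor a curvature $-1$ copy of $\hyp$, where $d_{\hyp}(i,i\lambda)=\log\lambda$. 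The only point deserving emphasis is the one you already flag: the identification of the Weyl-chamber coordinates with the per-factor hyperbolic distances is exactly the normalization the paper fixes in Section \ref{metrics} (minimal holomorphic sectional curvature $-1$, polydisks isometrically embedded), and your appeal to it is what makes the left-hand computation match the paper's conventions rather than differ by a scalar.
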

We are now ready to define the distances on $\calX$.
\begin{definition}The \emph{vectorial distance} $\dC$ is the projection onto the Weyl chamber $\aw$.
\end{definition}
The vectorial distance is not $\R$-valued; however there is a natural partial order on the Weyl chamber that allows us to talk about triangular inequality. This, together with many interesting properties of $\dC$, is proven by Parreau in \cite{parreau_distvect}. Since in the symmetric space associated to $\Sp(2n,\R)$ the opposition involution is trivial, the vectorial distance is symmetric: for each pair $x,y\in\calX$ it holds $\dC(x,y)=\dC(y,x)$.

As the Weyl chamber is a fundamental domain for the $\Sp(2n,\R)$-action on $\calX^2$, any $\Sp(2n,\R)$-invariant distance can be obtained composing the vectorial distance with a suitable function. We will use two $\R$-valued distances:
\begin{definition}The \emph{Riemannian distance} $d^R$ is the composition of $\dC$ with the function
\begin{align*}
\aw&\to\R\\
(x_1,\ldots,x_n)&\mapsto \sqrt{\sum x_i^2}.
\end{align*}
\end{definition}
Notice that the Riemannian distance is the distance induced by the unique $\Sp(2n,\R)$-invariant metric on $\calX$ with minimal holomorphic sectional curvature equal to $-1$. The normalization is chosen so that the polydisks are isometrically and holomorphically embedded. 
\begin{definition}
The \emph{(determinant) Finsler metric} $d^F$ is the composition of $\dC$ with the function
\begin{align*}
\aw&\to\R\\
(x_1,\ldots,x_n)&\mapsto\frac 12 \sum x_i.
\end{align*}
\end{definition}
An easy application of Cauchy-Schwarz's inequality shows:
\begin{lem}\label{lem:relRF}
For every $a,b\in\calX$
 $$\dR(a,b)\leq 2\dF(a,b)\leq \sqrt n\dR(a,b).$$
\end{lem}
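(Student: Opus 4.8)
The plan is to reduce the assertion to an elementary comparison between the $\ell^1$- and $\ell^2$-norms on the closed Weyl chamber. By the definitions in Section~\ref{metrics}, both $\dR$ and $\dF$ are obtained by post-composing the vectorial distance $\dC$ with a fixed function on $\aw$; hence it suffices to fix $v=(x_1,\dots,x_n):=\dC(a,b)\in\aw$ and to show
$$\sqrt{\sum_{i=1}^n x_i^2}\ \leq\ \sum_{i=1}^n x_i\ \leq\ \sqrt{n}\,\sqrt{\sum_{i=1}^n x_i^2},$$
since $\dR(a,b)=\sqrt{\sum_i x_i^2}$ while $2\dF(a,b)=\sum_i x_i$. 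The key input that makes this work is that every coordinate of a vector of $\aw$ is non-negative, as $\aw=\{x_1\geq\dots\geq x_n\geq 0\}$.

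For the first inequality I would square and expand: $\bigl(\sum_i x_i\bigr)^2=\sum_i x_i^2+2\sum_{i<j}x_i x_j\geq\sum_i x_i^2$, the cross terms being non-negative precisely because the $x_i$ are. For the second inequality I would apply the Cauchy--Schwarz inequality to $v$ and the all-ones vector $\mathbf{1}=(1,\dots,1)$, getting $\sum_i x_i=\langle v,\mathbf{1}\rangle\leq\|v\|_2\,\|\mathbf{1}\|_2=\sqrt{n}\,\sqrt{\sum_i x_i^2}$. Taking square roots where needed and dividing by $2$ yields the claimed chain $\dR(a,b)\leq 2\dF(a,b)\leq\sqrt{n}\,\dR(a,b)$.

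I do not expect any genuine obstacle: once the definitions of $\dR$ and $\dF$ are unwound, the statement is just the textbook estimate $\|v\|_2\leq\|v\|_1\leq\sqrt{n}\,\|v\|_2$ restricted to vectors with non-negative entries, and each half is a one-line computation. The only point deserving attention is bookkeeping of the factor $\tfrac12$ in the definition of the determinant Finsler metric, which is exactly why the statement is phrased with the middle term $2\dF(a,b)$ rather than $\dF(a,b)$.
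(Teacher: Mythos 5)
Your argument is correct and is exactly the paper's intended proof: the paper dispatches this lemma with the remark that it is "an easy application of Cauchy--Schwarz's inequality," which, after unwinding the definitions of $\dR$ and $\dF$ as compositions of $\dC$ with the $\ell^2$- and (half the) $\ell^1$-norm on $\aw$, is precisely your comparison $\|v\|_2\leq\|v\|_1\leq\sqrt{n}\,\|v\|_2$ for vectors with non-negative entries.
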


As we will see, the computation of Finsler and vectorial distance often reduces to computations about eigenvalues of positive definite symmetric matrices. We will denote by $\langle \cdot,\cdot\rangle$ the standard Euclidean inner product on $\R^{2n}$. One of the tools we will use many times is the (Courant--Fischer--Weyl) min--max principle, which we recall here:
\begin{lem}\label{lem:minmax}
Let $A$ be a symmetric matrix of eigenvalues $a_1\geq\dots\geq a_n$. Then
$$a_i=\min_{\dim(U)=n-i+1}\max\left\{\frac{\langle Av,v\rangle}{\|v\|^2}\, |\, 0\neq v\in U\right\}=\max_{\dim(U)=i}\min\left\{\frac{\langle Av,v\rangle}{\|v\|^2}\, |\, 0\neq v\in U\right\}.$$
In particular
$$a_1=\max_{v\neq 0}\frac{\langle Av,v\rangle}{\|v\|^2}\;\;\; \mbox{and}\;\;\; a_n=\min_{v\neq 0}\frac{\langle Av,v\rangle}{\|v\|^2}.$$
\end{lem}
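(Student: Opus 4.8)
The statement is the classical Courant--Fischer--Weyl theorem, so the plan is to record the standard argument, which rests on two ingredients: the spectral theorem and an elementary dimension count. First I would use the spectral theorem to fix an orthonormal basis $e_1,\dots,e_n$ of $\R^n$ with $Ae_j=a_je_j$; writing a vector as $v=\sum_j v_je_j$, every Rayleigh quotient appearing in the statement becomes the weighted average $\langle Av,v\rangle/\|v\|^2=\bigl(\sum_j a_jv_j^2\bigr)\big/\bigl(\sum_j v_j^2\bigr)$, which therefore lies between $a_n$ and $a_1$, and more precisely is $\le a_i$ whenever $v\in\langle e_i,\dots,e_n\rangle$ and $\ge a_i$ whenever $v\in\langle e_1,\dots,e_i\rangle$.

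For the first displayed equality I would prove the two inequalities separately. The bound $\min_{\dim U=n-i+1}\max_{0\neq v\in U}\frac{\langle Av,v\rangle}{\|v\|^2}\le a_i$ is witnessed by the single subspace $U_0=\langle e_i,\dots,e_n\rangle$, which has dimension $n-i+1$ and on which the maximum of the Rayleigh quotient equals $a_i$, attained at $v=e_i$. For the reverse inequality, given any $U$ with $\dim U=n-i+1$, the dimension count $\dim U+\dim\langle e_1,\dots,e_i\rangle=(n-i+1)+i=n+1>n$ forces the intersection $U\cap\langle e_1,\dots,e_i\rangle$ to contain a nonzero vector $w$; for this $w$ the Rayleigh quotient is $\ge a_i$, so $\max_{0\neq v\in U}\frac{\langle Av,v\rangle}{\|v\|^2}\ge a_i$, and taking the infimum over all admissible $U$ preserves this. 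Combining the two bounds gives the equality.

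The second displayed equality follows by applying the first to $-A$, whose ordered eigenvalues are $-a_n\ge\dots\ge-a_1$, so that its $i$-th largest eigenvalue is $-a_{n-i+1}$: negating swaps $\min$ with $\max$ and reindexes, yielding $a_i=\max_{\dim U=i}\min_{0\neq v\in U}\frac{\langle Av,v\rangle}{\|v\|^2}$. Finally the ``in particular'' formulas are the extreme cases $i=1$ and $i=n$, where in one of the two expressions the optimizing subspace is forced to be all of $\R^n$ (or, dually, an arbitrary line). The only step that warrants any care is the dimension count producing a nonzero vector in $U\cap\langle e_1,\dots,e_i\rangle$, i.e.\ the inequality $\dim(U\cap W)\ge \dim U+\dim W-n$; beyond that I expect no real obstacle.
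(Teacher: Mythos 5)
Your proof is correct: the spectral-theorem reduction, the test subspace $\langle e_i,\dots,e_n\rangle$, the dimension count $\dim(U\cap\langle e_1,\dots,e_i\rangle)\geq \dim U+i-n\geq 1$, and the passage to $-A$ for the max--min form are exactly the standard Courant--Fischer argument, and the ``in particular'' cases follow as you say. Note that the paper offers no proof of this lemma at all -- it is recalled as a classical tool -- so there is no argument to compare against; your write-up fills that (intentional) gap correctly and needs no changes.
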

The following lemmas will be handy:
\begin{lem}\label{lem:evAB}
Let $A,B$ be positive definite symmetric matrices. Then
$$\max\ev A\max\ev B\geq \max \ev (AB)\geq \max \ev A\min \ev B.$$
\end{lem}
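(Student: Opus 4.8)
The plan is to reduce the statement to facts about symmetric positive definite matrices, the point being that $AB$ itself need not be symmetric. First I would observe that $AB=A^{1/2}\bigl(A^{1/2}BA^{1/2}\bigr)A^{-1/2}$, so $AB$ is conjugate to the symmetric positive definite matrix $A^{1/2}BA^{1/2}$; hence the eigenvalues of $AB$ are real and positive and coincide with those of $A^{1/2}BA^{1/2}$, so in particular $\max\ev(AB)=\max\ev\bigl(A^{1/2}BA^{1/2}\bigr)$.

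Next I would apply the min--max principle (Lemma~\ref{lem:minmax}) and substitute $w=A^{1/2}v$ to get
$$\max\ev(AB)=\max_{v\neq 0}\frac{\langle BA^{1/2}v,A^{1/2}v\rangle}{\|v\|^2}=\max_{w\neq 0}\frac{\langle Bw,w\rangle}{\langle A^{-1}w,w\rangle}.$$
Both inequalities then follow from this formula. For the upper bound, for every $w\neq 0$ one has $\langle Bw,w\rangle\leq \max\ev B\,\|w\|^2$ and $\langle A^{-1}w,w\rangle\geq \min\ev(A^{-1})\,\|w\|^2=\|w\|^2/\max\ev A$, whence the quotient is at most $\max\ev A\max\ev B$. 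For the lower bound, I would evaluate the same quotient at a unit eigenvector $w_0$ of $A$ associated with $\max\ev A$: then $\langle A^{-1}w_0,w_0\rangle=1/\max\ev A$ while $\langle Bw_0,w_0\rangle\geq \min\ev B$, so the quotient at $w_0$ — and therefore the maximum — is at least $\max\ev A\min\ev B$. (Equivalently, one can phrase the two bounds as submultiplicativity of the operator norm, $\|A^{1/2}BA^{1/2}\|\le\|A^{1/2}\|\,\|B\|\,\|A^{1/2}\|$, and as monotonicity of $\max\ev$ under the Loewner order applied to $A^{1/2}BA^{1/2}\geq(\min\ev B)\,A$.)

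There is essentially no serious obstacle; the only subtlety is the non-symmetry of $AB$, which is dispatched at the outset by passing to $A^{1/2}BA^{1/2}$, after which everything is a one-line application of the variational characterization of eigenvalues already recalled in Lemma~\ref{lem:minmax}.
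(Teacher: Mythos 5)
Your proof is correct and follows essentially the same route as the paper: symmetrize the product via a square root (you conjugate to $A^{1/2}BA^{1/2}$, the paper to $B^{1/2}AB^{1/2}$) and then apply the min--max principle of Lemma~\ref{lem:minmax} to the resulting Rayleigh quotient. The only cosmetic differences are that you package the quotient as $\langle Bw,w\rangle/\langle A^{-1}w,w\rangle$ and test it at an eigenvector of $A$, and that you spell out the upper bound, which the paper leaves implicit.
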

\begin{proof}
Notice that since $B$ is symmetric and positive definite it admits a positive square root. Moreover, since the eigenvalues of a matrix are invariant under conjugation, the eigenvalues of $AB$ coincide with the eigenvalues of $B^{1/2}A B^{1/2}$. 
The result then follows using the min--max principle:

 \begin{align*}
  \max\ev(B^{1/2}A B^{1/2})&=\max_{v\in\R^n\setminus 0}\frac{\langle B^{1/2}A B^{1/2}v,v\rangle}{\|v\|^2}\\&=\max_{v\in\R^n\setminus 0}\frac{\langle B^{1/2}A B^{1/2}v,v\rangle}{\|B^{1/2}v\|^2}\frac{\langle B^{1/2}v, B^{1/2}v\rangle}{\|v\|^2}\\
  &\geq \max_{w\neq 0}\frac{\rangle Aw,w\langle}{\|w\|^2}\min_{v\neq 0}\frac{\rangle Bv,v\langle}{\|v\|^2}\\
  &=\max\ev(A)\min\ev(B).
 \end{align*}
\end{proof}

\begin{lem}\label{lem:eigenvalues}
Let $A,B$ be positive definite symmetric matrices. Then the difference $A-B$ is positive definite if and only if all eigenvalues of $AB^{-1}$ are bigger than one. 
\end{lem}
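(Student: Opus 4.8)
The plan is to reduce the statement, which concerns the non-symmetric matrix $AB^{-1}$ and the symmetric matrix $A-B$, to a single statement about one symmetric matrix by conjugating/congruencing with the positive square root of $B$. Since $B$ is symmetric and positive definite it has a positive definite square root $B^{1/2}$, with $B^{-1/2}$ again symmetric and positive definite, and I would work with $C:=B^{-1/2}AB^{-1/2}$, which is symmetric and positive definite.

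First I would observe that $AB^{-1}$ and $C$ are conjugate, via $B^{1/2}(AB^{-1})B^{-1/2}=B^{1/2}AB^{-1/2}=C$ — actually more directly $C=B^{-1/2}(AB^{-1})B^{1/2}$ — so they have the same eigenvalues; in particular the eigenvalues of $AB^{-1}$ are the (real, positive) eigenvalues of $C$, so the phrase ``all eigenvalues of $AB^{-1}$ are bigger than one'' is unambiguous. Next I would record the elementary fact that congruence by an invertible matrix preserves positive definiteness: for $P=B^{-1/2}$ (which is symmetric, so $P^{T}=P$) and any symmetric $M$ one has $\langle B^{-1/2}MB^{-1/2}v,v\rangle=\langle M(B^{-1/2}v),B^{-1/2}v\rangle$, and $v\mapsto B^{-1/2}v$ is a bijection of $\R^{n}\setminus\{0\}$, so $M$ is positive definite iff $B^{-1/2}MB^{-1/2}$ is. Applying this with $M=A-B$ gives that $A-B$ is positive definite iff $B^{-1/2}(A-B)B^{-1/2}=C-\Id$ is positive definite. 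Finally, since $C$ is symmetric, $C-\Id$ is symmetric with eigenvalues $\mu_i-1$ where $\mu_i$ are the eigenvalues of $C$ (for instance by the min--max principle, Lemma \ref{lem:minmax}, or just by simultaneous diagonalization), so $C-\Id$ is positive definite iff every $\mu_i>1$. Chaining the two equivalences yields the claim.

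I do not expect a genuine obstacle here: the argument is a routine application of the square-root trick together with invariance of positive definiteness under congruence and of the spectrum under conjugation. The only point deserving a word of care is precisely that $AB^{-1}$ is in general not symmetric, so one cannot speak of it ``being positive definite''; passing to the symmetric representative $C$ via $B^{1/2}$ both legitimises talking about its eigenvalues and makes the positive-definiteness bookkeeping transparent.
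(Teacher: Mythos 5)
Your argument is correct and is essentially the paper's own proof: both rest on the identity $B^{-1/2}AB^{-1/2}=B^{-1/2}(A-B)B^{-1/2}+\Id$ together with invariance of positive definiteness under congruence, you merely spell out additionally that $AB^{-1}$ and $B^{-1/2}AB^{-1/2}$ are conjugate and hence share their spectrum.
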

\begin{proof}
This follows from the observation that
$$B^{-1/2}A B^{-1/2}=B^{-1/2}(A-B) B^{-1/2}+\Id$$
and the fact that a symmetric matrix  $M$ is positive definite if and only if $^tNMN$ is positive definite for any invertible matrix $N$.
\end{proof}
Using Lemma \ref{lem:eigenvalues}, we get:
\begin{lem}\label{Finslerdistance}
Let $A$ and $B$ be positive definite symmetric matrices such that the difference $B-A$ is positive definite. Let $\mu_1\geq\dots\geq \mu_n$ be the eigenvalues of $A^{-1}B$. Then
$$\dC(iA,iB)=(\log \mu_1,\dots,\log \mu_n)$$
and
$$\dF(iA,iB)=\frac{1}{2}\log\det(A^{-1}B).$$
\end{lem}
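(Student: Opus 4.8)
The plan is to reduce the computation of $\dC(iA,iB)$ to the special case of the upper-half plane by simultaneous diagonalization, which makes both formulas transparent once we identify the relevant eigenvalues.

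First I would use the hypothesis that $A,B$ and $B-A$ are positive definite. By Lemma \ref{lem:eigenvalues}, the eigenvalues $\mu_1\geq\dots\geq\mu_n$ of $A^{-1}B$ are all $>1$. Since $A$ is positive definite it has a positive square root $A^{1/2}$, and $A^{-1}B$ is conjugate to the symmetric positive definite matrix $A^{-1/2}BA^{-1/2}$; hence there is an orthogonal matrix $O$ with $O^{t}A^{-1/2}BA^{-1/2}O=\diag(\mu_1,\dots,\mu_n)$. Setting $N=A^{1/2}O$ we get $N^{t}AN=\Id$ and $N^{t}BN=\diag(\mu_1,\dots,\mu_n)$. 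The matrix $g=\bsm\, ^tN^{-1}&0\\0&N\,\esm$ lies in $\Sp(2n,\R)$ and acts on the upper-half space model by $Z\mapsto {}^tN^{-1}Z\,N^{-1}$ — wait, more precisely by $Z\mapsto (AZ+B)(CZ+D)^{-1}$ with $A=\,^tN^{-1}$, $B=C=0$, $D=N$, so it sends $iA\mapsto i\,^tN^{-1}AN^{-1}$; to get the clean action $Z\mapsto {}^tN Z N$ one instead takes $g=\bsm N^t&0\\0&N^{-1}\esm$ (using symmetry of $Z$). Either way, choosing $g$ appropriately we obtain $g\cdot(iA)=i\Id$ and $g\cdot(iB)=i\diag(\mu_1,\dots,\mu_n)$.

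Since $\dC$ is $\Sp(2n,\R)$-invariant, it now suffices to compute $\dC(i\Id, iD)$ where $D=\diag(\mu_1,\dots,\mu_n)$. Here I would invoke Siegel's theorem as quoted in the excerpt: the Weyl chamber projection of $(X,Z)$ is $(\log\l_1,\dots,\log\l_n)$ with $\l_j=\frac{1+\sqrt{r_j}}{1-\sqrt{r_j}}$ and $r_j$ the eigenvalues of $R(X,\ov Z,Z,\ov X)$. With $X=i\Id$, $Z=iD$, $\ov Z=-iD$, $\ov X=-i\Id$, the explicit formula \eqref{cross-ratio} gives
$$R(i\Id,-iD,iD,-i\Id)=(i\Id+iD)^{-1}(-i\Id+iD)(-i\Id-iD)^{-1}(i\Id+iD),$$
which simplifies (all factors are diagonal and commute) to the diagonal matrix with entries $\frac{(\mu_j-1)^2}{(\mu_j+1)^2}$, i.e.\ $r_j=\left(\frac{\mu_j-1}{\mu_j+1}\right)^2$ and $\sqrt{r_j}=\frac{\mu_j-1}{\mu_j+1}$ (positive since $\mu_j>1$). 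Then $\l_j=\frac{1+\frac{\mu_j-1}{\mu_j+1}}{1-\frac{\mu_j-1}{\mu_j+1}}=\frac{2\mu_j/(\mu_j+1)}{2/(\mu_j+1)}=\mu_j$. Since $\mu_1\geq\dots\geq\mu_n>1>0$, the tuple $(\log\mu_1,\dots,\log\mu_n)$ indeed lies in $\aw$, so $\dC(iA,iB)=(\log\mu_1,\dots,\log\mu_n)$. The Finsler formula then follows immediately from the definition of $\dF$ as $\tfrac12\sum x_i$ composed with $\dC$: $\dF(iA,iB)=\tfrac12\sum_{j}\log\mu_j=\tfrac12\log\det\diag(\mu_1,\dots,\mu_n)=\tfrac12\log\det(A^{-1}B)$.

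The only genuinely delicate point is bookkeeping: making sure the chosen $g\in\Sp(2n,\R)$ really is symplectic and really implements $iA\mapsto i\Id$, $iB\mapsto iD$ simultaneously, and checking that the resulting vector lies in the fundamental domain $\aw$ (which is where positivity of $B-A$, giving $\mu_j>1>0$, is used). Everything else is a short diagonal-matrix computation using \eqref{cross-ratio} and Siegel's theorem, so I do not anticipate a substantive obstacle.
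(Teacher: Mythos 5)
Your proof is correct and essentially follows the paper's route: both arguments rest on Lemma \ref{lem:eigenvalues} (giving $\mu_i>1$), Siegel's theorem, and the explicit formula \eqref{cross-ratio}; the only difference is that you first reduce to the diagonal case by a symplectic congruence, whereas the paper computes $R(iA,-iB,iB,-iA)=(\Id+A^{-1}B)^{-2}(A^{-1}B-\Id)^2$ directly for general $A,B$. Two harmless slips to fix when writing it up: the normalizing matrix should be $N=A^{-1/2}O$ (with $N=A^{1/2}O$ one gets $N^tAN=O^tA^2O$, not $\Id$), and the last factor of your displayed cross-ratio should be $X_1-X_3=i\Id-iD$ rather than $i\Id+iD$ (with the corrected factor the product is indeed the diagonal matrix with entries $(\mu_j-1)^2/(\mu_j+1)^2$).
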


\begin{proof}
Note first that by Lemma \ref{lem:eigenvalues} the eigenvalues of $A^{-1}B$ are all bigger than one. We then have
$$\dC(iA,iB)=(\log\lambda_1,\ldots,\log\lambda_n)$$
where $\lambda_i=\frac{1+\sqrt{r_i}}{1-\sqrt{r_i}}$ and the $r_i$ are the eigenvalues of $R(iA,-iB,iB,-iA)$. Using \eqref{cross-ratio}, one can compute that
$$R(iA,-iB,iB,-iA)=(\Id+A^{-1}B)^{-2}(A^{-1}B-\Id)^2,$$
so $r_i=\frac{(\mu_i-1)^2}{(\mu_i+1)^2}$. But since all the $\mu_i$ are all bigger than one, we deduce that $\lambda_i=\mu_i$. Hence
$$\dC(iA,iB)=(\log \mu_1,\dots,\log \mu_n)$$
and
$$\dF(iA,iB)=\frac{1}{2}\sum_{j=1}^n \log \mu_j=\frac{1}{2}\log\prod_{j=1}^n \mu_j=\frac{1}{2}\log \det (A^{-1}B).$$
\end{proof}
\subsection{$\R$-tubes}\label{Rtubes}
In this section we recall the definition of $\R$-tubes, subspaces of $\calX$ that play the role of geodesics in $\hyp$, and we will prove some results that we will use in the remainder of the paper.
Let $\{a,b\}$ be an unordered pair of transverse real Lagrangians. 
\begin{definition} The \emph{$\R$-tube} $\Yy_{a,b}$ associated to $\{a,b\}$ is the set
$$\Yy_{a,b}=\{l\in\calX \;|\; R(a,l,\sigma ({l}),b)=-\Id\}.$$
We will refer to the real Lagrangians $a,b$ as  the \emph{endpoints} of $\Yy_{a,b}$.
\end{definition}
It can be proven (see \cite[Section 4.2]{bp}) that $\Yy_{a,b}$ is a totally geodesic subspace of $\calX$ of the same real rank as $\calX$ and that it is the parallel set of the Riemannian singular geodesics, whose endpoints in the visual boundary of $\calX$ are the Lagrangians $a$ and $b$.  The stabilizer of $\Yy_{a,b}$ is $\Stab_{\Sp(2n,\R)}(\{a,b\})$ and is a $\mathbb Z/2\mathbb Z$-extension of $\GL(n,\R)$ that acts transitively on the $\R$-tube.

Up to the symplectic group action we can reduce to a model $\R$-tube, the one with endpoints $0
$ and $l_\infty
$. In the upper-half space model this standard tube consists of matrices of the form $$\Yy_{0,l_\infty}=\{iY|\; Y\in \Sym^+(n,\R)\}$$
and $\GL_n(\R)$ acts on it as $G\cdot i X=i(GX(^t\!G))$.

It was exploited in \cite{bp} (cfr.\ also \cite{bilw}) that the incidence structure of  $\R$-tubes in the Siegel space forms a synthetic geometry that shares many common features with the hyperbolic geometry in $\mathbb H^2$. In particular, the following result shows how the intersection pattern of $\R$-tubes reflects the intersection pattern of geodesics in the hyperbolic plane.
\begin{prop}\label{intersectingtubes}
If $(l_1,l_2,l_3,l_4)$ is maximal, the intersection $\Yy_{l_1,l_3}\cap\Yy_{l_2,l_4}$ consists of a single point and $\Yy_{l_1,l_2}\cap\Yy_{l_3,l_4}$ is empty.
\end{prop}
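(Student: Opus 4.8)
The plan is to normalise one of the two $\R$--tubes and then solve the defining cross--ratio equation of the other one by hand. For the first assertion, since $\Sp(2n,\R)$ acts transitively on ordered pairs of transverse Lagrangians, I would fix $g\in\Sp(2n,\R)$ with $g(l_1)=0$ and $g(l_3)=l_\infty$. The cross--ratio $R$ is $\Sp(2n,\R)$--equivariant and every real symplectic matrix commutes with $\sigma$, so $g\,\Yy_{a,b}=\Yy_{g(a),g(b)}$ for every transverse pair; in particular $g\,\Yy_{l_1,l_3}=\Yy_{0,l_\infty}=\{iY\mid Y\in\Sym^+(n,\R)\}$ and $g\,\Yy_{l_2,l_4}=\Yy_{\mu,\nu}$, where (identifying transverse Lagrangians with symmetric matrices via $\iota$) I set $\mu:=g(l_2)$ and $\nu:=g(l_4)$. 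Maximality is preserved by $g$, so $(0,\mu,l_\infty)$ and $(0,l_\infty,\nu)$ are maximal triples, and after a cyclic permutation Lemma~\ref{lem:max}(1)--(2) says exactly that $\mu$ is positive definite and $\nu$ is negative definite. For the second assertion one instead chooses $g$ with $g(l_1)=0$ and $g(l_2)=l_\infty$; the same reasoning gives $g\,\Yy_{l_1,l_2}=\Yy_{0,l_\infty}$ and $g\,\Yy_{l_3,l_4}=\Yy_{\mu',\nu'}$ with $\mu':=g(l_3)$ and $\nu':=g(l_4)$ \emph{both} negative definite. (One could alternatively normalise the whole $4$-tuple first using Lemma~\ref{lem:IdLambda}, but this is not needed.)

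The core step is to describe $\Yy_{0,l_\infty}\cap\Yy_{\mu,\nu}$ explicitly. First I would note that for $\mu,\nu\in\Sym(n,\R)$ and $Y\in\Sym^+(n,\R)$ the matrices $\mu-iY$ and $\nu\pm iY$ are invertible (if $(\mu-iY)(a+ib)=0$ with $a,b\in\R^n$, comparing real and imaginary parts gives $a^{t}Ya+b^{t}Yb=0$, hence $a=b=0$), so the formula \eqref{cross-ratio} applies and, since $\sigma(iY)=\overline{iY}=-iY$, the condition $iY\in\Yy_{\mu,\nu}$ reads $R(\mu,iY,-iY,\nu)=-\Id$. Writing $A=\mu+iY$ and $B=\nu+iY$, so that $\bar A=\mu-iY$ and $\bar B=\nu-iY$, this equation becomes $\bar B\,B^{-1}A=-\bar A$, equivalently $B^{-1}A=-\overline{B^{-1}A}$; thus $(\nu+iY)^{-1}(\mu+iY)$ must be purely imaginary, say equal to $iN$ with $N$ real. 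Expanding $\mu+iY=(\nu+iY)(iN)=i\nu N-YN$ and separating real and imaginary parts gives $Y=\nu N$ and $\mu=-YN$, hence $N=\nu^{-1}Y$ and
\[
Y\,(-\nu^{-1})\,Y=\mu .
\]
All steps are reversible (given a positive definite symmetric $Y$ solving this, set $N=\nu^{-1}Y$ and retrace), so $\Yy_{0,l_\infty}\cap\Yy_{\mu,\nu}$ is in bijection with the set of positive definite symmetric solutions of $Y(-\nu^{-1})Y=\mu$.

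It then remains to read off the two cases. When $\mu$ is positive definite and $\nu$ is negative definite, $P:=-\nu^{-1}$ is positive definite, and the substitution $Y=P^{-1/2}ZP^{-1/2}$ turns the equation into $Z^{2}=P^{1/2}\mu P^{1/2}$, which has the unique positive definite solution $Z=(P^{1/2}\mu P^{1/2})^{1/2}$; since $g$ is a bijection, $\Yy_{l_1,l_3}\cap\Yy_{l_2,l_4}$ consists of a single point. When $\mu'$ and $\nu'$ are both negative definite, $-\nu'^{-1}$ is positive definite, so $Y(-\nu'^{-1})Y$ is positive definite for every positive definite $Y$, whereas $\mu'$ is negative definite; the equation therefore has no solution and $\Yy_{l_1,l_2}\cap\Yy_{l_3,l_4}=\emptyset$.

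I expect the main obstacle to be the reduction of the cross--ratio identity $R(\mu,iY,-iY,\nu)=-\Id$ to the transparent matrix equation $Y(-\nu^{-1})Y=\mu$: one has to manipulate the non--commutative complex expression in \eqref{cross-ratio} carefully, check that the reduction is an equivalence in both directions, and keep track of the precise definiteness of $\mu,\nu$ forced by maximality --- those signs are exactly what separates ``a single point'' from ``empty''. The rest (existence and uniqueness of positive square roots, the invertibility remarks, the equivariance of $R$ and of $\Yy$) is routine, and no genuinely higher--rank phenomenon intervenes: the computation is formally the same for $n=1$, where it reduces to the classical linking picture of geodesics in $\hyp$.
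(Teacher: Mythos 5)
Your argument is correct and follows essentially the same route as the paper: normalize by the $\Sp(2n,\R)$-action, write the membership condition $R(\mu,iY,-iY,\nu)=-\Id$ for points $iY$ of the standard tube via \eqref{cross-ratio}, reduce it to a quadratic matrix equation, and decide solvability from the definiteness forced by maximality (Lemma \ref{lem:max}). The only differences are cosmetic: the paper quotes \cite{bp} (Lemma 4.7 there) for the single-point statement and, for the emptiness statement, normalizes the whole $4$-tuple to $(-\Id,\Lambda,0,l_\infty)$ so the equation becomes $Y^2=\Lambda$, whereas you normalize only one transverse pair and obtain the uniform equation $Y(-\nu^{-1})Y=\mu$, which lets you treat both assertions in one self-contained computation; your definiteness bookkeeping ($\mu$ positive and $\nu$ negative definite in the first case, both negative definite in the second) and the reversibility of the reduction are all in order.
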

\begin{proof}
The first result is proven in \cite[Lemma 4.7]{bp}. With the same techiques we can prove that the second intersection is empty. We reproduce the argument for completeness.

Up to the action of the symplectic group, we can assume that $$(l_1,l_2,l_3,l_4)=(-\Id,\Lambda, 0,l_{\infty}),$$
where $\Lambda=\diag(\l_1,\dots,\l_n)$ and $\l_i\in(-1,0)$ (cfr.\ \cite[Proposition 2.11]{bp}). Now a point $y$ belongs to $\Yy_{l_1,l_2}\cap\Yy_{l_3,l_4}=\Yy_{-\Id,\Lambda}\cap\Yy_{0,\ell_\infty}$ if and only if $y=iY$, for some $Y\in\Sym(n,\R)$, and $iY\in \Yy_{-\Id,\L}$. By definition $iY\in \Yy_{-\Id,\L}$ if and only if $R(-\Id,iY,-iY,\L)=-\Id$. Using \eqref{cross-ratio} we have
\begin{align*}
R(-\Id,iY,-iY,\L)=-\Id &\Leftrightarrow (-\Id-iY)^{-1}(\L-iY)(\L+iY)^{-1}(-\Id+iY)=-\Id\\ 
&\Leftrightarrow (\L-iY)(\L+iY)^{-1}=(\Id+iY)(-Id+iY)^{-1}\\
&\Leftrightarrow (\L-iY)(\L+iY)^{-1}=(-\Id+iY)^{-1}(\Id+iY)\\
&\Leftrightarrow(-\Id+iY)(\L-iY)=(\Id+iY)(\L+iY)\\
&\Leftrightarrow Y^2=\L.
\end{align*}
But as $\Lambda$ is negative definite, there is no solution to this equation.
\end{proof}
\subsection{The product structure of a tube and causal maps}\label{sec:producttube}
We now turn to a more precise description of the geometry of a single $\R$-tube. Recall that the standard model for the symmetric space associated to $\GL(n,\R)$ is
$$\Xx_{\GL(n,\R)}=\Sym^+(n,\R)$$
and $\GL(n,\R)$ acts on $\Xx_{\GL_n(\R)}$ by $G\cdot X=GX^tG$.
We endow $\Xx_{\GL(n,\R)}$ with the Riemannian distance given by
$$d_\GL(X,Y)=\sqrt{\sum_{i=1}^n (\log\l_i)^2},$$
where $\l_i$ are the eigenvalues of $XY^{-1}$. This is twice the normalization for the Riemannian distance chosen in \cite{benoist_proprietes, parreau_compactification}, but it is better suited to our purposes because with this choice the natural identification $\Xx_{\GL(n,\R)}\cong \Yy_{0,l_\infty}$ is an isometry (where $\Yy_{0,l_\infty}$ is equipped with the Riemannian metric).

Recall that a model for the symmetric space associated to $\SL(n,\R)$ is
$$\Xx_{\SL(n,\R)}=\{X\in\Sym^+(n,\R)|\; \det(X)=1\}$$
and its model Weyl chamber is
$$\aw_{\SL(n,\R)}=\left\{(x_1,\ldots,x_n)\in\R^n|x_1\geq\ldots\geq x_n,\sum_{i=1}^n x_i=0\right\}.$$
We associate to the pair $(X,Y)$ the vector $(\log \l_1,\ldots,\log\l_n)$ where $\l_1\geq\ldots\geq\l_n$ are the eigenvalues of $XY^{-1}$. We normalize the Riemannian metric on $\Xx_{\SL(n,\R)}$ so that
$$\dSL(X,Y)=\sqrt{\sum_{i=1}^n(\log \l_i)^2}.$$ Coherently with above, this is twice the standard normalization. 

The group $\GL(n,\R)$ is reductive and its symmetric space, endowed with the Riemannian metric, splits as the direct product $\Xx_{\GL(n,\R)}=\R\times\Xx_{\SL(n,\R)}$; $\R$ is the Euclidean factor of the reducible symmetric space $\Xx_{\GL(n,\R)}$. 
Explicitly:
\begin{lem}\label{lem:producttube}
The map
\begin{align*}
F=\pi^\R\times\pi^{\rm SL}:\Xx_{\GL(n,\R)}&\longrightarrow \R\times \Xx_{\SL(n,\R)}\\
X&\longmapsto\left(\frac {\log \det X}{\sqrt{n}},\left(\frac1{(\det X)^{1/n}}X\right)\right)
\end{align*}
is an isometry.
\end{lem}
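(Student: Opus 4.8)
The plan is to verify directly that $F$ preserves distances, using the fact that the target is given the product Riemannian metric, so that $d_{\R\times\Xx_{\SL}}\big((t,P),(s,Q)\big)^2 = (t-s)^2 + \dSL(P,Q)^2$. Given $X,Y\in\Xx_{\GL(n,\R)}$, write $a=\det X$, $b=\det Y$, and let $\l_1\ge\dots\ge\l_n$ be the eigenvalues of $XY^{-1}$, so that $\dGL(X,Y)^2=\sum_i(\log\l_i)^2$ and $ab^{-1}=\det(XY^{-1})=\prod_i\l_i$. First I would compute the Euclidean component: $\pi^\R(X)-\pi^\R(Y) = \tfrac{1}{\sqrt n}(\log a-\log b) = \tfrac{1}{\sqrt n}\sum_i\log\l_i$, so the contribution of this factor to the squared distance is $\tfrac1n\big(\sum_i\log\l_i\big)^2$.

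Next I would identify the eigenvalues governing the $\SL$-component. By definition $\dSL\big(\pi^{\rm SL}(X),\pi^{\rm SL}(Y)\big)^2=\sum_i(\log\mu_i)^2$ where the $\mu_i$ are the eigenvalues of $\big(a^{-1/n}X\big)\big(b^{-1/n}Y\big)^{-1} = (b/a)^{1/n}\,XY^{-1}$. Hence $\mu_i = (b/a)^{1/n}\l_i$, i.e. $\log\mu_i = \log\l_i - \tfrac1n\sum_j\log\l_j$; note $\sum_i\log\mu_i=0$ as it must be. Now it is a one-line algebraic identity (the Pythagorean decomposition of a vector into its mean and its mean-zero part): for any reals $c_i:=\log\l_i$ with mean $\bar c=\tfrac1n\sum_i c_i$,
\[
\sum_{i=1}^n (c_i-\bar c)^2 + n\,\bar c^{\,2} = \sum_{i=1}^n c_i^2.
\]
Applying this with $c_i=\log\l_i$ gives exactly $\dSL\big(\pi^{\rm SL}(X),\pi^{\rm SL}(Y)\big)^2 + \big(\pi^\R(X)-\pi^\R(Y)\big)^2 = \sum_i(\log\l_i)^2 = \dGL(X,Y)^2$, which is the claimed isometry.

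I would also note, for completeness, that $F$ is a bijection: $\pi^\R\times\pi^{\rm SL}$ has the obvious inverse $(t,P)\mapsto e^{t/\sqrt n}\,P$ (using that every $X\in\Sym^+(n,\R)$ is uniquely $(\det X)^{1/n}$ times an element of $\Xx_{\SL(n,\R)}$), and the computation above shows it is distance-preserving, hence an isometry onto the product. There is no real obstacle here; the only point requiring a little care is the normalization bookkeeping — checking that the factor $1/\sqrt n$ in the Euclidean coordinate is precisely what makes the Pythagorean identity come out with coefficient $1$, and that the doubled normalizations of $\dGL$ and $\dSL$ chosen above are mutually consistent. Once the eigenvalues $\mu_i=(b/a)^{1/n}\l_i$ are pinned down, everything follows from the elementary variance identity.
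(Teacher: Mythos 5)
Your proposal is correct and follows essentially the same route as the paper: compute the Euclidean component via $\log\det$, identify the eigenvalues of the $\SL$-part as $\mu_i=\l_i/(\det(XY^{-1}))^{1/n}$, and conclude with the Pythagorean (variance) identity, which is exactly the cancellation of the cross term in the paper's expansion. No gaps.
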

\begin{proof}
Clearly the map is a bijection, so we just need to show that it preserves distances. For any $X,Y\in\Xx$ we have
$$|\pi^\R(X)-\pi^\R(Y)|=\left|\frac{\log\det X-\log\det Y}{\sqrt{n}}\right|=\left|\frac{\log\det(XY^{-1})}{\sqrt{n}}\right|.$$
Moreover
$$\dSL(\pi^\SL(X),\pi^\SL(Y))=\sqrt{\sum_{i=1}^n(\log\mu_i)^2}$$
where $\mu_1\geq\dots\geq \mu_n $ are the eigenvalues of
$$\left(\frac1{(\det X)^{1/n}}X\right)\left(\frac1{(\det Y)^{1/n}}Y\right)^{-1}=\frac1{(\det (XY^{-1}))^{1/n}}XY^{-1}.$$
So
$$\mu_i=\frac1{(\det (XY^{-1}))^{1/n}}\l_i$$
where the $\l_i$ are the eigenvalues of $XY^{-1}$. Set $d:=\det(XY^{-1})$. We have:
\begin{align*}
\mbox{d}(F(X),F(Y))^2&=|\pi^\R(X)-\pi^\R(Y)|^2+\dSL(\pi^\SL(X),\pi^\SL(Y))^2\\
&=\frac{1}{n}(\log d)^2+\sum_{i=1}^n\left(
\log \l_i-\frac{1}{n}\log d\right)^2\\
&=\sum_{i=1}^n(\log \l_i)^2+\frac{2}{n}\log d \underbrace{(\log d-\sum_{i=1}^n\log \l_i)}_{=0}=\dGL(X,Y)^2
\end{align*}
\end{proof}

\begin{remark}
On the model flat $\pi^\R$ is the scalar product with the unit vector  $(1/\sqrt n,\ldots,1/\sqrt n)$ and the tangent space to $\SL(n,\R)$ is its orthogonal.
\end{remark}

%
We now define causal maps and show some of the properties we will need.
\begin{definition}\label{def:Causal}Let $K$ be a subset of $\R$; a map $f:K\to \Xx_{\GL(n,\R)}$ is \emph{causal} if for each pair $x<y\in K$, $f(y)-f(x)$ is positive definite.
\end{definition}
The next lemma summarizes a property of causal paths that will be crucial for our future analysis: for any causal path the length of its projection on the Euclidean factor gives an upper bound on the length of the projection to $\Xx_{\SL(n,\R)}$:
\begin{lem}\label{lem:pos}
Let $A,B\in\Xx_{\GL(n,\R)}$. If $B-A$ is positive definite, then
$$\pi^\R(B)>\pi^\R(A)$$ and $$\sqrt {n-1}(\pi^\R(B)-\pi^\R(A))>\dSL( \pi^{\SL}(B),\pi^{\SL}(A)).$$
\end{lem}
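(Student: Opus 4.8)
\textbf{Proof strategy for Lemma \ref{lem:pos}.}

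The plan is to reduce everything to a statement about eigenvalues of $AB^{-1}$ (equivalently $A^{-1}B$), using the product decomposition of Lemma \ref{lem:producttube}. First I would set $d=\det(B A^{-1})$ and let $\mu_1\geq\cdots\geq\mu_n$ denote the eigenvalues of $BA^{-1}$ (equivalently of $A^{-1/2}BA^{-1/2}$). By Lemma \ref{lem:eigenvalues}, the hypothesis that $B-A$ is positive definite is equivalent to all $\mu_i>1$, so $d=\prod\mu_i>1$ and hence $\pi^\R(B)-\pi^\R(A)=\frac{\log d}{\sqrt n}>0$, giving the first inequality immediately.

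For the second inequality, unwinding the definitions in Lemma \ref{lem:producttube} gives
\[
\sqrt n\,(\pi^\R(B)-\pi^\R(A))=\log d=\sum_{i=1}^n\log\mu_i,\qquad
\dSL(\pi^\SL(B),\pi^\SL(A))^2=\sum_{i=1}^n\Bigl(\log\mu_i-\tfrac1n\log d\Bigr)^2,
\]
so after squaring and multiplying out, the claim $(n-1)(\pi^\R(B)-\pi^\R(A))^2>\dSL(\cdots)^2$ becomes, writing $t_i=\log\mu_i>0$ and $s=\sum t_i$,
\[
\frac{n-1}{n}s^2>\sum_{i=1}^n\Bigl(t_i-\tfrac sn\Bigr)^2=\sum_{i=1}^n t_i^2-\frac{s^2}{n},
\]
i.e.\ $s^2>\sum_{i=1}^n t_i^2$. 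This last inequality is exactly the statement that for strictly positive reals $t_1,\dots,t_n$ one has $\bigl(\sum t_i\bigr)^2=\sum t_i^2+2\sum_{i<j}t_it_j>\sum t_i^2$, which holds because every cross term $t_it_j$ is strictly positive (here $n\geq 2$; the case $n=1$ is vacuous since then $\Xx_{\SL(1,\R)}$ is a point and the right-hand side is $0$). So the inequality is strict, as required.

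I do not expect any serious obstacle here: the only points to be careful about are that the strictness genuinely uses $t_i>0$ for all $i$ (which is where positive definiteness of $B-A$, rather than mere positive semidefiniteness, enters), and that one should handle $n=1$ separately, or simply note the constant $\sqrt{n-1}$ makes the second assertion trivial in that case. One should also double-check the harmless point that $BA^{-1}$, $A^{-1}B$ and $A^{-1/2}BA^{-1/2}$ all share the same eigenvalues, so that the reduction to $t_i=\log\mu_i$ is legitimate.
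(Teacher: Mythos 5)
Your proof is correct and takes essentially the same route as the paper: both use the isometric splitting of Lemma \ref{lem:producttube} to reduce the two sides to the quantities $t_i=\log\mu_i>0$, where $\mu_i$ are the eigenvalues of $BA^{-1}$ (the paper first normalizes $A=\Id$ via the transitive $\GL(n,\R)$-action, which amounts to the same substitution), and the key step is the elementary inequality $\sum_i t_i^2<\bigl(\sum_i t_i\bigr)^2$. Your explicit attention to strictness (needing $t_i>0$ for all $i$, plus the trivial case $n=1$) is a nice touch, since the paper records this final step only as a non-strict inequality.
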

\begin{proof}
It follows from Lemma \ref{lem:eigenvalues} that if $B-A$ is positive definite then all eigenvalues of $BA^{-1}$ are bigger than 1. In particular $\det B>\det A$, which gives the first inequality.

In order to verify the second statement we can assume that $A=\Id$: indeed $\GL(n,\R)$ acts transitively on $\Xx_{\GL(n,\R)}$ by isometries, and since the splitting of Lemma \ref{lem:producttube} is isometric, both terms in the second inequality are invariant by the $\GL(n,\R)$-action. Denoting by $b_1\geq\ldots\geq b_n>1$ the eigenvalues of $B$ we have $\pi^\R(B)=\frac{1}{\sqrt{n}}\sum_{i=1}^n \log b_i$, $\pi^\R(A)=0$, and the vector in $\aw_{\SL(n,\R)}$ associated to the pair $( \pi^{\SL}(B),\pi^{\SL}(A))$ is $$\left(\log b_1-\frac {\pi^\R(B)}{\sqrt{n}}, \ldots, \log b_n-\frac{\pi^\R(B)}{\sqrt{n}}\right).$$
Then
\begin{align*}
\dSL( \pi^{\SL}(B),\pi^{\SL}(A))^2&=\sum_{i=1}^n\left(\log b_i-\frac {\pi^\R(B)}{\sqrt{n}}\right)^2\\
&=\sum_{i=1}^n(\log b_i)^2-\pi^\R(B)^2\leq (n-1)\pi^\R(B)^2,
\end{align*}
where the last inequality follows from the fact that, for positive numbers $x_i$, it holds $$\sum_{i=1}^n x_i^2\leq\left(\sum_{i=1}^n x_i\right)^2.$$
\end{proof}
In rank two we have the following stronger result:
\begin{cor}\label{cor:pos2}
If $n=2$, the matrix $B-A$ is positive definite if and only if $$\pi^\R(B)-\pi^\R(A)> \dSL(\pi^{\SL}(B),\pi^{\SL}(A)).$$
\end{cor}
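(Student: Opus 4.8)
The content of Corollary~\ref{cor:pos2} is that in rank two the necessary condition of Lemma~\ref{lem:pos} becomes sufficient. One direction (positive definiteness implies the strict inequality) is already contained in Lemma~\ref{lem:pos}, since for $n=2$ we have $\sqrt{n-1}=1$; so the only thing to prove is the converse. The plan is to reduce to the case $A=\Id$ exactly as in the proof of Lemma~\ref{lem:pos}: both $\pi^\R(B)-\pi^\R(A)$ and $\dSL(\pi^\SL(B),\pi^\SL(A))$ are $\GL(2,\R)$-invariant, and $B-A$ is positive definite if and only if $N^{-1}(B-A)\,{}^tN^{-1}$ is (for $N$ with $A=N\,{}^tN$), so we may replace $B$ by $A^{-1/2}BA^{-1/2}$ and $A$ by $\Id$.

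With $A=\Id$, let $b_1\ge b_2>0$ be the eigenvalues of $B$. Then $B-\Id$ is positive definite precisely when $b_2>1$, i.e.\ when $\log b_2>0$. On the other hand, writing $s=\log b_1$, $t=\log b_2$, we have $\pi^\R(B)=\frac{1}{\sqrt 2}(s+t)$ and $\pi^\R(A)=0$, while the $\aw_{\SL(2,\R)}$-vector associated to $(\pi^\SL(B),\pi^\SL(\Id))$ is $\bigl(\tfrac{s-t}{2},\tfrac{t-s}{2}\bigr)$, so $\dSL(\pi^\SL(B),\pi^\SL(\Id))=\frac{|s-t|}{\sqrt 2}$. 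Hence the inequality $\pi^\R(B)-\pi^\R(A)>\dSL(\pi^\SL(B),\pi^\SL(A))$ is exactly $s+t>|s-t|$, which (since $s\ge t$ gives $|s-t|=s-t$) is equivalent to $2t>0$, i.e.\ $t=\log b_2>0$, i.e.\ $b_2>1$. This is precisely the condition that $B-\Id$ is positive definite, which completes the equivalence.

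There is essentially no obstacle here: the whole statement collapses to the elementary observation that for two reals $s\ge t$ one has $s+t>|s-t|\iff t>0$, combined with the explicit rank-two formulas for $\pi^\R$ and $\dSL$ already in place. The only point requiring a word of care is the reduction to $A=\Id$, for which one invokes that $M\mapsto {}^tNMN$ preserves positive-definiteness for invertible $N$ (as used in Lemma~\ref{lem:eigenvalues}) together with the isometry of the splitting in Lemma~\ref{lem:producttube} that makes both sides $\GL(2,\R)$-invariant.
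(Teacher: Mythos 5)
Your proof is correct and follows essentially the same route as the paper: the forward implication is delegated to Lemma \ref{lem:pos} (where $\sqrt{n-1}=1$ for $n=2$), the converse is reduced to $A=\Id$ by $\GL(2,\R)$-invariance, and then the explicit rank-two formulas $\pi^\R(B)=\tfrac{1}{\sqrt2}(\log b_1+\log b_2)$ and $\dSL(\pi^\SL(B),\pi^\SL(\Id))=\tfrac{1}{\sqrt2}(\log b_1-\log b_2)$ turn the hypothesis into $\log b_2>0$, i.e.\ $B-\Id$ positive definite. This matches the paper's argument step for step, with a bit more detail spelled out on the reduction to $A=\Id$.
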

\begin{proof} Given Lemma \ref{lem:pos}, we just need to show that if the condition on the projections holds, $B-A$ is positive definite, i.e.\ (by Lemma \ref{lem:eigenvalues}) that the eigenvalues of $BA^{-1}$ are strictly bigger than one. As in the proof of Lemma \ref{lem:pos}, we can reduce ourselves to the case where $A=\Id$, so we need to show that the eigenvalues $b_1\geq b_2$ of $B$ are bigger than one. But in this case we have
$$\pi^\R(B)-\pi^\R(S)=\frac{1}{\sqrt{2}}(\log b_1+\log b_2)$$
and
\begin{align*}
\dSL( \pi^{\SL}(B),\pi^{\SL}(A))&=\sqrt{\left(\log b_1-\frac{1}{2}(\log b_1 +\log b_2)\right)^2+\left(\log b_2-\frac{1}{2}(\log b_1 +\log b_2)\right)^2}\\
&=\frac{1}{\sqrt{2}}(\log b_1 -\log b_2)
\end{align*}
so the hypothesis implies $\log b_2>0$, i.e.\ $b_2>1$, and hence $b_1>1$ as well.
\end{proof}

\begin{remark}\label{dFandpiR}{ By restating Lemma \ref{Finslerdistance} using the notation of this section, we get that} given a pair of points $iX,iY\in\Yy_{0,l_\infty}$ such that $X-Y$ is positive definite,  the Finsler distance $\dF(X,Y)$ is, up to an explicit factor, equal to the difference of the projections of the two points on the Euclidean factor:   $$2d^F(iX,iY)=\sqrt n(\pi^\R(X)-\pi^\R(Y)).$$ 
This observation will play a crucial role in our proof of Theorem \ref{thm:main}. 
\end{remark}
\subsection{Orthogonality and orthogonal projection}\label{sec:orthogonality}
In this section we define the concept of orthogonality, following \cite{bp}.

\begin{definition}We say that two $\R$-tubes $\Yy_{a,b}$ and $\Yy_{c,d}$ are \emph{orthogonal} (and we write $\Yy_{a,b}\perp\Yy_{c,d}$) if they are orthogonal as submanifolds of the symmetric space endowed with the Riemannian metric.
\end{definition}
Concretely, $\Yy_{a,b}$ and $\Yy_{c,d}$ are orthogonal if they meet in a point $x$ and there their tangent spaces are orthogonal as subspaces of ${\rm T}_x\calX$ (with respect to the Riemannian metric). It was observed in \cite[Section 4.3]{bp} that the orthogonality relation can be expressed as a property of the cross-ratio of the boundary points: if $(a,c,b,d)$ is maximal, the $\R$-tubes $\Yy_{a,b}$ and $\Yy_{c,d}$ are orthogonal  if and only if $R(a,c,b,d)=2\Id$. 

Let us now fix a tube $\Yy_{a,b}$. In \cite[Section 4.3]{bp} the authors construct an involution $\sigma^{\Sp}_{a,b}$ of $\Sp(2n,\R)$, which is induced by the complex conjugation that fixes the real form $V_{a,b}$ of $\C^{2n}$ given by $$V_{a,b}=\langle v+iw|v\in a, w\in b\rangle.$$
We denote by  $\sigma_{a,b}\in\GL(2n,\R)$ the matrix corresponding to such linear map. Note that $\sigma_{a,b}$ is not a symplectic matrix, but for any two tubes $\Yy_{a,b}$ and $\Yy_{c,d}$ the product $\sigma_{a,b}\sigma_{c,d}$ belongs to $\Sp(2n,\R)$. 

The involution $\sigma^{\Sp}_{a,b}$ induces an anti-holomorphic map $\sigma_{a,b}^\calX$ of $\mathcal X$ whose fixed point set consists precisely of $\Yy_{a,b}$. It was verified in \cite[Cor. 4.7]{bp} that the $\R$-tubes orthogonal to $\Yy_{a,b}$ foliate the symmetric space $\Xx$. Moreover $\sigma_{a,b}$ induces also an involution
$$\sigma_{a,b}^\Ll:(\!(a,b)\!)\to (\!(b,a)\!)$$
where $(\!(a,b)\!):=\{c\in \Ll\, | \, (a,c,b) \mbox{ is maximal}\}$. 
For each Lagrangian $c$ such that $(a,c,b)$ is maximal, $\sigma^\Ll_{a,b}(c)$ is the unique Lagrangian $d$  with the property that $\Yy_{a,b}\perp\Yy_{c,d}$. 
Using these observations it is possible to define \cite[Cor. 4.7]{bp} the orthogonal projection
$$p_{a,b}:\Xx\cup (\!(a,b)\!)\to \Yy_{a,b}.$$

It will be useful to have concrete expression for the reflection $\sigma^{\Ll}_{a,b}$ and the restriction of orthogonal projection $p_{a,b}$ to $(\!(a,b)\!)$, for $(a,b)=(0,l_\infty)$. Recall that we identify $\Sym(n,\R)$ with the set of Lagrangians in $\Ll(\R^{2n})$ that are transverse to ${l_\infty}$ via the restriction of the affine chart $\iota:\Sym(n,\C)\to\Ll(\C^{2n})$. From the definitions, it follows that $\sigma_{0,l_\infty}=\bsm \Id_n&0\\0&-\Id_n\esm$. Further, one can also prove that:
\begin{lem}\label{perpto0infty}
For any $A\in \Sym(n,\R)$, the $\R$-tubes $\Yy_{A,-A}$ and $\Yy_{0,l_\infty}$ are orthogonal and their unique intersection point is $iA$. In particular $\sigma^\Ll_{0,l_\infty}(A)=-A$ and $p_{0,l_\infty}(A)=iA$. 
\end{lem}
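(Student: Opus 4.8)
The plan is to verify the three claims directly, using the explicit model $\Yy_{0,l_\infty}=\{iY\mid Y\in\Sym^+(n,\R)\}$ and the concrete form of the data: $\sigma_{0,l_\infty}=\bsm \Id_n&0\\0&-\Id_n\esm$, the cross-ratio formula \eqref{cross-ratio}, and the orthogonality criterion from \cite[Section 4.3]{bp} (namely, if $(a,c,b,d)$ is maximal then $\Yy_{a,b}\perp\Yy_{c,d}$ iff $R(a,c,b,d)=2\Id$). First I would check that $\Yy_{A,-A}$ is a genuine $\R$-tube, i.e.\ that $A$ and $-A$ are transverse as Lagrangians: this follows because $A-(-A)=2A$ is positive definite, hence invertible, and transversality of two symmetric matrices (as affine charts of Lagrangians) is equivalent to invertibility of their difference.

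Next I would locate the intersection point. A point $iY\in\Yy_{0,l_\infty}$ lies in $\Yy_{A,-A}$ iff $R(A,iY,-iY,-A)=-\Id$. Using \eqref{cross-ratio} this becomes
\[
(A-iY)^{-1}(-A-iY)(-A+iY)^{-1}(A+iY)=-\Id,
\]
which after clearing denominators (as in the computation in Proposition \ref{intersectingtubes}) reduces to $Y^2=A^2$; since $Y$ is required to be positive definite and $A$ need not be, the unique solution is $Y=(A^2)^{1/2}=|A|$ — wait, more carefully: the equation $Y^2=A^2$ among positive definite $Y$ has the unique solution $Y=(A^2)^{1/2}$, but I actually want $Y=A$, so I should be careful and recompute; the correct reduction, keeping track of signs, should give $Y=A$ when $A$ itself is already handled as an element of $\Sym(n,\R)$ parametrizing a Lagrangian (the point $iA$ makes sense in $\calX$ only when $A\in\Sym^+(n,\R)$, and for general symmetric $A$ one interprets the statement via the $\Sp(2n,\R)$-action). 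I would therefore carry out the cross-ratio algebra explicitly and confirm that $iA$ is the unique point of $\Yy_{0,l_\infty}\cap\Yy_{A,-A}$.

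Then I would verify orthogonality at $iA$. By Lemma \ref{lem:max}, the quadruple $(0,A,l_\infty,-A)$ is maximal precisely when the relevant positive-definiteness conditions hold, and for $A\in\Sym^+(n,\R)$ one checks $(0,A,l_\infty)$ maximal (since $A-0=A>0$) and the remaining triples similarly. Applying the criterion $R(0,A,l_\infty,-A)=2\Id$: using \eqref{cross-ratio} with $l_\infty$ handled via the limiting/affine-chart convention (or by conjugating so that none of the four Lagrangians is at infinity, via Lemma \ref{lem:IdLambda} or \cite[Prop.\ 2.11]{bp}), a short computation gives $R(0,A,l_\infty,-A)=(0-A)^{-1}\cdot(\text{the }l_\infty\text{ terms})\cdot(0-l_\infty)$, which simplifies to $2\Id$. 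This establishes $\Yy_{A,-A}\perp\Yy_{0,l_\infty}$.

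Finally, the last two assertions are formal consequences of the preceding results. Since $\Yy_{A,-A}$ is orthogonal to $\Yy_{0,l_\infty}$ and $(0,A,l_\infty)$ is maximal, the defining property of $\sigma^\Ll_{0,l_\infty}$ (it sends $c\in(\!(0,l_\infty)\!)$ to the unique $d$ with $\Yy_{0,l_\infty}\perp\Yy_{c,d}$, see \cite[Cor.\ 4.7]{bp}) forces $\sigma^\Ll_{0,l_\infty}(A)=-A$; and the definition of $p_{0,l_\infty}$ on $(\!(0,l_\infty)\!)$ as the intersection point of $\Yy_{0,l_\infty}$ with the orthogonal tube through $A$ gives $p_{0,l_\infty}(A)=iA$. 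The main obstacle will be the careful bookkeeping of the cross-ratio computation when one of the Lagrangians is $l_\infty$ — either I push through the affine-chart formula \eqref{cross-ratio} by a direct limiting argument (writing $l_\infty$ as $\lim_{t\to\infty} t\Id$ and checking the limit of the endomorphism of $l_1$ is well-defined), or I avoid the issue entirely by first moving all four Lagrangians into the affine chart via an element of $\Sp(2n,\R)$, performing the computation there, and transporting the conclusion back using $\Sp(2n,\R)$-equivariance of $R$ and of the tube/orthogonality structure.
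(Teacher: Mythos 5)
Your overall plan (direct verification in the affine chart, orthogonality via the cross-ratio criterion, and the ``in particular'' clauses as formal consequences of the definitions of $\sigma^\Ll_{0,l_\infty}$ and $p_{0,l_\infty}$) is the right one -- the paper offers no written proof, and this is exactly the kind of explicit computation it has in mind. The orthogonality step is fine: $(-A,0,A,l_\infty)$ is maximal for $A\in\Sym^+(n,\R)$ by Lemma \ref{lem:max}, and $R(-A,0,A,l_\infty)=(-A)^{-1}(-2A)=2\Id$ once the $l_\infty$-factors are handled by the limit $t\Id$, $t\to\infty$ (your displayed intermediate expression with a lone factor $(0-l_\infty)$ is garbled, but the method is sound).

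The genuine gap is in the intersection-point step, and your own hedge does not repair it. First, the reduction of $R(A,iY,-iY,-A)=-\Id$ to $Y^2=A^2$ ``as in Proposition \ref{intersectingtubes}'' fails: that computation used that one endpoint was the scalar $-\Id$, so the relevant factors commuted. Carrying the algebra out with endpoints $\pm A$, the equation is equivalent to $(A+iY)(A-iY)^{-1}(A+iY)=-(A-iY)$, which (writing $A+iY=(A-iY)+2iY$) reduces to $YA^{-1}Y=A$, not to $Y^2=A^2$; the two agree only when $Y$ commutes with $A$. Second, your proposed escape for general symmetric $A$ (``interpret the statement via the $\Sp(2n,\R)$-action'') is not available: the lemma genuinely requires $A\in\Sym^+(n,\R)$, which is how it is used throughout the paper. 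For indefinite invertible $A$ the point $iA$ does not lie in $\calX$, and even uniqueness of the intersection fails -- e.g.\ for $A=\diag(1,-1)$ every $Y=\bpm \cosh t&\sinh t\\ \sinh t&\cosh t\epm$ solves $YA^{-1}Y=A$, so $\Yy_{A,-A}\cap\Yy_{0,l_\infty}$ is a whole curve (consistently, $(-A,0,A,l_\infty)$ is then not maximal, so Proposition \ref{intersectingtubes} does not apply). With $A$ positive definite the step closes in either of two ways: solve $YA^{-1}Y=A$ with $Y>0$ (substituting $Y=A^{1/2}BA^{1/2}$ gives $B^2=\Id$, hence $B=\Id$ and $Y=A$), or, more cheaply, check $R(A,iA,-iA,-A)=-\Id$ directly (all factors are polynomials in $A$, so they commute and the scalar computation applies) and invoke Proposition \ref{intersectingtubes} for the maximal $4$-tuple $(-A,0,A,l_\infty)$ to get uniqueness, rather than solving the equation at all.
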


We will need the fact that the vectorial distance of the projection of two Lagrangians $x,y$ to an $\R$-tube $\Yy_{a,b}$ can be computed in term of the eigenvalues of the cross-ratio of the four Lagrangians. 
\begin{lem}\label{R(a,x,y,b)}
If $(a,x,y,b)\in\Ll(\R^{2n})^{(4)}$ is a maximal 4-tuple  and $p_{a,b}$ is the orthogonal projection onto $\Yy_{a,b}$, the distance $\dC(p_{a,b}(x),p_{a,b}(y))$ is $(\log\mu_1,\dots,\log\mu_n)$, where the $\mu_i$ are the eigenvalues of $R(a,x,y,b)$.
\end{lem}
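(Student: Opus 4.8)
The plan is to reduce the statement to a direct computation in the standard model, using the $\Sp(2n,\R)$-invariance of all the quantities involved. First I would move the tube $\Yy_{a,b}$ into standard position: since $\Sp(2n,\R)$ acts transitively on pairs of transverse Lagrangians, choose $g\in\Sp(2n,\R)$ with $g(a,b)=(0,l_\infty)$. Because the cross-ratio $R$ is equivariant (conjugation by $g$ on the relevant endomorphisms), the orthogonal projection $p_{a,b}$ is equivariant (the involutions $\sigma^{\Sp}_{a,b}$, $\sigma^{\Ll}_{a,b}$ are constructed from the conjugation fixing $V_{a,b}$, hence conjugate under $g$), and the vectorial distance $\dC$ is $\Sp(2n,\R)$-invariant, it suffices to prove the statement for $(a,b)=(0,l_\infty)$. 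So write $x=X$, $y=Y$ with $X,Y\in\Sym(n,\R)$; maximality of $(0,X,Y,l_\infty)$ means, by Lemma~\ref{lem:max}, that $X$ and $Y-X$ are positive definite (equivalently $Y$ and $Y-X$ positive definite).

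Next I would compute the projections explicitly. By Lemma~\ref{perpto0infty}, $p_{0,l_\infty}(Z)=iZ$ for every $Z\in\Sym(n,\R)$ transverse to $0$ and $l_\infty$ with the appropriate maximality (i.e.\ $Z$ positive definite, which is the case here for both $X$ and $Y$ since $(0,X,Y,l_\infty)$ maximal forces $X,Y\in\Sym^+(n,\R)$). Hence $p_{0,l_\infty}(X)=iX$ and $p_{0,l_\infty}(Y)=iY$, both points of the standard tube $\Yy_{0,l_\infty}=\{iW\mid W\in\Sym^+(n,\R)\}$. Now I invoke Lemma~\ref{Finslerdistance}: since $Y-X$ is positive definite, $\dC(iX,iY)=(\log\mu_1,\dots,\log\mu_n)$ where $\mu_1\geq\dots\geq\mu_n$ are the eigenvalues of $X^{-1}Y$.

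It remains to identify $X^{-1}Y$ with $R(a,x,y,b)$ after transporting back, i.e.\ to check that $R(0,X,Y,l_\infty)$ has the same eigenvalues as $X^{-1}Y$. Using the explicit formula \eqref{cross-ratio} with the convention that one first sends $(0,X,Y,l_\infty)$ into the affine chart — concretely, applying \eqref{cross-ratio} to the $4$-tuple $(0,X,Y,l_\infty)$ after an affine change putting these in $\Sym(n,\C)$, or equivalently computing $R$ directly from the projection definition $R(l_1,l_2,l_3,l_4)=p_{l_1}^{\parallel l_2}\circ p_{l_4}^{\parallel l_3}|_{l_1}$ with $l_1=0$, $l_4=l_\infty$ — one gets that $R(0,X,Y,l_\infty)$ is conjugate to $X^{-1}Y$ (the factors of \eqref{cross-ratio} telescope, since two of the four Lagrangians are $0$ and $\infty$; morally this is the higher-rank version of the fact that the classical cross-ratio of $(0,x,y,\infty)$ is $y/x$). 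Combining this with Lemma~\ref{Finslerdistance} and the equivariance reduction gives $\dC(p_{a,b}(x),p_{a,b}(y))=(\log\mu_1,\dots,\log\mu_n)$ with $\mu_i=\ev_i\bigl(R(a,x,y,b)\bigr)$, as claimed.

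The one point requiring care — the main obstacle — is making sure the various normalizations and conventions line up: which argument slot of $R$ the projection is expressed in, the precise form of \eqref{cross-ratio} when some entries are $0$ or $l_\infty$ (so that the affine chart formula is literally applicable), and verifying that the maximality hypothesis indeed forces both $X$ and $Y$ to be positive definite so that Lemma~\ref{perpto0infty} and Lemma~\ref{Finslerdistance} apply verbatim. Once these bookkeeping issues are settled, the proof is the short chain: equivariance $\Rightarrow$ standard position $\Rightarrow$ $p_{0,l_\infty}(Z)=iZ$ $\Rightarrow$ Lemma~\ref{Finslerdistance} $\Rightarrow$ eigenvalues of $X^{-1}Y=$ eigenvalues of $R(0,X,Y,l_\infty)$.
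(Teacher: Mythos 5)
Your proposal is correct and follows essentially the same route as the paper: reduce by the $\Sp(2n,\R)$-action to $(a,b)=(0,l_\infty)$ and conclude by explicit computation (the paper leaves this computation implicit, while you carry it out via Lemma~\ref{perpto0infty}, Lemma~\ref{Finslerdistance}, and the identification of $R(0,X,Y,l_\infty)$ with $X^{-1}Y$, which indeed follows from the projection definition of $R$ since \eqref{cross-ratio} is not literally applicable when one entry is $l_\infty$). The bookkeeping points you flag — maximality forcing $X$, $Y$, $Y-X$ positive definite, and the equivariance of $R$, $p_{a,b}$, $\dC$ — all check out.
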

\begin{proof}
Up to the action of the symplectic group, we can reduce to the case $(a,b)=(0,l_\infty)$. In this case, the result follows from explicit computations.
\end{proof}
%
\section{Maximal representations}\label{maxreps}
As mentioned in the introduction, maximal representations are the representations that maximize the Toledo invariant, an invariant defined with the aid of bounded cohomology. It follows from a deep result of Burger, Iozzi and Wienhard that these representations can be equivalently characterized as representations admitting a ``well-behaved'' boundary map. Precisely, let $\G$ be the fundamental group of an oriented surface $\S$ with negative Euler characteristic and boundary $\partial \Sigma$ (which could be empty). Fix a finite area hyperbolization of $\S$ inducing an action of $\G$ on $\mathbb S^1=\partial \H^2$.
\begin{definition}\label{def:maxrep}
A representation $\rho:\G\to\Sp(2n,\R)$ \emph{admits a maximal framing} if there exists a $\rho$-equivariant map $\phi:\mathbb S^1\to\Ll(\R^{2n})$ which is monotone (i.e.\ the image of any positively oriented triple in the circle is a maximal triple) and right continuous.
\end{definition}
Burger, Iozzi and Wienhard (see \cite[Theorem 8]{biw}) proved that a representation admits a maximal framing if and only if it is maximal. Since we will not directly need bounded cohomology in the rest of the paper we refer the interested reader to \cite{biw} for a definition of the Toledo invariant.
  The following structural result about maximal representations allows us to associate, to each maximal representation, a locally symmetric space whose fundamental group is isomorphic to the fundamental group of $\S$. In the paper we will be interested in the geometry of such locally symmetric space.
\begin{theor}[{\cite[Theorem 5]{biw}}]
Maximal representations are  injective and have discrete image.
\end{theor}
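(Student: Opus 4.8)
The statement to prove is that maximal representations $\rho:\Gamma\to\Sp(2n,\R)$ are injective with discrete image. The plan is to leverage the existence of the monotone, right-continuous boundary map $\phi:\mathbb S^1\to\Ll(\R^{2n})$ guaranteed by the Burger--Iozzi--Wienhard characterization (Definition \ref{def:maxrep}), and to run a dynamical/convergence-group style argument on the circle transported to the Shilov boundary $\Ll(\R^{2n})$. The key observation is that a monotone map of the circle into the space of Lagrangians is injective on a set of full measure (in fact off a countable set), so $\phi$ genuinely detects the action of $\Gamma$ on $\mathbb S^1$, and the latter is faithful and discrete because $\Gamma$ is a lattice in $\PSL(2,\R)$ acting by the fixed hyperbolization.

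\medskip

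First I would record that, since the chosen hyperbolization of $\Sigma$ gives a faithful action of $\Gamma$ on $\mathbb S^1=\partial\H^2$, it suffices to show that $\rho(\gamma)=\Id$ forces $\gamma$ to act trivially on $\mathbb S^1$, and that any sequence $\rho(\gamma_k)\to\Id$ (with $\gamma_k$ distinct) is impossible. For injectivity: if $\rho(\gamma)=\Id$ then $\phi(\gamma\cdot\xi)=\rho(\gamma)\phi(\xi)=\phi(\xi)$ for all $\xi$. Since $\gamma$ acts on $\mathbb S^1$ either trivially or with at most two fixed points, if it acted nontrivially it would move some nonempty open arc $I$ off itself; monotonicity of $\phi$ implies that $\phi$ restricted to a positively oriented triple in $I$ is a maximal (hence injective, pairwise transverse) triple, so $\phi$ cannot be constant on $\gamma$-orbits inside $I$ unless $\gamma$ fixes $I$ pointwise — contradiction. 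Hence $\gamma$ acts trivially on $\mathbb S^1$, so $\gamma=1$ by faithfulness of the hyperbolization.

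\medskip

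For discreteness I would argue by contradiction: suppose $\rho(\gamma_k)\to g$ in $\Sp(2n,\R)$ with the $\gamma_k\in\Gamma$ pairwise distinct. Since $\Gamma$ acts on $\mathbb S^1$ as a lattice (in particular as a convergence group with a dense set of fixed points of hyperbolic elements), after passing to a subsequence there exist points $a\neq b\in\mathbb S^1$ with $\gamma_k\cdot\xi\to a$ uniformly on compact subsets of $\mathbb S^1\setminus\{b\}$; choose three distinct points $\xi_1,\xi_2,\xi_3$ all different from $b$ and positively oriented. Applying $\rho(\gamma_k)$ to the transverse triple $(\phi(\xi_1),\phi(\xi_2),\phi(\xi_3))$ and using continuity of the $\Sp(2n,\R)$-action on triples of Lagrangians, we get $g\cdot\phi(\xi_i)=\lim\phi(\gamma_k\cdot\xi_i)$. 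By monotonicity $\phi(\gamma_k\cdot\xi_i)$ is a maximal (pairwise transverse) triple for each $k$, but all three coordinates converge to (limits of) $\phi$ near $a$, forcing $g\cdot\phi(\xi_1)$, $g\cdot\phi(\xi_2)$, $g\cdot\phi(\xi_3)$ to fail transversality — contradicting that $g\in\Sp(2n,\R)$ preserves transversality of the already-transverse triple $\phi(\xi_i)$. Here one must be slightly careful because $\phi$ need only be right-continuous, not continuous, but this only affects countably many points and one can always choose the $\xi_i$ among continuity points (equivalently, use that a monotone circle map has at most countably many discontinuities), so the limit identification of $g\cdot\phi(\xi_i)$ with points in the closure of $\phi(\text{small arc around }a)$ still goes through.

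\medskip

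The main obstacle I anticipate is making the convergence-dynamics step fully rigorous on the level of Lagrangians rather than on $\mathbb S^1$: one needs that "three points of $\mathbb S^1$ collapsing together" forces "the corresponding three Lagrangians collapsing / losing transversality", which uses monotonicity in an essential way (a maximal triple degenerating in the limit cannot stay pairwise transverse if the three source points merge), together with the fact that $\Sp(2n,\R)$ acts on the open subset of pairwise-transverse triples properly — this is precisely the mechanism that turns the convergence action on $\mathbb S^1$ into properness of $\rho(\Gamma)$ on $\Xx$. An alternative, cleaner route that avoids spelling out the convergence dynamics by hand is simply to cite \cite[Theorem 8]{biw} once more: Burger--Iozzi--Wienhard already prove that the boundary map is a quasi-isometric embedding type object (or invoke the Anosov-ness of maximal representations up to the Shilov-hyperbolic hypothesis, cf.\ the discussion in this section), from which discreteness and faithfulness are immediate; but since the present section is meant to be self-contained at the level of boundary maps, I would prefer the direct monotonicity argument sketched above.
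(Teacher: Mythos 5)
The paper does not prove this statement at all: it is imported verbatim as \cite[Theorem 5]{biw}, so there is no internal proof to compare against, and any argument you give is really a reconstruction of Burger--Iozzi--Wienhard's result from the maximal-framing characterization (\cite[Theorem 8]{biw}), which the paper also quotes. Granting that input, your injectivity argument is correct and can even be shortened: if $\rho(\gamma)=\Id$ with $\gamma\neq 1$, faithfulness of the hyperbolization gives $\xi$ with $\gamma\xi\neq\xi$; completing to a positively oriented triple and using monotonicity, $\phi(\xi)$ and $\phi(\gamma\xi)$ are transverse, hence distinct, contradicting $\phi(\gamma\xi)=\rho(\gamma)\phi(\xi)=\phi(\xi)$. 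This is close in spirit to how the result is actually obtained in \cite{biw}, so the route is reasonable and not circular, whereas your ``cleaner alternative'' at the end is not available: invoking Anosov-ness or quasi-isometric embedding properties of maximal representations presupposes (or contains) exactly the discreteness and injectivity being proved, and the Anosov statement moreover needs the peripheral Shilov-hyperbolicity hypothesis that is not part of this theorem.

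The discreteness half has a genuine gap at its key step. From $\gamma_k\xi_i\to a$ you want to conclude that the limits $g\cdot\phi(\xi_i)=\lim_k\phi(\gamma_k\xi_i)$ cannot stay pairwise transverse, but monotonicity and right-continuity, as you use them, do not by themselves exclude that the values of $\phi$ at points collapsing to $a$ accumulate on three pairwise transverse Lagrangians. Your proposed remedy --- choosing the $\xi_i$ among continuity points of $\phi$ --- is aimed at the wrong place: continuity at $\xi_1,\xi_2,\xi_3$ is irrelevant, since the degeneration happens at the attracting point $a$. What the argument actually requires is the lemma that a monotone map $\phi:\mathbb S^1\to\Ll(\R^{2n})$ admits one-sided limits $\phi(a^-),\phi(a^+)$ at every point; this is proved by choosing $u,v$ with $a$ in the arc $(\!(u,v)\!)$ and working in an affine chart adapted to $\phi(u),\phi(v)$, where monotonicity becomes monotonicity of a bounded family of symmetric matrices, whose one-sided limits exist. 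With that lemma, after passing to a subsequence each sequence $(\gamma_k\xi_i)_k$ approaches $a$ from a fixed side, so each $g\cdot\phi(\xi_i)$ equals $\phi(a^-)$ or $\phi(a^+)$; two of the three must then coincide, contradicting the transversality of $(g\cdot\phi(\xi_1),g\cdot\phi(\xi_2),g\cdot\phi(\xi_3))$, and the proof closes. Two minor corrections: the points $a$ and $b$ produced by the convergence-group property may coincide (harmless, since you only need $\xi_i\neq b$), and you should justify that distinct $\gamma_k$ form a divergent sequence in $\PSL(2,\R)$ (immediate from discreteness of the hyperbolization) before invoking that property.
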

Given a maximal representation $\rho:\Gamma\to \Sp(2n,\R)$, the image $\rho(\g)\in \Sp(2n,\R)$ of each non-peripheral element $\g\in\G$ is \emph{Shilov hyperbolic}: it is conjugate to $\bsm A&0\\0&^t A^{-1}\esm$ for a matrix $A$ in $\GL(n,\R)$ with all eigenvalues with absolute value greater than one (see \cite{Strubel}). Equivalently, $\rho(\gamma)$ fixes two transverse Lagrangians $\L^+_\g$ and $\L^-_\g$ on which it acts expandingly (resp.\ contractingly). Furthermore if $\phi:\mathbb S^1\to \Ll(\R^{2n})$ is the equivariant boundary map, the Lagrangians $\L^\pm_\g$ are the images $\phi(\g^\pm)$.\ \\

{\bf Assumption:} from now on, whenever a surface $\S$ has boundary, we will restrict to maximal representations $\rho$ such that the image of each peripheral element is Shilov hyperbolic. It is possible to show that this is equivalent to the requirement that the representation $\rho:{\Gamma}\to\Sp(2n,\R)$  is Anosov in the sense of \cite{GW}, and therefore we will denote maximal representations satisfying our assumption \emph{Anosov maximal representation}.
Moreover, we fix an orientation on the boundary components such that the surface lies \emph{to the right} of the boundary. This corresponds to a choice between a peripheral element and its inverse. We will always choose primitive peripheral elements according to the orientation of the boundary components.

\smallskip\smallskip\smallskip
The synthetic geometry whose lines are $\R$-tubes is particularly adapted to the study of maximal representations: if $\rho$ is a maximal representation, the boundary map $\phi$ allows us to select specific $\R$-tubes associated to the elements of the fundamental group. More precisely, for each element $\g\in\G$, $\rho(\g)$ stabilizes an $\R$-tube, denoted $\Yy_\g$, whose endpoints are the attractive and repulsive fixed points $\L_\g^\pm$ of $\rho(\g)$ in $\Ll(\R^{2n})$. It follows from Proposition \ref{intersectingtubes} that such tubes have the same intersection pattern as the axes of the elements in $\wt \S$. For ease of notation, if a tube is associated to an element $\gamma\in\Gamma$, we will denote the projection defined in section \ref{sec:orthogonality} by $p_\g$ and the associated involution by $\s_\g^{\Sp}$ (or $\sigma_{\rho(\gamma)}^{\Sp}$, if we want to underline the representation we are considering).
\smallskip\smallskip\smallskip

For points $x,y\in \mathbb{S}^1$, we denote by $(\!(x,y)\!)$ the subset of $\mathbb{S}^1$ given by points $z$ such that $(x,z,y)$ is a positive oriented triple. The following observation will be useful:
\begin{remark}\label{projcausal}
The transitivity of the $\Sp(2n,\R)$-action on the space of tubes and the identification of $\Yy_{0,l_\infty}$ with $\Xx_{\GL(n,\R)}$, allows us to generalize Definition \ref{def:Causal} and define the notion of a causal path $f:K\to \Yy$ for any tube $\Yy$. With this definition, if $\phi:\mathbb{S}^1\to \Ll(\R^{2n})$ is the boundary map of a maximal representation $\rho$, then for each $x<y$ the image of the map 
\begin{align*}
(\!(x,y)\!)&\to \Yy_{\phi(x),\phi(y)}\\
 t&\mapsto p_{\phi(x),\phi(y)}(\phi(t))
\end{align*}
is a causal path.
\end{remark}

\subsection{Some special maximal representations}
A special type of maximal representations we will be interested in are the ones obtained using the diagonal embedding $\Delta$ of $n$ identical copies of $\SL(2,\R)$ into $\Sp(2n,\R)$. The centralizer $Z_{\Sp(2n,\R)}(\Delta(\SL(2,\R)))$ is the subgroup, isomorphic to $\O(n)$, consisting of matrices of the form $\bsm A&0\\0&A\esm$, for matrices $A\in{\O}(n)$. 

Let $\rho':\G\to \PSL(2,\R)$ be the holonomy representation of a hyperbolic structure on $\S$. The representation $\rho'$ can be lifted to a maximal representation $\rho:\G\to\SL(2,\R)$. For any $\rho$ obtained this way and any character $\chi:\G\to Z_{\Sp(2n,\R)}(\Delta(\SL(2,\R)))$, the product $\Delta\circ\rho\times \chi$ is a maximal representation (see \cite{biw} for details) whose image is contained in $\Delta(\SL_2(\R))\times Z_{\Sp(2n,\R)}(\Delta(\SL(2,\R)))<\Sp(2n,\R)$. Observe that such a representation preserves the image of the diagonal inclusion of the Poincar\'e disk in the standard polydisk.
\begin{definition}
We say that a representation $\rho:\G\to \Sp(2n,\R)$ is \emph{a diagonal embedding of a hyperbolization} if there exists a lift $\rho_0:\G\to\SL(2,\R)$ of the holonomy of a hyperbolization  and a character  $\chi:\G\to Z_{\Sp(2n,\R)}(\Delta(\SL(2,\R)))$ such that $\rho$ is conjugated to $\Delta\circ\rho_0\times \chi$.
\end{definition} 

Another generalization of Teichm\"uller space that has attracted a lot of attention is the Hitchin component. If $i_N:\SL(2,\R)\to\SL(N,\R)$ denotes the unique irreducible representation, the \emph{Hitchin component} $\mbox{Hit}_N(\Gamma)$ is the component of the character variety $$\quot{\Hom(\G,\SL(N,\R))}{\SL(N,\R)}$$ containing  $i_N\circ \rho$, where $\rho$ the holonomy of a hyperbolization. Representations in the Hitchin component are called \emph{Hitchin representations}. When $N=2n$ is even, the representation $i_{2n}$ factors through $\Sp(2n,\R)$. It turns out that all the Hitchin representations whose image is contained in $\Sp(2n,\R)$ are also maximal representations \cite[Example 3.10]{bilw}. We call such representations   \emph{Hitchin maximal representations}.
\subsection{Distances and maximal representations}\label{sec:distandmaxreps}
Given $\gamma\in\Gamma$, we define the length  $\ellC (\rho(\gamma))$ (resp.\ $\ell^R(\rho(\gamma))$, $\ell^F(\rho(\gamma))$) of $\rho(\gamma)$ with respect to the vectorial (resp.\ Riemannian, Finsler) metric to be the translation length of $\rho(\gamma)$ acting on $\calX$ computed with the corresponding metric. 

Let $A$ be a matrix representing the action of $\rho(\g)$ on its attractive Lagrangian  $\L^+_\g$ and suppose $|a_1|\geq \dots \geq |a_n|>1$ are the absolute values of the eigenvalues of $A$. It is well known (see \cite{benoist_proprietes} and \cite{parreau_compactification}) that the vectorial length of $\rho(\gamma)$ is explicitely related to the eigenvalues of $A$:
$$\ellC(\rho(\gamma))=(2\log |a_1|,\dots,2\log |a_n|).$$
From this it is easy to deduce that
$$\ellF(\rho(\g))=\log \det A$$
and
$$\ellR(\rho(\g))=2\sqrt{\sum_{i=1}^n (\log |a_i|)^2}.$$
Note that the Finsler metric assigns to a peripheral element a length which is closely related to its action on the attractive Lagrangians. Moreover many points in $\Yy_\g$ realize the Finsler translation length of the element $\rho(\g)$:
\begin{lem}\label{lem:ellF}
Let $g\in \Sp(2n,\R)$ be Shilov hyperbolic with associated tube $\Yy$. For any $x\in\Yy$ such that $(x,g x)$ is a causal segment we have 
$$\ellF(g)=\dF(gx,x).$$
In particular, if $\rho$ is a maximal representation and $\phi$ is the associated boundary map, for any $x\in \mathbb{S}^1$, $$\ellF(\rho(\g))=\dF(p_\gamma(\phi(x)),p_\g(\phi(\g x))).$$
\end{lem}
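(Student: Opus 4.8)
The plan is to reduce to the model tube and exploit Remark \ref{dFandpiR}, which identifies the Finsler distance between two points of $\Yy_{0,l_\infty}$ in causal position with the gap between their projections to the Euclidean factor $\R$ of $\Xx_{\GL(n,\R)}$. First I would conjugate so that $\Yy=\Yy_{0,l_\infty}$ and $g$ acts as $G\cdot iX = i(GX\,{}^t\!G)$ for some $G\in\GL(n,\R)$ whose eigenvalues (in absolute value) are $|a_1|\geq\dots\geq|a_n|>1$; this is exactly the normal form recalled just before the lemma, where $A$ represents the action of $\rho(\g)$ on $\L^+_\g$. Writing $x=iX$ with $X\in\Sym^+(n,\R)$, the causality hypothesis that $(x,gx)$ is a causal segment means $GX\,{}^t\!G - X$ is positive definite, so by Lemma \ref{Finslerdistance} we have $2\dF(gx,x)=\log\det\bigl(X^{-1}GX\,{}^t\!G\bigr)=\log\det(G\,{}^t\!G)=2\log|\det G|$. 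Since $\ellF(g)=\log\det A = \log|\det G|$ by the displayed formula above (the eigenvalues of $A$ and $G$ agree up to sign), this gives $\dF(gx,x)=\ellF(g)$ for \emph{any} such causal $x$, which is the first assertion.

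The one genuine point that needs care — and the place I expect the only real work — is that $\dF(gx,x)$ is an \emph{upper} bound for the translation length by definition, and I must separately argue it is not strictly larger than the infimum $\ellF(g)$; in other words that $\ellF(g)=\log\det A$ really is the translation length and is achieved. For the lower bound I would use that the Finsler distance is bounded below by (twice) the first coordinate of the vectorial distance is not quite what is wanted; instead the cleanest route is: for any $Y\in\calX$, applying the formula $\ellC(g)=(2\log|a_1|,\dots,2\log|a_n|)$ together with the fact that $\dC(gY,Y)\succeq \ellC(g)$ in the partial order on $\aw$ (a standard property of vectorial displacement of an isometry with these eigenvalues, e.g.\ from \cite{parreau_distvect} or the argument in \cite{bp}) shows $\dF(gY,Y)\geq \tfrac12\sum 2\log|a_i|=\log\det A$. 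Combined with the upper bound just computed on causal points of $\Yy$, this pins down $\ellF(g)=\log\det A=\dF(gx,x)$.

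Finally, for the application to maximal representations: with $g=\rho(\g)$ Shilov hyperbolic and $\phi$ the boundary map, Remark \ref{projcausal} tells us that $t\mapsto p_{\L^-_\g,\L^+_\g}(\phi(t))$ traces a causal path in $\Yy_\g$ over an arc of $\mathbb S^1$, and since $\g$ acts on $\mathbb S^1$ with north--south dynamics towards $\g^{\pm}$, the pair $(p_\g(\phi(x)),p_\g(\phi(\g x)))$ is a causal segment of $\Yy_\g$ for every $x\in\mathbb S^1$ (here one uses $\rho$-equivariance of $\phi$ and of $p_\g$, so that $p_\g(\phi(\g x))=\rho(\g)\cdot p_\g(\phi(x))$). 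Applying the first part of the lemma with this causal segment yields $\ellF(\rho(\g))=\dF\bigl(p_\g(\phi(x)),p_\g(\phi(\g x))\bigr)$, as claimed.
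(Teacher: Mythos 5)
Your proposal is correct and follows essentially the paper's proof: reduce to the model tube $\Yy_{0,l_\infty}$, note that causality puts you in the situation of Lemma \ref{Finslerdistance}, and compute $\dF(gx,x)=\log|\det A|=\ellF(g)$ (your variant of keeping a general $X$ and cancelling it via multiplicativity of the determinant, rather than normalizing $x=i\Id$, is an inessential difference), and the ``in particular'' part via Remark \ref{projcausal} and equivariance is exactly what the paper leaves implicit. Your second paragraph establishing the lower bound $\ellF(g)\geq\log\det A$ is fine but redundant relative to the paper, which already takes $\ellF(g)=\log\det A$ as known in Section \ref{sec:distandmaxreps} from the cited formula for the vectorial translation length.
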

\begin{proof}
Using the transitivity of the $\Sp(2n,\R)$-action we can assume that $\L_g^+=l_\infty$ and $\L_g^-=0$, so that $g$ has expression $\bsm A&0\\0&^t\!A^{-1}\esm$, and, up to conjugating $A$, we can assume that $x=i\Id$. In this case we get $g x=iA\,{^t\!A} $. The causality condition tells us that $A\,{^t\!A}$ is positive definite, and hence (by Lemma \ref{Finslerdistance}) $\dF(i\Id, iA\,{^t\!A})=\log\det A=\ellF(g)$. 
\end{proof}
Another important advantage of the Finsler metric is that it is additive on causal curves:
\begin{lem}\label{orsum}
Suppose $x,y,z\in (\!(\g^-,\g^+)\!)\subset S^1$ are positively oriented. Then
$$d_F(p_\g(\phi(x)),p_\g(\phi(z)))=d_F(p_\g(\phi(x)),p_\g(\phi(y)))+d_F(p_\g(\phi(y)),p_\g(\phi(z))).$$
\end{lem}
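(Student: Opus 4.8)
The statement is an additivity property of the Finsler distance along the causal path $t \mapsto p_\gamma(\phi(t))$ obtained from the boundary map. By Lemma \ref{Finslerdistance} and the discussion in Remark \ref{dFandpiR}, once we normalize the tube $\Yy_\gamma$ to be the standard tube $\Yy_{0,l_\infty}$ (using transitivity of the $\Sp(2n,\R)$-action, which preserves all the relevant structure: Finsler distance, causality, and the orthogonal projection), the Finsler distance between two points $iX, iY \in \Yy_{0,l_\infty}$ with $X - Y$ positive definite is an explicit positive multiple of $\pi^\R(X) - \pi^\R(Y)$, namely $2\dF(iX,iY) = \sqrt{n}\,(\pi^\R(X) - \pi^\R(Y))$. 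So the whole statement reduces to additivity of the scalar quantity $\pi^\R$ along a causal chain.

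The key step is therefore to establish that $p_\gamma(\phi(x))$, $p_\gamma(\phi(y))$, $p_\gamma(\phi(z))$ are pairwise comparable in the causal order along $\Yy_\gamma$, in the right order. Concretely, writing $p_\gamma(\phi(x)) = iX$, $p_\gamma(\phi(y)) = iY$, $p_\gamma(\phi(z)) = iZ$ in the standard model, I would show that $Y - X$, $Z - Y$, and $Z - X$ are all positive definite. This is exactly the content of Remark \ref{projcausal}: since $x, y, z \in (\!(\gamma^-,\gamma^+)\!)$ are positively oriented (so $y \in (\!(x, z)\!)$ after possibly relabelling), the map $t \mapsto p_{\phi(\gamma^-),\phi(\gamma^+)}(\phi(t))$ restricted to $(\!(\gamma^-,\gamma^+)\!)$ is a causal path, and in particular its values at any two points differ (later minus earlier) by a positive definite matrix. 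Granting this, $\pi^\R$ is manifestly additive: $\pi^\R(Z) - \pi^\R(X) = (\pi^\R(Z) - \pi^\R(Y)) + (\pi^\R(Y) - \pi^\R(X))$, since $\pi^\R$ is a genuine $\R$-valued function (the linear functional $X \mapsto \tfrac{1}{\sqrt n}\log\det X$ on $\Sym^+(n,\R)$). Multiplying through by $\sqrt{n}/2$ and invoking Lemma \ref{Finslerdistance} (valid on each of the three pairs, since each difference is positive definite) converts this back into the claimed identity for $d_F$.

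I would organize the proof as: (1) reduce to the standard tube by equivariance; (2) invoke Remark \ref{projcausal} to get that the three projected points lie along the tube in causal order, so consecutive differences and the total difference are all positive definite; (3) apply Remark \ref{dFandpiR} / Lemma \ref{Finslerdistance} to rewrite each of the three Finsler distances as $\tfrac{\sqrt n}{2}$ times a difference of $\pi^\R$-values; (4) conclude by linearity of $\pi^\R$.

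The only genuine subtlety — and the step I would be most careful about — is the ordering: one needs that the positively oriented triple $(x,y,z)$ on $S^1$ translates into $y$ lying between $x$ and $z$ with respect to the parametrization of $(\!(\gamma^-,\gamma^+)\!)$ compatible with the causal structure on $\Yy_\gamma$, i.e. that the causal path of Remark \ref{projcausal} is oriented so that $p_\gamma(\phi(y)) - p_\gamma(\phi(x))$ and $p_\gamma(\phi(z)) - p_\gamma(\phi(y))$ are \emph{both} positive definite rather than one being negative definite. This is precisely what "causal path" on $(\!(x,z)\!) \subset (\!(\gamma^-,\gamma^+)\!)$ gives us, using that the cyclic order on the circle and the maximality of the framing $\phi$ guarantee $x < y < z$ in $(\!(\gamma^-,\gamma^+)\!)$; everything else is then immediate. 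No essential calculation is needed beyond what is already recorded in Lemma \ref{Finslerdistance} and Remark \ref{dFandpiR}.
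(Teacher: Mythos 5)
Your proposal is correct and follows essentially the same route as the paper: normalize to the standard tube, use monotonicity/causality of the projected boundary map (the paper does this via Lemma \ref{perpto0infty} and maximality, you via Remark \ref{projcausal}) to get that consecutive differences are positive definite, and then reduce additivity of $\dF$ to additivity of $\frac12\log\det$, which is exactly the content of your $\pi^\R$-formulation via Remark \ref{dFandpiR} and of the paper's direct use of Lemma \ref{Finslerdistance}. The ordering point you flag is handled in the paper simply by reading ``positively oriented'' as $\g^-,x,y,z,\g^+$ being in cyclic order, so no extra argument is needed there.
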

\begin{proof}
Up to the action of the symplectic group, we can assume $\phi(\g^-)=0$, $\phi(\g^+)=l_\infty$, $\phi(x)=\Id$, $\phi(y)=A$ and $\phi(z)=B$. By monotonicity, we know that $A-\Id$ and $B-A$ are positive definite. By Lemma \ref{perpto0infty}, the $\R$-tubes passing through $\Id$, $A$ and $B$ and orthogonal to $\Yy_{0,l_\infty}$ are $\Yy_{-\Id,\Id}$, $\Yy_{-A,A}$ and $\Yy_{-B,B}$ respectively, so $p_\g(\phi(x))=i\Id$, $p_\g(\phi(y))=iA$ and $p_\g(\phi(z))=iB$.

By Lemma \ref{Finslerdistance} 
\begin{align*}
\dF(p_\g(\phi(y)),p_\g(\phi(z)))&=\frac{1}{2}\log \det (A^{-1}B)\\
\dF(p_\g(\phi(x)),p_\g(\phi(y)))&=\frac{1}{2}\log \det A\\
\dF(p_\g(\phi(x)),p_\g(\phi(z)))&=\frac{1}{2}\log \det B,
\end{align*} which implies the desired equality. 
\end{proof}

The Finsler metric is also closely related to a cross-ratio in the sense of Labourie. Precisely, in \cite{Lab_cr} Labourie introduced a notion of $\R$-valued cross-ratio on the boundary at infinity of the fundamental group of a surface. In \cite{Lab_energy} he showed that
\begin{align*}
\B:\partial \Gamma^{4*}&\to \R\\
(x,y,z,t)&\mapsto \det R(\phi(x),\phi(t),\phi(y),\phi(z))
\end{align*}
is a cross-ratio in the sense of \cite{Lab_cr}, where
$$\partial \Gamma^{4*}=\{(x,y,z,t)\in\partial \Gamma^{4}\, |\, x\neq t \mbox{ and } y\neq z\}.$$ Moreover, given a cross-ratio $\B$, the period of a non-trivial element $\gamma\in\G$ is defined as
$$\ell_{\B}(\gamma):=\log | \B(\gamma^-,\gamma\cdot y,\gamma^+,y) |.$$
It is easy to check that
$$\ell^F(\gamma)=\frac{1}{2}\ell_{\B}(\gamma).$$
Labourie uses this cross-ratio in \cite{Lab_energy} to show that maximal representations are well displacing (cfr.\ also \cite{Har-Str} for a functorial extension to general Hermitian Lie groups). 
Recently Martone and Zhang used this language to prove systolic inequalities for the Finsler distance\footnote{Their result holds for the larger class of positively ratioed representations.} \cite{MZ}. 

As opposed to the Finsler metric, the Riemannian metric is not additive on causal paths. However we have the following (cfr.\ \cite[Lemma 9.3]{bp}):
\begin{lem}\label{ellRcausal}
Let $x_0,\ldots,x_k\in\Yy_\g$ be on a causal curve. Then 
$$\sum_{i=1}^k \dR(x_{i-1},x_i)\leq \sqrt n \dR(x_1,x_k).$$
\end{lem}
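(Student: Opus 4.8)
The plan is to reduce to the model tube, use the isometric splitting $F=\pi^\R\times\pi^\SL\colon\Xx_{\GL(n,\R)}\to\R\times\Xx_{\SL(n,\R)}$ from Lemma \ref{lem:producttube}, and exploit the fact, provided by Lemma \ref{lem:pos}, that the Euclidean projection $\pi^\R$ increases along causal curves and controls the $\Xx_{\SL(n,\R)}$-component of the displacement. First I would use that $\Sp(2n,\R)$ acts transitively on $\R$-tubes and by isometries to assume $\Yy_\g=\Yy_{0,l_\infty}$, which we identify with $\Xx_{\GL(n,\R)}$; with our normalization this identification is an isometry, so the Riemannian distance restricted to the tube equals $\dGL$, and under the splitting of Lemma \ref{lem:producttube} we have $\dGL(x,y)^2=(\pi^\R(x)-\pi^\R(y))^2+\dSL(\pi^\SL(x),\pi^\SL(y))^2$. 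The hypothesis that the $x_i$ lie on a causal curve means in particular that $x_i-x_{i-1}$ is positive definite for each $i$, so Definition \ref{def:Causal} and Remark \ref{projcausal} make the statement meaningful in this model.

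Next I would apply Lemma \ref{lem:pos} to each consecutive pair: since $x_i-x_{i-1}$ is positive definite, $\pi^\R(x_{i-1})<\pi^\R(x_i)$ and $\dSL(\pi^\SL(x_{i-1}),\pi^\SL(x_i))\le\sqrt{n-1}\,(\pi^\R(x_i)-\pi^\R(x_{i-1}))$. Plugging this into the formula for $\dR$ on the tube gives
\[
\dR(x_{i-1},x_i)^2\le(\pi^\R(x_i)-\pi^\R(x_{i-1}))^2+(n-1)(\pi^\R(x_i)-\pi^\R(x_{i-1}))^2=n\,(\pi^\R(x_i)-\pi^\R(x_{i-1}))^2,
\]
hence $\dR(x_{i-1},x_i)\le\sqrt n\,(\pi^\R(x_i)-\pi^\R(x_{i-1}))$ for every $i$, the increments all being positive.

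Finally I would sum over $i$ and telescope, using the positivity of the increments, to obtain $\sum_{i=1}^k\dR(x_{i-1},x_i)\le\sqrt n\,(\pi^\R(x_k)-\pi^\R(x_0))$; since $F$ is an isometry and the projection onto the Euclidean factor is $1$-Lipschitz, $\pi^\R(x_k)-\pi^\R(x_0)\le\dGL(x_0,x_k)=\dR(x_0,x_k)$, which gives the claim. (Equivalently, one can phrase the middle step through the Finsler metric via Remark \ref{dFandpiR}, since $\sqrt n\,(\pi^\R(x_i)-\pi^\R(x_{i-1}))=2\dF(x_{i-1},x_i)$ and $\dF$ is additive along causal curves, so $\sum_i\dR(x_{i-1},x_i)\le 2\dF(x_0,x_k)=\sqrt n\,(\pi^\R(x_k)-\pi^\R(x_0))\le\sqrt n\,\dR(x_0,x_k)$.) There is no real obstacle here: the entire content is Lemma \ref{lem:pos} together with the product structure of the tube; the only points requiring care are the reduction to the model tube so that causality and the splitting are available, and keeping track of the factor-$2$ normalization that makes $\Xx_{\GL(n,\R)}\cong\Yy_{0,l_\infty}$ an isometry.
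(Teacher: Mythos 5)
Your argument is correct, and it is essentially the argument this paper's toolkit is set up for: the paper itself does not prove the lemma but quotes it from \cite[Lemma 9.3]{bp}, and your combination of the isometric splitting of Lemma \ref{lem:producttube}, the causal estimate of Lemma \ref{lem:pos} applied to each consecutive pair, telescoping in the Euclidean factor, and the $1$-Lipschitz projection (equivalently, the Finsler reformulation via Remark \ref{dFandpiR} and Lemma \ref{lem:relRF}) is exactly the intended mechanism. One small point: what you prove is $\sum_{i=1}^k \dR(x_{i-1},x_i)\le\sqrt n\,\dR(x_0,x_k)$, whereas the statement as printed has $\dR(x_1,x_k)$ on the right; the printed version is a typo (for $k=1$ it would force $\dR(x_0,x_1)=0$), so your version is the correct intended one.
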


\subsection{Doubles}\label{sec:doubles}
Let $\Sigma$ be a surface with nonempty boundary. The purpose of this section is to construct, for each Anosov maximal representation $\rho:\pi_1(\Sigma,v)\to \Sp(2n,\R)$, what we call the \emph{holomorphic double} of $\rho$: a specific maximal representation $D\rho:\pi_1(D\Sigma,v)\to \Sp(2n,\R)$ of the fundamental group of the double of the surface $\S$ that restricts to the given representation $\rho$.

Denote by $c_0,\ldots,c_m$ the boundary components, and fix a basepoint $v$ on $c_0$. Recall that the double $D\Sigma$ of the surface $\Sigma$ is the surface obtained gluing two copies $\Sigma,\ov \Sigma$  of the surface $\Sigma$ along its boundary components (where $\ov \S$ is $\Sigma$ endowed with the opposite orientation). We denote by $j_0:\Sigma\to D\Sigma$ and $j_1:\ov\Sigma\to D\Sigma$ the natural inclusions and by $j:D\Sigma\to D\Sigma$ the involution fixing the boundary components pointwise and with the property that $j\circ j_0=j_1$. With a slight abuse of notation we will also denote by $j$ (resp.\ $j_i$) the maps induced at the level of fundamental groups. Given two paths $\alpha$ and $\beta$ such that $\alpha$ ends at the starting point of $\beta$, we denote by $\alpha\ast\beta$ their concatenation.

We denote by $c_i$ also loops parametrizing the respective boundary components and leaving the surface to the right. Recall that for any $\g\in\G$ we denote by $\s_{\rho(\g)}$ the element of $\GL(2n,\R)$ inducing the involution $\s_\g^{\Sp}$ (see Sections \ref{sec:orthogonality} and \ref{maxreps}). We set $\sigma_\rho:=\sigma_{\rho(c_0)}$.

Each arc $\alpha$ starting at $v$ and ending at a boundary component $c_i$ determines the peripheral element $\gamma_\alpha=\alpha\ast c_i\ast \alpha^{-1}$, which in turn gives a matrix $\sigma_{\rho(\gamma_\alpha)}\in\GL(2n,\R)$. 

Our holomorphic double will be obtained amalgamating the representation $\rho$ on $j_0(\pi_1(\Sigma,v))$ and  the $\sigma_\rho \rho \sigma_\rho$  on $j_1(\pi_1(\Sigma,v))$:
\begin{prop}\label{prop:double}
Let $\Sigma$ be a surface with nonempty boundary. Let $\rho:\pi_1(\Sigma,v)\to\Sp(2n,\R)$ be a maximal representation and assume that the image of every peripheral element is Shilov hyperbolic. Then $\rho$ is the restriction of a unique maximal representation $D\rho:\pi_1(D\Sigma,v)\to \Sp(2n,\R)$ such that: 
\begin{enumerate}
\item for all elements $\gamma$ in $\pi_1(D\Sigma,v)$, $D\rho(j(\gamma))=\sigma_\rho D\rho(\gamma) \sigma_\rho $;
\item for every arc $\alpha$ joining $v$ to a boundary component of $\Sigma$ we have
$$D\rho(\alpha\ast j(\alpha)^{-1})=\sigma_{\rho(\g_\alpha)}\sigma_\rho.$$
\end{enumerate}
\end{prop}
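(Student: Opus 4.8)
The plan is to realize $\pi_1(D\Sigma, v)$ as an amalgamated product (or, when there are several boundary components, an iterated amalgam / graph-of-groups fundamental group) of two copies of $\pi_1(\Sigma, v)$ along the peripheral subgroups, and then to define $D\rho$ piecewise by the universal property of such a decomposition. Concretely, writing $\Sigma' = j_0(\Sigma)$ and $\ov\Sigma = j_1(\Sigma)$, the double $D\Sigma$ is the union of these two subsurfaces glued along $\partial\Sigma$, so by van Kampen $\pi_1(D\Sigma,v)$ is generated by $j_0(\pi_1(\Sigma,v))$, the elements $j_1(\gamma)$ (conjugated back to $v$ along the fixed arc structure), and the arc elements $\alpha * j(\alpha)^{-1}$ that cross from one copy to the other; the relations are exactly that the two peripheral copies of each $c_i$ agree after conjugation by the relevant arc. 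I would first set up this presentation carefully, fixing for each boundary component $c_i$ an arc $\alpha_i$ from $v$ to $c_i$ (with $\alpha_0$ trivial), so that every crossing arc is expressed in terms of the $\alpha_i$ and loops in $\Sigma$.

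\textbf{Definition of $D\rho$ and consistency.} On $j_0(\pi_1(\Sigma,v))$ I set $D\rho|_{j_0} = \rho$; on $j_1(\pi_1(\Sigma,v))$ I set $D\rho(j(\gamma)) = \sigma_\rho \rho(\gamma)\sigma_\rho$ (this forces property (1)); and on the crossing element $\alpha_i * j(\alpha_i)^{-1}$ I set $D\rho(\alpha_i * j(\alpha_i)^{-1}) = \sigma_{\rho(\gamma_{\alpha_i})}\sigma_\rho$, as demanded by property (2). The main verification is that these assignments respect the amalgamation relation at each boundary curve: for each $i$, the element $\gamma_{\alpha_i} = \alpha_i * c_i * \alpha_i^{-1}$ lies in $\pi_1(\Sigma,v)$, and the relation in $\pi_1(D\Sigma,v)$ reads (up to the fixed conventions) that $j_1(\gamma_{\alpha_i})$ is conjugate to $j_0(\gamma_{\alpha_i})$ by the crossing element. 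Applying $D\rho$, I must check
\[
\sigma_{\rho(\gamma_{\alpha_i})}\sigma_\rho \cdot \bigl(\sigma_\rho \rho(\gamma_{\alpha_i})\sigma_\rho\bigr)\cdot \sigma_\rho^{-1}\sigma_{\rho(\gamma_{\alpha_i})}^{-1} = \rho(\gamma_{\alpha_i}).
\]
Since $\sigma_\rho^2 = \Id$ and $\sigma_{\rho(\gamma)}^2 = \Id$ (these are the linear involutions from Section~\ref{sec:orthogonality}), the left side simplifies to $\sigma_{\rho(\gamma_{\alpha_i})}\rho(\gamma_{\alpha_i})\sigma_{\rho(\gamma_{\alpha_i})}$, so the required identity is precisely the statement that $\rho(\gamma_{\alpha_i})$ commutes with its own associated involution $\sigma_{\rho(\gamma_{\alpha_i})}$ — equivalently that $\sigma_{\rho(\gamma)}^{\Sp}$ fixes $\rho(\gamma)$, i.e. $\rho(\gamma)$ stabilizes the tube $\Yy_\gamma$ and acts on it. This holds because $\rho(\gamma_{\alpha_i})$ is Shilov hyperbolic (our standing assumption) with fixed Lagrangians $\L^\pm$ equal to the endpoints of $\Yy_{\gamma_{\alpha_i}}$, and the real form $V_{a,b}$ defining $\sigma_{a,b}$ is preserved by any element fixing $a$ and $b$. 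Checking that the consistency must also be verified on the other peripheral components (not just $c_0$, where $\alpha_0$ is trivial and the relation is immediate since $\sigma_{\rho(c_0)} = \sigma_\rho$) — and doing the bookkeeping with the concatenation conventions and the direction of the arcs — is the routine but slightly fiddly core of the argument.

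\textbf{Maximality of $D\rho$ and uniqueness.} Once $D\rho$ is well-defined, uniqueness is immediate: properties (1) and (2) together with $D\rho|_{j_0} = \rho$ determine $D\rho$ on a generating set. For maximality, I would use the characterization of Burger--Iozzi--Wienhard (\cite[Theorem 8]{biw}) in terms of a monotone equivariant boundary map: the boundary map $\phi$ of $\rho$ on the limit set of $j_0(\pi_1\Sigma)$ extends across the circle by alternating $\phi$ and $\sigma_\rho\circ\phi$ on the complementary intervals corresponding to $\ov\Sigma$, and the reflection structure of the $\sigma$'s (the fact that $\sigma_{a,b}^\Ll$ is the orthogonality involution and preserves maximality of triples, cf. Lemma~\ref{lem:max} and Section~\ref{sec:orthogonality}) guarantees that monotonicity is preserved at the gluing points. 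Alternatively, and perhaps more cleanly, one invokes that maximality of a representation can be detected on a pants decomposition / by the additivity of the Toledo invariant, and that the restriction of $D\rho$ to each piece of $D\Sigma$ is conjugate (via some $\sigma_\rho$) to a restriction of $\rho$, hence maximal, so the Toledo invariant of $D\rho$ is maximal. The one genuine obstacle is the boundary-map monotonicity check at the gluing: one must confirm that if $(x,y,z)$ is a positively oriented triple in $\mathbb{S}^1 = \partial D\Sigma$ with $x$ in a $\Sigma$-interval and $z$ in an $\ov\Sigma$-interval, then $(\phi_{D\rho}(x), \phi_{D\rho}(y), \phi_{D\rho}(z))$ is a maximal triple of Lagrangians; this uses that $\sigma_\rho$ reverses orientation on the circle and, via the explicit form $\sigma_{0,l_\infty} = \bsm \Id_n & 0\\ 0 & -\Id_n\esm$, sends the maximal-triple condition "$Y - X$ positive definite" to "$X - Y$ positive definite", which is exactly compatible with the orientation reversal between $\Sigma$ and $\ov\Sigma$.
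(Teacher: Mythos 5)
Your proposal follows essentially the same route as the paper's proof: the same presentation of $\pi_1(D\Sigma,v)$ with crossing generators $x_i=\alpha_i\ast j(\alpha_i)^{-1}$, the same assignments on generators, the same key commutation of $\rho(\gamma_{\alpha_i})$ with $\sigma_{\rho(\gamma_{\alpha_i})}$ to kill the amalgamation relations, the same uniqueness argument, and (in your ``alternative'' route, which is the one the paper actually uses) maximality via additivity of the Toledo invariant together with anti-holomorphic conjugation on the orientation-reversed copy. The only verifications the paper performs that you leave implicit are property (1) on the crossing generators (immediate from $j(x_i)=x_i^{-1}$) and property (2) for arcs $\alpha$ other than the chosen $\alpha_i$, which the paper gets from the equivariance $\rho(\delta_\alpha)\sigma_{\rho(\gamma_i)}\rho(\delta_\alpha)^{-1}=\sigma_{\rho(\gamma_\alpha)}$ --- both routine within your setup.
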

The double of a Hitchin representation was defined by Labourie and McShane in \cite[Section 9]{LMS}. The crucial difference of our setting is that in general maximal representations do not have diagonalizable image, as opposed to Hitchin representations. For this reason we cannot deduce our result from \cite{LMS} and we need to choose a different involution of $\Sp(2n,\R)$. As a result, if $\rho$ is a Hitchin maximal representation, the holomorphic double we define here is different from the Hitchin double defined in \cite{LMS}: if $i_{2n}$ is the irreducible representation of $\SL(2,\R)$ into $\Sp(2n,\R)$ and $h:\G\to \SL(2,\R)$ is the holonomy of a hyperbolization,  $Di_{2n}(h)$ is different from $i_{2n}(Dh)$. On the other hand, we have $\Delta(Dh)=D\Delta(h)$, and this motivated the choice of the definition of our double as a \emph{holomorphic} double.
\begin{proof}[Proof of Proposition \ref{prop:double}] 
To simplify the notation, we will drop the reference to the representation $\rho$ both in $\sigma_\rho$ and $\sigma_{\rho(\g_\alpha)}$, which we will simply denote by $\sigma$ and $\sigma_{\g_\alpha}.$ 

For all $i$ between $1$ and $m$, fix an arc $\alpha_i$ joining $v$ to the boundary component $c_i$. If $x_i$ denotes the concatenation $\alpha_i\ast j(\alpha_i)^{-1}$, and $\g_i\in\pi_1(\S,v) $ is the class of the concatenation $\g_i=\alpha_i\ast c_i\ast \alpha_i^{-1}$, a presentation for the  group $D\G$ is given by 
$$\pi_1(D\Sigma,v)=\left\langle  j_0(\pi_1(\Sigma,v)),j_1(\pi_1(\Sigma,v)), x_1,\ldots,x_m\left|\,j_0(c_0)j_1(c_0)^{-1}, j_0(\gamma_i)^{-1}x_ij_1(\gamma_i)x_i^{-1}\right.\right\rangle.$$
In particular, there exists at most one representation $D\rho:D\G\to \Sp(2n,\R)$ satisfying the hypotheses of Proposition \ref{prop:double}, because the requirements of Proposition \ref{prop:double} uniquely determine the image of $D\rho$ on the generators. Indeed, we need to set for all $\g\in \pi_1(\Sigma,v)$
\begin{equation}\label{DrhoSigma}
D\rho(j_0(\g))=\rho(\gamma)
\end{equation}
\begin{equation}\label{Drhoj1Sigma}
D\rho(j_1(\g))=\sigma\rho(\gamma)\sigma
\end{equation}
and for all $i\in\{1,\dots, m\}$
\begin{equation}\label{Drhoxi}
D\rho(x_i)=\sigma_{\g_i}\sigma.
\end{equation}
To show that $D\rho$ exists, we just need to show that (\ref{Drhoxi}), (\ref{DrhoSigma}) and (\ref{Drhoj1Sigma}) determine a well defined maximal representation. To prove that it is well defined we need to check that:
\begin{itemize}
\item property (1) holds for the $x_i$;
\item the relations are mapped to the identity by $D\rho$;
\item for any $\alpha\ast j(\alpha)$ different from $x_i$, (2) holds.
\end{itemize}
For the first point, we have
\begin{align*}
D\rho(j(x_i))&=D\rho(j(\alpha_i)\ast \alpha_i^{-1})=D\rho((\alpha_i\ast j(\alpha_i)^{-1})^{-1})\\
&=[D\rho(\alpha_i\ast j(\alpha_i)^{-1})]^{-1}\stackrel{(\ref{Drhoxi})}{=}(\sigma_{\g_i}\sigma)^{-1}\\
&=\sigma\sigma_{\g_i}\sigma \sigma\stackrel{(\ref{Drhoxi})}{=}\sigma D\rho(x_i)\sigma.
\end{align*}
For the first relation, since $\sigma$ and $\rho(c_0)$ commute, we have:
\begin{align*}
D\rho(j_0(c_0)j_1(c_0)^{-1})&=\left(D\rho(j_0(c_0)\right)\left(D\rho(j(j_0(c_0)^{-1})\right)\\&\stackrel{(\ref{DrhoSigma}),(\ref{Drhoj1Sigma})}{=}\rho(c_0)\sigma \rho(c_0)^{-1}\sigma=\Id.
\end{align*}
For the other relations, we use the fact that $\sigma_{\gamma_i}$ commutes with $\rho(\gamma_i)$. This implies:
$$
D\rho( j_0(\gamma_i)^{-1}x_ij_1(\gamma_i)x_i^{-1}) \stackrel{(\ref{DrhoSigma}),    (\ref{Drhoj1Sigma}), (\ref{Drhoxi})}{=}(\rho(\gamma_i^{-1}))(\sigma_{\gamma_i}\sigma)(\sigma \rho(\gamma_i)\sigma)(\sigma \sigma_{\gamma_i})=\Id.
$$
Let now $\alpha$ be any other arc with endpoint in the component $c_i$. Denote by $\delta_\alpha$ the concatenation $\delta_\alpha=\alpha\ast\alpha_i^{-1}$, so that $\alpha=\delta_\alpha\ast\alpha_i$. We get:
\begin{align*}
D\rho(\alpha\ast j(\alpha)^{-1})&=D\rho(\delta_\alpha(\alpha_i\ast j(\alpha_i)^{-1})j(\delta_\alpha)^{-1} )\\
&\stackrel{(\ref{DrhoSigma}),(\ref{Drhoxi})}{=}\rho(\delta_\alpha)\sigma_{\gamma_i}\sigma\sigma\rho(\delta_\alpha)^{-1}\sigma
\end{align*}
and since one can verify that
$$\rho(\delta_\alpha)\sigma_{\gamma_i}\rho(\delta_\alpha)^{-1}=\sigma_{\gamma_\alpha}$$
we deduce that
$$D\rho(\alpha\ast j(\alpha)^{-1})=\sigma_{\gamma_\alpha}\sigma,$$
as required.

The fact that $D\rho$ is a maximal representation follows from the additivity formula for the Toledo invariant (see \cite[Theorem 1(3)]{biw}, and also \cite{Strubel}) and the fact that the restriction of $D\rho$ to $j_1(\pi_1(\S,v))$ is maximal being conjugate via an anti-holomorphic isometry to a maximal representation of a surface with the opposite orientation.
\end{proof}

For how we defined it, the double of a representation depends on the choice of a boundary component and of a base point on it. The goal of the next proposition is to show that we can forget about this choice, as up to conjugation in the fundamental group we get the same double.

\begin{prop}\label{prop:samedouble}
Let $v,w\in \partial\Sigma$ and $\beta$ an arc from $v$ to $w$. Let $\eta:\pi_1(\Sigma,w)\to\Sp(2n,\R)$ be an Anosov maximal representation
. Then
$$D\eta(\delta)=D\rho(\beta\ast \delta\ast\beta^{-1}),$$
where $\rho:\pi_1(\Sigma,v)\to\Sp(2n,\R)$ is the representation given by $$\rho(\gamma)=\eta(\beta^{-1}\ast \gamma\ast\beta).$$
\end{prop}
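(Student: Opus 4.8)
The statement is a naturality/independence-of-basepoint claim: changing the basepoint from $v$ to $w$ (and the representation from $\rho$ to the conjugate $\eta$) produces the "same" double, after the obvious conjugation by the connecting arc $\beta$. The natural strategy is to show that the map $\delta\mapsto D\rho(\beta\ast\delta\ast\beta^{-1})$ is a representation of $\pi_1(\Sigma,w)$ that satisfies the two defining properties of $D\eta$ listed in Proposition \ref{prop:double}, and then invoke the uniqueness clause of that proposition to conclude it equals $D\eta$. So first I would record that $\beta\ast(\cdot)\ast\beta^{-1}$ is the standard isomorphism $\pi_1(D\Sigma,w)\to\pi_1(D\Sigma,v)$ induced by $\beta$, so $\delta\mapsto D\rho(\beta\ast\delta\ast\beta^{-1})$ is indeed a homomorphism $\pi_1(D\Sigma,w)\to\Sp(2n,\R)$, and its restriction to $\pi_1(\Sigma,w)$ is $\rho$ conjugated by $\beta$, i.e. precisely $\eta$ by the given formula $\rho(\gamma)=\eta(\beta^{-1}\ast\gamma\ast\beta)$.

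**Key steps.** The two things to verify are: (1) for every $\delta\in\pi_1(D\Sigma,w)$, $D\rho(\beta\ast j(\delta)\ast\beta^{-1})=\sigma_{\eta}\,D\rho(\beta\ast\delta\ast\beta^{-1})\,\sigma_{\eta}$, where $\sigma_\eta=\sigma_{\eta(c_0')}$ is the reflection attached to the boundary component $c_0'$ through $w$ (note $w$ need not lie on the same component as $v$, so one must be careful here); and (2) for every arc $\alpha'$ from $w$ to a boundary component, $D\rho(\beta\ast(\alpha'\ast j(\alpha')^{-1})\ast\beta^{-1})=\sigma_{\eta(\gamma_{\alpha'})}\sigma_{\eta}$. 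For (2) one writes $\alpha:=\beta\ast\alpha'$, an arc from $v$ to that boundary component, observes that the concatenation $\beta\ast(\alpha'\ast j(\alpha')^{-1})\ast\beta^{-1}$ equals $(\beta\ast\alpha')\ast j(\beta\ast\alpha')^{-1}$ up to the $j$-invariance of the portion of $\beta$ — here one must be slightly careful because $j$ fixes the boundary but not $\beta$; I would use instead the already-proven property (2) of $D\rho$ for the arc $\alpha=\beta\ast\alpha'$, which gives $D\rho(\alpha\ast j(\alpha)^{-1})=\sigma_{\rho(\gamma_\alpha)}\sigma_\rho$, together with property (1) of $D\rho$ and the conjugation identity $\rho(\delta)\sigma_{\rho(\gamma)}\rho(\delta)^{-1}=\sigma_{\rho(\delta\gamma\delta^{-1})}$ (used already in the proof of Proposition \ref{prop:double}) to rearrange this into $\sigma_{\eta(\gamma_{\alpha'})}\sigma_\eta$. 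Property (1) for the conjugated representation follows similarly from property (1) of $D\rho$ once one knows $\sigma_\eta$ and $\sigma_\rho$ are related by $\sigma_\eta=D\rho(\beta)^{-1}\sigma_\rho D\rho(\beta)$ up to the reflection bookkeeping; this last identity is itself a special case of the arc computation, applied to $\beta$ viewed as (part of) an arc between the two base components.

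**Main obstacle.** The genuinely delicate point is the bookkeeping around the reflection matrices $\sigma$ when $v$ and $w$ lie on \emph{different} boundary components, so that $\sigma_\rho=\sigma_{\rho(c_0)}$ and $\sigma_\eta=\sigma_{\eta(c_0')}$ are reflections associated to genuinely different peripheral elements. One has to show these two candidate "base reflections" give the same double, which amounts to checking that $D\rho(\beta\ast j(\beta)^{-1})=\sigma_{\rho(\gamma_\beta)}\sigma_\rho$ (an instance of property (2) of $D\rho$, since $\beta$ is an arc from $v$ to the component through $w$) conjugates $\sigma_\rho$ to $\sigma_\eta$ correctly, i.e. that the abstract double does not see which boundary component we singled out. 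Once that identity is in hand, everything else is a formal manipulation with concatenations of paths and the commutation relations $[\sigma_{\rho(\gamma)},\rho(\gamma)]=\Id$ and $\rho(\delta)\sigma_{\rho(\gamma)}\rho(\delta)^{-1}=\sigma_{\rho(\delta\gamma\delta^{-1})}$ already established. I would therefore structure the proof as: (i) reduce to checking the two defining properties of Proposition \ref{prop:double} via its uniqueness; (ii) dispatch the reflection-compatibility lemma relating $\sigma_\rho$ and $\sigma_\eta$; (iii) verify properties (1) and (2) by direct path algebra. The uniqueness clause of Proposition \ref{prop:double} is what makes this short — without it one would have to check all the group relations by hand again.
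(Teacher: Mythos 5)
Your proposal is correct and follows essentially the same route as the paper: define $\theta(\delta)=D\rho(\beta\ast\delta\ast\beta^{-1})$, observe it is maximal and extends $\eta$, verify properties (1) and (2) of Proposition \ref{prop:double} using property (2) of $D\rho$ applied to the arcs $\beta$ and $\beta\ast\alpha'$ together with the identification of the base reflections, and conclude by the uniqueness clause. The only imprecision is your formula $\sigma_\eta=D\rho(\beta)^{-1}\sigma_\rho D\rho(\beta)$ (not meaningful, as $\beta$ is an arc, so ``$D\rho(\beta)$'' is undefined): the correct and simpler relation, which is what the paper uses, is the exact equality $\sigma_\eta=\sigma_{\rho(\gamma_\beta)}$, since $\rho(\gamma_\beta)=\eta(d)$ on the nose for the boundary component $d$ containing $w$.
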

\begin{proof} Denote by $c$ be boundary component containing $v$ and by $d$ the one containing $w$.

Note first that since $\eta$ is maximal, $\rho$ is maximal as well, so we can define its double as in Proposition \ref{prop:double}. To show the equality, we will prove that $\theta(\gamma):=D\rho(\beta\ast \gamma\ast\beta^{-1})$ is a maximal representation which extends $\eta$ and satisfies the two conditions of Proposition \ref{prop:double}. By uniqueness, this will imply that $\theta=D\eta$.

Clearly $\theta$ is maximal, since $D\rho$ is. Suppose $\delta\in \pi_1(\Sigma,w)$. Then $\beta\ast \delta\ast\beta^{-1}\in \pi_1(\Sigma,v)$, so
$$\theta(j_0(\delta))=D\rho(j_0(\beta\ast \delta\ast\beta^{-1}))=\rho(\beta\ast \delta\ast\beta^{-1})=\eta(\delta),$$
i.e.\ $\theta$ extends $\eta$.

We now prove property (1). We have:
\begin{align*}
\theta(j(\delta))&=D\rho(\beta\ast j(\delta)\ast \beta^{-1})=D\rho(\beta\ast j(\beta)^{-1})D\rho(j(\beta\ast\delta\ast \beta^{-1}))D\rho(j(\beta)\ast\beta^{-1})\\
&=\s_{\rho(\g_\beta)}\sigma_\rho\sigma_\rho\theta(\delta)\sigma_\rho\sigma_\rho\s_{\rho(\g_\beta)}
\end{align*}
and since $\rho(\g_\beta)=\rho(\beta\ast d\ast\beta^{-1})=\eta(d)$, we have $\s_{\rho(\g_\beta)}=\sigma_\eta$, which shows (1).

Finally, we show that property (2) holds. Let $\alpha$ be an arc joining $w$ with a boundary component $e$. Then
\begin{align*}
\theta(\alpha\ast j(\alpha)^{-1})&=D\rho(\beta\ast\alpha\ast j(\alpha)^{-1}\ast\beta^{-1})\\
&=D\rho((\beta\ast\alpha)\ast j(\beta\ast\alpha)^{-1})D\rho(j(\beta)\ast\beta^{-1})\\
&=\sigma_{\rho(\g_{\beta\ast\alpha})}\sigma_\rho\sigma_\rho\sigma_{\rho(\g_\beta)}.
\end{align*}
Since we have $$\rho(\g_{\beta\ast\alpha})=\rho(\beta\ast\alpha\ast e\ast \alpha^{-1}\ast\beta^{-1})=\eta(\g_\alpha)$$
and we know from before that $\sigma_{\rho(\g_\beta)}=\sigma_\eta$, we get $\theta(\alpha\ast j(\alpha)^{-1})=\sigma_{\eta(\gamma_\alpha)}\sigma_\eta$.
\end{proof}
If a surface has nonempty boundary, the boundary map associated to a maximal representation of its fundamental group is only right-continuous in general, while for closed surfaces the boundary map has especially good properties:
\begin{theor}[{\cite[Corollary 6.3]{bilw}}]
Let $\rho :\pi_1(\Sigma) \to \Sp(2n,\R )$ be a maximal representation, where $\Sigma$ is a closed surface. Then there is a $\rho$-equivariant continuous injective map $\phi :\mathbb  S^ 1 \to \Ll(\R^{2n} )$ with rectifiable image.
\end{theor}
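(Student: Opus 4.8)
The plan is to extract $\phi$ from the monotone boundary map that comes with any maximal representation, upgrade right-continuity to continuity using that closed surface groups are cocompact, deduce injectivity for free, and finally prove rectifiability by a bounded-variation estimate for causal curves.

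By \cite[Theorem 8]{biw} (quoted above) there is a $\rho$-equivariant, monotone and right-continuous map $\phi\colon\mathbb S^1\to\Ll(\R^{2n})$; recall also that, $\S$ being closed, $\rho(\g)$ is Shilov hyperbolic for every nontrivial $\g\in\G$, with $\phi(\g^{\pm})=\L_\g^{\pm}$ its attracting/repelling fixed Lagrangians. \emph{Injectivity} is then immediate: if $x\neq y$, pick $z$ with $(x,z,y)$ positively oriented; then $(\phi(x),\phi(z),\phi(y))$ is a maximal, hence pairwise transverse, triple, so $\phi(x)\neq\phi(y)$. More generally, any two distinct points in the image of $\phi$ are transverse, a fact used again below.

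\emph{Continuity} is the delicate point. Since $\S$ is closed, $\rho$ is moreover Anosov with respect to the stabilizer of a Lagrangian (maximal representations of closed surface groups are Anosov in the sense of \cite{GW}; see \cite{bilw}), and therefore admits a \emph{continuous} $\rho$-equivariant limit map $\psi\colon\mathbb S^1=\partial\G\to\Ll(\R^{2n})$ with $\psi(\g^{+})=\L_\g^{+}$ for every primitive $\g\in\G$. As $\phi(\g^{+})=\L_\g^{+}=\psi(\g^{+})$ and the set of attracting fixed points $\g^{+}$ is dense in $\mathbb S^1$, for an arbitrary $x\in\mathbb S^1$ we may pick $\g_k^{+}\downarrow x$ and conclude $\phi(x)=\lim_k\phi(\g_k^{+})=\lim_k\psi(\g_k^{+})=\psi(x)$, the first equality by right-continuity of $\phi$ and the last by continuity of $\psi$. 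Hence $\phi=\psi$ is continuous. I expect this step to be the main obstacle, and it is exactly where the closedness of $\S$ is essential: for surfaces with boundary $\phi$ genuinely has jumps and the statement fails; the closed case works because the $\G$-action on $\mathbb S^1$ is cocompact (equivalently minimal), which forces the monotone map to have no jumps.

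\emph{Rectifiability.} Using continuity and compactness, cover $\mathbb S^1$ by finitely many closed arcs $I_j=[a_j,b_j]$ for which there is a point $q_j$ in the complementary arc with $\phi(q_j)\pitchfork\phi(t)$ for all $t\in I_j$. Acting by $\Sp(2n,\R)$ we may assume $\phi(q_j)=l_\infty$, so that $\phi$ maps $I_j$ into the affine chart $\iota(\Sym(n,\R))$; write $X_j(t)\in\Sym(n,\R)$ for the image. For $a_j\le s<t\le b_j$ the triple $(\phi(q_j),\phi(s),\phi(t))=(l_\infty,X_j(s),X_j(t))$ is maximal, so by Lemma \ref{lem:max} the increment $X_j(t)-X_j(s)$ is positive definite, i.e.\ $X_j$ is a causal path in the sense of Definition \ref{def:Causal}. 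Thus each diagonal entry of $X_j$ is nondecreasing, while the nonnegativity of the $2\times2$ principal minors of the positive semidefinite increment yields, for all $k,\ell$,
\[
\bigl|\bigl(X_j(t)-X_j(s)\bigr)_{k\ell}\bigr|\le\tfrac12\Bigl(\bigl(X_j(t)-X_j(s)\bigr)_{kk}+\bigl(X_j(t)-X_j(s)\bigr)_{\ell\ell}\Bigr).
\]
Summing over any partition of $I_j$, each entry of $X_j$ has total variation at most the oscillation of the (continuous, hence bounded) diagonal entries over $I_j$; hence $X_j$ has finite total variation, $\phi(I_j)$ has finite length, and summing over the finitely many $j$ shows that $\phi(\mathbb S^1)$ is rectifiable.
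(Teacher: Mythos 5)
This statement is not proved in the paper at all: it is quoted directly from \cite[Corollary 6.3]{bilw}, so there is no internal argument to compare yours against; the relevant comparison is with the source, and your proposal essentially reconstructs its logic. Injectivity and pairwise transversality of image points from monotonicity of the boundary map of \cite[Theorem 8]{biw} is correct; continuity via the Anosov property of maximal representations of closed surface groups, identifying the continuous dynamics-preserving limit map with the monotone map on the dense set of attracting fixed points and then everywhere by right-continuity, is also correct; and the rectifiability argument (cover $\mathbb S^1$ by arcs avoiding a transverse point, pass to the affine chart, observe the path is causal, and bound the total variation of each matrix entry by the oscillation of the monotone diagonal entries) is the same bounded-variation mechanism used in \cite{bilw}. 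Two points deserve to be made explicit. First, the continuity step imports the full strength of the main theorem of \cite{bilw} (maximal representations of closed surface groups are Anosov); this is not circular, since that theorem is established there independently of the corollary, but you should acknowledge that your argument is a deduction of the corollary from the Anosov theorem rather than an independent proof — the ``cocompactness'' you expected to use enters only through that citation. Second, in the rectifiability step you should record that $X_j(a_j)\preceq X_j(t)\preceq X_j(b_j)$ keeps $\phi(I_j)$ in a bounded region of the chart, on which $\iota$ is Lipschitz for any fixed Riemannian metric on $\Ll(\R^{2n})$, so that finite total variation of $X_j$ in $\Sym(n,\R)$ really does give finite length of $\phi(I_j)$ in $\Ll(\R^{2n})$; and fix the orientation convention so that $(q_j,s,t)$ is positively oriented for $s<t$ in $I_j$, which is what makes $X_j(t)-X_j(s)$ positive definite via Lemma \ref{lem:max}. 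With these small additions the proposal is complete and correct.
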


 We will associate to a maximal representation $\rho$ of a surface with boundary the boundary map of $D\rho$, so that we have a continuous map in this case as well. 
 More precisely, we denote by $D\G$ the fundamental group of the double of $\S$ and fix once and for all an action $h$ of $D\G$ on $\H^2$, inducing an action of $D\G$ on $\mathbb{S}^1$.
 Let $\phi:\mathbb S^1\to\Ll(\R^{2n})$ be the continuous boundary map associated to  $D\rho$. We will denote by $\L(\G)\subset \mathbb S^1$ the limit set of $\G$ on $\partial \H^2$. If we fix any finite-area hyperbolization of $\Sigma$ with holonomy $h_0$ and denote by $t:\mathbb S^1\to \L(\G)$ the $(h_0,h|_{\G})$-equivariant map associating to any parabolic point $p$ for $h_0$ the attractive fixed point of $h(p)$, then the composition $\phi\circ t$ is the equivariant boundary map of Definition \ref{def:maxrep}\footnote{Cfr. also \cite{Nico-JP} for a different explicit construction of the boundary map.}.

\section{Defining orthotubes}\label{sec:ort}
The purpose of the section is to define a notion of orthogeodesic in our setting, which will allow us to give a geometric interpretation of our main result as inequalities relating intrinsic geometric quantities in the locally symmetric space associated to a maximal representation. We will define orthotubes and their length, and we will prove that the length of an orthotube is half the length of the corresponding curve in the double of the surface.


\subsection{The classical definition}
Let $\Sigma$ be a hyperbolic surface with nonempty geodesic boundary and fundamental group $\Gamma$. Fix a boundary component $c$ and consider the set of \emph{oriented} orthogeodesics starting from $c$, that is, the set of oriented geodesic segments with first endpoint on $c$, second endpoint in $\partial\Sigma$ and orthogonal to the boundary at both endpoints. If we fix a fundamental domain $I$ in a lift $\tilde{c}$ of $c$ in $\tilde{\Sigma}=\hyp$, any oriented orthogeodesic starting from $c$ can be lifted uniquely to a geodesic segment in $\H^2$  orthogonal to $I$ and to the lift $\tilde{d}$ of a boundary component $d$ ($c$ and $d$ can be the same, but $\tilde{c}$ and $\tilde{d}$ are different). 

On the one hand, this implies that the set of orthogeodesics is in bijection with the set of arcs starting at $c$ and ending at a boundary component modulo homotopy relative to the boundary of $\Sigma$. On the other, the set of oriented orthogeodesics starting from $c$ is also in bijection with the set
$$\quot{\{\tilde{d}\subseteq{\partial\widetilde\Sigma}, \tilde{d}\neq \tilde{c}\}}{\mbox{Stab}_\Gamma(\tilde{c})}.$$
Now, each lift of a boundary component corresponds to a unique primitive peripheral element of $\G$ with the correct orientation\footnote{From now on \emph{peripheral element} will mean primitive peripheral element with compatible orientation.}. So if we fix the peripheral element $\gamma$ corresponding to $c$, the set of oriented orthogeodesics starting from $c$ is also in bijection with
$$ \ort^{\hyp}_\Sigma(\gamma):=\quot{\{\delta\neq\g\;\mbox{peripheral}\}}{\langle\gamma\rangle},$$
where $\langle\gamma\rangle$ acts by conjugation. 

Similarly, if $\ort^{\hyp}_\Sigma$ denotes the set of all \emph{unoriented} orthogeodesic in $\S$, we can see that it is in bijection with
$$\ort^{\hyp}_\Sigma:=\quot{\{\{\gamma,\delta\}\,|\,\gamma\neq \delta\;\mbox{peripheral}\}}{\G}.$$
\subsection{Orthotubes in $\bm{\calX}$}\label{sec:orthogeodesics}
Interpreting orthogeodesics as a pairs of peripheral elements gives a natural way to define orthotubes as orthogonal $\R$-tubes in $\calX$:
\begin{definition}An \emph{orthotube} corresponding to the pair $\alpha=(\gamma,\delta)$ is a tube $\Yy_\alpha$ in $\calX$ orthogonal to $\Yy_\g$ and $\Yy_\delta$.
\end{definition}
It is easy to show that for every pair of peripheral elements there exists a unique orthotube:
\begin{lem}\label{existenceorth}
For each pair of peripheral elements, an orthotube exists and is unique. The assignment of orthotubes is $\G$-equivariant.
\end{lem}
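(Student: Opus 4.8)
The plan is to recast orthogonality in terms of reflections and then pass to a normal form in which the whole configuration splits as a product of $n$ copies of the classical picture of two disjoint geodesics in $\mathbb H^2$. Equivariance will follow formally from uniqueness: $\rho(g)$ is an isometry of $\calX$ carrying the attracting and repelling Lagrangians of $\rho(\gamma)$ to those of $\rho(g\gamma g^{-1})$, so $\rho(g)\cdot\Yy_\gamma=\Yy_{g\gamma g^{-1}}$ and likewise for $\delta$, whence $\rho(g)\cdot\Yy_\alpha$ is an $\R$-tube orthogonal to $\Yy_{g\gamma g^{-1}}$ and $\Yy_{g\delta g^{-1}}$ and must be their orthotube. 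For existence and uniqueness, set $a^{\pm}=\L^{\pm}_\gamma$, $b^{\pm}=\L^{\pm}_\delta$. As $\gamma\neq\delta$ correspond to disjoint lifts of boundary components and $\R$-tubes inherit the intersection pattern of the corresponding axes, $\Yy_\gamma\cap\Yy_\delta=\varnothing$; the boundary map of an Anosov maximal representation is transverse on distinct points, so $a^{\pm},b^{\pm}$ are pairwise transverse, and monotonicity of the boundary map together with the fact that disjoint boundary lifts are unlinked shows that, after possibly interchanging $b^-$ and $b^+$, the $4$-tuple $(a^-,a^+,b^-,b^+)$ is maximal. By \cite[Prop.\ 2.11]{bp} we may act by $\Sp(2n,\R)$ and assume $(a^-,a^+,b^-,b^+)=(-\Id,0,D,l_\infty)$ with $D=\diag(d_1,\dots,d_n)$, all $d_j>0$. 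In this position everything respects the splitting $\R^{2n}=\bigoplus_{j=1}^n\langle e_j,e_{n+j}\rangle$: since these four Lagrangians lie in the standard polydisk, the involutions $\sigma_{a^-,a^+}$ and $\sigma_{b^-,b^+}$ are the direct sums over $j$ of the $2\times 2$ matrices inducing the reflections of $\mathbb H^2$ in the geodesics with endpoints $\{-1,0\}$ and $\{d_j,\infty\}$, and in each factor these two geodesics are disjoint.

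For existence, consider $g:=\sigma_{a^-,a^+}\,\sigma_{b^-,b^+}\in\Sp(2n,\R)$. By the previous paragraph $g$ is a direct sum of $n$ hyperbolic elements of $\PSL(2,\R)$, each a translation along the common perpendicular of a pair of disjoint geodesics, so $g$ is Shilov hyperbolic; its attracting and repelling fixed Lagrangians are $\L_g^+=\iota(\diag(a_1,\dots,a_n))$ and $\L_g^-=\iota(\diag(b_1,\dots,b_n))$, where $\{a_j,b_j\}$ are the endpoints of the $j$-th common perpendicular, and the classical computation in $\mathbb H^2$ gives $a_j\in(-1,0)$ and $b_j>d_j$, i.e.\ $-\Id\prec\diag(a_j)\prec0$ and $\diag(b_j)\succ D\succ0$. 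Since $\sigma_{a^-,a^+}$ and $\sigma_{b^-,b^+}$ are involutions, each conjugates $g$ to $g^{-1}$ and hence interchanges $\L_g^+$ and $\L_g^-$; a direct check with Lemma \ref{lem:max} gives $\L_g^+\in(\!(a^-,a^+)\!)$ and $\L_g^-\in(\!(b^-,b^+)\!)$, so $\L_g^-=\sigma^\Ll_{a^-,a^+}(\L_g^+)=\sigma^\Ll_{b^-,b^+}(\L_g^+)$. By the reflection description of orthogonality, $\Yy_\alpha:=\Yy_{\L_g^+,\L_g^-}$ is an $\R$-tube orthogonal to both $\Yy_\gamma$ and $\Yy_\delta$.

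For uniqueness, recall from \cite[Cor.\ 4.7]{bp} (of which Lemma \ref{perpto0infty} is the model instance) that the $\R$-tubes orthogonal to $\Yy_\gamma$ foliate $\calX$ and are exactly the tubes $\Yy_{c,\sigma^\Ll_{a^-,a^+}(c)}$ with $c\in(\!(a^-,a^+)\!)$. Such a tube is also orthogonal to $\Yy_\delta$ precisely when $\sigma^\Ll_{b^-,b^+}$ and $\sigma^\Ll_{a^-,a^+}$ agree on $c$, i.e.\ when $g$ fixes $c$ (an accompanying arc condition, automatic below, guarantees the two tubes actually meet). Writing $c=\iota(Z)$, membership $c\in(\!(a^-,a^+)\!)$ forces $-\Id\prec Z\prec0$ by Lemma \ref{lem:max}, hence $Z-\diag(b_j)$ is negative definite and $c$ is transverse to $\L_g^-$; since $g$ is Shilov hyperbolic this yields $g^k\cdot c\to\L_g^+$, whereas $g$ fixes $c$, so $c=\L_g^+$. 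Thus $c$, and with it the orthotube, is unique.

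The step I expect to be the main obstacle is the input to the existence argument: knowing that $g$ is Shilov hyperbolic and identifying its fixed Lagrangians. The reduction to the diagonal normal form is exactly what makes this tractable, turning it into the classical existence of the common perpendicular of two disjoint geodesics in $\mathbb H^2$, one factor at a time. A secondary subtlety is that $g$ need not be regular — for a diagonal embedding of a Fuchsian representation its fixed-Lagrangian locus is positive–dimensional — but the uniqueness argument circumvents this, using only transversality to the single Lagrangian $\L_g^-$ and the contracting dynamics of $g$; what remains is the routine but somewhat delicate bookkeeping with Lemma \ref{lem:max} to pin down the relevant arcs and the orientation conventions.
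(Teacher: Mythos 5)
Your argument is correct, but it takes a genuinely different route from the paper's. The paper uses the same starting normalization (via \cite[Prop.\ 2.11]{bp}, in the variant $(\phi(\g^+),\phi(\g^-),\phi(\delta^+),\phi(\delta^-))=(-\Id,\Lambda,0,l_\infty)$), then observes that orthogonality to $\Yy_{0,l_\infty}$ forces the candidate tube to be $\Yy_{-A,A}$ with $A$ positive definite, and converts the remaining orthogonality into the single cross-ratio equation $R(-\Id,-A,\Lambda,A)=2\Id$, which by \eqref{cross-ratio} reduces to $A^2=-\Lambda$; existence and uniqueness then come simultaneously from the unique positive definite square root, in two lines. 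You instead form the product of the two tube reflections $g=\sigma_{a^-,a^+}\sigma_{b^-,b^+}$, use that in the diagonal normal form everything splits along $\R^{2n}=\bigoplus_j\langle e_j,e_{n+j}\rangle$ so that $g$ is blockwise the classical translation along the common perpendicular of two disjoint geodesics in $\H^2$, take the tube spanned by its fixed Lagrangians, and get uniqueness from the contracting dynamics of a Shilov hyperbolic element on Lagrangians transverse to $\L_g^-$ (correctly noting that regularity of $g$ is not needed). Both proofs lean on the same inputs from \cite{bp} — the normal form, the involutions $\sigma_{a,b}$, and the $\sigma^\Ll$/foliation description of tubes orthogonal to a fixed tube — and both dispose of equivariance by naturality/uniqueness, which the paper does not even spell out. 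What the paper's computation buys is brevity; what your construction buys is conceptual content: it identifies the orthotube as the axis tube of the symplectic element $\sigma_{a^-,a^+}\sigma_{b^-,b^+}$, which is exactly the mechanism the paper exploits later for the holomorphic double (in Proposition \ref{prop:4.6} one has $D\rho(D\alpha)=\sigma_{\gamma_\alpha}\sigma_\rho$ and $\Yy_\alpha=\Yy_{D\alpha}$), so your argument in effect re-derives part (1) of that proposition at the cost of the blockwise bookkeeping and the maximality/arc checks you carry out with Lemma \ref{lem:max}.
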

\begin{proof}
Consider $\gamma$ and $\delta$; by \cite[Proposition 2.11]{bp} we can assume that $\phi(\g^+)=-\Id$, $\phi(\g^-)=\Lambda$, $\phi(\delta^+)=0$ and $\phi(\delta^-)=l_\infty$, for some diagonal matrix $\Lambda$ with eigenvalues between $-1$ and $0$. If $\Yy$ is orthogonal to $\Yy_\delta$, it must then be of the form $\Yy_{-A,A}$ for some positive definite symmetric matrix $A$. We show that there exists a unique matrix $A$ such that $\Yy$ is orthogonal to $\Yy_\gamma$ as well, i.e.\ such that
$$R(-\Id,-A,\Lambda,A)=2\Id.$$
By explicit computations using \eqref{cross-ratio}, this is equivalent to $A^2=-\Lambda$, which has indeed one and only one positive definite solution.
\end{proof}
We define the length of an orthotube $\Yy_\alpha$ to be the distance, with respect to the vectorial,  Riemannian or Finsler metric, of the unique intersection points with the tubes associated with the peripheral elements determining $\alpha$. We denote such distances by $\ellC(\alpha)$, $\ellR(\alpha)$ and $\ellF(\alpha)$ respectively. Observe that any Riemannian geodesic segment in $\calX$ with endpoints in $\Yy_\gamma$ and $\Yy_\delta$ and orthogonal to both tubes is necessarily contained in $\Yy_\alpha$ and has length $\ellR(\alpha)$. In particular in the locally symmetric space $\G\backslash \calX$ the Riemannian length of an orthotube is the length of a local length minimizer between the projections of two peripheral tubes.

In analogy with the hyperbolic case, we define $\ort_\Sigma(\gamma)$ to be the set of all orthotubes associated to pairs $(\g,\delta)$, for every peripheral element $\delta\neq \gamma$, up to the action of $\langle\gamma\rangle$ by conjugation, and $\ort_\Sigma$ to be the union of all $\ort_\Sigma(\gamma)$ up to the action of $\G$ by conjugation and of $\mathbb Z/2\mathbb Z$ by switching endpoints. Note that any two orthotubes in the same class have the same length, so we can talk about the length of an element of $\ort_\Sigma(\gamma)$ or $\ort_\Sigma$.
\begin{remark}\label{sumorthogeodesics}
Each element $\alpha\in\ort_\Sigma$ appears twice in the union of all $\ort_\Sigma(\gamma)$. So for any positive real-valued function $f$, if $\Sigma$ is a surface with $m$ boundary components represented by $\g_1,\dots,\g_n$, we have:
$$2\sum_{\alpha\in\ort_\Sigma}f(\ellC(\alpha))=\sum_{i=1}^{m}\sum_{\alpha\in\ort_\Sigma(\gamma_i)}f(\ellC(\alpha)).$$
\end{remark}
\subsection{Orthotubes and doubles}
In the case of hyperbolic surfaces, any orthogeodesic doubles to a closed geodesic in the double of the surface, whose length is twice the length of the orthogeodesic. The purpose of the section is to show that the same holds in our setting. Formally  if $\alpha=(\delta_1,\delta_2)$ is an orthotube of $\Sigma$, it corresponds to a homotopy class of paths, also denoted by $\alpha$ with a slight abuse of notation, between two boundary components of $\Sigma$, and hence to an element $D\alpha\in\pi_1(D\Sigma,v)$. Explicitly, if $\ov\beta$ denotes a path in $\wt \Sigma$ between the preferred lift of $v$ and the axis of $\delta_1$, and if $\beta$ is the projection to $\Sigma$ of $\ov \beta$, $D\alpha$ is the class, in $\pi_1(\S,v)$ corresponding to $\beta\ast\alpha\ast j(\alpha)^{-1}\ast\beta^{-1}$.  We have the following:
\begin{prop}\label{prop:4.6}
For each orthotube $\alpha$ we have 
\begin{enumerate}
\item $\Yy_{\alpha}=\Yy_{D\alpha};$
\item $2\ellF(\alpha)=\ellF(D\alpha).$
\end{enumerate}
\end{prop}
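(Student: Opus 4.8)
The plan is to prove both statements by reducing, via the symplectic group action, to the standard configuration used in Lemma~\ref{existenceorth}, and then to identify the relevant fixed-point sets and translation lengths directly.

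\emph{Part (1).} First I would recall that $\Yy_\alpha$ is, by definition of orthotube, the unique $\R$-tube orthogonal to both $\Yy_{\delta_1}$ and $\Yy_{\delta_2}$, and that its endpoints are two transverse Lagrangians $a,b$; by Lemma~\ref{existenceorth} this pair is determined by the configuration of the four boundary Lagrangians $\phi(\delta_i^\pm)$. On the other hand, $\Yy_{D\alpha}$ is the $\R$-tube whose endpoints are the attracting and repelling fixed Lagrangians $\L^\pm_{D\alpha}$ of $D\rho(D\alpha)$ acting on $\Ll(\R^{2n})$. So it suffices to show $\{a,b\}=\{\L^+_{D\alpha},\L^-_{D\alpha}\}$. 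The key computation: conjugating so that $\phi(\delta_1^+)=-\Id$, $\phi(\delta_1^-)=\Lambda$, $\phi(\delta_2^+)=0$, $\phi(\delta_2^-)=l_\infty$ as in Lemma~\ref{existenceorth}, the orthotube is $\Yy_{-A,A}$ with $A^2=-\Lambda$. Using the explicit form of the double from Proposition~\ref{prop:double} — $D\rho(D\alpha)$ is conjugate to a product of the two reflections $\sigma_{\rho(\delta_1)}\sigma_{\rho(\delta_2)}$ (these are the matrices inducing the involutions fixing $\Yy_{\delta_1}$, $\Yy_{\delta_2}$, up to the relevant base-point arcs) — I would compute that $\sigma_{\rho(\delta_1)}\sigma_{\rho(\delta_2)}$ is Shilov hyperbolic with fixed Lagrangians exactly $-A$ and $A$. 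Concretely, $\sigma_{0,l_\infty}=\bsm\Id_n&0\\0&-\Id_n\esm$ and the analogous reflection in $\Yy_{-A,A}$ is obtained by the $\GL(n,\R)$-conjugation sending $\{0,l_\infty\}$ to $\{-A,A\}$; the product of two such reflections fixes precisely the common-perpendicular tube, which is $\Yy_{-A,A}$ itself — this is the synthetic analogue of the fact in $\H^2$ that the product of two reflections in disjoint geodesics is a hyperbolic translation along their common perpendicular. Matching this with $\Yy_{D\alpha}$ gives (1).

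\emph{Part (2).} Given (1), the element $D\rho(D\alpha)$ acts on $\Yy_\alpha=\Yy_{D\alpha}$ and I want its Finsler translation length to be $2\ellF(\alpha)$. By Lemma~\ref{lem:ellF}, $\ellF(D\alpha)=\dF(g\cdot x,x)$ for any $x\in\Yy_{D\alpha}$ with $(x,g\cdot x)$ causal, where $g=D\rho(D\alpha)$. Now $g=\sigma_{\rho(\delta_1)}\sigma_{\rho(\delta_2)}$ (after conjugation), and each $\sigma_{\rho(\delta_i)}$ induces the geodesic reflection fixing $\Yy_{\delta_i}\cap\Yy_\alpha$ pointwise — more precisely the anti-holomorphic isometry $\sigma^{\calX}_{\delta_i}$ whose fixed set is $\Yy_{\delta_i}$, restricted to the tube $\Yy_\alpha$ it acts as the reflection through the point $p_i:=\Yy_{\delta_i}\cap\Yy_\alpha$. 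Taking $x=p_1$, we get $\sigma^{\calX}_{\delta_2}(p_1)$ is the reflection of $p_1$ through $p_2$ inside $\Yy_\alpha$, and $\sigma^{\calX}_{\delta_1}$ then fixes $p_1$, so $g\cdot p_1$ is the point of $\Yy_\alpha$ at signed distance $2\,\dF(p_1,p_2)=2\ellF(\alpha)$ from $p_1$ along the causal direction — here I use additivity of $\dF$ on causal curves (Lemma~\ref{orsum}) and the fact that reflection through $p_2$ doubles the Finsler displacement, which on the Euclidean factor of the tube (Remark~\ref{dFandpiR}) is just the elementary statement that reflecting a point through another doubles the distance. Checking that $(p_1,g\cdot p_1)$ is indeed causal uses that $p_1,p_2,g\cdot p_1$ lie in order along a causal path in $\Yy_\alpha$, which follows from Remark~\ref{projcausal} applied to the boundary map of $D\rho$.

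\emph{Main obstacle.} The routine parts are the cross-ratio and matrix computations; the genuinely delicate point is identifying $D\rho(D\alpha)$ precisely as the product $\sigma_{\rho(\delta_1)}\sigma_{\rho(\delta_2)}$ of the correct reflections — this requires unwinding the amalgamation in Proposition~\ref{prop:double} together with the base-point arcs $\beta$, $\alpha$, $j(\alpha)$ entering the definition of $D\alpha$, and checking that the arc-dependent conjugating factors cancel (using $\rho(\delta_\alpha)\sigma_{\gamma_i}\rho(\delta_\alpha)^{-1}=\sigma_{\gamma_\alpha}$, exactly as in the proof of Proposition~\ref{prop:double}). Once that identification is in place, both statements follow from the synthetic geometry of a single $\R$-tube and the two reflections fixing its two orthogonal sub-tubes.
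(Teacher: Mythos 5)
Your proposal is correct in substance, and its starting point is the same as the paper's: the whole argument hinges on the identity $D\rho(D\alpha)=\sigma_{\rho(\gamma_\alpha)}\sigma_\rho$, which is not really something to be ``unwound'' -- it is exactly property (2) of Proposition~\ref{prop:double}, with Proposition~\ref{prop:samedouble} taking care of orthotubes not touching the base boundary component. After that the two arguments diverge. For (1) the paper avoids any normalized matrix computation: it quotes \cite{bp} to see that each reflection preserves $\Yy_\alpha$ (so $D\rho(D\alpha)$ does), rules out the swap of the endpoints $a,b$ via $\sigma_\rho(a)=b$ and the fact that $b$ is not fixed by $\sigma_{\gamma_\alpha}$, and then identifies attractive vs.\ repulsive by a maximality/eigenvalue remark; your route instead computes the product of reflections in the frame of Lemma~\ref{existenceorth}. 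For (2) the paper uses the $\sigma_\rho$-symmetry of the configuration together with additivity of $\dF$ along causal curves (Lemma~\ref{orsum}) and Lemma~\ref{lem:ellF} applied to the projection of $\Lambda^+_{\gamma_\alpha}$; you instead argue synthetically that each $\sigma^\calX_{\delta_i}$ restricts on $\Yy_\alpha$ to the geodesic symmetry at $p_i=\Yy_{\delta_i}\cap\Yy_\alpha$, so the product is a transvection displacing $p_2$ (or $p_1$) by $2\dF(p_1,p_2)$. Your picture is attractive and arguably more geometric, but it buys this at the cost of two facts the paper never needs and you would have to prove: (i) that the restriction of the anti-holomorphic reflection in a tube to an orthogonal tube is the point symmetry at the intersection point (true -- in the model $\Yy_{0,l_\infty}$ the reflection in $\Yy_{-\Id,\Id}$ is $Z\mapsto \bar Z^{-1}$, which restricts to $Y\mapsto Y^{-1}$ -- but it needs the short computation); (ii) that the invariant tube is really $\Yy_{D\alpha}$, i.e.\ that the element is Shilov hyperbolic with attractive/repulsive Lagrangians the endpoints of $\Yy_\alpha$. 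Merely ``fixing the common-perpendicular tube'' is not enough, since a Shilov hyperbolic element preserves many tubes and fixes more than two Lagrangians; here you need that the segment $(p_1,p_2)$ is causal (which follows from maximality of the boundary $4$-tuple, as in Lemma~\ref{2logcothC}), so that the induced transvection is $Y\mapsto GY\,{}^t\!G$ with all eigenvalues of $G$ of modulus $>1$. Two small slips to fix: in part (2) the composition $\sigma_{\delta_1}\sigma_{\delta_2}$ applies $\sigma_{\delta_1}$ to $\sigma_{\delta_2}(p_1)$, not to $p_1$ (the clean choice is $x=p_2$, giving the ordered causal triple $p_2,p_1,g\,p_2$), and the causality needed for Lemma~\ref{lem:ellF} via Remark~\ref{projcausal} uses that the endpoints of $\Yy_\alpha$ are $\phi(D\alpha^\pm)$, i.e.\ it genuinely requires part (1) first -- which your order of exposition respects.
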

\begin{proof}
We show the result for orthotubes $\alpha$ starting from the boundary component $c_0$. The general result will follow from Proposition \ref{prop:samedouble}.

Since $D\alpha=\alpha\ast j(\alpha)^{-1}$, we know that $D\rho(D\alpha)=\sigma_{\gamma_\alpha}\sigma_\rho$. Note that by \cite[Lemma  4.15]{bp}, if $\Yy_{a,b}\perp \Yy_{c,d}$, then setwise $\sigma_{a,b}\Yy_{c,d}=\Yy_{c,d}$, and by \cite[Lemma  4.11]{bp} $\sigma_{a,b}\Yy_{a,b}=\Yy_{a,b}$. As a consequence, $$D\rho(D\alpha)\Yy_\alpha=\Yy_\alpha.$$
Suppose the endpoints of $\Yy_\alpha$ are Lagrangians $a,b$, where $(\Lambda_{\gamma_\alpha}^{ +},a,\Lambda_{\gamma_\alpha}^{ -},b)$ is maximal. We want to show that $b$ (resp.\ $a$) is the repulsive (resp.\ attractive) Lagrangian of $D\rho(D\alpha)$. Note first that since $D\rho(D\alpha)$ fixes the tube $\Yy_{\alpha}$, it either fixes or exchanges $a$ and $b$. But
$$D\rho(D\alpha)(a)=\sigma_{\gamma_\alpha}\sigma_\rho (a)=\sigma_{\gamma_\alpha}(b),$$
where the second equality holds by \cite[Lemma 4.15]{bp}. But since $b$ is not an endpoint of $\Yy_{\gamma_\alpha}$, it is not fixed by $\sigma_{\gamma_\alpha}$, which implies that $D\rho(D\alpha)(a)=a$ and $D\rho(D\alpha)(b)=b$. 

Moreover, since $D\rho(D\alpha)^{-1}\Lambda_{\gamma_\alpha}^-=\sigma_\rho\Lambda^-_{\gamma_\alpha}$, we get that $(a,\Lambda_{\gamma_\alpha}^-,D\rho(D\alpha)^{-1}\Lambda_{\gamma_\alpha}^-,b)$ is maximal. We claim that this implies that $b$ and $a$ are the repulsive and attractive Lagrangians of $D\rho(D\alpha)$. This follows from the following observation:
\begin{Remark} Let $g\in\Sp(2n,\R)$ be Shilov hyperbolic fixing two Lagrangians $a$ and $b$. If there exists a point $x\in(\!(a,b)\!)$ with $(a,x,g^{-1}x,b)$ maximal, then $a=\L_g^+$ and $b=\L_g^-$.
\end{Remark}
To prove the remark, observe that up to the symplectic group action we can assume that $(a,x,b)=(l_\infty,\Id,0)$ and $g=\bsm A&0\\0&^tA^{-1}\esm$, and we need to verify that $l_\infty$ is the attractive Lagrangian for $g$, namely that the eigenvalues of $A$ are precisely the eigenvalues of $g$ that have absolute value bigger than 1. But the hypothesis that  $(a,x,g^{-1}x,b)$ is maximal implies that $(a,gx,x,b)$ is maximal and so
$$1<\min\ev(A ^tA)\leq |\min\ev (A)|^2,$$
as requested.

So we get $\Yy_\alpha=\Yy_{D\alpha}$.
\begin{figure}[h]
\begin{overpic}{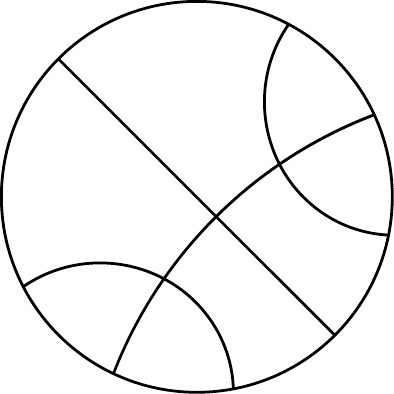}
\put(-17,20){$\sigma_\rho\Lambda_{\g_\alpha}^-$}
\put(55,-8){$\sigma_\rho\Lambda_{\g_\alpha}^+$}
\put(102,38){$\Lambda_{\g_\alpha}^+$}
\put(75,95){$\Lambda_{\g_\alpha}^-$}
\put(36,39){$\Yy_{\alpha}$}
\put(87,7){$\Lambda_{c_0}^-$}
\put(3,90){$\Lambda_{c_0}^+$}
\put(20,-5){$b$}
\put(104,73){$a$}
\end{overpic}
\caption{The $\R$-tubes appearing in the proof}\label{fig:doubletube}
\end{figure}

We now want to show the statement about the Finsler lengths. Denoting the projection onto $\Yy_\alpha$ by $p_\alpha$, we have
\begin{align*}
\ellF(\alpha)&=\dF(p_\alpha(\Lambda_{\gamma_\alpha}^+),p_\alpha(\Lambda_{c_0}^-))\\
&=\frac{1}{2}\left(\dF(p_\alpha(\Lambda_{\gamma_\alpha}^+),p_\alpha(\Lambda_{c_0}^-))+\dF(p_\alpha(\underbrace{\sigma_\rho(\Lambda_{c_0}^-)}_{\Lambda_{c_0}^-}),p_\alpha(\sigma_\rho(\Lambda_{\gamma_\alpha}^+))\right)\\
&=\frac{1}{2}\dF(p_\alpha(\Lambda_{\gamma_\alpha}^+),p_\alpha(\sigma_\rho(\Lambda_{\gamma_\alpha}^+))
\end{align*}
where the last equality follows from the additivity of the Finsler metric on causal curves (Lemma \ref{orsum}). But $D\rho(D\alpha)^{-1}(\Lambda_{\gamma_\alpha}^+)=\sigma_\rho\Lambda_{\gamma_\alpha}^+$, thus by Lemma \ref{lem:ellF}
$$\dF(p_\alpha(\Lambda_{\gamma_\alpha}^+),p_\alpha(\sigma_\rho(\Lambda_{\gamma_\alpha}^+))=\ellF(D\alpha),$$
which implies that $\ellF(\alpha)=\frac{1}{2}\ellF(D\alpha)$.

\end{proof}


\section{Strategy of proof}\label{sec:strategy}
The idea of the proof of Basmajian's identity is the following. Fix a lift $\tilde{c}$ of a boundary component $c$ to $\hyp$, with endpoints $x$ and $y$. Pick $z\in\tilde{c}$ and let $\g$ be the peripheral element with axis $\tilde{c}$. We can write $(z,\g z)$ as
$$(z,\g z)=\left(p_{\tilde{c}}(\, (\!(x,y)\!)\cap \Lambda(\Gamma))\cup\!\!\!\!\!\!\!\!\!\bigcup_{\substack{\tilde{d}\subset \partial \wt\Sigma\\\mbox{\tiny with endpoints}\\ \mbox{\tiny in }(\!(x,y)\!)}}\!\!\!\!\!\!\!\!\!p_{\tilde{c}}(\tilde{d})\right)\cap (z,\g z)$$
where $p_{\tilde{c}}$ is the orthogonal projections onto $\tilde{c}$ and $\Lambda(\Gamma)$ is the limit set of $\Gamma$. For every other component $\tilde{d}$ of $\partial\tilde{\Sigma}$, we can compute the length of the projection onto $\tilde{c}$ in terms of the length of the corresponding orthogeodesic, using hyperbolic trigonometry. Since the limit set has measure zero (see Proposition \ref{prop:limitset}), we can deduce that its projection onto $\tilde{c}$ has measure zero as well. The length of $(z,\g z)$ is the length of $c$, and hence we obtain Basmajian's identity.
\begin{figure}[h]
\vspace{.5cm}
\begin{overpic}{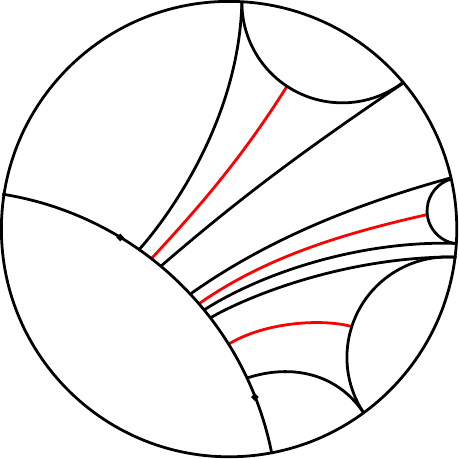}
\put(10,58){$\tilde{c}$}
\put(65,85){$\tilde{d_3}$}
\put(101,52){$\tilde{d_2}$}
\put(80,25){$\tilde{d_1}$}
\put(58,-7){$x$}
\put(-7,56){$y$}
\put(49,10){$z$}
\put(18,42){$\g z$}
\end{overpic}
\caption{Some projections on the lift $\tilde{c}$ (orthogonals in red)}
\end{figure}

Our proof of \eqref{Finsler}, Basmajian-type inequalities for the Finsler metric, follows the same strategy of the classical proof, using additivity of the Finsler metric along causal paths (Lemma \ref{orsum}) and the important fact that the Finlser translation length of an element is attained at any point along a causal curve (Lemma \ref{lem:ellF}). With this at hand, given any peripheral element $\gamma$, we look at the $\R$-tubes corresponding to the other peripheral elements $\delta$ and we compute the vectorial distance between the projection of the two endpoints of a tube $\Yy_\delta$ onto $\Yy_\g$ in terms of the vectorial length of the orthogeodesic between them (Lemma \ref{2logcothC}), which in turn (Lemma \ref{2logcoth}) gives an inequality for the Finsler metric.  The generalization (Theorem \ref{thm:M0}) of the fact that the limit set has measure zero is proven in Section \ref{sec:0measure}, following the strategy explained in Section \ref{classicalmeasure0}.

The characterization of diagonal embeddings as representations attaining the equalities in \eqref{Finsler} follows from the observation that having equalities is equivalent (Lemma \ref{2logcothC}) to the fact that the cross-ratios of the form $R(\L_\g^-,\L_\delta^+,\L_\delta^-,\L_\g^+)$ are multiples of the identity. By Lemma \ref{lem:iff}, this implies that the image of the boundary map is in the boundary of a diagonal disk, which allows us to deduce the characterization we want.

The Riemannian case is a priori harder, since neither Lemma \ref{lem:ellF} nor Lemma \ref{orsum} hold true for the Riemannian distance. However we rely on the observation that the translation length of a peripheral element $\g$ is at most the translation length of $\g$ on the Euclidean factor of $\Yy_\g$, and the latter quantity is, up to a constant, its Finsler translation length (Remark \ref{dFandpiR}). This allows us to deduce \eqref{Riemannian} as a consequence of the result about the Finsler metric.
\section{An identity between cross-ratios}\label{sec:0measure}
Basmajian's proof of his celebrated identity builds on the following well known fact:
\begin{prop}\label{prop:limitset}
Let $\Gamma$ be a Fuchsian group corresponding to the holonomy of a hyperbolic surface $\Sigma$ with nonempty geodesic boundary. Then the Lebesque measure of the limit set $\Lambda=\Lambda(\Gamma)$ is zero.
\end{prop}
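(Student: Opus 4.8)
The plan is to exhibit a subset of the domain of discontinuity of positive measure, so that the limit set, being contained in the complement (which has full measure under ergodicity of the geodesic flow, or more elementarily since $\Sigma$ has a funnel), has zero measure. First I would recall the structure of a hyperbolic surface with nonempty geodesic boundary: $\Sigma$ is the \emph{convex core} of a complete hyperbolic surface $\Sigma^\circ$ obtained by attaching a funnel (half-infinite collar) to each boundary geodesic. On the level of universal covers, if $\Gamma = \pi_1(\Sigma) < \PSL(2,\R)$ is the Fuchsian holonomy, each lift $\tilde d$ of a boundary component $d$ of $\Sigma$ bounds an open interval $I_{\tilde d} \subset \mathbb S^1 = \partial\H^2$ disjoint from the limit set $\Lambda(\Gamma)$: the interval on the side of $\tilde d$ away from (the lifted copy of) $\Sigma$. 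These are the ``gaps'' of $\Lambda(\Gamma)$ associated to the boundary, and the complement $\mathbb S^1 \setminus \Lambda(\Gamma)$ is the union of the $\Gamma$-translates of these gaps.

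Next I would reduce to a measure-theoretic statement about a single fundamental interval. Fix a boundary geodesic $c$ with lift $\tilde c$, with stabilizer $\langle\gamma\rangle$, and a fundamental interval $I \subset \mathbb S^1$ for the action of $\langle\gamma\rangle$ on $\mathbb S^1$ minus the two endpoints of $\tilde c$ (concretely, one of the two arcs $(\!(x,y)\!)$ determined by the endpoints $x,y$ of $\tilde c$). Inside $I$ one sees the endpoints of all other lifts $\tilde d$ of boundary components ``facing'' $\tilde c$, and each contributes a gap $I_{\tilde d} \subset I$. These gaps are pairwise disjoint (distinct lifts of boundary geodesics are disjoint geodesics, so the intervals they cut off are nested or disjoint; the ones facing $\tilde c$ are disjoint), and their union is exactly $I \setminus \Lambda(\Gamma)$. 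Passing to the orthogonal projection $p_{\tilde c}$ onto $\tilde c$, the length of $p_{\tilde c}(I_{\tilde d})$ is computed by hyperbolic trigonometry in terms of the length of the orthogeodesic between $c$ and $d$, and a direct computation (the half-collar width / Basmajian's own lemma) shows $\ell(p_{\tilde c}(I_{\tilde d})) = 4\log\coth\frac{\ell(\alpha_{\tilde d})}{2} > 0$, where $\alpha_{\tilde d}$ is that orthogeodesic; in particular each $I_{\tilde d}$ is a genuine interval of positive measure.

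The heart of the argument — and the step I expect to be the main obstacle — is showing that these gaps \emph{fill up $I$ up to measure zero}, i.e. $\lm(I \cap \Lambda(\Gamma)) = 0$. The clean way is an ergodicity argument: since $\Sigma$ has infinite area (it has a funnel), the geodesic flow on the unit tangent bundle of $\Sigma^\circ$ is \emph{not} ergodic, but the action of $\Gamma$ on $\mathbb S^1$ (equivalently on $\mathbb S^1 \times \mathbb S^1 \setminus \text{diag}$) is dissipative, and a standard dichotomy (Hopf, or the Poincaré series criterion: $\sum_{g\in\Gamma} e^{-d(o,go)} < \infty$ for geometrically finite $\Gamma$ of the second kind) gives that $\Lambda(\Gamma)$ has zero Lebesgue measure. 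Alternatively, and more in the spirit of the excerpt, one argues directly: $\Gamma$ is geometrically finite with a fundamental domain (a right-angled polygon, possibly with ideal vertices) whose closure in $\overline{\H^2}$ meets $\mathbb S^1$ in a set of positive measure; since $\mathbb S^1 \setminus \Lambda(\Gamma) = \bigcup_{g\in\Gamma} g\cdot(\text{gaps})$ and the $\Gamma$-orbit of this open set is all of the domain of discontinuity, while the Dirichlet domain shows the complement of $\Lambda$ carries a fundamental domain of positive measure for the $\Gamma$-action, Poincaré recurrence / conservativity considerations force $\lm(\Lambda) = 0$. I would present the ergodicity route as the main line and cite the classical fact (Ahlfors, Sullivan, or Nicholls) that a non-elementary geometrically finite Fuchsian group of the second kind has limit set of zero measure, since $\Gamma$ here is manifestly of the second kind (the gaps $I_{\tilde d}$ are nonempty). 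The routine trigonometric identification of $\ell(p_{\tilde c}(I_{\tilde d}))$ I would defer to the next section, where it is needed in the form of Lemma~\ref{2logcothC} and its classical analogue.
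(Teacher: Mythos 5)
Your proposal is correct for the setting at hand, but it proves the proposition by a genuinely different route from the paper. You ultimately rest the whole weight on the classical theorem that a geometrically finite (here convex cocompact) Fuchsian group of the second kind has limit set of Lebesgue measure zero, reached either by citation (Ahlfors, Sullivan, Nicholls) or via convergence of the Poincar\'e series at exponent $1$ together with the Hopf--Tsuji--Sullivan dichotomy; since $\Gamma$ is convex cocompact and manifestly of the second kind, this does prove the statement. The paper deliberately avoids this machinery: following Bowditch's suggestion (cf.\ also Tukia), it proves a porosity statement --- every subinterval of $\partial\hyp$ with endpoints in $\Lambda$ contains a boundary gap whose length is at least the definite proportion $e^{-r}$ of the interval, where $r$ is the compactness constant of Remark~\ref{distboundary} --- and concludes with the elementary covering Lemma~\ref{criterionmeasure0}. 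The reason for that choice is not rank-one elegance: the porosity argument is exactly what is transported to maximal representations in Section~\ref{sec:m0max} (Proposition~\ref{subinterval}, leading to Theorem~\ref{thm:M0}), where no ergodic-theoretic input of the kind you invoke is available. So your route buys brevity at the price of heavy classical theorems, while the paper's buys a self-contained argument whose only input is Remark~\ref{distboundary} and which generalizes to higher rank.

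Two caveats you should fix if you keep your route. First, your opening deduction --- that exhibiting a positive-measure subset of the domain of discontinuity already forces $\lm(\Lambda)=0$ --- is not valid as stated: for infinitely generated groups of the second kind the limit set can have positive Lebesgue measure, so the finitely generated/geometrically finite hypothesis must enter explicitly (just as compactness enters the paper's proof). You do acknowledge that the heart is showing the gaps exhaust the interval up to measure zero, but then ``infinite area, hence the geodesic flow is not ergodic'' is the conclusion of the dichotomy rather than something you may assume, and the ``Dirichlet domain plus Poincar\'e recurrence'' alternative is too vague to stand on its own; the clean statement to quote is: convergence type (which holds since $\delta(\Gamma)<1$ for a convex cocompact group of the second kind) implies the conical limit set is null, and here $\Lambda$ equals the conical limit set. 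Second, a minor slip: the projection of a single gap $I_{\tilde d}$ onto $\tilde c$ has length $2\log\coth\frac{\ell(\alpha_{\tilde d})}{2}$, not $4\log\coth\frac{\ell(\alpha_{\tilde d})}{2}$ (the factor $4$ in Basmajian's identity accounts for the two ends of each unoriented orthogeodesic, cf.\ Lemma~\ref{2logcothC} with $n=1$); this does not affect your argument, which only needs positivity of the gaps.
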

The goal of this section is to show that the analogous result holds for Anosov maximal representation, by adapting to the higher rank setting a proof which has been kindly suggested to us by Brian Bowditch (a similar argument can be found in \cite{Tukia}). 
 
We will use the following result, giving a sufficient condition for a subset of an interval to have Lebesgue measure zero.
\begin{lem}\label{criterionmeasure0}
Let $[0,\ell]$ be an interval in $\R$, $X$ be a union of open subintervals $\{I_\alpha\}_\alpha$ of $[0,\ell]$ and $Y=[0,\ell]\setminus X$. Suppose there exists  $\lambda>0$ such that for every interval $I\subset[0,\ell]$ there exists a finite union of closed intervals $J\subset I$ such that
\begin{enumerate}
\item $J\subset X\cap I$, and
\item $\lm(J)\geq \l\lm(I)$.
\end{enumerate}
Then $\lm(Y)=0$.
\end{lem}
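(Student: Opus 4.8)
This is a standard density/Lebesgue-density-type argument, and the natural approach is proof by contradiction combined with the Lebesgue density theorem. Suppose $\lm(Y)>0$. By the Lebesgue density theorem, almost every point of $Y$ is a density point, so I would pick a density point $y_0$ of $Y$ and choose a small interval $I\subset[0,\ell]$ containing $y_0$ on which the density of $Y$ is very close to $1$ — concretely, $\lm(Y\cap I)>(1-\lambda)\lm(I)$, which is possible precisely because $\lambda$ is a fixed positive constant independent of $I$. Then the hypothesis supplies a finite union of closed intervals $J\subset X\cap I$ with $\lm(J)\geq\lambda\lm(I)$. But $J\subset X$ means $J$ is disjoint from $Y$, so $\lm(Y\cap I)\leq\lm(I)-\lm(J)\leq\lm(I)-\lambda\lm(I)=(1-\lambda)\lm(I)$, contradicting the choice of $I$.

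An alternative, slightly more self-contained route avoids quoting the density theorem: iterate the hypothesis. Starting from $I_0=[0,\ell]$, the hypothesis gives $J_0\subset X\cap I_0$ with $\lm(J_0)\geq\lambda\lm(I_0)$; the complement $I_0\setminus J_0$ is a finite union of intervals (intervals minus finitely many closed subintervals), and applying the hypothesis to each of them and summing yields a finite union of closed intervals $J_1\subset X$, disjoint from $J_0$, covering at least a $\lambda$-fraction of $I_0\setminus J_0$, hence $\lm(J_0\cup J_1)\geq(1-(1-\lambda)^2)\lm(I_0)$. Inductively one builds pairwise disjoint finite unions $J_k\subset X$ with $\lm\!\left(\bigcup_{k=0}^{m}J_k\right)\geq(1-(1-\lambda)^{m+1})\lm(I_0)$. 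Since $X\supseteq\bigcup_k J_k$ and $(1-\lambda)^{m+1}\to 0$, we get $\lm(X)=\ell$, so $\lm(Y)=0$.

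One small technical point to handle carefully in either version: when I subtract a finite union of closed subintervals from an interval $I$, the result is a finite union of intervals that are open at the endpoints touching the removed closed intervals, but since the hypothesis is stated for arbitrary intervals $I\subset[0,\ell]$ and Lebesgue measure ignores finitely many endpoints, this causes no difficulty; I would just note that the hypothesis applies to each resulting subinterval and that measures add over the finitely many disjoint pieces. I would also note that $X$ and $Y$ need not themselves be measurable a priori — but $Y=[0,\ell]\setminus X$ with $X$ a union of open intervals is the complement of an open set, hence closed, hence measurable, so this is not an issue.

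**Main obstacle.** There is no serious obstacle here; the only thing requiring a little care is the bookkeeping in the iterative version — making sure the $J_k$ are genuinely pairwise disjoint (each new family is chosen inside the complement of the union of all previous ones) and that the geometric-series estimate is assembled correctly. If I use the density-theorem version, the argument is essentially immediate once the right interval $I$ is selected, and the only subtlety is confirming that the constant $\lambda$ being \emph{uniform} over all $I$ is exactly what lets the contradiction close.
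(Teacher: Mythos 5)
Your proposal is correct, and it in fact contains two complete arguments. Your ``alternative, more self-contained route'' is essentially the paper's own proof: the authors construct by induction sets $Y_k\supset Y$, each a finite union of intervals, by removing at every stage from each piece a finite union of closed subintervals of measure at least a $\lambda$-fraction, so that $\lm(Y_k)\le(1-\lambda)^k\ell\to 0$; this is exactly your disjoint families $J_k$ with the estimate $\lm\bigl(\bigcup_{k\le m}J_k\bigr)\ge\bigl(1-(1-\lambda)^{m+1}\bigr)\ell$, read from the complementary side. Your primary argument, via the Lebesgue density theorem, is a genuinely different and shorter route: assuming $\lm(Y)>0$, you take a density point of the closed (hence measurable) set $Y$, choose $I$ with $\lm(Y\cap I)>(1-\lambda)\lm(I)$, and then the hypothesis forces $\lm(Y\cap I)\le\lm(I)-\lm(J)\le(1-\lambda)\lm(I)$, a contradiction; as you note, the uniformity of $\lambda$ over all intervals is precisely what closes this. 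What the paper's (and your second) argument buys is self-containedness --- only finite additivity of the measure and a geometric series are needed --- whereas the density argument is immediate but invokes a heavier standard theorem. Both are valid, and your remark that removing closed subintervals leaves half-open pieces (harmless, since the hypothesis is stated for arbitrary intervals and endpoints are null) handles the one bookkeeping point the paper leaves implicit.
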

\begin{proof}
We construct by induction a sequence of sets $Y_k\supset Y$ which is a union of intervals and such that $\lm(Y_k)\leq (1-\l)^k\ell$.

For $k=0$, consider $I=[0,\ell]$. By hypothesis, there exists a finite union of subintervals $J_0\subset X$ such that $\lm(J_0)\geq \l \ell$. Set $Y_0:=X\setminus J_0$.
Suppose we have constructed $Y_k=\bigcup_\beta I^k_\beta$ with $\lm(Y_k)\leq (1-\l)^k\ell$. Then for each $I^k_\beta$ we know that there exists a finite union of subintervals $J^k_\beta$ satisfying (1) and (2); we set $Y_{k+1}:=Y_k\setminus \bigcup_\beta J^k_\beta$. Then $Y_{k+1}$ is still a union of intervals and
$$\lm(Y_{k+1})\leq (1-\l)\lm(Y_k)\leq (1-\l)^{k+1}\ell.$$
\end{proof}
\subsection{The classical proof}\label{classicalmeasure0}
In this section we sketch the proof of Proposition \ref{prop:limitset}.

We will show that for any compact interval $T\subset \partial\hyp\simeq\mathbb{S}^1$, $\lm(T\cap \Lambda)=0$. 
Note that the measure $\lm$ on $T$ is the one induced by the measure on $\mathbb{S}^1$. Fix an identification of $\partial\hyp$ with $\R\cup\{\infty\}$ so that $T=[0,L]$. Since $(T,\lm)$ is $C$-biLipschitz to $T$ endowed with the distance $|\cdot|$ induced by $\R$, it is enough to prove our claim using $|\cdot|$.

We want to apply Lemma \ref{criterionmeasure0} and we claim that it is enough to verify the assumptions only for subintervals $I=[a,b]\subset T$ whose extrema  $a,b$ belong to $\Lambda(\G)$. Indeed, consider $a'=\inf\{x\in[a,b] | x\in\Lambda\}$ and $b'=\sup\{x\in[a,b] | x\in\Lambda\}$. We have $I=[a,a')\cup [a',b']\cup(b',b]$ and the first and the last subintervals are contained in $I\setminus \Lambda$.

We will use the following observation:
\begin{Remark}\label{distboundary} If $\Sigma$ is a complete hyperbolic structure on a compact surface with non-empty boundary, there is a constant $r=r(\Sigma)$ such that for every $p\in \Sigma$, the distance of $p$ from the boundary is at most $r$.
\end{Remark}
Consider the unique point $p$ in the geodesic between $a$ and $b$ with imaginary part $(b-a)/2$. Note that $p\in\wt{\Sigma}$ because $a$ and $b$ are in the limit set. By Remark \ref{distboundary}, there is a point $q\in\partial\wt{\Sigma}$ at distance at most $r$ from $p$, which implies that $\im q\geq e^{-r}(b-a)/2$. Moreover $q$ belongs to a lift of a boundary component of $\S$ with endpoints $c<d$ in $I$, so $(c,d)\subset I\setminus \Lambda$, and we have
$$(d-c)\geq 2\im q \geq e^{-r}(b-a).$$
So we can set $\lambda=e^{-r}$ and apply Lemma \ref{criterionmeasure0} to deduce that $|\Lambda\cap T|=0$.
\begin{figure}[H]
\begin{overpic}{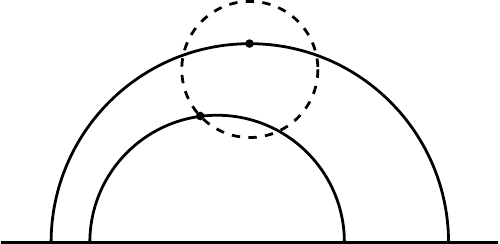}
\put(4,-7){$a$}
\put(17,-7){$c$}
\put(67,-7){$d$}
\put(88,-7){$b$}
\put(38,18){$q$}

\put(53,35){$p$}
\put(58,48){$D_{r}(p)$}
\end{overpic}
\caption{Finding the subinterval}
\end{figure}
\subsection{Maximal representations}\label{sec:m0max}
Our objective is to prove the following result: 
\begin{theor}\label{thm:M0}
Let $\Sigma$ be a surface with nonempty boundary and let $\rho:\pi_1(\Sigma)\to\Sp(2n,\R)$ be  an Anosov maximal representation. For every peripheral element $\gamma$ we have:
$$\ellF(\g)=\sum_{\alpha\in\ort_\Sigma(\gamma)} \dF(p_\g(\phi(\delta_\alpha^+)),p_\g(\phi(\delta_\alpha^-)))$$
where $\phi$ is the boundary map associated to $\rho$ and $\delta_\alpha$ is the peripheral element associated to $\alpha$ and different from $\gamma$.
\end{theor}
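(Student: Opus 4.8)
The plan is to adapt the classical proof sketched in Section \ref{classicalmeasure0} to the higher rank setting, passing through the double $D\rho$ so that we may use the continuity (and rectifiability) of the boundary map. Fix the peripheral element $\gamma$ and work with the lift $\Yy_\gamma$ of the peripheral tube; after applying an element of $\Sp(2n,\R)$ we may assume $\phi(\gamma^-)=0$, $\phi(\gamma^+)=l_\infty$, and by Remark \ref{projcausal} the map $t\mapsto p_\gamma(\phi(t))$ restricted to $(\!(\gamma^-,\gamma^+)\!)$ is a causal path into $\Yy_{0,l_\infty}\cong\Xx_{\GL(n,\R)}$. By Remark \ref{dFandpiR}, composing with $\pi^\R$ (up to the factor $\sqrt n/2$) turns the Finsler distance along this causal path into an honest signed distance on $\R$; so the quantity $\ellF(\gamma)=\dF(p_\gamma(\phi(x)),p_\gamma(\phi(\gamma x)))$ (Lemma \ref{lem:ellF}) becomes the length of a fundamental interval $T=[0,L]\subset\R$, and the terms $\dF(p_\gamma(\phi(\delta_\alpha^+)),p_\gamma(\phi(\delta_\alpha^-)))$ become the lengths of the open subintervals $I_\alpha$ of $T$ cut out by the projections of the tubes $\Yy_{\delta_\alpha}$ (these are genuinely subintervals because distinct peripheral tubes are disjoint from $\Yy_\gamma$ and from each other by Proposition \ref{intersectingtubes}, and their endpoints project to genuine points by Lemma \ref{existenceorth}/orthogonality). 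The identity to prove is then precisely that the complement $Y=T\setminus\bigcup_\alpha I_\alpha$ has Lebesgue measure zero.

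To establish that, I would invoke Lemma \ref{criterionmeasure0}: it suffices to produce a uniform $\lambda>0$ so that every subinterval $I\subset T$ contains a finite union $J$ of closed subintervals of $\bigcup_\alpha I_\alpha$ with $\lm(J)\geq\lambda\lm(I)$. As in the classical argument it is enough to check this for intervals $I$ whose endpoints lie in (the image under the coordinate of) the limit set, since the extremal pieces of a general $I$ are automatically in the complement of the limit set, hence inside some $I_\alpha$. Given such an $I$, pull it back to an arc in $(\!(\gamma^-,\gamma^+)\!)\subset\mathbb S^1$ with endpoints $a,b\in\L(D\Gamma)$, look at the midpoint $p$ of the singular geodesic in $\Yy_{\phi(a),\phi(b)}$, and use a compactness/cocompactness input — the analogue of Remark \ref{distboundary} for the convex core of $D\rho$ together with the fact that $\Gamma$-translates of a compact fundamental piece of the peripheral tubes are uniformly dense near the limit set — to find a peripheral tube $\Yy_{\delta_\alpha}$ (a lift of a boundary component of $\Sigma$ inside $D\Sigma$) passing uniformly close to $p$. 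Projecting that tube back to $\Yy_\gamma$ and then to $\R$ yields a subinterval $I_\alpha\subset I$ whose length is at least a fixed fraction of $\lm(I)$; the uniformity of the constant comes from the fact that the projection $p_\gamma$ and the coordinate $\pi^\R$ are Lipschitz on the relevant compact region, together with a lower bound on how much of $\Yy_{\delta_\alpha}$ projects into $I$ (a higher rank version of the estimate $\dF(p_\gamma(\phi(\delta^+)),p_\gamma(\phi(\delta^-)))=2\log\coth(\ellC(\alpha)_n/2)$-type computation, to be made precise via Lemma \ref{R(a,x,y,b)} and Lemma \ref{2logcothC} cited in the strategy section).

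The main obstacle, compared to the hyperbolic case, is the geometric input replacing Remark \ref{distboundary}: one must show that there is a uniform $r$ so that every point of $\Yy_\gamma$ (equivalently, every point on a singular geodesic joining two limit points) is within distance $r$ of some peripheral tube, and moreover that the nearby peripheral tube projects onto a definite-size interval of $\Yy_\gamma$. In the surface case this is cocompactness of the complement of the cusp/funnel neighborhoods; here the clean statement is cocompactness of the action of $D\Gamma$ on (a neighborhood in) its limit set together with the Anosov property guaranteeing that the peripheral tubes $\Yy_{\delta}$ vary continuously and project nicely. Because $D\rho$ is a maximal representation of a closed surface group, its boundary map is continuous with rectifiable image (the cited theorem of \cite{bilw}), which is exactly what is needed to make the projections $p_\gamma\circ\phi$ continuous and the above density argument go through; and the causality/monotonicity of these projections (Remark \ref{projcausal}) is what keeps everything one-dimensional after applying $\pi^\R$. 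Once the uniform $\lambda$ is in hand, Lemma \ref{criterionmeasure0} gives $\lm(Y)=0$, and unwinding the identification through Remark \ref{dFandpiR} and Lemma \ref{lem:ellF} produces exactly $\ellF(\gamma)=\sum_{\alpha\in\ort_\Sigma(\gamma)}\dF(p_\gamma(\phi(\delta_\alpha^+)),p_\gamma(\phi(\delta_\alpha^-)))$.
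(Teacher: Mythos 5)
Your overall frame coincides with the paper's: pass to the double so the boundary map is continuous, use Lemma \ref{lem:ellF} and Lemma \ref{orsum} (equivalently Remark \ref{dFandpiR}) to turn the statement into ``the complement of the intervals $I_\alpha$ in $[0,\ell]$ has Lebesgue measure zero'', invoke Lemma \ref{criterionmeasure0}, and reduce to intervals whose endpoints lie in the image of the limit set. The genuine gap is in the key step where you produce, inside a given interval, a sub-interval of definite relative size. You take ``the midpoint $p$ of the singular geodesic in $\Yy_{\phi(a),\phi(b)}$'' and assert, via an analogue of Remark \ref{distboundary}, that some peripheral tube passes uniformly close to $p$. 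In higher rank this is exactly what is \emph{not} available: the quasi-isometrically embedded copy of the surface (the image of the map $F$ of Proposition \ref{qiembedding}) only comes uniformly close to points of the form $p_{\phi(a),\phi(b)}(\phi(z))$ with $z\in(\!(a,b)\!)$, not to arbitrary points of the $\frac{n(n+1)}{2}$-dimensional tube $\Yy_{\phi(a),\phi(b)}$, and the paper explicitly warns that one cannot guarantee a point of the causal path $p_{\phi(a),\phi(b)}(\phi(z))$ with imaginary part $\tfrac12(\phi(b)-\phi(a))$. The whole content of the hardest step of the paper's argument, Proposition \ref{prop:high}, is to prove the substitute statement: there is a uniform $k_3>0$ and some $z\in(\!(a,b)\!)$ with $\max\ev\bigl(\im\, p_{\phi(a),\phi(b)}(\phi(z))\bigr)>k_3\max\ev\bigl(\phi(b)-\phi(a)\bigr)$, and this requires a delicate analysis of monotone (causal) paths of symmetric matrices. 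Your proposal silently assumes this, so as written the argument has no lower bound on the ``height'' of the point you feed into the cocompactness step, and hence no lower bound on the interval you extract.

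A second, related omission: even granting a high point near a peripheral tube, you still need quantitative conversions to conclude that the corresponding interval $[c,d]=[\theta(s'),\theta(t')]$ satisfies $d-c\geq\lambda(b-a)$. The paper does this with three lemmas you do not supply and which are not replaced by your appeal to Lipschitz continuity of $p_\g$ and $\pi^\R$ (Lipschitz bounds go the wrong way; one needs lower bounds): Lemma \ref{lem:closetohigh} (a point of $\calX$ at bounded Riemannian distance from $iY$ has imaginary part with comparable top eigenvalue), Lemma \ref{lem:highislarge} (a tube through a high point has endpoints $C,D$ with $\max\ev(D-C)$ at least twice that height), and Lemma \ref{lem:topvsdet} (on the bounded region of the affine chart determined by $\ellF(\g)$, $\max\ev$ of a difference is comparable to $\log\det$ of the quotient, which is what $\theta$ measures). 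The references you cite for this purpose, Lemma \ref{R(a,x,y,b)} and Lemma \ref{2logcothC}, compute orthogeodesic lengths and play no role in this estimate. So the skeleton is right, but the two ingredients that make the higher-rank case genuinely harder than Proposition \ref{prop:limitset} are missing.
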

\begin{Remark}\label{rem:parabolic}
 Observe that the inequality 
 $$\ellF(\g)\geq\sum_{\alpha\in\ort_\Sigma(\gamma)} \dF(p_\g(\phi(\delta_\alpha^+)),p_\g(\phi(\delta_\alpha^-)))$$
 immediately follows from the fact that the Finsler translation distance of $\gamma$ is attained at the projection $p_\gamma(\phi(x))$ of any point $\phi(x)$ in the image of the boundary map (Lemma \ref{lem:ellF}) together with the additivity of the Finsler distance along causal paths (Lemma \ref{orsum}). 
 Obviously the same inequality holds also if some peripheral element is not Shilov hyperbolic, as long as $\rho(\gamma)$ is, restricting the sum to the orthogeodesic corresponding to pairs of peripheral elements whose image is Shilov hyperbolic. 
\end{Remark}
\begin{Remark}
Note that we can rewrite Theorem \ref{thm:M0} as the Basmajian identity for cross-ratios associated to maximal representations, stated in the introduction as Theorem \ref{thm:cr}. Indeed, it follows from Lemma \ref{R(a,x,y,b)} that
$$\dF(p_\g(\phi(\delta_\alpha^+)),p_\g(\phi(\delta_\alpha^-)))=\frac{1}{2}\log\B(\gamma^-,\delta_\alpha^-,\gamma^+,\delta_\alpha^+)$$
and we have already noticed in Section \ref{sec:distandmaxreps} that for any element $\g\in\Gamma$
$$\ell^F(\gamma)=\frac{1}{2}\ell_{\B}(\gamma).$$
\end{Remark}
To prove Theorem \ref{thm:M0}, we will use the same strategy explained in Section \ref{classicalmeasure0}. The similarities will be evident as we will consider the upper-half space model for the symmetric space associated to $\Sp(2,\R)$; the maximum eigenvalue of the imaginary part of a point in $\calX$ will play the role of the imaginary part of a point in $\hyp$.

Consider $[\![x,\g x]\!]=\{x,\g x\}\cup(\!(x,\g x)\!)\subset \partial\hyp$ and denote by $\ell$ the Finsler translation distance of $\g$. We can assume that $\phi(\g^+)=l_\infty, \phi(\g^-)=0, \phi(x)=\Id$, up to conjugating with an element in $\Sp(2n,\R)$.  We define a monotone map
\begin{align*}\theta:[\![x,\g x]\!]&\to[0,\ell]\\
y&\mapsto\dF(p_\g(\phi(y)),p_\g(\phi(x)))=\frac12\log\det(\phi(y)\phi(x)^{-1})
\end{align*}
where the equality follows from Lemma \ref{Finslerdistance} and Remark \ref{projcausal} (see also the proof of Lemma \ref{orsum}).
For any orthotube $\alpha$ so that $\delta_\alpha^+,\delta_\alpha^-\in (\!(x,\gamma\cdot x)\!)$, 
let $I_\alpha$ be
$$I_\alpha:=(\theta(\delta_\alpha^+),\theta(\delta_\alpha^-))\subset [0,\ell],$$
and $X=\bigcup_{\alpha}I_\alpha$.

Proving Theorem \ref{thm:M0} is then equivalent to showing that $\lm(X)=\ell$, which in turn is the same as proving that $\lm([0,\ell]\setminus X)=0$. We want to use Lemma \ref{criterionmeasure0}; as in the classical case, we know that we can reduce ourselves to consider subintervals of $[0,\ell]$ with endpoints in $\theta(\Lambda(\Gamma))$. So it is enough to prove the following:
\begin{prop}\label{subinterval}
There exists a constant $\lambda>0$  such that for every $a,b\in\theta(\L(\G))$ there is an interval $I_\alpha\subset [a,b]\cap X$ with $\lm(I_\alpha)\geq \l(b-a)$.
\end{prop}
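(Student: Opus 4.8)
The plan is to mimic the classical argument from Section \ref{classicalmeasure0}, replacing the geometry of $\mathbb H^2$ with the synthetic geometry of $\R$-tubes in $\calX$, and using the maximum eigenvalue of the imaginary part of a point in $\calX$ in the role of the imaginary part in the upper-half plane model. First, fix $a,b\in\theta(\L(\G))$ and pick $x_a,x_b\in\L(\G)\subset\mathbb S^1$ with $\theta(x_a)=a$, $\theta(x_b)=b$ and $(x,x_a,x_b,\g x)$ positively oriented; let $a'=\phi(x_a)$ and $b'=\phi(x_b)$. Since $a',b'$ are in the image of the boundary map of a point of $\L(\G)$, they are limits of fixed points of elements of $\rho(\G)$, and the $\R$-tube $\Yy_{a',b'}$ intersects the ``thick part'' of the locally symmetric space: more precisely, I want a compactness statement analogous to Remark \ref{distboundary}, namely that there is a constant $r=r(\rho)$ such that every point of $\Yy_{a',b'}$ is within Finsler (or Riemannian) distance $r$ of some peripheral tube $\Yy_\delta$ with $\delta^\pm\in(\!(x_a,x_b)\!)$. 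This follows from cocompactness of the action of $\rho(\G)$ on the convex hull (or on the union of the peripheral tubes together with a bounded neighbourhood) — concretely, from the Anosov property the set $\bigcup_\delta\Yy_\delta$ is ``coarsely dense'' inside the relevant region, uniformly.

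Next, I would choose a good comparison point on $\Yy_{a',b'}$. After conjugating so that $\phi(\g^-)=0$, $\phi(\g^+)=l_\infty$, $\phi(x)=\Id$, Lemma \ref{lem:IdLambda} lets me further normalize $(\phi(\g^+),a',\phi(\g^-),b')$; but what I actually need is the point $p\in\Yy_{a',b'}$ whose image part has maximal eigenvalue comparable to $e^{b-a}$ — mirroring the choice of the point of imaginary part $(b-a)/2$ in the classical proof. Here the key computational input is Lemma \ref{Finslerdistance} together with Remark \ref{dFandpiR}: the quantity $b-a=\theta(x_b)-\theta(x_a)$ is $\tfrac{1}{\sqrt n}$ times the difference of the projections of $p_\g(b')$ and $p_\g(a')$ onto the Euclidean factor of $\Yy_\g$, i.e.\ $\tfrac12\log\det(b'\,(a')^{-1})$ up to the normalization in $\theta$. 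I would then use the explicit cross-ratio formula \eqref{cross-ratio} and Lemma \ref{R(a,x,y,b)} to relate the eigenvalues of the imaginary part of $p$ to $b-a$: one gets that $\max\ev(\im p)$ is bounded below by a fixed constant times $e^{(b-a)/n}$ (or similar), where the loss of $n$ in the exponent comes from passing between the $\det$-type quantity controlling $\theta$ and the largest-eigenvalue quantity controlling the ``size'' of the tube.

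Then I invoke the coarse-density statement to find a peripheral tube $\Yy_{\delta}$ passing within distance $r$ of $p$, with $c:=\phi(\delta^+), d:=\phi(\delta^-)$ (ordered so $(a',c,d,b')$ or $(a',d,c,b')$ is maximal and $c,d$ lie ``between'' $a'$ and $b'$ on the tube). Since $\Yy_\delta$ is close to $p$, its endpoints $c,d$ are such that $\dF(p_\g(c),p_\g(d))$ — which is exactly the length of the interval $I_\alpha$ for the orthotube $\alpha=(\g,\delta)$ — is bounded below. Concretely, the imaginary part of the point $p_{\g}(c)$ (resp.\ $p_\g(d)$) relates to $\max\ev(\im p)$ up to a factor $e^{\pm r}$ (the $r$-closeness controls eigenvalues multiplicatively, by the min--max principle, Lemma \ref{lem:minmax}, and Lemma \ref{lem:evAB}), so $\theta(x_d)-\theta(x_c)\geq \tfrac{1}{\text{const}}\,e^{-cr}(b-a)$, giving $\lm(I_\alpha)\geq\lambda(b-a)$ with $\lambda=\lambda(r,n)$ independent of $a,b$. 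Finally I check $I_\alpha\subset[a,b]$, which is immediate from monotonicity of $\theta$ once $c,d$ are between $x_a$ and $x_b$, and that $I_\alpha\subset X$ by definition.

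The main obstacle I expect is establishing the uniform coarse-density / bounded-distance statement that plays the role of Remark \ref{distboundary}: in the hyperbolic case it is just cocompactness of $\Sigma$, but for an Anosov maximal representation one must argue that the union of peripheral $\R$-tubes is uniformly coarsely dense in the appropriate part of $\calX$ (the ``convex core'' direction), and then track how closeness in $\calX$ translates — multiplicatively, and with a dimension-dependent loss — into closeness of the eigenvalues of imaginary parts and hence of the endpoints of the resulting interval. Keeping all the normalizations consistent (the factor $2$ in $\dF$, the $\sqrt n$ in Remark \ref{dFandpiR}, the relation $\theta=\dF(p_\g\phi(\cdot),p_\g\phi(x))$) so that the final $\lambda$ is genuinely uniform is the delicate bookkeeping; the existence of $\lambda>0$ itself is robust.
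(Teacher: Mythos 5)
Your overall skeleton (normalize, find a ``high'' point, use a bounded-distance-to-the-boundary statement, convert closeness into a definite interval) is the right one, but there is a genuine gap at the crucial step, and it is exactly the step the paper identifies as the hardest. You propose to take \emph{the} point $p\in\Yy_{a',b'}$ whose imaginary part has large top eigenvalue and then invoke a coarse-density statement saying that \emph{every} point of $\Yy_{a',b'}$ (or of the ``convex core'') lies within uniform distance $r$ of some peripheral tube. No such statement is available, and it is not a formal consequence of cocompactness: the only bounded-distance statement one has comes from Remark \ref{distboundary} for the cocompact $D\G$-action on $\H^2$, transported to $\calX$ through the quasi-isometric embedding $F$ of Proposition \ref{qiembedding}, and it therefore applies only to points in the image of $F$, i.e.\ to points of the causal path $t\mapsto p_{\phi(x_a),\phi(x_b)}(\phi(t))$, not to arbitrary points of the tube $\Yy_{a',b'}$. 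The ``highest'' point of the tube can be very far from the image of the boundary map; the paper states explicitly that one cannot guarantee a point with imaginary part $\tfrac12(\phi(x_b)-\phi(x_a))$ on the projected path. Bridging this is Proposition \ref{prop:high}: there is a uniform $k_3$ and a point $z\in(\!(x_a,x_b)\!)$ with $\max\ev(\im\, p_{\phi(x_a),\phi(x_b)}(\phi(z)))>k_3\max\ev(\phi(x_b)-\phi(x_a))$, proved by a delicate contradiction argument on monotone (causal) paths of symmetric matrices. Your proposal replaces this with an assumption, so as written the argument does not go through; once you have the Proposition \ref{prop:high} point $z$, the rest of your outline (nearby point $w\in F(T^1\partial\wt\Sigma)$ via Remark \ref{distboundary} and Proposition \ref{qiembedding}, then Lemmas \ref{lem:closetohigh} and \ref{lem:highislarge} to transfer the height to the nearby peripheral tube) matches the paper.

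A secondary issue is the claimed quantitative relation $\max\ev(\im p)\gtrsim e^{(b-a)/n}$: this is not the right shape. Since $a,b$ are $\theta$-values (Finsler/logarithmic coordinates) and all matrices $\phi(t)$, $t\in[\![x,\g x]\!]$, have eigenvalues in $[1,2e^{\ell}]$ after the normalization $\phi(x)=\Id$, the correct comparison between $\log\det$-type quantities (which compute $\theta$-differences) and $\max\ev$-type quantities (which measure the size of tubes) is \emph{linear} in $b-a$, with constants depending on $\ell$ and $n$; this is Lemma \ref{lem:topvsdet}, and the boundedness of the fundamental domain is what makes the final $\lambda$ uniform. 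Your final conversion of $r$-closeness into a multiplicative eigenvalue bound is in the right spirit, but the precise tool is Lemma \ref{lem:closetohigh} (together with Lemma \ref{lem:highislarge}), not just the min--max principle and Lemma \ref{lem:evAB}.
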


To be able to use Remark \ref{distboundary} in this setting, we first need a $\rho$-equivariant map from the unit tangent bundle of the hyperbolic plane into $\calX$. Recall that we fixed a cocompact action $h$ of $D\G$ on $\H^2$. We parametrize the unit tangent bundle by positively oriented triples of points $(a,b,c)$ on the boundary of the hyperbolic plane: a point $(p,v)\in T^1\hyp$ determines an oriented geodesic $l$ and $a$ and $c$ denote its start and end points. Moreover, $b$ is the point at infinity of the geodesic ray starting from $p$, orthogonal to $l$ and to the right of $l$.
\begin{figure}[H]
\begin{overpic}{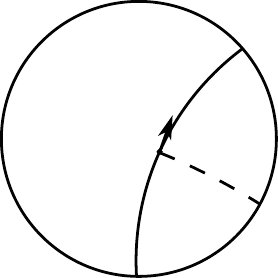}
\put(45,-10){$a$}
\put(100,20){$b$}
\put(90,85){$c$}
\put(25,45){$(p,v)$}
\end{overpic}
\caption{The parametrization of $T^1\hyp$}
\end{figure}

We use the boundary map $\phi$ to define a $D\rho$-equivariant map
\begin{align*}
F:T^1\hyp&\to \calX\\
(a,b,c)&\mapsto p_{\Yy_{\phi(a),\phi(c)}}(\phi(b)).
\end{align*}
\begin{prop}\label{qiembedding}
The map $F$ is a quasi-isometric embedding with respect to the Riemannian and Finsler metric on $\calX$.
\end{prop}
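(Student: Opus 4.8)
The plan is to reduce the statement to two independent estimates: a coarse upper bound and a coarse lower bound for $\dR$ (equivalently, by Lemma \ref{lem:relRF}, for $\dF$) between $F(a,b,c)$ and $F(a',b',c')$ in terms of the hyperbolic distance between the corresponding points of $T^1\hyp$. Since $D\rho$ is a maximal (hence Anosov) representation and $h$ is a cocompact action of $D\G$ on $\hyp$, a standard \v Svarc--Milnor-type argument shows it suffices to work equivariantly: fix a compact fundamental domain $K\subset T^1\hyp$ for the $D\G$-action, and it is enough to control $\dR(F(t),F(\g t))$ for $t\in K$ and $\g\in D\G$ in terms of the word length $|\g|$, and conversely. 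Because $D\rho$ is Anosov, it is a quasi-isometric embedding of $D\G$ into $\Xx$ (with respect to any invariant metric), so the comparison between $\dR(D\rho(\g)\cdot x_0, x_0)$ and $|\g|$ is already known; the content of the proposition is therefore to compare $\dR(F(t),F(\g t))$ with $\dR(D\rho(\g)\cdot x_0,x_0)$, uniformly over $t\in K$.

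First I would establish the upper bound. The map $F$ is $D\rho$-equivariant and its image meets every $D\G$-orbit of flats boundedly; concretely, $F(a,b,c)=p_{\Yy_{\phi(a),\phi(c)}}(\phi(b))$ lies in the $\R$-tube $\Yy_{\phi(a),\phi(c)}$, which for $t\in K$ ranges over a compact family of tubes, so $\dR(F(t), x_0)$ is bounded on $K$. Equivariance then gives
\[
\dR(F(t),F(\g t)) = \dR(D\rho(\g)\cdot F(t), F(t)) \le \dR(D\rho(\g)\cdot x_0, x_0) + 2\sup_{t\in K}\dR(F(t),x_0),
\]
and the first term is $\le C|\g| + C$ since $D\rho$ is Anosov; combined with $d_{\hyp}(t,\g t)\asymp |\g|$ (Milnor--\v Svarc for the cocompact action $h$) this yields the upper Lipschitz estimate. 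The only point to check carefully is that $\sup_{t\in K}\dR(F(t),x_0)<\infty$, which follows from continuity of $\phi$ (Corollary 6.3 of \cite{bilw}, applied to $D\rho$ on the closed surface $D\Sigma$) and of the orthogonal projection onto tubes (Lemma \ref{R(a,x,y,b)} and the explicit formulas in Section \ref{sec:orthogonality}).

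For the lower bound — which I expect to be the main obstacle — the idea is to use the causal/monotone structure coming from the boundary map together with Lemma \ref{lem:ellF} and the additivity of the Finsler metric along causal paths (Lemma \ref{orsum}). Given $\g\in D\G$, let $\tilde c$ be the axis of $h(\g)$ with endpoints $\g^\pm$; for a point $t$ roughly on $\tilde c$, the point $F(t)$ projects (under the orthogonal projection $p_\g$ onto $\Yy_\g=\Yy_{\phi(\g^-),\phi(\g^+)}$) to a point whose $\dF$-displacement by $D\rho(\g)$ equals $\ellF(D\rho(\g))$, by Lemma \ref{lem:ellF} and Remark \ref{projcausal}; since $p_\g$ is $1$-Lipschitz for $\dF$, this gives
\[
\dF(F(t),F(\g t)) \ge \dF\bigl(p_\g(F(t)), p_\g(F(\g t))\bigr) = \ellF(D\rho(\g)),
\]
and $\ellF(D\rho(\g)) \ge c|\g| - c$ by the Anosov property. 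For general $t$ not near an axis one replaces $\g$ by a bounded-index comparison or uses that $F(t)$ and $F(\g t)$ lie in tubes at bounded distance from $\Yy_\g$, absorbing the error into additive constants; here one must make sure the projection $p_\g$ is defined, i.e.\ that the relevant Lagrangians lie in $(\!(\phi(\g^-),\phi(\g^+))\!)$, which is guaranteed by monotonicity of $\phi$. Assembling the two bounds, and using $d_{\hyp}(t,\g t)\asymp |\g|$ once more, gives constants $L\ge 1$, $C\ge 0$ with
\[
\tfrac1L\, d_{\hyp}(t,t') - C \;\le\; \dR(F(t),F(t')) \;\le\; L\, d_{\hyp}(t,t') + C
\]
for all $t,t'\in T^1\hyp$, and the same for $\dF$ by Lemma \ref{lem:relRF}; this is the claimed quasi-isometric embedding.
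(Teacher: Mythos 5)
Your reduction to pairs $(t,\g t)$ with $t$ in a compact fundamental domain $K$, and your upper bound, are fine and in the end coincide with the paper's very short argument (which quotes \cite[Corollary 6.2]{bilw} for the orbit map and concludes by cocompactness and continuity of $\phi$). The genuine gap is in your lower bound. The inequality $\ellF(D\rho(\g))\geq c|\g|-c$ is false in general: translation length is conjugation-invariant while word length is not (take $\g=\alpha\beta\alpha^{-1}$ with $|\alpha|$ huge), so well-displacing --- which is what maximality gives, cf.\ \cite{Lab_energy} --- only bounds $\ellF(D\rho(\g))$ from below in terms of the \emph{translation length} of $h(\g)$ on $\hyp$, i.e.\ in terms of $d_{\hyp}(t,\g t)$ only when $t$ lies on the axis of $h(\g)$. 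For the pairs you actually need (fixed $t\in K$, arbitrary $\g$) the point $t$ may lie at arbitrarily large distance $D$ from that axis, and then $d_{\hyp}(t,\g t)$ is of order $\ell_{\hyp}(h(\g))+2D$; the bound $\dF(F(t),F(\g t))\geq\ellF(D\rho(\g))$ (which, incidentally, needs neither the projection $p_\g$ nor its alleged $1$-Lipschitz property for $\dF$ --- a property not established in the paper and not obvious for the Finsler metric, although any point is displaced at least the translation length) misses the unbounded term $2D$, so the error cannot be absorbed into additive constants. The fallback you sketch, that $F(t)$ and $F(\g t)$ lie in tubes at bounded distance from $\Yy_\g$, is unjustified and false in general: when the geodesic determined by $t$ is far from the axis of $h(\g)$, the corresponding tubes drift apart (indeed this is exactly what the quasi-isometry statement predicts).

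The fix is already contained in your own preamble: you grant that the orbit map $\g\mapsto D\rho(\g)\cdot x_0$ is a quasi-isometric embedding (this is precisely the cited \cite[Corollary 6.2]{bilw}, applied to $D\rho$, using equivariance of $F$ so that the restriction of $F$ to an orbit is an orbit map). Then the lower bound follows from the same triangle inequality as your upper bound:
\begin{equation*}
\dR(F(t),F(\g t))=\dR(F(t),D\rho(\g)F(t))\;\geq\; \dR(x_0,D\rho(\g)x_0)-2\sup_{s\in K}\dR(F(s),x_0)\;\geq\; c|\g|-c',
\end{equation*}
using only equivariance and the finiteness of $\sup_{K}\dR(F(\cdot),x_0)$, which you correctly derived from continuity of $\phi$ and compactness of $K$. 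With the causal/translation-length detour replaced by this, your argument is correct and is essentially the paper's proof.
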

\begin{proof}
Since the Riemannian and Finsler metrics on $\calX$ are quasi-isometric, it is enough to prove the result for the Riemannian metric.
It is shown in \cite[Corollary 6.2]{bilw} that the restriction of $F$ to a $D\G$-orbit is a quasi-isometric embedding. The result then follows since $D\G$ acts cocompactly on $\H^2$ and $F$ is continuous (since $\phi$ is).
\end{proof}

In the proof of Proposition \ref{subinterval} it will be useful to be able to relate the maximum eigenvalue of $B-A$ to the logarithm of $\det BA^{-1}$, for $A,B\in \phi(\partial \G)$. 
\begin{lem}\label{lem:topvsdet}
Assume $(\Id, A, B, m\Id)$ is maximal, for some $m>1$. Then there exists $k_1,k_2$ depending only on $m$ and $n$ such that
$$k_2\log\det(BA^{-1})>\max\ev(B-A)> k_1\log\det(BA^{-1}).$$ 
\end{lem}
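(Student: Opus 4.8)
The statement asserts a two-sided comparison between $\max\ev(B-A)$ and $\log\det(BA^{-1})$ under the hypothesis that the quadruple $(\Id, A, B, m\Id)$ is maximal. The plan is to extract quantitative information from maximality via Lemma \ref{lem:max} and then play off the eigenvalue of the \emph{difference} $B-A$ against the eigenvalues of the \emph{ratio} $BA^{-1}$ using the min--max principle (Lemma \ref{lem:minmax}) and Lemma \ref{lem:evAB} and Lemma \ref{lem:eigenvalues}.

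First I would unpack the maximality hypothesis. By Lemma \ref{lem:max}(3) applied to $(\Id,A,B,m\Id)$ (all consecutive differences positive since $m\Id-\Id=(m-1)\Id$ is positive definite), we get that $A-\Id$, $B-A$ and $m\Id-B$ are all positive definite. In particular $\Id < A < B < m\Id$ in the Loewner order, so all eigenvalues of $A$ and of $B$ lie in the interval $(1,m)$. This is the key boundedness: both matrices are positive definite, bounded above by $m$ and below by $1$, with $B-A$ also positive definite.

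For the \textbf{lower bound} $\max\ev(B-A) > k_1\log\det(BA^{-1})$: write $BA^{-1}$ has eigenvalues $\mu_1\ge\dots\ge\mu_n$, all $>1$ by Lemma \ref{lem:eigenvalues}. Then $\log\det(BA^{-1})=\sum\log\mu_i \le n\log\mu_1 = n\log\max\ev(BA^{-1})$. Now $\max\ev(BA^{-1})=\max\ev(A^{-1/2}BA^{-1/2})=\max\ev(\Id + A^{-1/2}(B-A)A^{-1/2}) = 1 + \max\ev(A^{-1/2}(B-A)A^{-1/2})$; and by Lemma \ref{lem:evAB} (or directly min--max), $\max\ev(A^{-1/2}(B-A)A^{-1/2}) \le \max\ev(A^{-1})\max\ev(B-A) < \max\ev(B-A)$ since $\max\ev(A^{-1}) = 1/\min\ev(A) < 1$. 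Hence $\log\det(BA^{-1}) \le n\log(1+\max\ev(B-A))$, and using $\log(1+t)\le t$ this gives $\log\det(BA^{-1}) \le n\max\ev(B-A)$, i.e. one may take $k_1 = 1/n$ (in fact no dependence on $m$ is needed for this direction).

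For the \textbf{upper bound} $k_2\log\det(BA^{-1}) > \max\ev(B-A)$: again $\max\ev(B-A) = \max\ev(A^{1/2}(A^{-1/2}(B-A)A^{-1/2})A^{1/2})$, and since $A^{1/2}MA^{1/2}$ and $A M$ share eigenvalues with... more carefully, by Lemma \ref{lem:evAB}, $\max\ev(B-A) = \max\ev\bigl(A^{1/2}\cdot A^{-1/2}(B-A)A^{-1/2}\cdot A^{1/2}\bigr) \le \max\ev(A)\cdot \max\ev(A^{-1/2}(B-A)A^{-1/2}) < m\bigl(\max\ev(BA^{-1}) - 1\bigr) = m(\mu_1 - 1)$. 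It remains to bound $\mu_1 - 1$ by a multiple of $\sum\log\mu_i$. Here is where $m$ enters: since $A < B < m\Id$ and $\Id < A$, we have $\mu_1 = \max\ev(BA^{-1}) \le \max\ev(B)\max\ev(A^{-1}) < m\cdot 1 = m$, so all $\mu_i \in (1,m)$. On the compact interval $[1,m]$ the function $t\mapsto t-1$ is bounded above by $\frac{m-1}{\log m}\log t$ (both vanish at $t=1$; compare derivatives or just use convexity of $\log$ on $[1,m]$), hence $\mu_1 - 1 \le \frac{m-1}{\log m}\log\mu_1 \le \frac{m-1}{\log m}\sum_{i=1}^n\log\mu_i = \frac{m-1}{\log m}\log\det(BA^{-1})$. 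Combining, $\max\ev(B-A) < \frac{m(m-1)}{\log m}\log\det(BA^{-1})$, so $k_2 = \frac{m(m-1)}{\log m}$ works (with a strict inequality since at least one of the intermediate estimates is strict, e.g. $\max\ev(A^{-1})<1$ strictly).

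\textbf{Main obstacle.} The only genuinely delicate point is the upper bound: controlling $\mu_1-1$ from below by $\log\det(BA^{-1})$ requires an \emph{upper} bound on the $\mu_i$, which is precisely what the hypothesis "$(\Id,A,B,m\Id)$ maximal" — via $B < m\Id$ — provides, and is why $k_2$ must depend on $m$ (as $m\to\infty$ the constant must blow up, since $B$ could be close to $m\Id$ while $A$ stays near $\Id$). Once the Loewner sandwich $\Id < A < B < m\Id$ is in hand, everything reduces to scalar estimates on $[1,m]$ plus the standard eigenvalue comparison lemmas already in the paper; I would organize the write-up so that the Loewner sandwich is established first, then the two inequalities are proved in parallel.
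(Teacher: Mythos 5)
Your proposal is correct and follows essentially the same route as the paper: both derive the Loewner sandwich $\Id<A<B<m\Id$ from maximality, compare $\max\ev(B-A)$ with $\max\ev(BA^{-1})-1$ by conjugating with $A^{\pm 1/2}$ and applying Lemma \ref{lem:evAB}, and then conclude with the scalar estimates $\log x\le x-1$, the monotonicity of $\frac{\log x}{x-1}$ on $(1,m]$, and $\log\max\ev(BA^{-1})\le\log\det(BA^{-1})\le n\log\max\ev(BA^{-1})$, arriving at the same constants $k_1=1/n$ and $k_2=\frac{m(m-1)}{\log m}$. The only cosmetic difference is that for the lower bound you bound $\mu_1-1$ above via $\max\ev(B-A)\max\ev(A^{-1})$ while the paper bounds $\max\ev(B-A)$ below via $(\mu_1-1)\min\ev(A)$, which are dual uses of the same lemma.
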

\begin{proof}
Since $(\Id, A, B, m\Id)$ is maximal, Lemma \ref{lem:max} and Lemma \ref{lem:eigenvalues} imply that all eigenvalues of $A$ and $B$ are between $1$ and $m$. Moreover it follows from  Lemma \ref{lem:eigenvalues}  that all eigenvalues of $BA^{-1}$ are bigger than 1, and since $B-B^{1/2} A^{-1}B^{1/2}B$ is positive definite they are also smaller than $m$ (compare with Lemma \ref{lem:evAB}). Moreover $B-A$ is conjugate to $(A^{-1/2}BA^{-1/2}-\Id)A$, so by Lemma \ref{lem:evAB} (and since the eigenvalues of $A$ are bigger than $1$) we have
$$\max\ev(BA^{-1})-1\leq\max \ev (B-A)<\max\ev (A)\left(\max \ev(BA^{-1})-1\right).$$
Because $\log x\leq x-1$ for any positive $x$,
$$\left(\max\ev(BA^{-1})-1\right)\geq \log\max\ev(BA^{-1})\geq \frac{1}{n}\log\det(BA^{-1})$$
where in the second inequality we used the fact that $(\max\ev(BA^{-1}))^n\geq \det(BA^{-1})$. 

Furthermore, $\frac{\log x}{x-1}$ is monotone decreasing for $x>1$, so
$$\max \ev(BA^{-1})-1\leq \frac{m-1}{\log(m)}\log \max \ev(BA^{-1})\leq\frac{m-1}{\log(m)}\log\det(BA^{-1}).$$
Combining these inequalities and using the fact that the eigenvalues of $A$ are smaller than $m$ we get the result.
\end{proof}
In the hyperbolic setting, given two points $x,y\in\Lambda$, we know that there exists a point $p\in\wt{S}\subset\H^2$ with $\im p=\frac{y-x}{2}$ and belonging to the geodesic of endpoints $x$ and $y$. In the case of maximal representations, given $x,y\in \Lambda$, with $\phi(x)=A$ and $\phi(y)=B$, we cannot guarantee that there is a point with imaginary part $\frac{1}{2}(B-A)$ belonging to the path $p_{A,B}(\phi(z))$, for $z\in(\!(x,y)\!)$. Our next goal is to show that there exists  $z\in (\!(x,y)\!)$ so that the maximum eigenvalue of the imaginary part of $p_{A,B}(\phi(z))$ is large enough with respect to the maximum eigenvalue of $B-A$. We begin by computing the imaginary part of the projection onto a tube with endpoints $(-C^2,C^2)$:
\begin{lem}\label{lem:0}
Assume that $(-C^2,T,C^2)$ is a maximal triple. Then
$$\im p_{-C^2,C^2}(T)=C(\Id-M^2)(\Id+M^2)^{-1}C,$$
where $M=C^{-1}TC^{-1}.$
\end{lem}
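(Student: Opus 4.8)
The statement is a concrete computation of the imaginary part of the orthogonal projection $p_{-C^2,C^2}(T)$ of a Lagrangian $T$ onto the $\R$-tube $\Yy_{-C^2,C^2}$, under the assumption that $(-C^2,T,C^2)$ is maximal. The natural strategy is to reduce to the normal form already available in the paper: by Lemma \ref{perpto0infty}, the tubes orthogonal to the standard tube $\Yy_{0,l_\infty}$ are exactly the tubes $\Yy_{A,-A}$, and $\Yy_{A,-A}$ meets $\Yy_{0,l_\infty}$ in the point $iA$. The plan is to conjugate the situation $(-C^2,C^2)$ into the standard tube $(0,l_\infty)$ by an explicit element $g\in\Sp(2n,\R)$, carry out the projection there using the clean description of $\sigma^\Ll_{0,l_\infty}$ and $p_{0,l_\infty}$, and then conjugate back, tracking what happens to the imaginary part.

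\textbf{Key steps.} First I would write down an explicit $g$ sending $(-C^2,C^2)$ to $(0,l_\infty)$: since $C$ is a positive definite symmetric matrix, one checks that $g=\bsm C^{-1}&0\\0&C\esm$ sends the Lagrangian $0$ to $0$, $l_\infty$ to $l_\infty$, and conjugates the tube $\Yy_{-C^2,C^2}$ to a tube with endpoints $C^{-1}(-C^2)({}^tC^{-1})=-\Id$ and $C^{-1}(C^2)({}^tC^{-1})=\Id$ — so in fact I want $g$ sending $(-C^2,C^2)$ to $(-\Id,\Id)$, and then note $\Yy_{-\Id,\Id}=\Yy_{0,l_\infty}$ would be false; instead one should send $(-C^2,C^2)$ directly to $(0,l_\infty)$ using the Cayley-type element that sends the pair $(-\Id,\Id)$ to $(0,l_\infty)$ composed with $\bsm C^{-1}&0\\0&C\esm$. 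Concretely, let $h\in\Sp(2n,\R)$ be the element with $h(-\Id,\Id)=(0,l_\infty)$ (this is the block version of the Cayley transform, whose fractional linear action on symmetric matrices is $Z\mapsto(Z-\Id)(Z+\Id)^{-1}$ or a similar Möbius map), and set $g=h\cdot\bsm C^{-1}&0\\0&C\esm$. The second step is to transport the hypothesis: the triple $(-C^2,T,C^2)$ being maximal becomes, after applying $g$, that $(0,T',l_\infty)$ is maximal where $T'=g\cdot T$, i.e.\ by Lemma \ref{lem:max}(2) that $T'$ is positive definite. Setting $M=C^{-1}TC^{-1}$, the fractional-linear computation gives $T'=(M-\Id)(M+\Id)^{-1}$ after the conjugation by $\bsm C^{-1}&0\\0&C\esm$ turns $T$ into $M$ and the Cayley step turns $M$ into $(M-\Id)(M+\Id)^{-1}$. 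Third, in the standard tube use Lemma \ref{perpto0infty}: since $T'\in\Sym^+(n,\R)$, $p_{0,l_\infty}(T')=iT'$, and hence, undoing $g$, the projection $p_{-C^2,C^2}(T)=g^{-1}\cdot(iT')$. The final step is to compute the imaginary part of $g^{-1}\cdot(iT')$: the block-diagonal conjugation by $\bsm C&0\\0&C^{-1}\esm$ acts on a point $iY$ of $\Yy_{0,l_\infty}$ as $iY\mapsto i(CYC)$, and the inverse Cayley step must be handled by the fractional linear formula. Assembling, $\im p_{-C^2,C^2}(T)=C\cdot\im\big((iT')\text{ pulled back through Cayley}\big)\cdot C$, and a short manipulation with $T'=(M-\Id)(M+\Id)^{-1}$ yields $C(\Id-M^2)(\Id+M^2)^{-1}C$.

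\textbf{Main obstacle.} The genuinely delicate point is getting the Cayley-type element and its action on symmetric matrices exactly right, including keeping the imaginary part real and symmetric throughout: the fractional linear transformation $Z\mapsto(AZ+B)(CZ+D)^{-1}$ mixes real and imaginary parts, so extracting $\im$ after the inverse Cayley transform of a purely imaginary matrix $iT'$ requires care. I expect to verify the formula most cleanly by instead checking directly that the claimed point $iY_0$ with $Y_0=C(\Id-M^2)(\Id+M^2)^{-1}C$ lies in $\Yy_{-C^2,C^2}$ (using the cross-ratio definition $R(-C^2, iY_0, -iY_0, C^2)=-\Id$ and formula \eqref{cross-ratio}) and that the tube through $iY_0$ orthogonal to $\Yy_{-C^2,C^2}$ passes through $T$ — equivalently that $R(-C^2,\sigma^{\Ll}\text{-partner of }T, \ldots)$ works out — or simply that $\sigma^\calX_{-C^2,C^2}$ fixes $iY_0$ and $iY_0$ is the projection point; this is a finite symmetric-matrix identity that reduces, after conjugating by $\bsm C^{-1}&0\\0&C\esm$, to the already-established statement for $(-\Id,\Id)$, namely the rank-one-type computation $\im p_{-\Id,\Id}(T)=(\Id-M^2)(\Id+M^2)^{-1}$ with $M=T$. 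So the real content is the $C=\Id$ case plus naturality under the $\GL(n,\R)$-action on the tube, and the proof should be organized to isolate exactly that.
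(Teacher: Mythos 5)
Your plan is essentially the paper's proof: the paper takes the single explicit symplectic element $\g=\frac{1}{\sqrt2}\bsm C&-C\\ C^{-1}&C^{-1}\esm$ (Cayley transform composed with conjugation by $C$) carrying $(0,l_\infty)$ to $(-C^2,C^2)$, pulls $T$ back to $Z=(\Id+M)(\Id-M)^{-1}\in(\!(0,l_\infty)\!)$, projects to $iZ$ via Lemma \ref{perpto0infty}, pushes forward and reads off $\im p_{-C^2,C^2}(T)=2CZ(Z^2+\Id)^{-1}C=C(\Id-M^2)(\Id+M^2)^{-1}C$, exactly the reduction-to-standard-tube computation you outline (your fallback of checking the candidate point directly and reducing to $C=\Id$ by $\GL(n,\R)$-equivariance would also work). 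One small correction to the step you yourself flag as delicate: the Cayley factor must send $(-\Id,\Id)$ to $(0,l_\infty)$, i.e. $M\mapsto(\Id+M)(\Id-M)^{-1}$, not $(M-\Id)(M+\Id)^{-1}$ — the latter is negative definite when $-\Id<M<\Id$, contradicting the positivity of $T'$ you correctly deduce from maximality; with the right sign the manipulation closes and yields the stated formula.
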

\begin{proof}
Let us choose $\g\in\Sp(2n,\R)$ such that $\g(0,l_\infty)=(-C^2,C^2)$. For example 
$$\g=\left(\begin{array}{cc}
\frac{1}{\sqrt{2}}C &-\frac{1}{\sqrt{2}}C\\
\frac{1}{\sqrt{2}}C^{-1}& \frac{1}{\sqrt{2}}C^{-1}
\end{array}\right).$$
The preimage $Z:=\g^{-1} \cdot T\in (\!(0,l_\infty)\!)$ is
$$Z=(C^{-1}T+C)(-C^{-1}T+C)=(C^{-1}TC^{-1}+\Id)(-C^{-1}TC^{-1}+\Id)^{-1}.$$
Hence
\begin{align*}
p_{-C^2,C^2}(T)&=\g( p_{\Yy_{0,\infty}}(\g^{-1} \cdot T))=\g\cdot i Z=\\
&=(iCZ-C)(iC^{-1}Z+C^{-1})^{-1}=C(iZ-\Id)(iZ+\Id)^{-1}C=\\
&=C(iZ-\Id)(iZ-\Id)(iZ-\Id)^{-1}(iZ+\Id)^{-1}C
\end{align*}
so
$$\im p_{-C^2,C^2}(T)=2CZ(Z^2+\Id)^{-1}C,$$
which becomes, after some computations,
$$\im p_{-C^2,C^2}(T)=C(\Id-M^2)(\Id+M^2)^{-1}C$$
where $M=C^{-1}TC^{-1}$ as required.
\end{proof}
The following Proposition is the hardest step in the proof of Theorem \ref{thm:M0}, and requires a careful analysis of causal paths.
\begin{prop}\label{prop:high}
There is an uniform constant $k_3$ depending on the representation $\rho$ such that, for any $(\!(x,y)\!)\subset (\!(\g^-,\g^+)\!)$, there exists $z\in (\!(x,y)\!)$ with 
$$\max\ev({\rm Im} p_{\phi(x),\phi(y)}(\phi(z)))>k_3\max\ev(\phi(y)-\phi(x)).$$
\end{prop}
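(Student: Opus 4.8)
The plan is to normalize, reduce to the standard tube $\Yy_{\phi(x),\phi(y)}$ via the $\Sp(2n,\R)$-action, and then run a compactness-via-equivariance argument analogous to the one behind Proposition~\ref{qiembedding}. First I would place $\phi(x) = \Id$ and $\phi(y) = m\Id$ via a symplectic element, where $m>1$ can be chosen uniformly bounded away from $1$: monotonicity of $\phi$ forces $(\phi(x),\phi(y))$ to bound a macroscopic chunk of the image, and one can use the fact that $\gamma^-,\gamma^+$ are distinct fixed points to get uniform control. Actually, a cleaner approach is to normalize $(\phi(x),\phi(y)) = (-C^2, C^2)$ (always possible up to the $\GL(n,\R)$-action on the tube together with a symplectic move) and invoke Lemma~\ref{lem:0}: for any $z\in(\!(x,y)\!)$, writing $T = \phi(z)$ and $M = C^{-1}TC^{-1}$, we have $\im p_{-C^2,C^2}(T) = C(\Id - M^2)(\Id + M^2)^{-1}C$. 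The task then becomes: find $z$ so that $\max\ev\bigl(C(\Id - M^2)(\Id+M^2)^{-1}C\bigr)$ is comparable to $\max\ev(\phi(y)-\phi(x)) = \max\ev(2C^2)$, i.e.\ comparable to $\|C\|^2$.

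\textbf{The geometric heart.} The point $p$ in the hyperbolic proof is the highest point of the geodesic $(x,y)$, and Remark~\ref{distboundary} produces a boundary point nearby. Here the analogue is: the causal path $t\mapsto p_{\phi(x),\phi(y)}(\phi(t))$ for $t$ ranging over $(\!(x,y)\!)$ traces out (Remark~\ref{projcausal}) a causal segment in $\Yy_{\phi(x),\phi(y)}$ whose \emph{Finsler length} is exactly $\dF(p_{x,y}(\phi(x)),p_{x,y}(\phi(y)))$, which by Lemma~\ref{lem:topvsdet} is comparable to $\log\max\ev(\phi(y)\phi(x)^{-1}) \sim \log\|C\|^2$ (all constants depending only on $n$ and on the uniform $m$ from Lemma~\ref{lem:topvsdet}). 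So along this causal path the projection to the Euclidean factor $\pi^\R$ moves by a definite amount (Remark~\ref{dFandpiR}, Lemma~\ref{lem:producttube}): it starts near the $\pi^\R$-coordinate of $p_{x,y}(\phi(x))$ and ends near that of $p_{x,y}(\phi(y))$. The midpoint of this Finsler-length interval is realized by some $z\in(\!(x,y)\!)$, and at that point the projection $p_{x,y}(\phi(z))$ has $\pi^\R$-coordinate roughly $\tfrac12(\pi^\R(p_{x,y}\phi(x)) + \pi^\R(p_{x,y}\phi(y)))$. Since $\Yy_{\phi(x),\phi(y)}$ is the parallel set of a singular geodesic connecting $\phi(x)$ to $\phi(y)$, and the tube identified with $\Xx_{\GL(n,\R)}$ has its two endpoints as the two ends of the Euclidean factor, the large $\pi^\R$-displacement from $p_{x,y}(\phi(x))$ translates (after undoing the normalization to the $\Yy_{0,l_\infty}$-picture and using $G\cdot iX = i(GX{}^tG)$) into a large maximum eigenvalue of the imaginary part. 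I would make this precise by diagonalizing $C$ and computing: writing $C = \diag(c_1,\dots,c_n)$ and choosing $T$ so that $M$ has a small entry in the slot where $c_i$ is largest, $\max\ev(\im p_{-C^2,C^2}(T))$ is bounded below by a fixed fraction of $c_{\max}^2$.

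\textbf{The main obstacle} is that we do not get to choose $z$ — we must find it \emph{inside the image of the boundary map} $\phi$, and $\phi(\partial\Gamma)$ is only a right-continuous, highly irregular subset of $\Ll(\R^{2n})$, not a continuum. The hyperbolic proof side-steps this because $\wt\Sigma$ is a genuine submanifold; here the substitute is the quasi-isometric embedding $F: T^1\H^2 \to \Xx$ of Proposition~\ref{qiembedding} together with Remark~\ref{distboundary} applied to the double $D\Sigma$ (cocompact). Concretely: take the point $(p,v)\in T^1\H^2$ whose associated triple $(x, b, y)$ has $b$ the ``highest'' direction of the geodesic $(x,y)$; then $F(x,b,y) = p_{\phi(x),\phi(y)}(\phi(b))$ is the projection of a boundary point, and by Remark~\ref{distboundary} (for $D\Sigma$, via the action $h$) there is a point of $\partial\wt{D\Sigma}$ within bounded distance of $(p,v)$, hence a $z'\in \partial\Gamma$ with $\phi(z')$ whose projection to the tube is within bounded Riemannian distance of $F(x,b,y)$. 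Because $F$ is a quasi-isometric embedding, $F(x,b,y)$ itself sits at Euclidean-factor distance comparable to $\tfrac12\ellF$-of-the-chunk from $p_{x,y}(\phi(x))$; and bounded Riemannian distance changes $\max\ev$ of the imaginary part by at most a bounded multiplicative factor (since the $\GL(n,\R)$-action is by isometries and $\max\ev(\im\,\cdot)$ transforms controllably, cfr.\ Lemma~\ref{lem:evAB}). The one subtlety to handle carefully is that $z'$ need not lie in $(\!(x,y)\!)$; to fix this I would instead apply Remark~\ref{distboundary} to the point $p$ chosen slightly inside so that the nearby boundary geodesic of $D\Sigma$ has \emph{both} endpoints in $(\!(x,y)\!)$ — exactly as in the classical argument in Section~\ref{classicalmeasure0}, where $q$ lands on a lift of a boundary component with endpoints $c<d$ inside $I$ — and take $z$ to be, say, the attractive endpoint of that boundary element. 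Tracking all constants, they depend only on $n$, on the cocompactness constant of $h$ on $\H^2$, on the quasi-isometry constants of $F$, and on $m$, hence on $\rho$ alone, giving the uniform $k_3$.
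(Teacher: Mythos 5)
Your opening moves (translating so that $(\phi(x),\phi(y))=(-C^2,C^2)$ and invoking Lemma \ref{lem:0} to write $\im p_{-C^2,C^2}(\phi(t))=C(\Id-M(t)^2)(\Id+M(t)^2)^{-1}C$) coincide with the paper's, but after that there is a genuine gap: you never prove that some point of the causal path actually attains a large height. Your two substitutes both fail. First, you say you would ``choose $T$ so that $M$ has a small entry in the slot where $c_i$ is largest'' --- but $T=\phi(z)$ is constrained to lie in the image of the boundary map, and the whole content of the proposition is precisely that the monotone curve $M(t)$, which you do not get to choose, cannot avoid the region where $[(\Id-M^2)(\Id+M^2)^{-1}]_{11}$ is bounded below. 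Second, the appeal to Proposition \ref{qiembedding} and Remark \ref{distboundary} does not supply this either: a quasi-isometric embedding controls distances between image points, not where in the tube $\Yy_{\phi(x),\phi(y)}$ a point sits, and in higher rank a point can be at the ``Finsler midpoint'' of the Euclidean factor while $\max\ev(\im\,\cdot)$ is arbitrarily small compared with $\max\ev(C^2)$ (e.g.\ in the model tube, $Z=\diag(\epsilon,1/\epsilon)$ has $\det Z=1$ but $\max\ev\bigl(CZ(Z^2+\Id)^{-1}C\bigr)\approx\epsilon\max\ev(C^2)$ for $C=\diag(c,1)$). So the assertion that ``$F(x,b,y)$ sits at Euclidean-factor distance comparable to half the Finsler length'' of the chunk, and hence is high, does not follow from anything you cite; drift in the $\SL(n,\R)$ factor is exactly the enemy, and Lemma \ref{lem:pos} alone does not kill it.

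There are also two smaller misstatements that signal the same confusion. The quantity $\dF(p_{x,y}(\phi(x)),p_{x,y}(\phi(y)))$ is not defined ($\phi(x),\phi(y)$ are the endpoints of the tube, and the causal path $t\mapsto p_{\phi(x),\phi(y)}(\phi(t))$ on the open interval has infinite Finsler length), and the normalization ``$\phi(x)=\Id$, $\phi(y)=m\Id$'' is not available here because the statement is taken in the fixed chart with $\phi(\gamma^\pm)=l_\infty,0$, so only moves preserving imaginary parts and the difference $\phi(y)-\phi(x)$ (the paper's horizontal translation) are allowed. In the paper, Remark \ref{distboundary}, Proposition \ref{qiembedding}, Lemma \ref{lem:closetohigh} and Lemma \ref{lem:highislarge} enter only in the next step (Proposition \ref{subinterval}), to pass from the high point $z$ to a genuine boundary tube; Proposition \ref{prop:high} itself is proved by a purely matrix-analytic contradiction argument exploiting the monotonicity of $M(t)$ in block form, and that argument (or a replacement for it) is what is missing from your proposal.
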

Note that while the constant $k_3$ is independent on $\gamma$, for the statement to make sense we need to know what $\phi(x)$ and $\phi(y)$ are in $(\!(0,l_\infty)\!)=(\!(\phi(\gamma^-),\phi(\gamma^+))\!)$, so that we can identify them with positive definite symmetric matrices.
\begin{proof}
Up to the action of an element $H\in\Sp(2n, \R)$ of the form $H=\bsm\Id& K\\0&\Id\esm$ (whose action doesn't change the imaginary part) we can assume $\phi(x)=-C^2$ and $\phi(y)=C^2$, for some positive definite matrix $C$. So
$$2\max\ev(C^2)=\max \ev(\phi(y)-\phi(x)).$$ 
As in Lemma \ref{lem:0} we will denote 
$$M(t):=C^{-1}\phi(t)C^{-1}$$
for $t\in(\!(x,y)\!)$. $M(t)$ forms a continuous monotone curve with $M(x)=-\Id$ and $M(y)=\Id$.

Without loss of generality, we can assume that $e_1$ is an eigenvector of $C$ for its maximum eigenvalue. Then we get
\begin{align*}
\max\ev\left({\rm Im} p_{\phi(x),\phi(y)}(\phi(t))\right)&=\max_{\|v\|=1}\langle C(\Id-M(t)^2)(\Id+M(t)^2)^{-1}Cv,v\rangle\\
&\geq \langle C(\Id-M(t)^2)(\Id+M(t)^2)^{-1}Ce_1,e_1\rangle\\
&=\max \ev (C^2)[(\Id-M(t)^2)(\Id+M(t)^2)^{-1}]_{11}.
\end{align*}
So it is enough to show that there exists an uniform $k_3>0$ such that for each such path there exists $t\in(\!(x,y)\!)$ with $[(\Id-M(t)^2)(\Id+M(t)^2)^{-1}]_{11}>2k_3$.

Set $X(t)=M(t)^2$. Notice that, for every $t\in(x,y)$, the matrix $X(t)$ is a positive semidefinite matrix, and all its eigenvalues and diagonal coefficients are smaller than one: indeed, by monotonicity, all eigenvalues of $M(t)$ are in absolute value smaller than one.

Since any  matrix in $\Sym(n-1,\R)$ is orthogonally congruent to a diagonal matrix, for every $t$ there exist a orthogonal matrix $D(t)$ of the form $\bsm1&0\\0&D_1(t)\esm$ with $D_1(t)\in {\O}(n-1)$  such that $D(t)XD(t)^{-1}$ has the form 
\begin{equation}\label{eqn:6.11}\bpm a(t)&b_2(t)&\ldots&b_n(t)\\
b_2(t)&c_2(t)&\ldots&0\\
\vdots&&\ddots&0\\
b_n(t)&0&\ldots&c_n(t)\epm.\end{equation} 
Here the only non-zero values of $D(t)XD(t)^{-1}$ are the numbers $a(t),b_i(t),c_i(t)$, and $c_i(t)$ are the eigenvalues of the $(n-1)$-dimensional lower right block of $X(t)$. Observe that $a(t)=\left(X(t)e_1,e_1\right)$ and there exist vectors $v_i(t)\in\R^n$ of norm one such that  $c_i(t)=\left(X(t)v_i,v_i\right)$. In particular, since, for all $t$, $X(t)$  is a positive semidefinite matrix whose eigenvalues are strictly smaller than one, we deduce $0\leq c_i(t)<1$ and $0\leq a(t)< 1$.

We get from Equation \ref{eqn:6.11}
$$x(t):=[(\Id+X(t))^{-1}]_{11}=\frac{\det (\Id+X(t)|_{\<e_2,\ldots,e_n\>})}{\det (\Id+ X(t))}=\left(1+a(t)-\sum_{i=2}^n\frac{b_i(t)^2}{1+c_i(t)}\right)^{-1},$$
where the third equality follows by expanding the determinant of $\Id+X(t)$ with respect to the first row. Here we denote by $(\Id+X(t)|_{\<e_2,\ldots,e_n\>}$ the $(n-1)$-dimensional lower right block of $X(t)$, the choice of the notation is motivated by the fact that we understand the symmetric matrix $\Id+X(t)$ as representing a bilinear form. In this notation we have $\det (\Id+X(t)|_{\<e_2,\ldots,e_n\>})=c_2(t)\ldots c_n(t)$

Now, assume by contradiction that for all $t$, $x(t)$ is smaller than $\frac12 +\rho$ with $\rho<\frac{1}{64n^3}$. Then
$$
1>a(t)=x(t)^{-1}-1+\sum_{i=2}^n\frac{b_i(t)^2}{1+c_i(t)}\geq \frac{1}{\frac{1}{2}+\rho}-1>1-4\rho.$$
Since $0\leq c_i(t)< 1$:
\begin{align*}
\|b(t)\|^2&=\sum_{i=2}^n{b_i(t)^2}\leq 2 \sum_{i=2}^n\frac{b_i(t)^2}{1+c_i(t)}\leq\\ &
\leq 2(1+a(t)-x(t)^{-1})<2\left(2-\frac{1}{\frac{1}{2}+\rho}\right)<8\rho.
\end{align*}
Let us now write the matrix $M(t)$ in block form as 
$$M(t)=\bpm m(t) &^td(t)\\d(t)&N(t)\epm$$
where $N(t)$ is a matrix in $\Sym(n-1,\R)$.
We have 
$$X(t)=\bpm m(t)^2+\|d(t)\|^2&m(t)^td(t)+{^td(t)}N(t)\\N(t)d(t)+m(t)d(t)&d(t)^td(t)+N(t)^2\epm. $$
The rest of the proof is devoted to deducing a contradiction from the fact that $N(t)$ is a monotone path in $\Sym(n-1,\R)$, $-m(t)$ is a strictly decreasing function, but $N(t)$ has almost eigenvectors for the value $m(t)$ (since $\|b(t)\|=\|N(t)d(t)+m(t)d(t)\|$ is very small).

In order to make this idea precise, we focus on the subinterval $J$ of $(\!(x,y)\!)$ on which $m(t)^2$ is smaller than $1/2-4\rho$ (which is nonempty because $m(t)$ varies continuously between $-1$ and $1$). Then
$$1-4\rho <a(t)=m(t)^2+\|d(t)\|^2< \frac{1}{2}-4\rho+\|d(t)\|^2,$$
which implies that $\|d(t)\|^2>\frac{1}{2}$.

Fix an orthonormal basis $v_2(t),\dots, v_n(t)$ of eigenvectors for $N(t)$ (these exist since $N(t)$ is symmetric), where $v_i(t)$ is an eigenvector corresponding to the eigenvalue $n_i(t)$. Observe that since $M(t)$ forms a monotone path, also $N(t)$ forms a monotone path in $\Sym(n,\R)$, and in particular $n_i(t)$ are monotone functions with $n_i(x)=-1$, $n_i(y)=1$.
We can write
$$d(t)=\sum_{i=2}^n \alpha_i(t)v_i(t).$$
Then
$$N(t)d(t)+m(t)d(t)=\sum_{i=2}^n\alpha_i(t)(n_i(t)+m(t))v_i(t)$$
which implies, taking the norm squared, that:
$$\sum_{i=2}^n\alpha_i(t)^2(n_i(t)+m(t))^2<8\rho$$
and hence for each $i$
$$|\alpha_i(t)(n_i(t)+m(t))|<\sqrt{8\rho}.$$
Note that since $\sum_{i=2}^n\alpha_i(t)^2=\|d(t)\|^2 >\frac{1}{2}$, for every $t$ there is an $i$ such that $\alpha_i(t)>\frac{1}{\sqrt{2n}}$, i.e.\ the sets $J_i=\left\{t\in J | \alpha_i(t)>\frac{1}{\sqrt{2n}}\right\}$ form an open cover of $J$. But in each $J_i$
$$|n_i(t)+m(t)|<4\sqrt{n\rho}$$
that is, $n_i(t)$ and $-m(t)$ are very close. Since $n_i(t)$ is strictly increasing and $-m(t)$ is strictly decreasing, the $J_i$ are connected.

Consider the subinterval $I$ of $J$ where $m(t)^2\leq \frac{1}{4}$. We can write it as 
$$I=[x_0,x_1]\cup [x_1,x_2]\cup\dots\cup[x_{k-1},x_k]$$
where $k\leq n$ and for each $j$ there exists an $i$ such that $[x_{j-1},x_j]\subset J_i$.
We know that $m(x_k)-m(x_0)=1$; at the same time $$m(x_k)-m(x_0)=\sum_{j=1}^{k}m(x_j)-m(x_{j-1}).$$
Since
\begin{align*}
m(x_j)-m(x_{j-1})&=m(x_j)+c_i(x_j)-c_i(x_j)-m(x_{j-1})+c_i(x_{j-1})-c_i(x_{j-1})=\\
&\leq 8\sqrt{n\rho}+\underbrace{(c_i(x_{j-1})-c_i(x_j))}_{< 0}<8\sqrt{n\rho}
\end{align*}
we have $1=m(x_k)-m(x_0)\leq 8n\sqrt{n\rho}$
which gives a contradiction because $\rho$ is smaller than $\frac{1}{64n^3}$.
\end{proof}
The following Lemma should be understood as an analogue, in the Siegel space, of the fact that the closest point to $i$ in the horoball $\{x+iy_0|x\in\R\}\subset \hyp$ is $iy_0$.
\begin{lem}\label{lem:closetohigh}
For every $Z\in\Sym(n,\R)$ and $Y,W\in \Sym^+(n,\R)$ with $Y-W$ positive definite, we have:
$$\dR(iY,Z+iW)\geq \log \frac{\max\ev(Y)}{\max \ev(W)}.$$
\end{lem}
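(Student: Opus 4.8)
The plan is to realise $\dR$ as the length metric of the explicit $\Sp(2n,\R)$--invariant Riemannian metric on the upper half space model, namely
$ds^2_Z(\dot Z)=\operatorname{tr}\big((\operatorname{Im} Z)^{-1}\dot Z(\operatorname{Im} Z)^{-1}\overline{\dot Z}\big)$.
This is precisely the metric with the normalisation fixed in Section \ref{metrics}: it is $\Sp(2n,\R)$--invariant, the polydisk of diagonal matrices is isometrically embedded as a product of hyperbolic planes of curvature $-1$, and its minimal holomorphic sectional curvature is $-1$; one checks that it induces $\dR$ by restricting to the totally geodesic tube $\Yy_{0,l_\infty}\cong\Xx_{\GL(n,\R)}$, where it reduces to the metric $\dGL$. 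Writing $\dot Z=\dot X+i\dot Y$ with $\dot X,\dot Y\in\Sym(n,\R)$ and $Y=\operatorname{Im} Z$, the cross terms cancel and $ds^2_Z(\dot Z)=\operatorname{tr}\big((Y^{-1/2}\dot XY^{-1/2})^2\big)+\operatorname{tr}\big((Y^{-1/2}\dot YY^{-1/2})^2\big)$, so in particular $ds^2_Z(\dot Z)\ge\operatorname{tr}\big((Y^{-1/2}\dot YY^{-1/2})^2\big)$.

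The key step is to show that for every unit vector $v\in\R^n$ the function $f_v\colon\calX\to\R$, $f_v(Z)=-\log\langle\operatorname{Im}(Z)v,v\rangle$, is $1$--Lipschitz for $\dR$. Indeed, along a smooth path $Z(t)$ one has $\tfrac{d}{dt}f_v(Z(t))=-\langle\dot Yv,v\rangle/\langle Yv,v\rangle$, and applying the min--max principle (Lemma \ref{lem:minmax}) to the symmetric matrix $M=Y^{-1/2}\dot YY^{-1/2}$ together with the vector $u=Y^{1/2}v$ gives
$\big(\langle\dot Yv,v\rangle/\langle Yv,v\rangle\big)^2=\big(\langle Mu,u\rangle/\|u\|^2\big)^2\le\operatorname{tr}(M^2)=\operatorname{tr}\big((Y^{-1/2}\dot YY^{-1/2})^2\big)\le ds^2_{Z(t)}(\dot Z(t))$.
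Hence $|\tfrac{d}{dt}f_v(Z(t))|$ is at most the speed of the path; integrating and taking the infimum over piecewise smooth paths yields $|f_v(P)-f_v(Q)|\le\dR(P,Q)$ for all $P,Q\in\calX$.

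To conclude, take $v$ to be a unit eigenvector of $Y$ for its largest eigenvalue, so that $\langle Yv,v\rangle=\max\ev(Y)$ and, since $W$ is positive definite, $0<\langle Wv,v\rangle\le\max\ev(W)$. Applying the Lipschitz estimate to $P=iY$ and $Q=Z+iW$ gives $\dR(iY,Z+iW)\ge|f_v(iY)-f_v(Z+iW)|=\big|\log\tfrac{\langle Wv,v\rangle}{\max\ev(Y)}\big|$; because $Y-W$ is positive definite we have $\langle Wv,v\rangle<\langle Yv,v\rangle=\max\ev(Y)$, so this absolute value equals $\log\tfrac{\max\ev(Y)}{\langle Wv,v\rangle}\ge\log\tfrac{\max\ev(Y)}{\max\ev(W)}$, as desired.

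The only mildly delicate point is matching the explicit metric $ds^2$ with the normalisation of $\dR$ used in the paper, which is defined through the vectorial distance rather than through a metric tensor; this is routine, as both restrict to $\dGL$ on the totally geodesic tube $\Yy_{0,l_\infty}$ and the invariance under $\Sp(2n,\R)$ propagates the identification. The substantive content is the one--line differential inequality $ds^2_Z(\dot Z)\ge\big(\langle\dot Yv,v\rangle/\langle Yv,v\rangle\big)^2$, which is exactly the higher rank incarnation of the fact that in $\hyp$ the horocyclic coordinate $z\mapsto-\log\operatorname{Im}(z)$ is $1$--Lipschitz.
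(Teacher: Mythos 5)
Your proof is correct, but it follows a genuinely different route from the paper. You work with the explicit $\Sp(2n,\R)$-invariant metric tensor $\operatorname{tr}\bigl((\im Z)^{-1}\dot Z(\im Z)^{-1}\overline{\dot Z}\bigr)$ and show that for a fixed unit vector $v$ the horofunction-type map $Z\mapsto -\log\langle\im(Z)v,v\rangle$ is $1$-Lipschitz, then evaluate it at $iY$ and $Z+iW$ with $v$ a top eigenvector of $Y$; the computation of the differential inequality and the final estimate are all valid (and in fact give the slightly stronger bound $\log\bigl(\max\ev(Y)/\langle Wv,v\rangle\bigr)$). The paper instead never introduces the metric tensor: it stays within the cross-ratio formalism of Section \ref{metrics}, writing $\dC(iY,Z+iW)_1=\log\frac{1+\sqrt{r_1}}{1-\sqrt{r_1}}$ with $r_1$ the top eigenvalue of $R(iY,Z-iW,Z+iW,-iY)$, computing this cross-ratio explicitly in terms of $S=Y^{-1/2}ZY^{-1/2}$ and $T=Y^{-1/2}WY^{-1/2}$, bounding the top eigenvalue of the resulting Hermitian matrix by that of its real part, and concluding with Lemma \ref{lem:evAB}. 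Your approach is shorter and more conceptual (it is literally the higher-rank version of $-\log\im$ being $1$-Lipschitz on $\hyp$, and it isolates where the hypothesis $Y-W>0$ is used), but it rests on identifying the paper's $\dR$ -- defined through Siegel's Weyl-chamber projection -- with the distance induced by the explicit tensor; that identification is standard (uniqueness of the invariant metric up to scale plus the normalization check on the tube $\Yy_{0,l_\infty}$, where the tensor restricts to $\dGL$), but it is an extra ingredient the paper deliberately avoids by keeping everything inside the cross-ratio machinery already set up.
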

\begin{proof}
We know that
$$\dC(iY,Z+iW)_j=\log\left(\frac{1+\sqrt r_j}{1-\sqrt r_j}\right)$$
where $1\geq r_1\geq\ldots\geq r_n\geq 0$ are the eigenvalues of the cross-ratio $R(iY, Z-iW,Z+iW,-iY)$.
Since the function 
$$x\mapsto \log\left(\frac{1+x}{1-x}\right)$$
is monotone increasing on $(0,1)$ and $\dR(iY,Z+iW)\geq\dC(iY,Z+iW)_1$, we want to estimate the quantity $\log\left(\frac{1+\sqrt r_1}{1-\sqrt r_1}\right)$.

We set $S=Y^{-1/2}ZY^{-1/2}$ and $T=Y^{-1/2}WY^{-1/2}$. By Lemma \ref{lem:eigenvalues}, $T$ is positive definite and all its eigenvalues are smaller than 1. One can show by explicit computations that
$R(iY, Z-iW,Z+iW,-iY)$ is conjugate to $$\Id -4[S^2+(T+\Id)^2+i(ST-TS)]^{-1}T=\Id-X^{-1}$$
where $X=\frac{1}{4}T^{-1/2}[S^2+(T+\Id)^2+i(ST-TS)]T^{-1/2}$. Now $r_1$ is $1-x_1^{-1}$, where $x_1$ is the maximum eigenvalue of $X$.

Note that $X$ is Hermitian, so its maximum eigenvalue is at least the maximum eigenvalue of its real part: indeed, because $\im(X)$ is antisymmetric, if $\lambda$ is the maximum eigenvalue of $\re(X)$ and $v$ is an eigenvector for $\lambda$ of norm one, we have
$$x_1\geq \left((\re(X)+i\im(X))v,v\right)=\lambda+i\left(\im(X)v,v\right)=\lambda.$$
As a consequence,
$$x_1\geq \max\ev(\re(X))\geq \max\ev\left(\frac{1}{4}T^{-1}(T+\Id)^2\right)=\frac{1}{4}\frac{(m+1)^2}{m},$$
where $m$ is the minimum eigenvalue of $T$. Thus
$$r_1=1-\frac{1}{x_1}\geq \frac{(m-1)^2}{(m+1)^2}$$
which implies that
$$\sqrt{r_1}\geq \frac{1-m}{m+1},$$
because  $m<1$. Using Lemma \ref{lem:evAB}, we get
$$\frac{1+\sqrt r_1}{1-\sqrt r_1}\geq \frac{1}{\min\ev(T)}=\max\ev(T^{-1})\geq \max\ev (Y)\min\ev(W^{-1})=\frac{\max\ev (Y)}{\max\ev(W)}.$$
\end{proof}

Next we prove that if $Y\in \Yy_{C,D}$ has imaginary part with large maximum eigenvalue, the segment with endpoints $\theta(C)$ and $\theta(D)$ in $[0,\ell]$ is long.
\begin{lem}\label{lem:highislarge}
If $Y\in \Yy_{C,D}$, then $\frac{1}{2}\max \ev (D-C)\geq \max \ev ({\rm Im} Y)$.
\end{lem}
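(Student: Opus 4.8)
The plan is to reduce to the symmetric model tube of Lemma \ref{lem:0} by an imaginary-part-preserving symplectic change of coordinates, and then to conclude from an elementary Loewner-order estimate.

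First I would normalise. In the situation of interest $D-C$ is positive definite (this holds in all our applications, by monotonicity of the boundary map), so set $E=\bigl((D-C)/2\bigr)^{1/2}\in\Sym^+(n,\R)$ and $K=-\tfrac12(C+D)\in\Sym(n,\R)$. The unipotent element $H=\bsm\Id&K\\0&\Id\esm\in\Sp(2n,\R)$ acts on $\Sym(n,\R)$ by $Z\mapsto Z+K$; in particular it fixes every imaginary part, and it carries the unordered pair $\{C,D\}$ to $\{-E^2,E^2\}$. Hence $H\cdot\Yy_{C,D}=\Yy_{-E^2,E^2}$, the point $H\cdot Y$ has the same imaginary part as $Y$, and $\max\ev(E^2)=\tfrac12\max\ev(D-C)$. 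So it suffices to bound $\max\ev({\rm Im}\,Y')$ for an arbitrary $Y'\in\Yy_{-E^2,E^2}$.

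Next, the element $\g$ appearing in the proof of Lemma \ref{lem:0} (with $C$ there replaced by $E$) satisfies $\g\cdot\Yy_{0,l_\infty}=\Yy_{-E^2,E^2}$, and $\Yy_{0,l_\infty}=\{iZ\mid Z\in\Sym^+(n,\R)\}$; hence every point of $\Yy_{-E^2,E^2}$ is of the form $\g\cdot iZ$ for some $Z\in\Sym^+(n,\R)$, and the computation carried out there is an identity in $Z$ giving
$$ {\rm Im}(\g\cdot iZ)=2\,E\,Z(Z^2+\Id)^{-1}E . $$
Finally I would observe that $Z$, $Z^2+\Id$ and $(Z-\Id)^2$ all commute, the last two being positive definite and positive semidefinite respectively, so that
$$ \Id-2Z(Z^2+\Id)^{-1}=(Z-\Id)^2(Z^2+\Id)^{-1}\succeq 0, $$
i.e. $2Z(Z^2+\Id)^{-1}\preceq\Id$. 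Conjugation by $E$ preserves the Loewner order, whence ${\rm Im}\,Y'=E\bigl(2Z(Z^2+\Id)^{-1}\bigr)E\preceq E^2$, and therefore $\max\ev({\rm Im}\,Y)=\max\ev({\rm Im}\,Y')\le\max\ev(E^2)=\tfrac12\max\ev(D-C)$.

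I do not expect a genuine obstacle here: the statement is essentially a one-line spectral consequence of Lemma \ref{lem:0}. The only points requiring care are checking that the chosen unipotent simultaneously moves the endpoints to $\pm E^2$ and fixes all imaginary parts, and remembering that the formula for the imaginary part proved in Lemma \ref{lem:0} is an identity in the parameter $Z$, hence describes every point of the tube rather than merely orthogonal projections of boundary Lagrangians.
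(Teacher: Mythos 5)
Your proposal is correct and follows essentially the same route as the paper: normalise by the imaginary-part-preserving horizontal translation to the symmetric tube $\Yy_{-E^2,E^2}$, use the computation of Lemma \ref{lem:0} to express the imaginary part of an arbitrary point of the tube, and conclude by an elementary spectral bound. The only (cosmetic) difference is that you bound $2Z(Z^2+\Id)^{-1}\preceq\Id$ directly in the Loewner order, whereas the paper rewrites the imaginary part as $E(\Id-M^2)(\Id+M^2)^{-1}E$ and invokes Lemma \ref{lem:evAB}.
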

\begin{proof} Up to translating horizontally (as at the beginning of the proof of Proposition \ref{prop:high}), we can assume $C=-E^2$ and $D=E^2$, for some matrix $E$. We know from Lemma \ref{lem:0} that the imaginary part of a point $Y\in\Yy_{-E^2,E^2}$ can be written as $E(\Id-M^2)(\Id+M^2)^{-1}E$ for some positive semidefinite matrix $M$ whose eigenvalues are smaller than 1. By Lemma \ref{lem:evAB} we have
$$
\max \ev(\im Y)\leq (\max \ev (E))^2\max \ev (\Id-M^2)(\Id+M^2)^{-1}\leq  \max \ev (E^2).
$$
\end{proof}
We can now prove the key step for Theorem \ref{thm:M0}.
\begin{proof}[Proof of Proposition \ref{subinterval}]Given $a=\theta(s)$ and $b=\theta(t)$, with $s,t\in\L(\G)$, we want to find a boundary component $\delta_\alpha$ with $d-c:=d^F (p_\gamma(\phi(\delta_\alpha^-)),p_\gamma(\phi(\delta_\alpha^+)))\geq \l (b-a)$, for some $\lambda$ independent on $a$ and $b$.
Let $z\in (\!(s,t)\!)$ given by Proposition \ref{prop:high}. By Remark \ref{distboundary} and Proposition \ref{qiembedding}, we know that there exist $k_4>1$ and 
$w\in F\left( T^1{\partial\wt\Sigma}\right)$ 
at distance at most $\log k_4$ from $p_{\phi(s),\phi(t)}(\phi(z))$. Here $F:T^1\mathbb H^2\to \mathcal X$ is the $D\rho$-equivariant quasi-isometric embedding studied in Proposition \ref{qiembedding}. Denote by $s'<t'$ the points in $(\!(s,t)\!)$ such that $w\in\Yy_{\phi(s'),\phi(t')}$ and let $c=\theta(s')$ and $d=\theta(t')$.

Note that $(\phi(x),\phi(s'),\phi(t'),\phi(\gamma\cdot x))$ is maximal, and hence so is $(\phi(x),\phi(s'),\phi(t'),m\Id)$ where $m=\max\ev(\phi(\gamma\cdot x))$. But since the translation length of $\gamma$ is $\ell$ and we assumed that $\phi(x)=\Id$ (which implies that all eigenvalues of $\phi(\gamma\cdot x)$ are bigger than one), we can deduce that $m<2e^\ell$ and hence
$$(\phi(x),\phi(s'),\phi(t'),2e^\ell\Id)$$
is maximal.

We have $[c,d]\subset [a,b]\cap X$ and
\begin{align*}
d-c&=\frac{1}{2}\log\det(\phi(t')\phi(s')^{-1})\\
&\stackrel{(i)}{>}\frac{1}{2k_2}\max \ev (\phi(t')-\phi(s'))\\
&\stackrel{(ii)}{\geq} \frac{1}{k_2}\max \ev(\im(w))\\
&\stackrel{(iii)}{\geq} \frac{1}{k_2k_4} \max \ev(\im p_{\phi(s),\phi(t)}(\phi(z)))\\
&\stackrel{(iv)}{>}\frac{k_3}{k_2k_4} \max \ev (\phi(t)-\phi(s))\\
&\stackrel{(v)}{>} \frac{2k_1k_3}{k_2k_4}(b-a)
\end{align*}
where we apply Lemma \ref{lem:topvsdet} in (i) and (v), Lemma \ref{lem:highislarge} in (ii), Lemma \ref{lem:closetohigh} in (iii) and Proposition \ref{prop:high} in (iv).\end{proof}
\begin{figure}[H]
\begin{overpic}{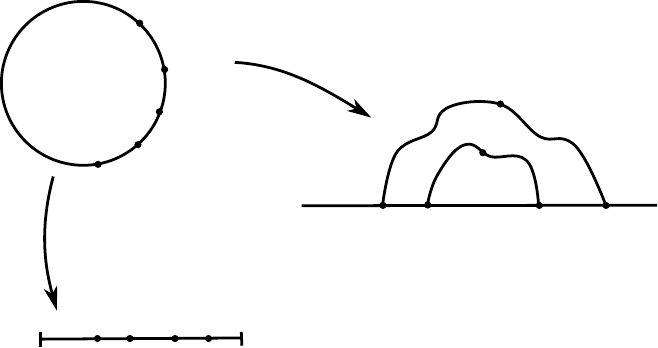}
\put(-5,50){$\partial\hyp$}
\put(15,24){$s$}
\put(20,27){$s'$}
\put(27,35){$z$}
\put(27,42){$t'$}
\put(23,50){$t$}
\put(45,42){$\phi$}
\put(75,40){$p_{\phi(s),\phi(t)}(\phi(z))$}
\put(70,25){$w$}
\put(49,16){$\phi(s)$}
\put(61,16){$\phi(s')$}
\put(75,16){$\phi(t')$}
\put(90,16){$\phi(t)$}
\put(3,15){$\theta$}
\put(5,-5){$0$}
\put(13,-5){$a$}
\put(18,-5){$c$}
\put(25,-5){$d$}
\put(31,-5){$b$}
\put(38,-5){$\ell$}\end{overpic}
\caption{A schematic picture of the situation described in the proof}
\end{figure}
\section{Geometric Basmajian-type inequalities}
This section is dedicated to the proof of Theorem \ref{thm:main} and to showing that the gap between the middle and right-hand side term in the inequalities \eqref{Finsler} and \eqref{Riemannian} can be arbitrarily large.

We will need a preliminary lemma concerning the vectorial length of orthotubes.
\begin{lem}\label{2logcothC}
Given two peripheral elements $\g,\delta$, we have
$$ \dC(p_\g(\phi(\delta^-)),p_\g(\phi(\delta^+)))_i= 2\log\coth \frac{\ellC(\ort(\gamma,\delta))_{n-i+1}}{2}.$$
\end{lem}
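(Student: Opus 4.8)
The plan is to reduce the whole identity to an explicit matrix computation after putting the four Lagrangians $\phi(\gamma^-),\phi(\delta^+),\phi(\delta^-),\phi(\gamma^+)$ in a convenient normal form. Both sides of the asserted equality are invariant under replacing $\rho$ by a conjugate representation, so I am free to act by $\Sp(2n,\R)$. First I would observe that, since $\phi$ is monotone and (with the compatible orientation chosen, as in the proof of Lemma \ref{existenceorth}) the points $\gamma^-,\delta^+,\delta^-,\gamma^+$ occur in this cyclic order on $\mathbb{S}^1$, the $4$-tuple $(\phi(\gamma^-),\phi(\delta^+),\phi(\delta^-),\phi(\gamma^+))$ is maximal. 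By Lemma \ref{lem:IdLambda} I may then assume, after conjugating, that
$$(\phi(\gamma^-),\phi(\delta^+),\phi(\delta^-),\phi(\gamma^+))=(-\Id,-\Lambda,\Lambda,\Id),$$
with $\Lambda=\diag(\lambda_1,\dots,\lambda_n)$ and $\lambda_j\in(0,1)$; conjugating further by a permutation matrix (which lies in $\Sp(2n,\R)$) I may also assume $\lambda_1\geq\dots\geq\lambda_n$. In this normalization $\Yy_\gamma=\Yy_{-\Id,\Id}$ and $\Yy_\delta=\Yy_{-\Lambda,\Lambda}$.

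The second step is to identify the orthotube $\alpha:=\ort(\gamma,\delta)$ and compute $\ellC(\alpha)$. By Lemma \ref{perpto0infty} the tube $\Yy_{0,l_\infty}$ is orthogonal to $\Yy_{A,-A}$ for every $A\in\Sym(n,\R)$; taking $A=\Id$ and $A=\Lambda$ shows $\Yy_{0,l_\infty}\perp\Yy_\gamma$ and $\Yy_{0,l_\infty}\perp\Yy_\delta$, so by the uniqueness statement in Lemma \ref{existenceorth} we get $\Yy_\alpha=\Yy_{0,l_\infty}$; the same lemma gives the intersection points $\Yy_\alpha\cap\Yy_\gamma=i\Id$ and $\Yy_\alpha\cap\Yy_\delta=i\Lambda$. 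Hence $\ellC(\alpha)=\dC(i\Lambda,i\Id)$, and since $\Id-\Lambda$ is positive definite Lemma \ref{Finslerdistance} yields $\ellC(\alpha)=(-\log\lambda_n,\dots,-\log\lambda_1)$, i.e.\ $\lambda_i=\exp\!\big(-\ellC(\alpha)_{n-i+1}\big)$.

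For the left-hand side I would apply Lemma \ref{R(a,x,y,b)} to the maximal $4$-tuple $(\phi(\gamma^-),\phi(\delta^+),\phi(\delta^-),\phi(\gamma^+))$: the decreasingly ordered coordinates of $\dC(p_\gamma(\phi(\delta^+)),p_\gamma(\phi(\delta^-)))$ — which equals $\dC(p_\gamma(\phi(\delta^-)),p_\gamma(\phi(\delta^+)))$ by symmetry of $\dC$ — are the logarithms of the eigenvalues of $R(-\Id,-\Lambda,\Lambda,\Id)$. Using the explicit formula \eqref{cross-ratio} one computes
$$R(-\Id,-\Lambda,\Lambda,\Id)=\big((\Id-\Lambda)^{-1}(\Id+\Lambda)\big)^2=\diag\!\left(\left(\tfrac{1+\lambda_j}{1-\lambda_j}\right)^2\right),$$
whose eigenvalues are already in decreasing order; therefore $\dC(p_\gamma(\phi(\delta^-)),p_\gamma(\phi(\delta^+)))_i=2\log\tfrac{1+\lambda_i}{1-\lambda_i}$. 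Substituting $\lambda_i=\exp\!\big(-\ellC(\alpha)_{n-i+1}\big)$ and using $\tfrac{1+e^{-s}}{1-e^{-s}}=\coth\tfrac{s}{2}$ gives exactly the claimed formula.

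All the computations here are elementary (applications of \eqref{cross-ratio}, Lemma \ref{lem:max}, Lemma \ref{Finslerdistance}); the only point that requires genuine care is the combinatorics of orientations — checking that the four boundary points appear in precisely the cyclic order for which Lemma \ref{lem:IdLambda} and Lemma \ref{R(a,x,y,b)} apply — together with keeping the two index conventions ($i$ versus $n-i+1$, and the decreasing orderings built into $\dC$ and $\ellC$) consistent throughout. I expect this bookkeeping to be the most delicate aspect, but there is no substantive obstacle.
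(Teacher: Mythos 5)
Your proposal is correct and follows essentially the same route as the paper's proof: normalize the maximal $4$-tuple to $(-\Id,-\Lambda,\Lambda,\Id)$ via Lemma \ref{lem:IdLambda}, identify the orthotube as $\Yy_{0,l_\infty}$ with intersection points $i\Id$, $i\Lambda$ via Lemma \ref{perpto0infty} and Lemma \ref{existenceorth}, compute $\ellC(\alpha)$ with Lemma \ref{Finslerdistance}, and compute the projected distance with Lemma \ref{R(a,x,y,b)} and the explicit cross-ratio formula \eqref{cross-ratio}. The only differences are cosmetic (your $\lambda_i\in(0,1)$ versus the paper's reciprocal convention, and your more explicit handling of the ordering and maximality bookkeeping, which the paper leaves implicit).
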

\begin{proof} Let $\Yy_{a,b}$ be the $\R$-tube orthogonal to both $\Yy_\g$ and $\Yy_\delta$.
Since $(\phi(\g^-),\phi(\delta^+),\phi(\delta^-),\phi(\g^+))$ is a maximal $4$-tuple, by Lemma \ref{lem:IdLambda} we can assume (up to the action of $\Sp(2n,\R)$) that $$(\phi(\g^-),\phi(\delta^+),\phi(\delta^-),\phi(\g^+))=(-\Id,-\Lambda,\Lambda,\Id),$$ where $\Lambda=\diag\left(\frac{1}{\lambda_n},\dots,\frac{1}{\l_1}\right)$, with $\l_1\geq\ldots\geq \l_n>1$. 
\vspace{0.5cm}
\begin{figure}[H]
\begin{overpic}{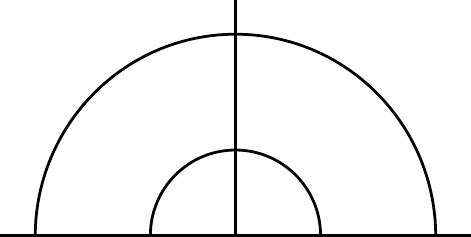}
\put(-1,-10){$-\Id$}
\put(22,-10){$-\L$}
\put(48,-10){$0$}
\put(66,-10){$\L$}
\put(88,-10){$\Id$}
\put(53,25){$\ort(\gamma,\delta)$}
\put(49,52){$l_\infty$}
\end{overpic}
\vspace{0.5cm}
\caption{A schematic picture of two $\R$-tubes and their orthogeodesic}
\end{figure}

By Lemma \ref{perpto0infty}, $\Yy_{0,l_\infty}$ is the $\R$-tube orthogonal to $\Yy_\g$ and $\Yy_\delta$ and it intersects them in $i\Id$ and $i\Lambda$. As a consequence, by Lemma \ref{Finslerdistance}
$$\ellC(\ort(\gamma,\delta))=(\log\l_1,\ldots,\log\l_n).$$
Moreover, Lemma \ref{R(a,x,y,b)} implies that the distance $\dC(p_\g(\phi(\delta^-)),p_\g(\phi(\delta^+)))$ is the vector given by the logarithm of the eigenvalues of $R(\phi(\g^-),\phi(\delta^+),\phi(\delta^-),\phi(\g^+))$. Now using \eqref{cross-ratio} we obtain
\begin{align*}
&R(\phi(\g^-),\phi(\delta^+),\phi(\delta^-),\phi(\g^+))=
(\Id-\Lambda)^{-2}(\Lambda+\Id)^2,
\end{align*}
so
$$\dC(p_\g(\phi(\delta^-)),p_\g(\phi(\delta^+)))=\left(\log\left(\frac{\l_n+1}{\l_n-1}\right)^2,\dots,\log\left(\frac{\l_1+1}{\l_1-1}\right)^2\right).$$
\end{proof}

\subsection{The Finsler metric}
We show the Basmajian-type inequality for the Finsler metric and for a single boundary component:
\begin{theor}\label{Finsler-component}
Let $\Sigma$ be a compact surface with boundary and $\gamma\in\Gamma=\pi_1(\Sigma)$ a peripheral element. Given an Anosov maximal representation $\rho:\Gamma\to\Sp(2n,\R)$, we have
$$n\!\!\sum_{\alpha\in\ort_\Sigma(\gamma)}\log \coth \frac{\ellC(\alpha)_n}{2}\geq\ellF(\rho(\gamma))\geq n\!\! \sum_{\alpha\in\ort_\Sigma(\gamma)}\log\coth\frac{\ellF(\alpha)}{n}$$
with equality if and only if $\rho$ is the diagonal embedding of a hyperbolization.
\end{theor}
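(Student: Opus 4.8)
Start with the identity. Combining Theorem~\ref{thm:M0} with Lemma~\ref{2logcothC} and the definition of the Finsler distance as $\tfrac12\sum_i\dC(\cdot,\cdot)_i$, I would first record that for each $\alpha=\ort(\gamma,\delta_\alpha)\in\ort_\Sigma(\gamma)$
\[
\dF\big(p_\gamma(\phi(\delta_\alpha^+)),p_\gamma(\phi(\delta_\alpha^-))\big)=\sum_{i=1}^n\log\coth\frac{\ellC(\alpha)_i}{2},\qquad\text{so that}\qquad\ellF(\rho(\gamma))=\sum_{\alpha\in\ort_\Sigma(\gamma)}\ \sum_{i=1}^n\log\coth\frac{\ellC(\alpha)_i}{2}.
\]
The two chains of inequalities then reduce to estimates on the inner sum. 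For the upper bound I would use that $\ellC(\alpha)_n$ is the smallest coordinate of $\ellC(\alpha)$ and that $t\mapsto\log\coth t$ is strictly decreasing on $(0,\infty)$, so each summand is $\le\log\coth\tfrac{\ellC(\alpha)_n}{2}$ and the inner sum is $\le n\log\coth\tfrac{\ellC(\alpha)_n}{2}$. For the lower bound I would check that $t\mapsto\log\coth t$ is strictly convex on $(0,\infty)$ — one computes $\tfrac{d^2}{dt^2}\log\coth t=\tfrac{4\cosh 2t}{\sinh^2 2t}>0$ — and apply Jensen's inequality to the numbers $t_i=\ellC(\alpha)_i/2$, whose mean is $\tfrac1n\sum_i\tfrac{\ellC(\alpha)_i}{2}=\tfrac{\ellF(\alpha)}{n}$ since $\ellF(\alpha)=\tfrac12\sum_i\ellC(\alpha)_i$; this yields $\sum_i\log\coth\tfrac{\ellC(\alpha)_i}{2}\ge n\log\coth\tfrac{\ellF(\alpha)}{n}$. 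Summing over $\ort_\Sigma(\gamma)$ gives both statements.

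Equality analysis. By strict monotonicity (for the upper bound) and strict convexity (for the lower bound), either inequality is an equality if and only if for every $\alpha\in\ort_\Sigma(\gamma)$ all the coordinates of $\ellC(\alpha)$ coincide. Reading off the computation in the proof of Lemma~\ref{2logcothC} — where $\ellC(\alpha)=(\log\lambda_1,\dots,\log\lambda_n)$ and the relevant cross-ratio equals $(\Id-\Lambda)^{-2}(\Id+\Lambda)^2$ with $\Lambda=\diag(\lambda_n^{-1},\dots,\lambda_1^{-1})$ — this is equivalent to $R(\phi(\gamma^-),\phi(\delta_\alpha^+),\phi(\delta_\alpha^-),\phi(\gamma^+))$ being a scalar multiple of $\Id$ for every peripheral $\delta_\alpha\ne\gamma$. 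The ``if'' direction is then a direct check: for $\rho=\Delta\circ\rho_0\times\chi$ the character $\chi$ has image in the compact group $Z_{\Sp(2n,\R)}(\Delta(\SL(2,\R)))$ and hence does not change the attracting/repelling Lagrangians of $\rho(\delta)=\Delta(\rho_0(\delta))\chi(\delta)$, so $\phi$ is the boundary map of $\rho_0$ followed by the diagonal inclusion $\Ll(\R^2)\hookrightarrow\Ll(\R^{2n})$, $L\mapsto L\otimes\R^n$; since the cross-ratio of diagonal images is the diagonal of the cross-ratio, every such $R$ equals $r\cdot\Id$ with $r$ the $1\times1$ hyperbolic cross-ratio, so every $\ellC(\alpha)$ is constant and both inequalities are equalities.

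For the converse I would argue that the fixed points of peripheral elements are dense in the limit set $\Lambda(\Gamma)$ (as $\Gamma$ acts minimally on it), and then combine the scalar-cross-ratio condition with continuity of $\phi$ and a lemma characterizing scalar cross-ratios — namely, that a maximal $4$-tuple of Lagrangians has scalar cross-ratio exactly when it lies in the Shilov boundary of a diagonally embedded Poincar\'e disk (cf.\ Lemma~\ref{lem:iff}) — to conclude that the whole image of $\phi$ lies in the Shilov boundary of one fixed diagonal disk $D\subset\calX$. Then $\rho(\Gamma)$ preserves $D$, so it acts on the boundary circle of $D$; identifying this circle with $\partial\H^2$ produces $\rho_0:\Gamma\to\PSL(2,\R)$, which lifts to $\SL(2,\R)$ because $\Gamma$ is free, and $\rho$ is conjugate to $\Delta\circ\rho_0\times\chi$ with $\chi$ valued in $Z_{\Sp(2n,\R)}(\Delta(\SL(2,\R)))$; finally, by additivity of the Toledo invariant together with tightness and holomorphicity of $\Delta$, maximality of $\rho$ forces $\rho_0$ to be a holonomy of a hyperbolization.

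I expect the inequalities themselves to be soft: granted Theorem~\ref{thm:M0}, they are only monotonicity and convexity of $\log\coth$ plus Jensen. The main obstacle is the equality characterization, and precisely the step upgrading the pointwise condition ``$R(\phi(\gamma^-),\phi(\delta^+),\phi(\delta^-),\phi(\gamma^+))$ is scalar for all peripheral $\delta$'' to ``$\phi(\partial\Gamma)$ lies in the Shilov boundary of a single diagonal disk'' — this is where the geometry of scalar cross-ratios (Lemma~\ref{lem:iff}), the density of peripheral fixed points, and the continuity of $\phi$ have to be used together; a minor additional point is to confirm that the compact character $\chi$ genuinely has no effect on $\phi$, so that it does not interfere with the characterization.
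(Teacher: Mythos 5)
Your derivation of the two inequalities is correct and coincides with the paper's: combining Theorem~\ref{thm:M0} with Lemma~\ref{2logcothC}, monotonicity of $\log\coth$ for the upper bound and strict convexity plus Jensen for the lower bound is exactly how the paper proceeds (it packages this per-orthotube estimate as Lemma~\ref{2logcoth}), and your reduction of the equality case to the condition that $R(\phi(\gamma^-),\phi(\delta^+),\phi(\delta^-),\phi(\gamma^+))$ is a scalar multiple of $\Id$ for every peripheral $\delta$, as well as the ``if'' direction for diagonal embeddings and the final identification $\rho(\G)\subset{\rm Stab}(\partial{\rm Diag})\simeq\SL(2,\R)\times\O(n)$, are all as in the paper.

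The gap is in the converse of the equality characterization, precisely at the step you yourself flag as the main obstacle. The scalar cross-ratio condition is a statement about each gap separately: normalizing $\phi(\gamma^-)=0$, $\phi(\gamma^+)=l_\infty$, it says $\phi(\delta^-)=\lambda_\delta\,\phi(\delta^+)$, i.e.\ the two endpoints of each gap map into the Shilov boundary of \emph{some} diagonal disk through $0$ and $l_\infty$ --- but a priori a different disk (a different ``direction'' $S_\delta=\pi^{\SL}(\phi(\delta^+))$) for each $\delta$. Density of peripheral fixed points in $\Lambda(\G)$ plus continuity of $\phi$ do not upgrade this to a single disk: nothing in those two facts excludes a devil's-staircase behaviour in which $\pi^{\SL}\circ p_\gamma\circ\phi$ is constant across every gap yet varies continuously along the limit set. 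Your parenthetical reading of Lemma~\ref{lem:iff} as a pointwise characterization of scalar cross-ratios is a mischaracterization; that pointwise fact is true but insufficient, and the actual content of Lemma~\ref{lem:iff} is the global constancy of $\pi^{\SL}\circ p_\gamma\circ\phi$ on $\Lambda(\G)\setminus\{\gamma^\pm\}$. Its proof needs two further ingredients that are absent from your sketch: the causal-path estimate of Lemma~\ref{lem:pos}, which bounds the $\SL$-displacement $\dSL(\pi^{\SL}p_\gamma\phi(x),\pi^{\SL}p_\gamma\phi(y))$ by $\sqrt{n-1}$ times the displacement in the Euclidean factor, and a second application of Theorem~\ref{thm:M0} together with Remark~\ref{dFandpiR}, which shows that after removing the gaps the limit set contributes zero Euclidean displacement; combining these, the total $\SL$-displacement between any two limit points vanishes, which is what rules out the staircase scenario and pins down one diagonal disk. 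Without this quantitative step your argument does not close.
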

This result, together with Remark \ref{sumorthogeodesics}, implies the Finsler metric part of Theorem \ref{thm:main}.

The first step is to relate $\ellF(\ort(\gamma,\delta))$ to the distance between the orthogonal projections of the extremal points of $\Yy_\d$ on $\Yy_\gamma$. For the reader's convenience, since we will use multiple properties of $\log\coth(x)$, Figure \ref{plotlogcoth} shows the graph of this function.
\begin{figure}[h]
\includegraphics[width=.5\textwidth]{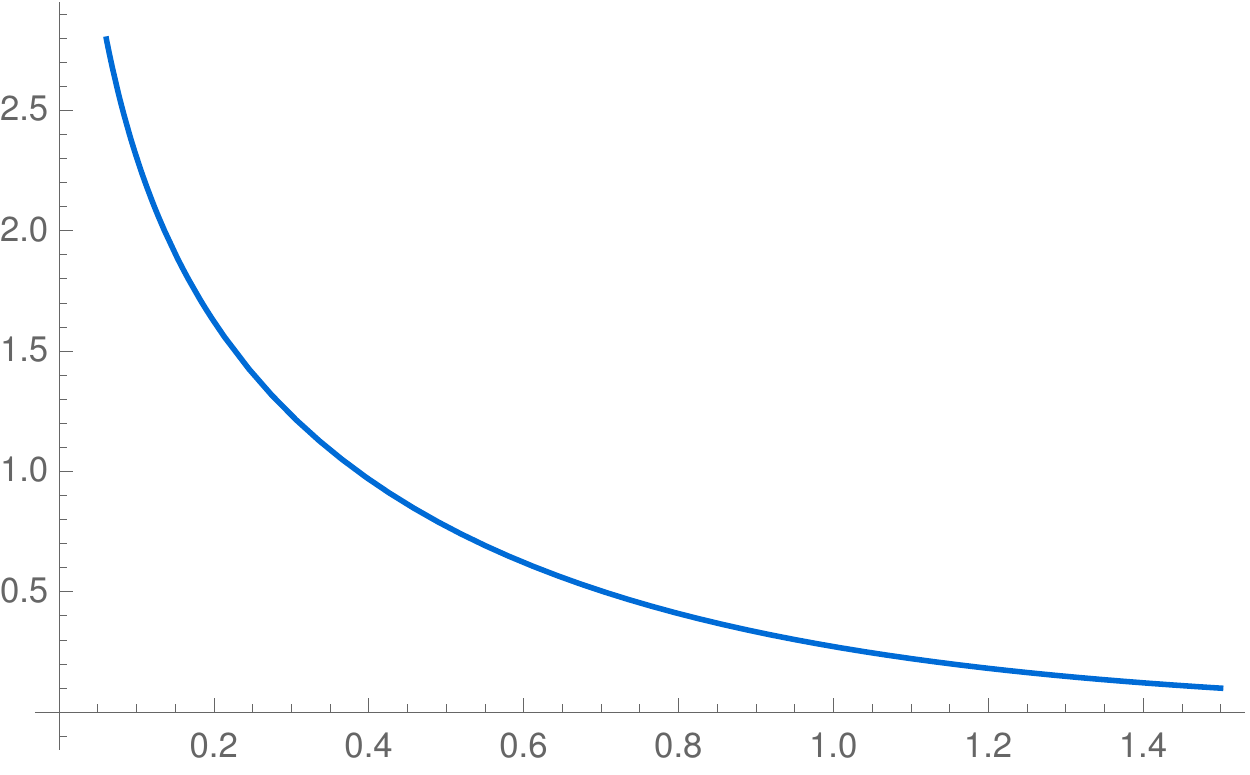}
\caption{The graph of $\log\coth(x)$ (drawn using Mathematica)}\label{plotlogcoth} 
\end{figure}

\begin{lem}\label{2logcoth}
Given two peripheral elements $\g,\delta$, we have
$$n\log\coth\frac{\ellC(\ort(\g,\delta))_n}{2}\geq \dF(p_\g(\phi(\delta^-)),p_\g(\phi(\delta^+)))\geq n\log\coth 
\frac{\ellF(\ort(\gamma,\delta))}{n},$$
with equalities if and only if all the eigenvalues of $R(\phi(\g^-),\phi(\delta^+),\phi(\delta^-),\phi(\g^+))$ are equal.
\end{lem}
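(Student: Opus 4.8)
The plan is to reduce the statement, via Lemma \ref{2logcothC} and the computations in its proof, to an elementary inequality for the function $\log\coth$. Recall from that proof that, up to the $\Sp(2n,\R)$-action, one may assume $(\phi(\g^-),\phi(\delta^+),\phi(\delta^-),\phi(\g^+))=(-\Id,-\Lambda,\Lambda,\Id)$ with $\Lambda=\diag(1/\l_n,\dots,1/\l_1)$ and $\l_1\geq\dots\geq\l_n>1$; moreover $\ellC(\ort(\g,\delta))=(\log\l_1,\dots,\log\l_n)$ and the eigenvalues of $R(\phi(\g^-),\phi(\delta^+),\phi(\delta^-),\phi(\g^+))$ are $\big(\tfrac{\l_i+1}{\l_i-1}\big)^2$. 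Setting $s_i:=\tfrac12\log\l_i$, so that $s_1\geq\dots\geq s_n>0$, the three quantities appearing in the statement become $\tfrac12\ellC(\ort(\g,\delta))_n=s_n$, $\tfrac1n\ellF(\ort(\g,\delta))=\tfrac1n\sum_{i=1}^n s_i$, and---since the Finsler distance is half the sum of the coordinates of the vectorial distance, and $\tfrac{\l_i+1}{\l_i-1}=\coth s_i$---
\[
\dF(p_\g(\phi(\delta^-)),p_\g(\phi(\delta^+)))=\sum_{i=1}^n\log\coth s_i.
\]
Thus the lemma is equivalent to $n\log\coth s_n\geq\sum_{i=1}^n\log\coth s_i\geq n\log\coth\big(\tfrac1n\sum_{i=1}^n s_i\big)$.

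For the left inequality I would just use that $\log\coth$ is strictly decreasing on $(0,\infty)$: since $s_i\geq s_n$ for all $i$, each summand satisfies $\log\coth s_i\leq\log\coth s_n$. For the right inequality I would invoke Jensen's inequality, which applies because $g(s):=\log\coth s$ is convex on $(0,\infty)$; this follows from the direct computation $g'(s)=-2/\sinh(2s)$, $g''(s)=4\cosh(2s)/\sinh^2(2s)>0$. Jensen then yields $\tfrac1n\sum_{i=1}^n g(s_i)\geq g\big(\tfrac1n\sum_{i=1}^n s_i\big)$, which is the claimed bound after multiplying by $n$ and recalling $\tfrac1n\sum_i s_i=\tfrac1n\ellF(\ort(\g,\delta))$.

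Finally, equality in the monotonicity step and in Jensen's inequality holds if and only if all $s_i$ coincide, equivalently all $\l_i$ are equal, equivalently all eigenvalues of $R(\phi(\g^-),\phi(\delta^+),\phi(\delta^-),\phi(\g^+))$ are equal; this gives the equality characterization. I expect no serious obstacle here: the argument is elementary, and the only point requiring care is bookkeeping the normalization constants ($\tfrac12$ for the Finsler distance, $\tfrac1n$ inside the right-hand $\log\coth$) relating $\ellF$, $\ellC$ and the $\log\coth$ terms.
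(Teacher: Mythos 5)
Your proposal is correct and follows essentially the same route as the paper: both reduce via Lemma \ref{2logcothC} to the identity $\dF(p_\g(\phi(\delta^-)),p_\g(\phi(\delta^+)))=\sum_i\log\coth\frac{\ellC(\ort(\g,\delta))_i}{2}$, then obtain the upper bound from the strict monotonicity of $\log\coth$ and the lower bound from its strict convexity (Jensen), with the equality case coming from strictness exactly as you argue. The only difference is cosmetic: you make the normalization explicit through the $s_i=\tfrac12\log\l_i$ and verify convexity by computing $g''$, whereas the paper simply cites these properties.
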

\begin{proof}
We have
\begin{align*}\dF(p_\g(\phi(\delta^-)),p_\g(\phi(\delta^+)))&=\sum_{i=1}^n\log\coth\frac{\ellC(\ort(\g,\delta))_i}{2} \\
&\geq n\log\coth\frac{\sum_{i=1}^n\ellC(\ort(\g,\delta))_i}{2n}\\
&=n\log\coth\frac{\ellF(\ort(\g,\delta)))}{n}
\end{align*}
where the first equality is given by Lemma \ref{2logcothC} and the inequality follows from the convexity of $\log\coth(x)$.
On the other hand by the monotonicity of $\log\coth(x)$
$$\dF(p_\g(\phi(\delta^-)),p_\g(\phi(\delta^+)))=\sum_{i=1}^n\log\coth\frac{\ellC(\ort(\g,\delta))_i}{2} \leq n\log\coth\frac{
\ellC(\ort(\g,\delta))_n}{2}.$$

Since $\log\coth$ is strictly convex and strictly monotone, there are equalities if and only if $\ellC(\ort(\g,\delta))_i=\ellC(\ort(\g,\delta))_j$ for all $i,j$. Because $\log\coth(x)$ is strictly decreasing, this is also equivalent to the fact that, for each pair $i,j$, $$\dC(p_\g(\phi(\delta^-)),p_\g(\phi(\delta^+))_i=\dC(p_\g(\phi(\delta^-)),p_\g(\phi(\delta^+))_j,$$
namely that  all eigenvalues of $R(\phi(\g^-),\phi(\delta^+),\phi(\delta^-),\phi(\g^+))$ are equal (Lemma \ref{R(a,x,y,b)}).
\end{proof}

In order to prove that the equalities characterize diagonal representations we need the following lemma:
\begin{lem}\label{lem:iff} 
Assume that an equality in Theorem \ref{Finsler-component} holds. Then $\pi^{\SL}\circ p_\g \circ\phi$ is constant on $\Lambda(\G)\setminus\{\g^-,\g^+\}$.
\end{lem}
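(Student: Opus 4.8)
The plan is to unwind what equality in Theorem \ref{Finsler-component} forces, combining the rigidity statement of Lemma \ref{2logcoth} with the additivity of the Finsler metric along causal curves. If the lower bound (or the upper bound) in Theorem \ref{Finsler-component} is an equality, then --- since Theorem \ref{thm:M0} already gives $\ellF(\rho(\gamma))=\sum_{\alpha\in\ort_\Sigma(\gamma)}\dF(p_\gamma(\phi(\delta_\alpha^+)),p_\gamma(\phi(\delta_\alpha^-)))$ and the proof of Theorem \ref{Finsler-component} compares this sum term-by-term with $n\log\coth(\ellC(\alpha)_n/2)$ (resp.\ $n\log\coth(\ellF(\alpha)/n)$) via Lemma \ref{2logcoth} --- equality must hold in each individual application of Lemma \ref{2logcoth}. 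By the last sentence of Lemma \ref{2logcoth}, this means that for every peripheral $\delta\neq\gamma$ the cross-ratio $R(\phi(\gamma^-),\phi(\delta^+),\phi(\delta^-),\phi(\gamma^+))$ is a multiple of the identity; equivalently, by Lemma \ref{R(a,x,y,b)}, $\dC(p_\gamma(\phi(\delta^-)),p_\gamma(\phi(\delta^+)))$ is a multiple of $(1,\dots,1)$, i.e.\ the projections $p_\gamma(\phi(\delta^\pm))$ differ by a vector pointing in the direction of a Lagrangian inside the tube $\Yy_\gamma$. In the product-structure language of Section \ref{sec:producttube} (using the identification $\Yy_{0,l_\infty}\cong\Xx_{\GL(n,\R)}=\R\times\Xx_{\SL(n,\R)}$), this says exactly that $\pi^{\SL}(p_\gamma(\phi(\delta^-)))=\pi^{\SL}(p_\gamma(\phi(\delta^+)))$ for every peripheral $\delta\neq\gamma$.

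Next I would promote this from the discrete set of endpoints of peripheral tubes to all of $\Lambda(\G)\setminus\{\gamma^-,\gamma^+\}$. Normalize so that $\phi(\gamma^-)=0$, $\phi(\gamma^+)=l_\infty$, and recall from Remark \ref{projcausal} that the path $t\mapsto p_\gamma(\phi(t))$ on $(\!(\gamma^-,\gamma^+)\!)$ is causal; by Lemma \ref{lem:pos} (applied to $A=p_\gamma(\phi(t_1))$, $B=p_\gamma(\phi(t_2))$ for $t_1<t_2$) the $\SL$-component $\pi^{\SL}\circ p_\gamma\circ\phi$ moves a distance at most $\sqrt{n-1}$ times the increment of $\pi^{\R}\circ p_\gamma\circ\phi$, and $\pi^{\R}\circ p_\gamma\circ\phi$ is (up to the factor $2/\sqrt n$) the Finsler parameter $\theta$ of Section \ref{sec:m0max}. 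The key point is that, by Theorem \ref{thm:M0} and the equality hypothesis, the total $\pi^{\R}$-increment over one period $(\!(x,\gamma x)\!)$ is accounted for entirely by the intervals $I_\alpha$ coming from peripheral tubes: the complement $\Lambda(\G)\cap(\!(x,\gamma x)\!)$ maps to a $\theta$-null set, hence a $\pi^{\R}$-null set. On the closure of each interval $I_\alpha$ the $\SL$-component is already known to be constant (it equals the common value $\pi^{\SL}(p_\gamma(\phi(\delta_\alpha^\pm)))$ at both endpoints, and since $(p_\gamma\circ\phi)|_{(\!(\delta_\alpha^-,\delta_\alpha^+)\!)}$ is causal, Lemma \ref{lem:pos} forces it to be constant on the whole sub-arc; moreover Proposition \ref{prop:4.6}(1) identifies $\Yy_{\delta_\alpha}$ with the $D\alpha$-axis, so $p_\gamma(\phi(\delta_\alpha^\pm))$ are genuinely the two endpoints of a single $I_\alpha$). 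A monotone-curve argument --- the $\SL$-component is a $\sqrt{n-1}$-Lipschitz function of the $\pi^{\R}$-parameter, constant on a family of intervals whose $\pi^{\R}$-lengths exhaust the full $\pi^{\R}$-length --- then shows that $\pi^{\SL}\circ p_\gamma\circ\phi$ is constant on $(\!(\gamma^-,\gamma^+)\!)\cap\Lambda(\G)$, hence on all of $\Lambda(\G)\setminus\{\gamma^-,\gamma^+\}$ after noting the endpoints lie outside.

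The main obstacle I anticipate is the last step: passing from "$\SL$-component constant on each $I_\alpha$, and the $I_\alpha$ fill up a full-measure set in the $\pi^{\R}$-parameter" to "$\SL$-component constant everywhere." This is where one genuinely needs that the $\SL$-component is Lipschitz \emph{as a function of the $\pi^{\R}$-parameter} (Lemma \ref{lem:pos}), not merely continuous in $t$, because the map $\theta$ is only monotone and may be far from bi-Lipschitz. Concretely: if $s<t$ are two points of $\Lambda(\G)\cap(\!(x,\gamma x)\!)$, the interval $(\theta(s),\theta(t))$ contains a sub-union of the $I_\alpha$ of total length arbitrarily close to $\theta(t)-\theta(s)$, the $\SL$-component is constant across each such $I_\alpha$, and it varies by at most $\sqrt{n-1}$ times the (small) total $\pi^{\R}$-length of the gaps; letting the gaps shrink gives $\pi^{\SL}(p_\gamma(\phi(s)))=\pi^{\SL}(p_\gamma(\phi(t)))$. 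Making this limiting argument clean --- in particular checking that the constant value on one $I_\alpha$ agrees with that on the next, which follows because consecutive intervals are separated only by gaps of arbitrarily small $\pi^{\R}$-measure --- is the part that needs care, but it is exactly the higher-rank analogue of the classical fact that Basmajian's identity being an equality forces the limit-set projection to be null and the boundary to be entirely covered by feet of orthogeodesics.
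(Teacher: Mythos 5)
Your proposal follows the paper's own proof in all essential points: equality forces term-by-term equality in Lemma \ref{2logcoth}, hence each cross-ratio $R(\phi(\g^-),\phi(\delta^+),\phi(\delta^-),\phi(\g^+))$ is a multiple of the identity and $\pi^{\SL}(p_\g\phi(\delta^+))=\pi^{\SL}(p_\g\phi(\delta^-))$; then the $\sqrt{n-1}$-Lipschitz control of the $\SL$-component by the $\R$-component (Lemma \ref{lem:pos}), combined with Remark \ref{dFandpiR} and Theorem \ref{thm:M0} applied between two limit points, squeezes $\dSL\bigl(\pi^{\SL}p_\g\phi(x),\pi^{\SL}p_\g\phi(y)\bigr)$ to zero exactly as in the paper's limiting argument over finitely many $\delta$'s. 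One intermediate claim is not justified: Lemma \ref{lem:pos} does \emph{not} force the $\SL$-component to be constant on the whole sub-arc $(\!(\delta_\alpha^+,\delta_\alpha^-)\!)$, since it only bounds the $\SL$-displacement by the $\pi^\R$-displacement, which across $I_\alpha$ is precisely the (large) length of $I_\alpha$; however, this claim is never needed, because your concrete limiting argument in the last paragraph uses only the equality of the values at the two endpoints of each $I_\alpha$ together with the Lipschitz bound on the complementary gaps, which is exactly what the paper does.
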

\begin{proof} Denote by $\Phi_{\g,\SL}$ the map $\pi^{\SL}\circ p_\g \circ\phi$ and by $\Phi_{\g,\R}$ the map $\pi^{\R}\circ p_\g \circ\phi$.
We will prove that for each $x,y\in \Lambda(\G)\setminus\{\g^-,\g^+\}$ we have $\Phi_{\g,\SL}(x)=\Phi_{\g,\SL}(y)$.

Observe that since, by assumption, an equality holds in Theorem \ref{Finsler-component}, we have necessarily that the corresponding equality in Lemma \ref{2logcoth} is satisfied.
 If in particular, $\delta\in \G$ is such that $(x,\d^+,\d^-,y)$ is positively oriented, as a consequence of Lemma \ref{2logcoth} we have that all eigenvalues of $R(\phi(\gamma^-),\phi(\delta^+),\phi(\delta^-),\phi(\gamma^+))$ are equal. Since $R(\phi(\gamma^-),\phi(\delta^+),\phi(\delta^-),\phi(\gamma^+))$ is conjugated to a symmetric matrix, this means
$$R(\phi(\gamma^-),\phi(\delta^+),\phi(\delta^-),\phi(\gamma^+))=\l \Id$$
for some $\l$. Up to the action of the symplectic group, we can assume $$(\phi(\g^-),p_\g\phi(\delta^+), p_\g\phi(\delta^-), \phi(\g^+))=(0,i\Id,iA,l_\infty)$$
for some $A\in\Sym^+(n,\R)$. Under these assumptions, by \cite[Lemma 4.3]{bp},
$$R(\phi(\gamma^-),\phi(\delta^+),\phi(\delta^-),\phi(\gamma^+))=A.$$
Thus $A=\l\Id$, which implies that $\Phi_{\g,\SL}(\delta^-)=\Phi_{\g,\SL}(\delta^+)=\Id$.

In particular 
\begin{align*}\dSL(\Phi_{\g,\SL}(x),\Phi_{\g,\SL}(y))&\leq \dSL(\Phi_{\g,\SL}(x),\Phi_{\g,\SL}(\delta^{ +}))+\dSL(\Phi_{\g,\SL}(\delta^{-}),\Phi_{\g,\SL}(y))\\
&\leq \sqrt{n-1}\left(\Phi_{\g,\R}(\delta^+)-\Phi_{\g,\R}(x)+\Phi_{\g,\R}(y)-\Phi_{\g,\R}(\delta^-)\right),
\end{align*}
where the last inequality is a consequence of Lemma \ref{lem:pos}. By repeating the argument, we obtain the same inequality for any finite sum of $\delta$. Taking the limit we get
\begin{align*}\dSL(\Phi_{\g,\SL}(x),\Phi_{\g,\SL}(y))\leq  \sqrt{n-1}\left(\Phi_{\g,\R}(y)-\Phi_{\g,\R}(x)-\mkern-25mu\sum_{\delta:(\!(\delta^+,\delta^-)\!)\subset(\!(x,y)\!)}\mkern-20mu\left(\Phi_{\g,\R}(\delta^-)-\Phi_{\g,\R}(\delta^+)\right)\right)
\end{align*}
Using Remark \ref{dFandpiR} and Theorem \ref{thm:M0}, we deduce
\begin{align*}
\Phi_{\g,\R}(y)-\Phi_{\g,\R}(x)&=\frac{2}{\sqrt{n}}\dF(p_\g\phi(x),p_\g\phi(y))
\\&=\frac{2}{\sqrt{n}}  \sum_{\delta:(\!(\delta^+,\delta^-)\!)\subset(\!(x,y)\!)}\mkern-20mu \dF(p_\g\phi(\delta^-),p_\g\phi(\delta^+))
\\ &=\mkern-20mu\sum_{\delta:(\!(\delta^+,\delta^-)\!)\subset(\!(x,y)\!)}\mkern-20mu\left(\Phi_{\g,\R}(\delta^-)-\Phi_{\g,\R}(\delta^+)\right),
\end{align*}
so $\dSL(\Phi_{\g,\SL}(x),\Phi_{\g,\SL}(y))=0$, which means that $\Phi_{\g,\SL}(x)=\Phi_{\g,\SL}(y)$.
\end{proof}

\begin{proof}[Proof of Theorem \ref{Finsler-component}]
Theorem \ref{thm:M0} and Proposition \ref{2logcoth} imply that both inequalities in Theorem \ref{Finsler-component} hold.

If $\rho$ is the diagonal embedding of a hyperbolization, then the inequalities are identities by the classical Basmajian's identity.
 Let us then prove the reverse implication. Observe that the boundary of a diagonal disk is given by
$$\partial\mbox{Diag}=\{l_\infty\}\cup\{\lambda \Id\;|\;\lambda\in \R\}\subset \Ll(\R^{2n}).$$ 
So as a consequence of Lemma \ref{lem:iff} we get that, up to conjugating the representation in $\Sp(2n,\R)$, the image $\phi(\Lambda(\Gamma))$ is contained in $\partial\mbox{Diag}$. This implies that $\rho(\Gamma)$ preserves $\partial\mbox{Diag}$ and hence
$$\rho(\Gamma)\subset\mbox{Stab}_{\Sp(2n,\R)}(\partial\mbox{Diag}).$$
It can be directly checked that 
$$\mbox{Stab}_{\Sp(2n,\R)}(\partial\mbox{Diag})=\left\{\left(\begin{matrix}
aX&bX\\cX&dX\end{matrix}\right)\left|\left(\begin{matrix}
a&b\\c&d
\end{matrix}\right)\in \SL(2,\R), X\in \O(n)\right.\right\}\simeq\SL(2,\R)\times \O(n),$$ which finishes the proof.
\end{proof}
\subsection{The Riemannian metric}\label{Sec:Riemannian}
We begin the section deducing from Theorem \ref{thm:M0} an inequality for the Riemannian distances.
\begin{prop}\label{prop:m0Riemannian}
Let $\rho:\pi_1(\Sigma)\to\Sp(2n,\R)$ be an Anosov maximal representation, for $\Sigma$ with nonempty boundary. For every peripheral element $\gamma$ we have:
$$\ellR(\g)\leq\sum_{\alpha\in\ort_\Sigma(\gamma)}\dR(p_\g(\phi(\delta_\alpha^-)),p_\g(\phi(\delta_\alpha^+))).$$
\end{prop}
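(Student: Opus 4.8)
The plan is to bound the Riemannian translation length of $\rho(\g)$ by the Riemannian length of a piecewise–geodesic path inside $\Yy_\g$ that runs through the projections of the endpoints of the orthotubes, and then to show that the contribution of the ``missing'' part of this path is negligible. This last step is exactly where Theorem \ref{thm:M0} enters, and it is the reason the naive inequality of Remark \ref{rem:parabolic} does not suffice.

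First I would fix a point $x\in(\!(\g^-,\g^+)\!)$ — chosen, as in the proof of Theorem \ref{thm:M0}, so that the gaps $(\!(\delta_\alpha^+,\delta_\alpha^-)\!)$ with both endpoints in $(\!(x,\g x)\!)$ are precisely those indexed by $\alpha\in\ort_\Sigma(\g)$ — and consider $\Psi:=p_\g\circ\phi:(\!(\g^-,\g^+)\!)\to\Yy_\g$, which is a causal curve by Remark \ref{projcausal}. Equivariance of $p_\g$ gives $\Psi(\g x)=\rho(\g)\Psi(x)$, whence $\ellR(\g)\le\dR(\Psi(x),\Psi(\g x))$. Writing $P_\alpha^{\pm}:=p_\g(\phi(\delta_\alpha^{\pm}))$, for a \emph{finite} subset $S\subset\ort_\Sigma(\g)$ I would order its elements $\alpha_1,\dots,\alpha_k$ along $[\![x,\g x]\!]$ and apply the triangle inequality for $\dR$ to the chain $\Psi(x),\,P^+_{\alpha_1},P^-_{\alpha_1},\,P^+_{\alpha_2},\dots,P^-_{\alpha_k},\,\Psi(\g x)$, obtaining
$$\dR(\Psi(x),\Psi(\g x))\le\sum_{\alpha\in S}\dR(P_\alpha^+,P_\alpha^-)+\sum_{j}\dR(b_j),$$
where the $b_j$ are the finitely many connecting segments between consecutive chosen gaps (together with the two end ones).

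On each connecting segment $b_j$ I would bound $\dR(b_j)\le 2\,\dF(b_j)$ using Lemma \ref{lem:relRF}. Since the subintervals of $[\![x,\g x]\!]$ cut out by the $b_j$ and by the gaps in $S$ partition $[\![x,\g x]\!]$, the additivity of the Finsler distance along the causal curve $\Psi$ (Lemma \ref{orsum}) together with Lemma \ref{lem:ellF} gives $\sum_j\dF(b_j)=\ellF(\g)-\sum_{\alpha\in S}\dF(P_\alpha^+,P_\alpha^-)$. Therefore
$$\ellR(\g)\le\sum_{\alpha\in S}\dR(P_\alpha^+,P_\alpha^-)+2\Big(\ellF(\g)-\sum_{\alpha\in S}\dF(P_\alpha^+,P_\alpha^-)\Big).$$
Finally I would let $S$ exhaust $\ort_\Sigma(\g)$: by Theorem \ref{thm:M0} one has $\sum_{\alpha\in\ort_\Sigma(\g)}\dF(P_\alpha^+,P_\alpha^-)=\ellF(\g)$, so the error term tends to $0$, leaving $\ellR(\g)\le\sum_{\alpha\in\ort_\Sigma(\g)}\dR(P_\alpha^+,P_\alpha^-)$, which is the claimed inequality since $\dR(P_\alpha^+,P_\alpha^-)=\dR(p_\g(\phi(\delta_\alpha^-)),p_\g(\phi(\delta_\alpha^+)))$.

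The only genuinely delicate point is the vanishing of the error term, i.e.\ that the Finsler length of the portion of one period of $\Psi$ not covered by finitely many orthotube–gaps can be made arbitrarily small; this is precisely the higher–rank analogue, supplied by Theorem \ref{thm:M0}, of the classical fact that the limit set has measure zero. Everything else is the triangle inequality, the comparison $\dR\le 2\dF$ of Lemma \ref{lem:relRF}, and the additivity of $\dF$ along causal curves — so I do not expect any serious obstacle beyond careful bookkeeping of the ordering of the gaps and the limiting argument.
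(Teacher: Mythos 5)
Your argument is correct and is essentially the paper's own proof: the paper likewise applies the triangle inequality for $\dR$ to the chain of projected endpoints over a finite family of orthotubes, bounds the connecting segments via $\dR\le 2\dF$ (Lemma \ref{lem:relRF}), and uses Finsler additivity together with Theorem \ref{thm:M0} to show the leftover Finsler length tends to zero as the family exhausts $\ort_\Sigma(\gamma)$. No substantive difference beyond bookkeeping.
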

\begin{proof}
Consider the peripheral elements $\delta$ such that $(\!(\delta^+,\delta^-)\!)\subset(\!(x,\gamma x)\!)$ for some fixed $x\in\Lambda(\Gamma)$. As they are countably many, we can index them as $\{\delta_i\}_{i\in \N}$. For every $k\in \N$, consider $\{\delta_1,\dots,\delta_k\}$. Up to reordering them, assume that $\delta_1^+,\delta_1^-,\dots,\delta_k^+,\delta_k^-$ are positively oriented. Then $\ellR(\g)\leq A_k+B_k$, where
$$A_k:=\sum_{i=1}^{k}\dR(p_\g(\phi(\delta_i^+)),p_\g(\phi(\delta_i^-)))$$
and
$$B_k:=\dR(x,p_\g(\phi(\delta_1^+)))+\sum_{i=1}^{k-1}\dR(p_\g(\phi(\delta_i^-)),p_\g(\phi(\delta_{i+1}^+)))+\dR(p_\g(\phi(\delta_k^-)),\g x).$$
By definition, as $k\to \infty$
$$A_k\to \!\!\!\!\!\!\!\!\!\!\!\sum_{\delta :(\delta^+,\delta^-)\subset (x,\g x)}\!\!\!\!\!\!\!\!\!\!\dR(p_\g(\phi(\delta^-)),p_\g(\phi(\delta^+)))=\sum_{\alpha\in\ort_\Sigma(\gamma)}\dR(p_\g(\phi(\delta_\alpha^-)),p_\g(\phi(\delta_\alpha^+))).$$
So it is enough to show that $B_k\to 0$ as $k\to\infty$. For every $k$, by Lemma \ref{lem:relRF} we have
$$B_k\leq 2 B^F_k$$
where 
$$B^F_k=\dF(x,p_\g(\phi(\delta_1^+)))+\sum_{i=1}^{k-1}\dF(p_\g(\phi(\delta_i^+)),p_\g(\phi(\delta_{i+1}^-)))+\dF(p_\g(\phi(\delta_k^-)),\g x).$$
By Theorem \ref{thm:M0} $B^F_k$ tends to zero as $k$ tends to infinity.


\end{proof}
As in the Finsler case, the inequalities \eqref{Riemannian} follow from a computation for each boundary component (Theorem \ref{Riemannian-component}) and Remark \ref{sumorthogeodesics}.
\begin{theor}\label{Riemannian-component}
For any Anosov maximal representation $\rho:\Gamma\to\Sp(2n,\R)$ and any peripheral element $\gamma\in \Gamma$ we have
$$2\sqrt{n} \!\!\sum_{\alpha\in\ort_\Sigma(\gamma)}\!\!\log\coth\frac{\ellC(\alpha)_n}{2}\geq\ellR(\gamma)\geq 2\sqrt n\!\!\sum_{\alpha\in\ort_\Sigma(\gamma)}\!\!\log \coth \frac{\ellR(\alpha)}{2\sqrt{n}}$$
with equalities if and only if $\rho$ is the diagonal embedding of a hyperbolization.
\end{theor}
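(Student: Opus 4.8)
The plan is to bootstrap everything from the Finsler statements: both displayed inequalities will be soft consequences of Theorem~\ref{Finsler-component}, Theorem~\ref{thm:M0}, Proposition~\ref{prop:m0Riemannian} and the pointwise comparison of the three distances in Lemma~\ref{lem:relRF}. For the \emph{left inequality} I would start from Proposition~\ref{prop:m0Riemannian}, which bounds $\ellR(\g)$ by $\sum_{\alpha\in\ort_\Sigma(\gamma)}\dR(p_\g(\phi(\delta_\alpha^-)),p_\g(\phi(\delta_\alpha^+)))$, and estimate each summand separately. By Lemma~\ref{2logcothC} the $i$-th coordinate of the vectorial distance $\dC(p_\g(\phi(\delta^-)),p_\g(\phi(\delta^+)))$ equals $2\log\coth\frac{\ellC(\alpha)_{n-i+1}}{2}$; since $\log\coth$ is decreasing and $\ellC(\alpha)_n$ is the smallest coordinate of $\ellC(\alpha)$, the largest of these numbers is $2\log\coth\frac{\ellC(\alpha)_n}{2}$, so $\dR=\|\dC\|_2\le\sqrt n\,\|\dC\|_\infty=2\sqrt n\log\coth\frac{\ellC(\alpha)_n}{2}$. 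Summing over $\ort_\Sigma(\gamma)$ yields the left inequality.

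For the \emph{right inequality} I would chain three elementary estimates. First, evaluating Lemma~\ref{lem:relRF} at a point of $\Yy_\g$ realizing $\ellR(\g)$ gives $2\ellF(\g)\le\sqrt n\,\ellR(\g)$, i.e.\ $\ellR(\g)\ge\frac{2}{\sqrt n}\ellF(\g)$; this is the quantitative form of the observation in Section~\ref{sec:strategy}, since $\Yy_\g$ is a $\rho(\g)$-invariant totally geodesic copy of $\R\times\Xx_{\SL(n,\R)}$ by Lemma~\ref{lem:producttube}, the Riemannian displacement of $\rho(\g)$ dominates the translation along the Euclidean factor, and that translation equals $\frac{2}{\sqrt n}\ellF(\g)$ by Lemma~\ref{lem:ellF} and Remark~\ref{dFandpiR}. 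Second, the lower bound in Theorem~\ref{Finsler-component} gives $\ellF(\g)\ge n\sum_{\alpha}\log\coth\frac{\ellF(\alpha)}{n}$, so $\ellR(\g)\ge 2\sqrt n\sum_\alpha\log\coth\frac{\ellF(\alpha)}{n}$. Third, Lemma~\ref{lem:relRF} applied to the two intersection points of each orthotube gives $2\ellF(\alpha)\le\sqrt n\,\ellR(\alpha)$, hence $\frac{\ellF(\alpha)}{n}\le\frac{\ellR(\alpha)}{2\sqrt n}$, and monotonicity of $\log\coth$ upgrades the previous bound to the right inequality.

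For the \emph{equality} discussion, if $\rho$ is a diagonal embedding of a hyperbolization then on the diagonal disk all the vectorial lengths involved are constant vectors, so the Riemannian length is $\sqrt n$ times the hyperbolic length and $\ellC(\alpha)_n$ is the hyperbolic length of $\alpha$; then $\frac{\ellC(\alpha)_n}{2}=\frac{\ellR(\alpha)}{2\sqrt n}$, the two outer expressions coincide, and equality with the middle term is Theorem~\ref{Finsler-component} in the diagonal case (i.e.\ the classical Basmajian identity). Conversely, if the right inequality is an equality then all three steps above are equalities, in particular $\ellF(\g)=n\sum_\alpha\log\coth\frac{\ellF(\alpha)}{n}$, which is the equality case of Theorem~\ref{Finsler-component}, whence $\rho$ is a diagonal embedding. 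If instead the left inequality is an equality, then for every orthotube one must have $\dR(p_\g(\phi(\delta_\alpha^-)),p_\g(\phi(\delta_\alpha^+)))=\sqrt n\,\|\dC(p_\g(\phi(\delta_\alpha^-)),p_\g(\phi(\delta_\alpha^+)))\|_\infty$ (the sum $\sum_\alpha\dR$ being finite, since $\dR\le 2\dF$ and $\sum_\alpha\dF=\ellF(\g)$ by Theorem~\ref{thm:M0}), which by the equality case of Cauchy-Schwarz forces all coordinates of that vectorial distance, equivalently all eigenvalues of $R(\phi(\g^-),\phi(\delta_\alpha^+),\phi(\delta_\alpha^-),\phi(\g^+))$, to be equal; this is exactly the input of Lemma~\ref{lem:iff}, and the argument in the proof of Theorem~\ref{Finsler-component} then puts $\phi(\Lambda(\G))$, up to conjugation, in the boundary of a diagonal disk, so that $\rho(\G)\subset\SL(2,\R)\times\O(n)$ and $\rho$ is a diagonal embedding.

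The inequalities themselves are soft; the delicate point, and the one I expect to be the main obstacle, is the converse in the equality analysis: showing that equality in \emph{either} Riemannian inequality propagates back to the rigid situation of Theorem~\ref{Finsler-component}. For the right inequality this is immediate from the chain, but for the left one it requires combining the coordinatewise rigidity of Cauchy-Schwarz with the \emph{exact} identity of Theorem~\ref{thm:M0} in order to recover equality in Theorem~\ref{Finsler-component} rather than a mere estimate.
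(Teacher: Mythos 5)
Your proposal is correct and follows essentially the same route as the paper: the right inequality via $\ellR(\g)\ge\frac{2}{\sqrt n}\ellF(\g)$ (Lemma \ref{lem:relRF}), the lower bound of Theorem \ref{Finsler-component}, and the comparison $2\ellF(\alpha)\le\sqrt n\,\ellR(\alpha)$; the left inequality via Proposition \ref{prop:m0Riemannian} together with Lemma \ref{2logcothC}; and the equality discussion by reducing to the equality case of Theorem \ref{Finsler-component}. Your analysis of equality in the left inequality (termwise rigidity forcing all eigenvalues of the cross-ratios $R(\phi(\g^-),\phi(\delta_\alpha^+),\phi(\delta_\alpha^-),\phi(\g^+))$ to coincide, then Lemma \ref{lem:iff}) merely spells out what the paper states tersely, and the only cosmetic slip is that $\ellR(\g)\ge\frac{2}{\sqrt n}\ellF(\g)$ does not require a point realizing the translation length, since the pointwise bound of Lemma \ref{lem:relRF} passes directly to the infimum.
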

\begin{proof}
We have
$$
\ellR(\g) \geq \frac{2\ellF(\g)}{\sqrt n} \geq 2\sqrt{n}\!\!\sum_{\alpha\in\ort_\Sigma(\g)}\!\!\log\coth\frac{\ellF(\alpha)}{n} \geq 2\sqrt{n}\!\!\sum_{\alpha\in\ort_\Sigma(\gamma)}\!\!\log\coth\frac{\ellR(\alpha)}{2\sqrt{n}},
$$
where the first and last inequalities follow from Lemma \ref{lem:relRF} and the middle one is given by Theorem \ref{Riemannian-component}. Note that the first inequality has also a geometric interpretation: since $\rho(\g)$ preserves the tube $\Yy_\g$, the Riemannian translation length of $\rho(\g)$, $\ellR(\g)$ is at most the translation length in the tube, that, in turn, is at most the translation length in the Euclidean factor $\R$.

For the other inequality, we have:
$$\ellR(\g)\leq \sum_{\alpha\in\ort_\Sigma(\gamma)}\dR(p_\g(\phi(\delta_\alpha^-)),p_\g(\phi(\delta_\alpha^+)))\leq 2\sqrt{n} \sum_{\alpha\in\ort_\Sigma(\gamma)}\log\coth\frac{\ellC(\alpha)_n}{2}$$
where the first inequality is Proposition \ref{prop:m0Riemannian} and the second follows from Lemma \ref{2logcothC} and the monotonicity of $\log\coth(x)$.

If $\rho$ is the diagonal embedding of a hyperbolization, then the equalities holds as a consequence of the classical Basmajian equality. On the other hand assume that some equality holds. Then the corresponding equality holds in Theorem \ref{Finsler-component} and hence $\rho$ is a diagonal embedding.
\end{proof}
\subsection{Extremal cases}\label{arbitrarilybad}
In this section we show that there are sequences of maximal representations in which the length of the boundary components stay bounded away from zero, but the  $\R$-tubes associated to any two peripheral elements are arbitrarily far apart.
\begin{prop}\label{prop:arbitrarilybad}
For any $n\geq 2$, $L>0$ and $\eta>0$, there is a maximal representation $\rho:\G_{0,n+1}\to\Sp(2n,\R)$ and a peripheral element $\g$ such that
$$
\ellF(\g)=\frac{nL}{2}\quad\text{ and }\quad \ellR(\g)=\sqrt{n}L$$
while
$$n\sum_{\alpha\in\ort_\Sigma(\gamma)}\log \coth \frac{\ellF(\alpha)}{n}< \eta\quad\text{ and }\quad 2\sqrt n\sum_{\alpha\in\ort_\Sigma(\gamma)}\log \coth \frac{\ellR(\alpha)}{2\sqrt{n}}< \eta. $$
\end{prop}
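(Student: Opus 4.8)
The plan is to build an explicit family of representations of the $(n+1)$-holed sphere group $\Gamma_{0,n+1}$ into $\Sp(2n,\R)$ as products of $n$ Fuchsian pieces living in the $n$ factors of the standard polydisk, twisted by a parameter that spreads the attracting/repelling Lagrangians of the various peripheral elements apart in the ``non-diagonal'' directions. Concretely, pick a hyperbolization of $\Sigma=\Sigma_{0,n+1}$ with a distinguished boundary curve $\gamma$, giving a holonomy $h:\Gamma\to\SL(2,\R)$. For a tuple $\mathbf{t}=(t_1,\dots,t_n)\in(\SL(2,\R))^n$ one can form the representation $\rho_{\mathbf{t}}$ whose value on a generator $g$ is the block-diagonal element $\diag(h_1(g),\dots,h_n(g))\in\Sp(2n,\R)$, where $h_i$ is obtained from $h$ by conjugating the images of all but one fixed collection of generators by $t_i$ (so that $h_i$ is still Fuchsian, hence $\rho_{\mathbf{t}}$ is maximal by the product construction of Section~\ref{maxreps}). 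The point is to choose $h_i$ so that the peripheral element $\gamma$ has the \emph{same} translation data in every factor --- so $\ellC(\gamma)=(L,\dots,L)$, giving $\ellF(\gamma)=\tfrac{nL}{2}$ and $\ellR(\gamma)=\sqrt n L$ --- but so that for every \emph{other} peripheral element $\delta$, the fixed points $\delta^{\pm}$ in the $n$ distinct factors are pairwise very different; as the spreading parameter degenerates, the $4$-tuples $(\phi(\gamma^-),\phi(\delta^+),\phi(\delta^-),\phi(\gamma^+))$ become ``as non-diagonal as possible''.

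The key computation is then to estimate $\ellC(\ort(\gamma,\delta))$ from below. By Lemma~\ref{2logcothC}, $\dC(p_\gamma(\phi(\delta^-)),p_\gamma(\phi(\delta^+)))_i=2\log\coth\frac{\ellC(\ort(\gamma,\delta))_{n-i+1}}{2}$, and by Lemma~\ref{R(a,x,y,b)} this vector is read off from the eigenvalues of the cross-ratio $R(\phi(\gamma^-),\phi(\delta^+),\phi(\delta^-),\phi(\gamma^+))$. In the block-diagonal model this cross-ratio is itself block-diagonal with $1\times 1$ blocks, each equal to the classical hyperbolic cross-ratio of four boundary points in the $i$-th Poincar\'e disk; its eigenvalues are $\bigl(\coth^2\tfrac{\ellC(\ort_i(\gamma,\delta))}{2}\bigr)$ where $\ort_i(\gamma,\delta)$ is the length of the classical orthogeodesic between $\gamma$ and $\delta$ in the $i$-th hyperbolic structure. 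So the claim reduces to: one can choose the $n$ hyperbolic structures so that $\gamma$ keeps length $L$ in all of them, but the orthogeodesic from $\gamma$ to any other boundary component is made arbitrarily long. For the Finsler statement one needs $\ellF(\ort(\gamma,\delta))=\tfrac12\sum_i\ellC(\ort_i(\gamma,\delta))$ to be large (which is immediate once each $\ellC(\ort_i(\gamma,\delta))$ is large), and then $\log\coth\frac{\ellF(\alpha)}{n}$ is exponentially small in that length; summing the finitely-or-countably many contributions and invoking convergence of $\sum_\alpha\log\coth\frac{\ellF(\alpha)}{n}\le \ellF(\gamma)/n<\infty$ (Theorem~\ref{Finsler-component}), a tail/uniformity argument gives the bound $<\eta$. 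The Riemannian case follows because $\ellR(\alpha)\ge \tfrac{2}{\sqrt n}\ellF(\alpha)$ by Lemma~\ref{lem:relRF}, so $\log\coth\frac{\ellR(\alpha)}{2\sqrt n}\le\log\coth\frac{\ellF(\alpha)}{n}$.

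Carrying this out, the steps in order are: (1) set up the block-diagonal family $\rho_{\mathbf{t}}$ and verify maximality via the product/polydisk construction; (2) arrange that $\gamma$ has identical translation data $(L,\dots,L)$ in all factors, computing $\ellF(\gamma),\ellR(\gamma)$; (3) identify the orthotube cross-ratio as block-diagonal and express $\ellC(\ort(\gamma,\delta))_n$, $\ellF(\ort(\gamma,\delta))$, $\ellR(\ort(\gamma,\delta))$ in terms of the classical orthogeodesic lengths in the $n$ factors; (4) choose the $n$ hyperbolic structures (e.g. pinching all curves other than $\gamma$, or Dehn-twisting to push collars, while keeping $\ell_{h_i}(\gamma)=L$) so that all classical orthogeodesics from $\gamma$ become long; (5) estimate the sums using convexity/monotonicity of $\log\coth$ and finiteness of the classical Basmajian sum to conclude the two strict inequalities $<\eta$. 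The main obstacle I expect is step~(4): one must simultaneously control a \emph{single} boundary length while making \emph{all} orthogeodesics emanating from that boundary component long --- this is a genuine statement about the geometry of hyperbolic surfaces with boundary and will likely require a careful choice of a degenerating path in Teichm\"uller space (say, pinching the interior curves of a pants decomposition adapted to $\gamma$, using the collar lemma to force orthogeodesics crossing the thin parts to be long, together with the fact that in the classical Basmajian identity the total sum $\ell(\gamma)=4\sum\log\coth(\ell(\alpha)/2)$ stays bounded, forcing each term to be small). Handling the bookkeeping between ``orthogeodesics in the $i$-th factor'' and ``orthotubes of $\rho_{\mathbf{t}}$'' --- in particular checking that they are in natural bijection, which holds because the combinatorial data (pairs of peripheral elements up to $\langle\gamma\rangle$) is the same --- is routine but must be stated carefully.
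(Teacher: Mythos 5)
Your setup (the block-diagonal product of $n$ Fuchsian representations with $\ell_{h_i}(\gamma)=L$ in every factor, the identification of the orthotube data with the $n$ classical orthogeodesic lengths via the block-diagonal cross-ratio, and the reduction of the Riemannian bound to the Finsler one via Lemma \ref{lem:relRF}) matches the paper's construction. The gap is your key step (4), and it is not merely the technical obstacle you flag: it is \emph{impossible} to choose a hyperbolic structure on $\Sigma_{0,n+1}$ with $\ell(\gamma)=L$ in which all orthogeodesics emanating from $\gamma$ are long. Indeed, if $d$ denotes the shortest such orthogeodesic length, the half-collar of width $w<d/2$ around $\gamma$ is embedded and disjoint from the other boundary components, so its area $L\sinh(w)$ is at most $2\pi|\chi(\Sigma)|$, giving $d\le 2\arcsinh\!\left(2\pi(n-1)/L\right)$ in every factor. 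Equivalently, the classical Basmajian identity pins $\sum_\alpha 2\log\coth(\ell_{h_i}(\alpha)/2)$ at exactly $L$ in each factor, so these contributions can never be made globally small. Your fallback in step (5) does not repair this: convexity gives $\log\coth\frac{\ellF(\alpha)}{n}\le\frac1n\sum_i\log\coth\frac{\ell_{h_i}(\alpha)}{2}$, and summing over $\alpha$ and applying Basmajian in each factor returns only the trivial bound $\ellF(\gamma)/n=L/2$; convergence of the series plus a ``tail/uniformity'' argument cannot yield a bound $<\eta/n$, and indeed for the genuinely diagonal representation (all factors equal) the sum equals $L/2$ exactly.

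The missing idea, which is how the paper argues, is to hide the unavoidable short orthogeodesic in a \emph{different factor for each boundary component}: choose the $i$-th hyperbolization so that the orthogeodesic $\alpha_i$ from $\gamma$ to the $i$-th boundary component is so short that $2\log\coth(\ell_{\rho_i}(\alpha_i)/2)=L-\varepsilon$ (this is the content of Lemma \ref{lem:hypgeom}). Then Basmajian in factor $i$ forces all the \emph{other} orthogeodesics of that factor to contribute at most $\varepsilon$ in total, and since each orthotube of the product representation is short in at most one factor (this is where $n\ge 2$ enters), discarding that factor and using monotonicity and convexity of $\log\coth$ bounds $n\sum_{\alpha\in\ort_\Sigma(\gamma)}\log\coth\frac{\ellF(\alpha)}{n}$ by roughly $n^2\varepsilon$, which is $<\eta$ once $\varepsilon=\eta/n^2$. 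So the spreading you are after must be achieved by making one classical orthogeodesic per factor very short, in a different place each time, rather than by making all of them long.
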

The following lemma from hyperbolic geometry will be useful to prove Proposition \ref{prop:arbitrarilybad}:
\begin{lem}\label{lem:hypgeom}
For every $L>0$, $\varepsilon>0$, and $n\in \N$, there exists a hyperbolic structure on a sphere with $n+1$ boundary components such that one boundary component $\gamma_0$ has length $L$ and there is an orthogeodesic $\alpha$ between $\gamma_0$ and another boundary component satisfying
$$2\log\coth\frac{\ell(\alpha)}{2}=L-\varepsilon.$$
\end{lem}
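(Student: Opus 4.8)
Looking at Lemma \ref{lem:hypgeom}, I need to construct a hyperbolic structure on an $(n+1)$-holed sphere where one boundary $\gamma_0$ has length $L$ and there's an orthogeodesic $\alpha$ from $\gamma_0$ to another boundary with a prescribed (short) length, precisely so that $2\log\coth(\ell(\alpha)/2) = L - \varepsilon$.

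Let me think about how to prove this.

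\begin{proof}[Proof plan for Lemma \ref{lem:hypgeom}]
The plan is to build $\Sigma$ out of one pair of pants that carries the orthogeodesic $\alpha$, plus enough extra pairs of pants to get the correct number of boundary components. Throughout we assume $\varepsilon<L$, which is forced: $t\mapsto 2\log\coth(t/2)$ is a strictly decreasing homeomorphism of $(0,\infty)$ onto itself, so the equation $2\log\coth\frac{\ell(\alpha)}2=L-\varepsilon$ has a (unique) solution $d^{*}>0$ if and only if $L-\varepsilon>0$. The first step is to record the elementary identity that $d^{*}$ is characterized by $\cosh d^{*}=\coth\frac{L-\varepsilon}{2}$: from $2\log\coth(d/2)=L-\varepsilon$ one gets $\coth(d/2)=e^{(L-\varepsilon)/2}=:u$, and, using $\cosh d=\frac{\coth^{2}(d/2)+1}{\coth^{2}(d/2)-1}$, this gives $\cosh d=\frac{u^{2}+1}{u^{2}-1}=\coth\frac{L-\varepsilon}{2}$. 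Set $C:=\coth\frac{L-\varepsilon}{2}$; since $\varepsilon\in(0,L)$ we have $C>\coth\frac L2>1$, and this strict inequality (which uses $\varepsilon>0$) is the one point that makes everything work.

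Next I would produce a hyperbolic pair of pants $P_{0}$ with cuffs $\gamma_{0},\delta_{1},\delta'$ of lengths $L,\ell_{1},\ell'$ whose seam (the unique simple geodesic arc orthogonal to both cuffs) joining $\gamma_{0}$ to $\delta_{1}$ has length $d^{*}$. By the classical trigonometric formula for a pair of pants, the length $d$ of that seam satisfies
$$\cosh d=\frac{\cosh(\ell'/2)+\cosh(L/2)\cosh(\ell_{1}/2)}{\sinh(L/2)\sinh(\ell_{1}/2)},$$
so the requirement $\cosh d=C$ reads $\cosh(\ell'/2)=C\sinh(L/2)\sinh(\ell_{1}/2)-\cosh(L/2)\cosh(\ell_{1}/2)$. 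As $\ell_{1}\to\infty$ the right-hand side is asymptotic to $\tfrac12 e^{\ell_{1}/2}\sinh(L/2)\bigl(C-\coth(L/2)\bigr)$, hence tends to $+\infty$ because $C>\coth(L/2)$. Therefore I may fix $\ell_{1}$ large enough that this right-hand side exceeds $1$ and then define $\ell':=2\arccosh$ of that value; this choice of $(L,\ell_{1},\ell')$ gives the desired $P_{0}$, in which the seam $\alpha$ between $\gamma_{0}$ and $\delta_{1}$ has $\ell(\alpha)=d^{*}$, i.e.\ $2\log\coth\frac{\ell(\alpha)}2=L-\varepsilon$.

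Finally I would enlarge $P_{0}$ to a sphere with exactly $n+1$ boundary components. If $n+1=3$ we take $\Sigma=P_{0}$. Otherwise glue to $P_{0}$ a chain of $n-2$ further pairs of pants by standard Fenchel--Nielsen surgery: attach $P_{1}$ (chosen with a cuff of length $\ell'$, arbitrary twist) to $P_{0}$ along $\delta'$, then attach $P_{2}$ along a free cuff of $P_{1}$, and so on; each gluing increases the number of boundary components by one, so after $n-2$ steps we obtain a hyperbolic sphere $\Sigma$ with geodesic boundary having $n+1$ boundary components, among them $\gamma_{0}$ (still of length $L$) and $\delta_{1}$. Since $\alpha\subset P_{0}\subset\Sigma$ is a geodesic of $\Sigma$ meeting $\gamma_{0}$ and $\delta_{1}$ orthogonally at its two endpoints, it is an orthogeodesic of $\Sigma$ between two distinct boundary components with the asserted length. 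The only substantive point is the inequality $C=\coth\frac{L-\varepsilon}{2}>\coth\frac L2$ that makes the pair-of-pants equation solvable; the remaining verifications are routine hyperbolic trigonometry and surgery.
\end{proof}
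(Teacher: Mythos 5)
Your proposal is correct and follows essentially the same route as the paper: both reduce to the right-angled hexagon (pair-of-pants) identity and exploit the key inequality $\coth\frac{L-\varepsilon}{2}>\coth\frac{L}{2}$ to make it solvable by taking the adjacent cuff sufficiently long. You are merely more explicit about translating $2\log\coth\frac{\ell(\alpha)}{2}=L-\varepsilon$ into $\cosh\ell(\alpha)=\coth\frac{L-\varepsilon}{2}$ and about gluing extra pairs of pants to reach $n+1$ boundary components, which the paper leaves implicit.
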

\begin{proof}
Explicitly, we want $\alpha$ of length
$$g(L,\varepsilon)=\log\frac{e^{(L-\varepsilon)/2}+1}{e^{(L-\varepsilon)/2}-1}.$$
It is enough to construct a pair of pants with one boundary component of length $L$ and the orthogonal between this boundary and another one of length $g(L,\varepsilon)$. Equivalently, we can construct a right-angled hexagon with one side of length $\frac{L}{2}$ and an adjacent one of length $g(L,\varepsilon)$. By properties of hyperbolic hexagons (see \cite[Section 2.4]{buser_book}), it is enough to find positive $x$ and $y$ satisfying
$$\cosh y=\sinh\frac{L}{2}\sinh x \cosh g(L,\varepsilon)-\cosh\frac{L}{2}\cosh x.$$

\begin{figure}
\vspace{.5cm}
\begin{overpic}{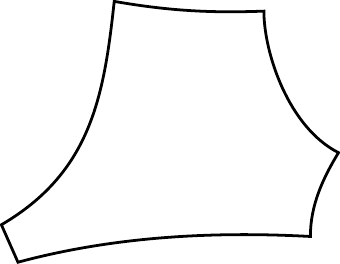}
\put(-8,0){$\frac{L}{2}$}
\put(45,-5){$g(L,\varepsilon)$}
\put(98,15){$x$}
\put(55,80){$y$}
\end{overpic}
\caption{The hexagon we want to construct}
\end{figure}

If we find such $x$ and $y$, there is a right-angled hyperbolic hexagon with non-consecutive sides of length $\frac{L}{2}$, $x$ and $y$ (and the side between the ones of length $\frac{L}{2}$ and $x$ has length $g(L,\varepsilon)$). But since $\cosh$ is bijective between $(0,\infty)$ and $(1,\infty)$, it is enough to show that there exists $x>0$ such that
$$\sinh\frac{L}{2}\sinh x \cosh g(L,\varepsilon)-\cosh\frac{L}{2}\cosh x>1. \;\; (\star)$$
Now there is some positive $\eta$ such that
$$\cosh g(L,\varepsilon)=\frac{e^{L-\varepsilon}+1}{e^{L-\varepsilon}-1}=\frac{e^L+1}{e^L-1}+\eta=\frac{\cosh(L/2)}{\sinh(L/2)}+\eta.$$
Using this, inequality $(\star)$ becomes
$$\eta \sinh x \sinh\frac{L}{2}>(\cosh x-\sinh x)\cosh\frac{L}{2}+1.$$
Let $x_0$ be such that $\cosh x-\sinh x<1$; then it is enough to choose $x>x_0$ satisfying
$$\sinh x>\frac{1+\cosh\frac{L}{2}}{\delta\sinh\frac{L}{2}}.$$
\end{proof}
\begin{proof}[Proof of Proposition \ref{prop:arbitrarilybad}]
Choose $0<\varepsilon=\frac{\eta}{n^2}$. 
We write the fundamental group $\G_{0,n+1}$ of an $(n+1)$-punctured sphere as 
$$\G_{0,n+1}=\left\<\g_0,\ldots,\g_n\left|\prod\g_i=1\right.\right\>.$$
Consider $n$ hyperbolizations $\rho_i:\G_{0,n+1}\to\SL(2,\R)$ so that $\rho_i(\g_0)$ has length $L$ and the orthogeodesic $\alpha_i$ between the axis of $\g_0$ and the axis of $\g_i$ satisfies the hypothesis of Lemma \ref{lem:hypgeom}. Consider the representation $\rho=\diag(\rho_1,\dots,\rho_n)$. Then
$$\ellF(\g_0)=\frac{nL}{2} \quad \text{ and }\quad\ellR(\g_0)=\sqrt{n}L.$$

The classical Basmajian's identity tells us that
$$2\sum_\alpha \log\cosh\frac{\ell_{\rho_i}(\alpha)}{2}=\ell_{\rho_i}(\gamma_0)=L.$$
Using the fact that the left-hand side term is bigger than $2\log\cosh\frac{\ell_
{\rho_i}(\alpha_j)}{2}+2\log\cosh\frac{\ell_
{\rho_i}(\alpha_i)}{2}$ for any $j\neq i$ and the assumption on $\ell_{\rho_i}(\alpha_i)$ we can deduce that for any $j\neq i$
$$\ell_{\rho_i}(\alpha_j)>\ell_{\rho_i}(\alpha_i)=\ell_{\rho_j}(\alpha_j)$$
so $$\min_i \ell_{\rho_i}(\alpha_j)\neq \ell_{\rho_j}(\alpha_j).$$

Look now at each term $\log\coth\frac{\ellF(\alpha)}{n}$: by convexity and monotonicity of $\log\coth(x)$, we have
\begin{align*}
\log\coth\frac{\ellF(\alpha)}{n}&=\log\coth\sum_{i=1}^n \frac{\ell_{\rho_i}(\alpha)}{2n}\leq\frac{1}{n}\sum_{i=1}^n\log\coth\frac{\ell_{\rho_i}(\alpha)}{2}\\
&\leq \max_i \log\coth\frac{\ell_{\rho_i}(\alpha)}{2}=\log\coth\min_i\frac{\ell_{\rho_i}(\alpha)}{2}.
\end{align*}
If $\alpha=\alpha_j$, we know thus that
$$\log\coth\frac{\ellF(\alpha_j)}{n}\leq \sum_{i\neq j}\log\coth \frac{\ell_{\rho_i}(\alpha)}{2}$$
while if $\alpha\neq \alpha_j$ for all $j$ we just consider that
$$\log\coth\frac{\ellF(\alpha)}{n}\leq \sum_{i=1}^n\log\coth \frac{\ell_{\rho_i}(\alpha)}{2}.$$
Using these inequalities and reordering the terms we get
$$n\sum_{\alpha\in\ort_{\Sigma}(\g_0)}\log\coth\frac{\ellF(\alpha)}{n}\leq n\sum_{i=1}^n\left(\sum_{\alpha\neq\alpha_i}\log\coth\frac{\ell_{\rho_i}(\alpha)}{2}\right).$$
On the other hand, the classical Basmajian's identity tells us that 
$$\sum_{\alpha\neq\alpha_i}2\log\coth\frac{\ell_{\rho_i}(\alpha)}{2}\leq \ell(\gamma_0)-2\log\coth\frac{\ell_{\rho_i}(\alpha_i)}{2}=\varepsilon.$$
and this implies that
$$2\sqrt n\sum_{\alpha\in\ort_\Sigma(\gamma_0)}\log \coth \frac{\ellR(\alpha)}{2\sqrt{n}}\leq 2n\sum_{\alpha\in\ort_{\Sigma}(\g_0)}\log\coth\frac{\ellF(\alpha)}{n}\leq n^2\varepsilon<\eta,$$
where the first inequality follows from Lemma \ref{lem:relRF}.

\end{proof}

\subsection{Proof of Corollary  \ref{corB}}
We conclude the paper with a proof of the geometric consequence of Theorem \ref{thm:main} mentioned in the introduction:
\begin{proof}[Proof of Corollary \ref{corB}]
We can assume that $\gamma$ is a boundary component, otherwise we cut the surface along the corresponding curve and consider the restriction of the representation to the fundamental groups of the surface(s) obtained. Note that Theorem \ref{Riemannian-component} implies that, for any orthotube $\alpha\in \ort_\Sigma(\gamma)$,
$$\ellR(\alpha)>2\sqrt{n}\arccoth\left(\exp\left(\frac{\ellR(\g)}{2\sqrt{n}}\right)\right)=:2w(\g).$$
Suppose, by contradiction, that the neighborhood $C(\g)=\langle \rho(\gamma)\rangle\backslash\mathcal N_{w(\gamma)}(\Yy_\gamma)$ is not embedded. This implies that there is an element $\delta\in\Gamma\setminus \<\g\>$ such that $C(\g)\cap\delta C(\g)\neq\emptyset$. 
Looking at the universal cover $\Xx$, this means that
$$\dR(\Yy_\g,\Yy_{\delta\gamma\delta^{-1}})<2w(\g)$$
and hence that there is an orthotube of length less than $2w(\g)$, a contradiction.

Similarly, consider another element $\delta$ corresponding to a simple closed curve or boundary component disjoint from the curve represented by $\g$. If by contradiction the two neighborhoods intersect, we get that
$$\dR(\Yy_\g, \Yy_\delta)<w(\g)+w(\delta)\leq 2\max{w(\g),w(\delta)}$$
so we have again an orthogeodesic (seen as element of $\ort_\g(\Sigma)$ or $\ort_\delta(\Sigma)$, depending on whether $w(\g)$ or $w(\delta)$ is maximum) which is too short, a contradiction.
\end{proof}
\bibliographystyle{alpha}
\bibliography{references_basmajian}
\end{document}